\theoremstyle{plain}
\newtheorem{theorem}{Theorem}[section]
\newtheorem{lemma}{Lemma}[section]
\theoremstyle{definition}
\newtheorem{remark}{Remark}[section]
\newcommand{\Id}{\mathbb{I}}
\newcommand{\R}{\mathbb{R}}
\newcommand{\Rext}{\R\cup\{+\infty\}}
\newcommand{\set}[1]{\left\{#1\right\}}
\newcommand{\sets}[1]{\{#1\}}
\newcommand{\norms}[1]{\Vert#1\Vert}
\newcommand{\prox}{\mathrm{prox}}
\newcommand{\proj}{\mathrm{proj}}
\newcommand{\dom}[1]{\mathrm{dom}(#1)}
\newcommand{\gra}[1]{\mathrm{gra}(#1)}
\newcommand{\ran}[1]{\mathrm{ran}(#1)}
\newcommand{\zero}[1]{{\boldsymbol{0}}}
\newcommand{\zer}[1]{\mathrm{zer}(#1)}
\newcommand{\mbb}[1]{\mathbb{#1}}
\newcommand{\mcal}[1]{\mathcal{#1}}
\newcommand{\Xc}{\mathcal{X}}
\newcommand{\Hc}{\mathcal{H}}
\newcommand{\Lc}{\mathcal{L}}
\newcommand{\Tc}{\mathcal{T}}
\newcommand{\Cc}{\mathcal{C}}
\newcommand{\Nc}{\mathcal{N}}
\newcommand{\Pc}{\mathcal{P}}
\newcommand{\Vc}{\mathcal{V}}
\newcommand{\iprods}[1]{\langle #1\rangle}
\newcommand{\intx}[1]{\mathrm{int}\left(#1\right)}
\newcommand{\BigO}[1]{\mathcal{O}\left(#1\right)}
\newcommand{\BigOs}[1]{\mathcal{O}\big(#1\big)}
\newcommand{\mytb}[1]{\textbf{#1}}
\newcommand{\myti}[1]{\textit{#1}}
\newcommand{\beforesubsec}{\vspace{-1.5ex}}
\newcommand{\aftersubsec}{\vspace{-1ex}}
\newcommand{\beforesec}{\vspace{-1.25ex}}
\newcommand{\aftersec}{\vspace{-1ex}}
\title{Sublinear Convergence Rates of Extragradient-Type Methods: A Survey on Classical and Recent Developments}
\author{Quoc Tran-Dinh\vspace{0.25ex}\\
\newline {Department of Statistics and Operations Research}\\
\newline The University of North Carolina at Chapel Hill\\
318 Hanes Hall, UNC-Chapel Hill, NC 27599-3260.\\
\newline \textit{Email:} \url{quoctd@email.unc.edu}.}
\date{Version 1 (\textit{March 30, 2023})}
\begin{document}
\maketitle

\begin{abstract}
The extragradient (EG), introduced by G. M. Korpelevich in 1976, is a well-known method to approximate solutions of saddle-point problems and their extensions such as variational inequalities and monotone inclusions.
Over the years, numerous variants of EG have been proposed and studied in the literature. 
Recently, these methods have gained popularity due to new applications in machine learning and robust optimization. 
In this work, we survey the latest developments in the EG method and its variants for approximating solutions of nonlinear equations and inclusions, with a focus on the monotonicity and co-hypomonotonicity settings. 
We provide a unified convergence analysis for different classes of algorithms, with an emphasis on sublinear best-iterate and last-iterate convergence rates. 
We also discuss recent accelerated variants of EG based on both Halpern fixed-point iteration and Nesterov's accelerated techniques.
Our approach uses simple arguments and basic mathematical tools to make the proofs as elementary as possible, while maintaining generality to cover a broad range of problems.
\end{abstract}

\section{Introduction}\label{sec:intro}
\aftersec
The \myti{generalized equation} (also called the \myti{[non]linear inclusion}) provides a unified template to model various problems in computational mathematics and related fields such as the optimality condition of optimization problems (in both unconstrained and constrained settings), minimax optimization, variational inequality, complementarity, two-person game,  and fixed-point problems, see, e.g., \cite{Bauschke2011,reginaset2008,Facchinei2003,phelps2009convex,Rockafellar2004,Rockafellar1976b,ryu2016primer}.
Theory and numerical methods for this equation and its special cases have been extensively studied for many decades, see, e.g., the following monographs and the references quoted therein \cite{Bauschke2011,Facchinei2003,minty1962monotone,Rockafellar1997}.
At the same time, several applications of this mathematical tool in operations research, economics, uncertainty quantification, and transportations have been investigated \cite{Ben-Tal2009,giannessi1995variational,harker1990finite,Facchinei2003,Konnov2001}.
In the last few years, there has been a surge of research in minimax problems due to new applications in machine learning and robust optimization, especially in generative adversarial networks (GANs), adversarial training, and distributionally robust optimization, see, e.g., \cite{arjovsky2017wasserstein,Ben-Tal2009,goodfellow2014generative,levy2020large,madry2018towards,rahimian2019distributionally} as a few examples. 
Minimax problems have also found new applications in online learning and reinforcement learning, among many others, see, e.g.,   \cite{arjovsky2017wasserstein,azar2017minimax,bhatia2020online,goodfellow2014generative,jabbar2021survey,levy2020large,lin2022distributionally,madry2018towards,rahimian2019distributionally,wei2021last}.
Such prominent applications have motivated the research in minimax optimization and variational inequality problems (VIPs).
On the one hand, classical algorithms such as gradient descent-ascent, extragradient, and primal-dual methods have been revisited, improved, and extended.
On the other hand, new variants such as accelerated extragradient and accelerated operator splitting schemes have also been developed and equipped with rigorous convergence guarantees and practical performance evaluation.
This new development motivates us to write this survey paper, with the focus on sublinear  convergence rate analysis.

\vspace{0.5ex}
\noindent\textbf{Problem statements.}
Since there is a vast amount of literature on the generalized equation, we will only present the recent developments on sublinear convergence rates of the \mytb{extragradient (EG)} method and its variants for approximating the solutions of the following \myti{generalized equation} (also known as a [composite] \myti{nonlinear inclusion}) and its specific cases:
\begin{equation}\label{eq:NI}
\textrm{Find $x^{\star}\in\dom{\Phi }$ such that:} \quad 0 \in \Phi x^{\star} \equiv Fx^{\star} + Tx^{\star},
\tag{NI}
\end{equation}
where $F : \R^p\to\R^p$ is a single-valued operator, $T : \R^p\rightrightarrows 2^{\R^p}$ is a set-valued (or multivalued) mapping from $\R^p$ to $2^{\R^p}$ (the set of all subsets of $\R^p$), $\Phi := F + T$, and $\dom{\Phi } := \dom{F}\cap\dom{T}$ is the domain of $\Phi$, which is the intersection of the domains of $F$ and $T$.
In this paper, we focus on the finite-dimensional Euclidean spaces $\R^p$ and $\R^n$ for ease of presentation. 
However, it is worth noting that most of the results presented in this paper can be extended to Hilbert spaces, as demonstrated in the existing literature.

\vspace{0.5ex}
\noindent\textbf{Special cases.}
If $F = 0$, then \eqref{eq:NI} reduces to a \myti{generalized equation} or a \myti{[non]linear inclusion} $0 \in Tx^{\star}$.
Alternatively, if $T = 0$, then \eqref{eq:NI} reduces to a \myti{[non]linear equation}:
\begin{equation}\label{eq:NE}
\textrm{Find $x^{\star}\in\dom{F}$ such that:} \quad Fx^{\star} = 0.
\tag{NE}
\end{equation}
If $T := \partial{g}$, the subdifferential of a proper, closed, and convex function $g : \R^p \to \Rext$, then \eqref{eq:NI} reduces a \myti{mixed variational inequality problem} (MVIP):
\begin{equation}\label{eq:MVIP}
\textrm{Find $x^{\star}\in\dom{\Phi}$ such that:} \quad \iprods{Fx^{\star}, x - x^{\star}} + g(x) - g(x^{\star}) \geq 0, \ \textrm{for all} \ x \in \dom{\Phi}.
\tag{MVIP}
\end{equation}
In particular, if $T = \Nc_{\Xc}$, the normal cone of a nonempty, closed, and convex set $\Xc$ in $\R^p$ (i.e. $g = \delta_{\Xc}$, the indicator of $\Xc$), then \eqref{eq:MVIP} reduces the classical (Stampacchia) \myti{variational inequality problem} (VIP):
\begin{equation}\label{eq:VIP}
\textrm{Find $x^{\star}\in\Xc$ such that:} \quad \iprods{Fx^{\star}, x - x^{\star}} \geq 0, \ \textrm{for all} \ x \in \Xc.
\tag{VIP}
\end{equation}
While \eqref{eq:VIP} can be viewed as a primal VIP (or a strong VIP), its dual (or weak) form can be written as
\begin{equation}\label{eq:DVIP}
\textrm{Find $x^{\star}\in\Xc$ such that:} \quad \iprods{Fx, x - x^{\star}} \geq 0, \ \textrm{for all} \ x \in \Xc,
\tag{DVIP}
\end{equation}
which is known as Minty's variational inequality problem.
If $F$ is monotone (see the definition in Section~\ref{sec:background}) then both problems \eqref{eq:VIP} and \eqref{eq:MVIP} are equivalent, i.e. their solution sets are identical, \cite{Facchinei2003,Konnov2001}.
One important special case of \eqref{eq:NI} or \eqref{eq:VIP} is the optimality condition of minimax problems of the form:
\begin{equation}\label{eq:minimax_prob}
\min_{u \in\R^m} \max_{v\in \R^n} \Big\{ \Lc(u, v) := \varphi(u) + \Hc(u, v) - \psi(v) \Big\},
\end{equation}
where $\varphi : \R^m\to\Rext$ and $\psi : \R^n\to\Rext$ are often proper, closed, and convex functions, and $\Hc : \R^m\times\R^n\to\R$ is a bifunction, often assumed to be differentiable, but not necessarily convex-concave.
If we denote $x := [u, v]$ as the concatenation of $x$ and $y$, and define $T := [\partial{\varphi}, \partial{\psi}]$ and $F := [\nabla_u{\Hc}(u, v), -\nabla_v{\Hc}(u, v)]$, then the optimality condition of \eqref{eq:minimax_prob} is exactly captured by \eqref{eq:NI}.

\vspace{1ex}
\noindent\textbf{Related work.}
Extensive research has been conducted in the literature to investigate the existence of solutions and theoretical properties of \eqref{eq:NI} and its special cases.
This research has been conducted  under various assumptions of monotonicity and extensions, including quasi-monotone, pseudo-monotone, and weakly monotone notions. 
Relevant literature references on this topic include \cite{Bauschke2011,Facchinei2003,Konnov2001,mordukhovich2006variational,Zeidler1984}.
Moreover, solution methods for \eqref{eq:NI} and its special cases have been well-developed, particularly in the context of monotonicity and related extensions such as quasi-monotone, pseudo-monotone, or star-monotone notion. 
In addition, nonmonotone instances of \eqref{eq:NI} have also received extensive attention in the literature, with many theoretical results and algorithms focusing on local properties. 
Additional information can be found in references such as \cite{bauschke2020generalized,Bonnans1994a,Bonnans2000a,combettes2004proximal,Dontchev1996,pennanen2002local,Robinson1980,Rockafellar1997}.


Existing solution methods  for \eqref{eq:NI} and its special cases often rely on a fundamental assumption: \textit{maximal monotonicity} of $F$ and $T$, or of $\Phi$ to guarantee global convergence.
These methods generally generalize  existing optimization algorithms such as gradient, proximal-point, Newton, and interior-point schemes to \eqref{eq:NI} and its special cases \cite{combettes2004proximal,Facchinei2003,Fukushima1992,Martinet1970,Peng1999,Rockafellar1976b,TranDinh2016c,vuong2018weak}, while leveraging the splitting structure of \eqref{eq:NI} to use individual operators defined on $F$ and $T$.
This approach leads to a class of splitting algorithms for solving \eqref{eq:NI} such as forward-backward splitting (FBS) and Douglas-Rachford (DRS) splitting schemes, as seen in \cite{Bauschke2011,Combettes2005,Davis2015,eckstein1992douglas,lin2020near,Lions1979}.
Alternatively, other approaches rely on primal-dual, dual averaging, and mirror descent techniques, with notable works including \cite{Chambolle2011,Nemirovskii2004,Nesterov2007a}.
These methods have also been further studied in many recent works such as \cite{Chen2013a,chen2017accelerated,Cong2012,Davis2014,Esser2010a,He2012b,Nesterov2006d,TranDinh2015b,tran2019non,ZhuLiuTran2020}.

When it comes to convergence analysis for gradient-based/forward methods, there is a fundamental challenge for generalized equation \eqref{eq:NI} because an objective function, which plays a central role in guaranteeing convergence for optimization problems, does not exist. 
This creates a significant challenge, particularly in nonmonotone settings. 
Additionally, unlike convex functions where strong properties such as coerciveness and cyclic monotonicity hold for their [sub]gradients beyond monotonicity, this is not the case for general monotone and Lipschitz continuous operators. 
This lack of a strong property results in gradient-based (or forward) methods being non-convergent, which limits their practicality, see, e.g.,  \cite{Facchinei2003}. 
To address this issue, the extragradient (EG) method was introduced by G. M. Korpelevich in 1976  \cite{korpelevich1976extragradient} and also by A. S. Antipin in \cite{antipin1976}. 
This method performs two sequential gradient steps at each iteration, making it twice as expensive as the standard gradient method, but converges under only the monotonicity  and the Lipschitz continuity of $F$. 
Since then, this method has been extended and modified in different directions to reduce its per-iteration complexity, including in certain nonmonotone settings, see, e.g., \cite{alacaoglu2022beyond,censor2011subgradient,censor2012extensions,Iusem1997,khobotov1987modification,malitsky2015projected,malitsky2020forward,malitsky2014extragradient,Monteiro2010a,Monteiro2011,popov1980modification,solodov1999hybrid,solodov1999new,solodov1996modified,tseng1990further,tseng2000modified}. 
Among these variants of EG, the past-extragradient scheme in  \cite{popov1980modification} and Teng’s forward-backward-forward splitting method in \cite{tseng2000modified} are the most notable ones. 
However, the results discussed here are only applicable to the monotone setting of \eqref{eq:NI} and its special cases. 
Additionally, most of the convergence results discussed are asymptotic, leading to sublinear “best-iterate” convergence rates of the residual norm associated with \eqref{eq:NI}. 
Under stronger assumptions such as ``strong monotonicity'', linear convergence rates can be achieved. 
Such types of convergence guarantees have been widely studied in the literature and are beyond the scope of this paper, see, e.g., \cite{Bauschke2011,Facchinei2003,Konnov2001}.

Motivated by recent applications in machine learning and robust optimization, such as Generative Adversarial Networks (GANs), adversarial training, distributionally robust optimization, reinforcement learning, and online learning, several methods for solving minimax problems have become critically important and attractive. 
This is particularly true in nonconvex-nonconcave, large-scale, and stochastic settings, as evidenced in works such as \cite{arjovsky2017wasserstein,azar2017minimax,Ben-Tal2009,bhatia2020online,goodfellow2014generative,jabbar2021survey,levy2020large,lin2022distributionally,madry2018towards,rahimian2019distributionally}.
Several researchers have proposed and revisited EG and its variants, including \cite{bohm2022solving,daskalakis2018training,diakonikolas2021efficient,pethick2022escaping}. 
A notable work is due to \cite{diakonikolas2021efficient}, where the authors proposed an EG-plus (EG+) variant of EG, capable of handling nonmonotone instances of \eqref{eq:NE}, known as weak-Minty solutions. 
In \cite{pethick2022escaping}, this method was further extended to \eqref{eq:NI}, while \cite{bohm2022solving,luo2022last}  modified EG+ for Popov's methods, as well as optimistic gradient variants.

In contrast to classical methods, there has been a significant focus on developing accelerated methods for solving \eqref{eq:NI} and its special cases under both monotone and co-hypomonotone structures. 
Early works in this area relied on dual averaging and mirror descent techniques such as those proposed in \cite{Cong2012,Nemirovskii2004,Nesterov2007a}, which require the monotonicity or specific assumptions. 
Attouch et al \cite{attouch2019convergence}  proposed accelerated proximal-point methods for solving \eqref{eq:NI} under the maximal monotonicity of $\Phi$.
Since then, numerous works have followed up and explored Nesterov's acceleration-type methods guided by dynamical systems, utilizing momentum and correction terms for solving \eqref{eq:NI} under monotone assumptions, as demonstrated in works such as \cite{attouch2020convergence,bot2022fast,bot2022bfast,kim2021accelerated,mainge2021fast,mainge2021accelerated}. 
Accelerated methods based on Halpern's fixed-point iteration \cite{halpern1967fixed} have also gained popularity. 
Although initially developed to approximate a fixed-point of a nonexpansive operator, this method can be applied to solve \eqref{eq:NE}, \eqref{eq:VIP}, and \eqref{eq:NI} under monotonicity. 
In \cite{lieder2021convergence}, it was shown that Halpern's fixed-point iteration can achieve $\BigOs{1/k}$ last-iterate convergence rates using a specific choice of parameters, where $k$ is the iteration counter.
The authors in  \cite{diakonikolas2020halpern} further exploited this approach to solve monotone VIPs of the form \eqref{eq:VIP}.
Yoon and Ryu extended Halpern's fixed-point iteration idea to EG methods to solve \eqref{eq:NE} without the co-coerciveness assumption on $F$ in their pioneering work  \cite{yoon2021accelerated}. 
Lee and Kim \cite{lee2021fast}  proposed a similar algorithm for solving \eqref{eq:NE} under the co-hypomonotonicity, further advancing  \cite{yoon2021accelerated} without sacrificing the $\BigOs{1/k}$-convergence rates. 
In \cite{tran2021halpern}, the authors proposed a Halpern-type variant for the past-extragradient method in  \cite{popov1980modification} by adopting the technique from \cite{yoon2021accelerated}. 
Recently, \cite{cai2022accelerated,cai2022baccelerated} extended  \cite{yoon2021accelerated} and \cite{tran2021halpern} to \eqref{eq:VIP} and \eqref{eq:NI} under either monotonicity or co-hypomonotonicity assumptions. 
New convergence analysis for these schemes can also be found in \cite{tran2023extragradient}. 
Note that both Halpern's fixed-point iteration and Nesterov's accelerated schemes for solving \eqref{eq:NE} and \eqref{eq:NI} are related to each other, as shown in \cite{tran2022connection}  for different methods, including EG. 
Nesterov's accelerated variants of EG can also be found in \cite{tran2023extragradient}.

\vspace{0.5ex}
\noindent\textbf{What does this paper survey?}
Our main objective is to provide a comprehensive survey of both classical and recent sublinear convergence rate results for EG and its variants for solving \eqref{eq:NI} and its special cases, as summarized in Table \ref{tbl:survey_results}. 
Specifically, we survey the following results.
%
%
\begin{table}[ht!]
\vspace{-1ex}
\newcommand{\cell}[1]{{\!\!}{#1}{\!\!}}
\begin{center}
\caption{Summary of the results surveyed in this paper and the most related references}\label{tbl:survey_results}
\vspace{-1ex}
\begin{small}
\resizebox{\textwidth}{!}{  
\begin{tabular}{|c|c|c|c|c|} \hline
\cell{Methods} & \cell{Assumptions} & \cell{Add. Assumptions} & \cell{Convergence Rates} & \cell{Citations} \\ \hline
\multicolumn{5}{|c|}{ For solving \eqref{eq:NE}} \\ \hline
\mytb{EG}/\mytb{EG+}/\mytb{FBFS} & \mytb{wMs}  & $F$ is \mytb{chm} & $\BigOs{1/\sqrt{k}}$ best and last   & \cite{diakonikolas2021efficient,golowich2020last,gorbunov2022extragradient,luo2022last} \\ \hline
\mytb{PEG}/\mytb{OG}/\mytb{FRBS}/\mytb{RFBS}/\mytb{GR} & \mytb{wMs} & $F$ is \mytb{chm}  & $\BigOs{1/\sqrt{k}}$ best and last & \cite{bohm2022solving,luo2022last} \\ \hline
\mytb{EAG}/\mytb{FEG}/\mytb{AEG} & $F$ is \mytb{chm} & None & $\BigOs{1/k}$ last-iterate &  \cite{yoon2021accelerated,kim2021accelerated,tran2022connection} \\ \hline
\mytb{PEAG}/\mytb{APEG} &  $F$ is \mytb{chm} & None & $\BigOs{1/k}$ last-iterate &  \cite{tran2021halpern,tran2022connection} \\ \hline
\multicolumn{5}{|c|}{ For solving \eqref{eq:NI}, \eqref{eq:MVIP}, and \eqref{eq:VIP}} \\ \hline
\mytb{EG}/\mytb{EG+} & $\Phi$ is \mytb{wMs} & $F$ is \mytb{mono}, $T$ is \mytb{3-cm} & $\BigOs{1/\sqrt{k}}$ best and last &   \cite{cai2022tight} \\ \hline
\mytb{FBFS}  & $\Phi$ is \mytb{wMs} & None & $\BigOs{1/\sqrt{k}}$ best-iterate & \cite{pethick2022escaping,luo2022last} \\ \hline
\mytb{OG}/\mytb{FRFS} & $\Phi$ is \mytb{wMs} & None & $\BigOs{1/\sqrt{k}}$ best-iterate &   \cite{luo2022last} \\ \hline
\mytb{RFBS} & $F$ is \mytb{mono} & $T$ is \mytb{mono} &   $\BigOs{1/\sqrt{k}}$ best and last &  \cite{cevher2021reflected,malitsky2015projected} \\ \hline
\mytb{GR} & $F$ is \mytb{mono} & $T$ is \mytb{$3$-cm} &   $\BigOs{1/\sqrt{k}}$ best-iterate &  \cite{malitsky2019golden} \\ \hline
\mytb{EAG}/\mytb{FEG}/\mytb{AEG} & $\Phi$ is \mytb{chm} & None & $\BigOs{1/k}$ last-iterate & \cite{cai2022accelerated,tran2023extragradient} \\ \hline
\mytb{PEAG}/\mytb{APEG} & $\Phi$ is \mytb{chm} & None & $\BigOs{1/k}$ last-iterate &   \cite{cai2022baccelerated,tran2023extragradient} \\ \hline
\end{tabular}}
\end{small}
\\\vspace{1ex}
{\footnotesize
\textbf{Abbreviations:} 
\mytb{EG} $=$ extragradient; 
\mytb{PEG} $=$ past extragradient; 
\mytb{FBFS} $=$ forward-backward-forward splitting;
\mytb{OG} $=$ optimistic gradient;
\mytb{FRBS} $=$ forward-reflected-backward splitting;
\mytb{RFBS} $=$ reflected-forward-backward splitting;
\mytb{GR} $=$ golden ratio; 
\mytb{EAG} $=$ extra-anchored gradient;
\mytb{FEG} $=$ fast extragradient;
\mytb{PEAG} $=$ past extra-anchored gradient;
\mytb{AEG} $=$ Nesterov's accelerated extragradient; 
and \mytb{APEG} $=$ Nesterov's accelerated past extragradient.
In addition, \mytb{wMs} $=$ weak-Minty solution;
\mytb{mono} $=$ monotone;
\mytb{chm} $=$ co-hypomonotone;
and \mytb{3-cm} $=$ $3$-cyclically monotone.
}
\end{center}
\vspace{-3ex}
\end{table}
\begin{compactitem}
\item First, we present both the $\BigOs{1/\sqrt{k}}$-best-iterate and last-iterate sublinear convergence rate results of EG and its variants for solving \eqref{eq:NE}. 
The best-iterate rate is classical for the monotone case, but has recently been obtained under a weak Minty solution condition, see \cite{diakonikolas2021efficient,pethick2022escaping} for EG and  \cite{bohm2022solving,luo2022last}  for past-EG in the non-composite case, i.e., for solving \eqref{eq:NE}. 
The last-iterate convergence rates for EG and past-EG have been recently proven in \cite{golowich2020last,gorbunov2022extragradient} for the monotone equation \eqref{eq:NE} and in  \cite{luo2022last} for the co-hypomonotone case (see also \cite{gorbunov2022convergence}). 
In this paper, we provide a new and unified proof that covers the results in \cite{golowich2020last,gorbunov2022extragradient,luo2022last}.
Our results are stated in a single theorem.
\item Second, we review the $\BigOs{1/\sqrt{k}}$-sublinear best-iterate convergence rates for EG and past-EG (also known as Popov's method) to solve \eqref{eq:NI} under the monotonicity of $\Phi$. 
We unify the proof of both methods in a single theorem and extend it to cover monotone inclusions of the form \eqref{eq:NI} instead of \ref{eq:VIP} or \ref{eq:MVIP} as in the literature. 
We also prove $\BigOs{1/\sqrt{k}}$ last-iterate convergence for the class of EG-type schemes for solving \eqref{eq:NI} under the monotonicity of $F$ and the $3$-cyclical monotonicity of $T$ (in particular, for solving \eqref{eq:MVIP}), which covers the results in  \cite{cai2022tight} as special cases. 
Next, we discuss the $\BigOs{1/\sqrt{k}}$-sublinear best-iterate convergence rates of the FBFS scheme and its variant: optimistic gradient under the weak-Minty solution notion, which was obtained in \cite{luo2022last}.
We again unify the proof in a single theorem, and our analysis is also different from \cite{luo2022last}.

\item  Third, we provide a new convergence analysis for both best-iterate and last-iterate rates of the reflected forward-backward splitting (RFBS) methods for solving \eqref{eq:NI} under the monotonicity of $F$ and $T$. 
RFBS was proposed in \cite{malitsky2015projected} to solve \eqref{eq:VIP} and was extended to solve \eqref{eq:NI} in  \cite{cevher2021reflected}. 
The best-iterate rates were proven in these works, and the last-iterate rate of RFBS for solving \eqref{eq:VIP} has recently been proven in \cite{cai2022baccelerated}. 
Our result here is more general and covers these works as special cases.
In addition, we also review the best-iterate convergence rate of the golden ration method in \cite{malitsky2019golden}, but extend it to the case $T$ is $3$-cyclically monotone, and extend the range of the golden-ratio parameter $\omega$ to $1 < \omega < 1 + \sqrt{3}$ instead of fixing it at $\omega := \frac{1+\sqrt{5}}{2}$ as in \cite{malitsky2019golden}.
\item Fourth, we present a new analysis for the extra-anchored gradient (EAG) method to solve \eqref{eq:NI}, which covers the results in \cite{cai2022accelerated,yoon2021accelerated} as special cases.
Our result extends to $3$-cyclically monotone operator $T$.
\item Fifth, we summarize the convergence results of EAG and past EAG (also called fast extragradient method \cite{kim2021accelerated}) for solving \eqref{eq:NI} under both monotonicity and co-hypomonotonicity of $\Phi$ from \cite{tran2023extragradient}. 
Note that EAG and past-EAG were first proposed in \cite{yoon2021accelerated} and \cite{tran2021halpern}, respectively, to solve monotone \eqref{eq:NE}. 
EAG was extended to the co-hypomonotone case of \eqref{eq:NE} in \cite{kim2021accelerated}. 
Recently, \cite{cai2022accelerated,cai2022baccelerated} extended EAG and past-EAG to solve \eqref{eq:NI} under the co-hypomonotonicity of $\Phi$.
\item Finally, we review two Nesterov’s accelerated extragradient methods presented in \cite{tran2022connection,tran2023extragradient} for solving \eqref{eq:NI} under the co-hypomonotonicity of $\Phi$, which achieve the same last-iterate convergence rates as EAG.
Note that Nesterov's accelerated extragradient methods have recently been studied in \cite{bot2022fast} for solving \eqref{eq:NE} via a dynamical system point of view.
\end{compactitem}

\vspace{0.5ex}
\noindent\textbf{What is not covered in this paper?}
The literature on EG and its variants is extensive, and it is not feasible for us to cover it in detail in this paper. 
 First, there are various classical and recent variants of EG and past-EG, such as those discussed in \cite{censor2011subgradient,censor2012extensions,Iusem1997,khobotov1987modification,malitsky2020forward,malitsky2014extragradient,solodov1999new,solodov1996modified,solodov1999hybrid,vuong2012extragradient}, that are not included in this paper. 
 These methods are essentially rooted from EG with the aim of improving the per-iteration complexity, theoretical aspects, or practical performance. 
 Second, we do not review results from methods such as gradient/forward, forward-backward splitting, proximal-point and its variants, inertial, dual averaging, mirror descent, and projective methods.
 The majority of these methods is not immediately derived from EG, including recent developments such as those in \cite{boct2015hybrid,boct2016inertial,BricenoArias2011,combettes2004proximal,Combettes2011,chen2017accelerated,eckstein2008family,Eckstein2009,lin2018solving}. 
 Third, we do not cover stochastic and randomized methods, including recent works such as [\cite{alacaoglu2021stochastic,davis2022variance,gidel2018variational,hsieh2019convergence,kannan2019optimal,iusem2017extragradient,pethick2023solving,peng2016arock,tran2022accelerated}. 
 Fourth, we do not present adaptive stepsizes/parameters and linesearch variants of EG-type methods.
 Fifth, we do not disuss continuous view of EG-type methods via dynamical systems or ordinary differential equations (ODEs), which is an emerging research topic in recent years. 
 Finally, we also do not cover specific applications to minimax problems and other concrete applications.

\vspace{0.5ex}
\noindent\textbf{Paper outline.}
This paper is organized as follows. 
Section \ref{sec:background}  reviews basic concepts and related results used in this paper. 
Section \ref{sec:EG4NE}  covers the convergence rate results of EG and its variants for solving \eqref{eq:NE}. 
Section \ref{sec:EG4NI} discusses the convergence rate results of EG and past-EG for solving \eqref{eq:NI}. 
Section \ref{sec:FBFS4NI} provides a new convergence rate analysis of FBFS and OG for solving \eqref{eq:NI}. 
Section \ref{sec:other_methods} presents a new analysis for both the reflected-forward-backward splitting and golden ratio methods for solving \eqref{eq:NI}. 
Section \ref{sec:EAG4NI} focuses on the extra-anchored gradient method and its variants for solving \eqref{eq:NI}. 
Finally, Section \ref{sec:AEG4NI} presents Nesterov's accelerated variants of EG for solving \eqref{eq:NI}. 
We conclude this paper with some final remarks.

\beforesec
\section{Background and Preliminary Results}\label{sec:background}
\aftersec
To prepare for our survey, we will briefly review certain basic concepts and properties of monotone operators and their extensions, as well as resolvents and other related mathematical tools.
These concepts and properties are well-known and can be found in several monographs, including \cite{Bauschke2011,reginaset2008,Facchinei2003,phelps2009convex,Rockafellar2004,Rockafellar1970,Rockafellar1976b,ryu2016primer}.

\beforesubsec
\subsection{Basic concepts, monotonicity, and Lipschitz continuity}\label{subsec:basic_concepts}
\aftersubsec
We work with finite dimensional Euclidean spaces $\R^p$ and $\R^n$ equipped with standard inner product $\iprods{\cdot, \cdot}$ and Euclidean norm $\norms{\cdot}$.
For a set-valued or multivalued mapping $T : \R^p \rightrightarrows 2^{\R^p}$, $\dom{T} = \set{x \in\R^p : Tx \not= \emptyset}$ denotes its domain, $\ran{T} := \bigcup_{x \in \dom{T}}Tx$ is its range, and $\gra{T} = \set{(x, y) \in \R^p\times \R^p : y \in Tx}$ stands for its graph, where $2^{\R^p}$ is the set of all subsets of $\R^p$.
The inverse of $T$ is defined as $T^{-1}y := \sets{x \in \R^p : y \in Tx}$.
For a proper, closed, and convex function $f : \R^p\to\Rext$, $\dom{f} := \sets{x \in \R^p : f(x) < +\infty}$ denotes the domain of $f$, $\partial{f}$ denotes the subdifferential of $f$, and $\nabla{f}$ stands for the gradient of $f$.
For any function $f$, which can be nonconvex, we call $f^{*}(y) := \sup_{x\in\R^p}\sets{\iprods{x, y} - f(x)}$ the Fenchel conjugate of $f$.

\vspace{0.75ex}
\noindent\textbf{(a)~Monotonicity.}
For a single-valued or multivalued mapping $T : \R^p \rightrightarrows 2^{\R^p}$ and $\mu \in \R$, we say that $T$ is $\mu$-monotone if $\iprods{u - v, x - y} \geq \mu\norms{x - y}^2$ for all $(x, u), (y, v)  \in \gra{T}$.
If $T$ is single-valued, then this condition reduces to $\iprods{Tx - Ty, x - y} \geq \mu\norms{x - y}^2$ for all $x, y\in\dom{T}$.
If $\mu = 0$, then we say that $T$ is monotone.
If $\mu > 0$, then $T$ is  $\mu$-strongly monotone (or sometimes called coercive), where $\mu > 0$ is called a strong monotonicity parameter.
If $\mu < 0$, then we say that $T$ is weakly monotone.
It is also called $-\mu$-hypomonotone, see \cite{bauschke2020generalized}.
If $T = \partial{g}$, the subdifferential of a proper and convex function, then $T$ is also monotone.
If $g$ is $\mu$-strongly convex with $\mu > 0$, then $T = \partial{g}$ is also $\mu$-strongly monotone.

Alternatively, if there exists $\rho \in \R$ such that $\iprods{u - v, x - y} \geq \rho\norms{u - v}^2$ for all $(x, u), (y, v) \in \gra{T}$, then we say that $T$ is $\rho$-comonotone.
If $\rho = 0$, then this condition reduces to the monotonicity of $T$.
If $\rho > 0$, then $T$ is called $\rho$-cocoercive. 
In particular, if $\rho = 1$, then $T$ is firmly nonexpansive.
If $\rho < 0$, then $T$ is called $-\rho$-cohypomonotone, see, e.g., \cite{bauschke2020generalized,combettes2004proximal}.
For a mapping $T$, we say that $T$ is pseudo-monotone if $\iprods{u, y-x} \geq 0$ implies $\iprods{v, y - x} \geq 0$ for all $(x, u),  (y, v) \in \gra{T}$.
Clearly, if $T$ is monotone, then it is also pseudo-monotone, but the conversion is not true in general. 

We say that $T$ is maximally $\mu$-monotone if $\gra{T}$ is not properly contained in the graph of any other $\mu$-monotone operator.
If $\mu = 0$, then we say that $T$ is maximally monotone.
Note that $T$ is maximally monotone, then $\eta T$ is also maximally monotone for any $\eta > 0$, and if $T$ and $U$ are maximally monotone, and $\dom{T}\cap\intx{\dom{U}} \not=\emptyset$, then $T + U$ is maximally monotone.
For a proper, closed, and convex function $f : \R^p\to\Rext$, the subdifferential $\partial{f}$ of $f$ is maximally monotone. 

For a given mapping $T$ such that $\zer{T} := \set{x \in \dom{T} : 0 \in Tx} \neq\emptyset$,  we say that $T$ is star-monotone (respectively, $\mu$-star-monotone or $\rho$-star-comonotone (see \cite{loizou2021stochastic})) if for some $x^{\star} \in\zer{T}$, we have $\iprods{u, x - x^{\star}} \geq 0$ (respectively, $\iprods{u, x - x^{\star}} \geq \mu\norms{x - x^{\star}}^2$ or $\iprods{u, x - x^{\star}} \geq \rho\norms{u}^2$) for all $(x, u) \in \gra{T}$.
Clearly, if $T$ is monotone (respectively, $\mu$-monotone or $\rho$-co-monotone), then it is also star-monotone (respectively, $\mu$-star-monotone or $\rho$-star-comonotone).
However, the reverse statement does not hold in general.

\vspace{0.75ex}
\noindent\textbf{(b)~Cyclic monotonicity.}
We also say that a mapping $T$ is $m$-cyclically monotone ($m\geq 2$) if $\sum_{i=1}^n\iprods{u^i, x^i - x^{i+1}} \geq 0$ for all $(x^i, u^i) \in \gra{T}$ and $x_1 = x_{m+1}$ (see \cite{Bauschke2011}).
We say that $T$ is cyclically monotone if it is $m$-cyclically monotone for every $m \geq 2$.
If $T$ is $m$-cyclically monotone, then it is also $\hat{m}$-cyclically monotone for any $2 \leq \hat{m} \leq m$.
Since a  $2$-cyclically monotone operator $T$ is monotone, any $m$-cyclically monotone operator $T$ is $2$-cyclically monotone, and thus is also monotone.
An $m$-cyclically monotone operator $T$ is called maximally $m$-cyclically monotone if $\gra{T}$ is not properly contained into the graph of any other $m$-cyclically monotone operator. 
As proven in \cite[Theorem 22.18]{Bauschke2011} that $T$ is maximally cyclically monotone iff $T = \partial{f}$, the subdifferential of a proper, closed, and convex function $f$.
However, there exist maximally $m$-cyclically monotone operators (e.g., rotation linear operators) that are not the subdifferential $\partial{f}$ of a proper, closed, and convex function $f$, see, e.g., \cite{Bauschke2011}.
Furthermore, as indicated in  \cite[Example 2.16]{bartz2007fitzpatrick}, there exist maximally $3$-cyclically monotone operators that are not maximal monotone.


\vspace{0.75ex}
\noindent\textbf{(c)~Lipschitz continuity and contraction.}
A single-valued or multivalued mapping $T$ is said to be $L$-Lipschitz continuous if $\sup\set{\norms{u - v} : u \in Tx, \ v \in Ty }\leq L\norms{x - y}$ for all $x, y \in\dom{T}$, where $L \geq 0$ is a Lipschitz constant. 
If $T$ is single-valued, then this condition becomes $\norms{Tx - Ty} \leq L\norms{x - y}$ for all $x, y\in\dom{T}$.
If $L = 1$, then we say that $T$ is nonexpansive, while if $L \in [0, 1)$, then we say that $T$ is $L$-contractive, and $L$ is its contraction factor.
If $T$ is $\rho$-co-coercive with $\rho > 0$, then $T$ is also $L$-Lipschitz continuous with the Lipschitz constant $L := \frac{1}{\rho}$.
However, the reverse statement is not true in general.
For a continuously differentiable function $f : \R^p \to \R$, we say that $f$ is $L$-smooth if its gradient $\nabla{f}$ is $L$-Lipschitz continuous on $\dom{f}$.
If $f$ is convex and $L$-smooth, then $\nabla{f}$ is $\frac{1}{L}$-co-coercive and vice versa, see, e.g., \cite{Nesterov2004}.

\vspace{0.75ex}
\noindent\textbf{(d)~Normal cone.}
Given a nonempty, closed, and convex set $\Xc$ in $\R^p$, the normal cone of $\Xc$ is defined as $\Nc_{\Xc}(x) := \sets{w \in \R^p : \iprods{w, x - y} \geq 0, \ \forall y\in\Xc}$ if $x\in\Xc$ and $\Nc_{\Xc}(x) = \emptyset$, otherwise.
The dual cone $\Nc^{*}_{\Xc}(x) := \sets{ w \in \R^p : \iprods{w, u} \geq 0, \ \forall u \in \Nc_{\Xc}(x)}$ of $\Nc_{\Xc}(x)$ at $x$ is $\Tc_{\Xc}(x)$, the tangent cone of $\Xc$ at $x$.
If $f := \delta_{\Xc}$, the indicator of $\Xc$, and $f^{*}$ is its Fenchel conjugate, then $\partial{f} = \Nc_{\Xc}$, and $\partial{f^{*}} = \Tc_{\Xc}$.

\vspace{0.75ex}
\noindent\textbf{(e)~Resolvent and proximal operators.}
The operator $J_Tx := \set{y \in \R^p : x \in y + Ty}$ is called the resolvent of $T$, denoted by $J_Tx = (\Id + T)^{-1}x$, where $\Id$ is the identity mapping.
If $T$ is $\rho$-monotone with $\rho > -1$, then evaluating $J_T$ requires solving a strongly monotone inclusion $0 \in y - x + Ty$.
If $T = \partial{f}$, the subdifferential of proper, closed, and convex function $f$, then $J_Tx$ reduces to the proximal operator of $f$, denoted by $\prox_f$, which can be computed as $\prox_f(x) := \mathrm{arg}\min_{y}\sets{f(y) + (1/2)\norms{y-x}^2}$.
In particular, if $T = \Nc_{\Xc}$, the normal cone of a closed and convex set $\Xc$, then $J_T$ is the projection onto $\Xc$, denoted by $\proj_{\Xc}$.
If $T$ is maximally monotone, then $\mathrm{ran}(\Id + T) = \R^p$ (by Minty's theorem) and $T$ is firmly nonexpansive (and thus nonexpansive).

\beforesubsec
\subsection{Best-iterate and last-iterate convergence rates}\label{subsec:types_of_rates}
\aftersubsec
The results presented in this paper are related to two types of  sublinear convergence rates: the best-iterate and the last-iterate convergence rates.
To elaborate on these concepts, we assume that $D$ is a given metric (e.g., $\norms{Fx^k}^2$ or $e(x^k)^2$ defined by \eqref{eq:res_norm} below) defined on an iterate sequence $\sets{x^k}$ generated by the underlying algorithm for solving \eqref{eq:NI} or its special cases.
For any $k \geq 0$ and a given order $\alpha > 0$,  if
\begin{equation*}
\min_{0 \leq l \leq k}D(x^l) \leq \frac{1}{k+1}\sum_{l=0}^k D(x^l) = \BigO{\frac{1}{k^{\alpha}}},
\end{equation*}
then we say that $\set{x^k}$ has a $\BigO{1/k^{\alpha}}$ best-iterate convergence rate.
In this case, we can take $\hat{x}_k :=  x_{k_{\min}}$ with $k_{\min} := \mathrm{arg}\min_{0\leq l \leq k}D(x^l)$ as the ``best'' output of our algorithm. 
If we instead have $D(x^k) =  \BigO{\frac{1}{k^{\alpha}}}$ with $x^k$ being the $k$-th iterate, then we say that $\sets{x^k}$ has a $\BigO{1/k^{\alpha}}$ last-iterate convergence rate.
We emphasize that the convergence on the metric $D$ of $\sets{x^k}$ does not generally imply the convergence of $\sets{x^k}$ itself, especially when characterize the rate of convergence in different metrics.

\beforesubsec
\subsection{Exact solutions and approximate solutions}\label{subsec:exact_and_approx_sols}
\aftersubsec
There are different metrics to characterize exact and approximate solutions of \eqref{eq:NI}.
The most obvious one is the residual norm of $\Phi$, which is defined as
\begin{equation}\label{eq:res_norm}
e(x) := \min_{\xi \in Tx}\norms{Fx + \xi}, \quad x \in \dom{\Phi}.
\end{equation}
Clearly, if $e(x^{\star}) = 0$ for some $x^{\star}\in\dom{\Phi}$, then $x^{\star} \in \zer{\Phi}$, a solution of \eqref{eq:NI}.
If $T = 0$, then $e(x) = \norms{Fx}$.
However, if $e(\hat{x}) \leq \epsilon$ for a given tolerance $\epsilon > 0$, then $\hat{x}$ can be considered as an $\epsilon$-approximate solution of \eqref{eq:NI}.
The algorithms presented in this paper use this metric as one means to characterize approximate solutions.

Other metrics often used for monotone \eqref{eq:VIP}, a special case of \eqref{eq:NI}, are gap functions and restricted gap functions \cite{Facchinei2003,Konnov2001,Nesterov2007a}, which are respectively defined as
\begin{equation}\label{eq:gap_func}
\mcal{G}(x) := \max_{y \in \Xc}\iprods{Fy, y - x} \quad \text{and} \quad \mcal{G}_{\mbb{B}}(x) := \max_{y\in\Xc\cap \mbb{B}}\iprods{Fy, y  - x},
\end{equation}
where $\mbb{B}$ is a given nonempty, closed, and bounded convex set.
Note that $\mcal{G}(x) \geq 0$ for all $x\in\Xc$, and $\mcal{G}(x^{\star}) = 0$ iff $x^{\star}$ is a solution of \eqref{eq:VIP}.
Therefore, to characterize an $\epsilon$-approximate solution $\tilde{x}$ of \eqref{eq:VIP}, we can impose $\mcal{G}(\tilde{x}) \leq \epsilon$.
For the restricted gap function $\mcal{G}_{\mbb{B}}$, if $x^{\star}$ is a solution of \eqref{eq:VIP} and $x^{\star} \in \mbb{B}$, then $\mcal{G}_{\mbb{B}}(x^{\star}) = 0$.
Conversely, if $\mcal{G}_{\mbb{B}}(x^{\star}) = 0$ and $x^{\star} \in \intx{\mbb{B}}$, then $x^{\star}$ is a solution of \eqref{eq:VIP} in $\mbb{B}$ (see \cite[Lemma 1]{Nesterov2007a}).
For \eqref{eq:DVIP}, we can also define similar dual gap functions and restricted dual gap functions \cite{Nesterov2007a}.
Gap functions have been widely used in the literature to characterize approximate solutions generated by many numerical methods for solving \eqref{eq:VIP} or \eqref{eq:DVIP}, see, e.g., \cite{chen2017accelerated,Cong2012,Facchinei2003,Konnov2001,Nemirovskii2004,Nesterov2007a}.

If $J_{\eta T}$ is well-defined and single-valued for some $\eta > 0$, and $F$ is single-valued, then we can use  the following forward-backward splitting residual operator:
\begin{equation}\label{eq:FB_residual}
G_{\eta \Phi }x := \tfrac{1}{\eta}\left(x - J_{\eta T}(x - \eta Fx)\right), 
\end{equation}
to characterize solutions of \eqref{eq:NI},  where $F$ is single-valued and $J_{\eta T}$ is the resolvent of $\eta T$ for any $\eta > 0$.
It is clear that $G_{\eta}x^{\star} = 0$ iff $x^{\star} \in \zer{\Phi}$.
In addition, if $J_{\eta T}$ is firmly nonexpansive, then we also have 
\begin{equation}\label{eq:FBR_bound2}
\norms{G_{\eta\Phi }x} \leq \norms{Fx + \xi}, \quad (x, \xi) \in \gra{T}.
\end{equation}
Hence, for a given tolerance $\epsilon > 0$, if $\norms{G_{\eta\Phi}\tilde{x}} \leq \epsilon$, then we can say that $\tilde{x}$ is an $\epsilon$-approximate solution of \eqref{eq:NI}.
If $T := \Nc_{\Xc}$, i.e., \eqref{eq:NI} reduces to \eqref{eq:VIP}, then, with $\eta = 1$, $G_{\Phi}x$ reduces to the classical natural map $\Pi_{F,\Xc}x = x - \proj_{\Xc}(x - Fx)$ of \eqref{eq:VIP}, and $r_n(x) := \norms{G_{\Phi}x} = \norms{\Pi_{F,\Xc}x}$ is the corresponding natural residual at $x$.
From \eqref{eq:FBR_bound2}, we have $r_n(x) \leq \norms{Fx + \xi}$ for any $\xi \in \Nc_{\Xc}(x)$.

\beforesubsec
\subsection{Gradient/forward-type methods}\label{subsec:GD_FW_methods}
\aftersubsec
Let us briefly recall the gradient/forward scheme for solving \eqref{eq:NE} as follows.
Starting from $x^0 \in\dom{F}$, at each iteration $k \geq 0$, we update
\begin{equation}\label{eq:FW4NE}
x^{k+1} := x^k - \eta Fx^k,
\tag{FW}
\end{equation}
where $\eta > 0$ is a given constant stepsize.
If $F$ is $\rho$-co-coercive and $0 < \eta < \rho$, then $\sets{x^k}$ converges to $x^{\star}\in\zer{F}$ (see, e.g., \cite{Facchinei2003}).
Otherwise, if $F$ is only monotone and $L$-Lipschitz continuous, then there exist examples (e.g., $Fx = [x_2, -x_1]$) showing that \eqref{eq:FW4NE} is divergent for any choice of constant stepsize $\eta$. 

To solve \eqref{eq:NI}, we can instead apply the forward-backward splitting method as follows.
Starting from $x^0 \in\dom{F}$, at each iteration $k \geq 0$, we update
\begin{equation}\label{eq:FBS4NI}
x^{k+1} := J_{\eta T}(x^k - \eta Fx^k),
\tag{FBS}
\end{equation}
where $\eta > 0$ is a given constant stepsize.
Similar to \eqref{eq:FW4NE}, if $F$ is $\rho$-co-coercive and $T$ is maximally monotone, then with $\eta \in (0, \rho)$, $\sets{x^k}$ generated by \eqref{eq:FBS4NI} converges to $x^{\star}\in\zer{\Phi}$.
If $F = \nabla{f}$, the gradient of a convex  and $L$-smooth function $f$, then $F$ is co-coercive.
However, imposing the co-coerciveness for a general mapping $F$ is often restrictive. 
Hence, both \eqref{eq:FW4NE} and \eqref{eq:FBS4NI} are less practical. 

\beforesec
\section{Extragradient-Type Methods For Nonlinear Equations}\label{sec:EG4NE}
\aftersec
As we have discussed before, the extragradient method was originally proposed by G. M. Korpelevic in 1976 \cite{Korpelevic1976}  and by A. S. Antipin around the same time \cite{antipin1976} to tackle saddle-point problems. 
Since then, this method has been extensively studied in the literature, with numerous variants proposed (see, e.g., \cite{Facchinei2003,he1997class,konnov1997class,Konnov2001,marcotte1991application,solodov1996modified,sun1995new,sun1996class,xiu2001convergence}). 
In recent years, the popularity of this method has increased further due to its effectiveness in solving minimax problems, including those in convex-concave and nonconvex-nonconcave settings, which are common in machine learning and robust optimization. 
In this section, we briefly survey both classical and recent works \cite{diakonikolas2021efficient,golowich2020last,gorbunov2022extragradient,luo2022last} on the extragradient method, as well as some closely related variants with minor modifications. 
We unify the convergence analysis in one single theorem.

\beforesubsec
\subsection{The class of extragradient methods for nonlinear equations}\label{subsec:EG4NE_scheme}
\aftersubsec
The class of extragradient methods for solving \eqref{eq:NE} we discuss in this section is presented as follows.
Starting from an initial point $x^0 \in\dom{F}$, at each iteration $k \geq 0$, we update 
\begin{equation}\label{eq:EG4NE}
\arraycolsep=0.2em
\left\{\begin{array}{lcl}
y^k &:= & x^k - \frac{\eta}{\beta} u^k, \vspace{1ex}\\
x^{k+1} &:= & x^k - \eta Fy^k,
\end{array}\right.
\tag{EG}
\end{equation}
where $\eta > 0$ is a given constant stepsize, $\beta \in (0, 1]$ is a scaling factor, and $u^k$ has two options as follows.
\begin{itemize}
\itemsep=0.0em
\item\textbf{Option 1.} If we set $u^k := Fx^k$, then we obtain the \mytb{extragradient} scheme \cite{Korpelevic1976} for \eqref{eq:NE}.
\item\textbf{Option 2.} If we choose $u^k := Fy^{k-1}$, the we obtain the \mytb{past-extragradient} method, also called \mytb{Popov's method} \cite{popov1980modification}, to solve \eqref{eq:NE}. 
This scheme is also known as an \mytb{optimistic gradient} method in the literature, see also \cite{daskalakis2018training,mertikopoulos2019optimistic,mokhtari2020convergence}.
\end{itemize}
For \textbf{Option 1} with $u^k := Fx^k$,  if $\beta = 1$, then we obtain exactly the \myti{classical extragradient method} \cite{Korpelevic1976} for solving \eqref{eq:NE}.
If $\beta < 1$, then we recover the \mytb{extragradient-plus (EG$+$)} scheme from \cite{diakonikolas2021efficient} for solving \eqref{eq:NE}.
If we compute $x^k = y^k + \frac{\eta}{\beta} Fx^k$ from the first line of \eqref{eq:EG4NE} and substitute it into the second line of \eqref{eq:EG4NE}, then we get $x^{k+1} = y^k - \eta(Fy^k - \frac{1}{\beta}Fx^k)$.
In this case, we obtain from \eqref{eq:EG4NE} a \mytb{forward-backward-forward splitting} variant of Tseng's method in \cite{tseng2000modified} as follows:
\begin{equation}\label{eq:FBFS4NE}
\arraycolsep=0.2em
\left\{\begin{array}{lcl}
y^k &:= & x^k - \frac{\eta}{\beta} Fx^k, \vspace{1ex}\\
x^{k+1} &:= & y^k - \eta (Fy^k - \frac{1}{\beta}Fx^k).
\end{array}\right.
\tag{FBFS}
\end{equation}
Clearly, if $\beta = 1$, then we recover  exactly \mytb{Tseng's method} for solving \eqref{eq:NE}.

For \textbf{Option 2} with $u^k := Fy^{k-1}$, we can show that it is equivalent to the following variants.
First, we can rewrite \eqref{eq:EG4NE} as
\begin{equation}\label{eq:EG4NE_v1}
\arraycolsep=0.2em
\left\{\begin{array}{lcl}
x^{k+1} & := &  x^k - \eta Fy^k \vspace{1ex}\\
y^{k+1} & := & x^{k+1} - \tfrac{\eta}{\beta}Fy^k.
\end{array}\right.
\tag{PEG}
\end{equation}
This form shows us that \eqref{eq:EG4NE_v1} saves one evaluation $Fx^k$ of $F$ at each iteration compared to \textbf{Option 1}.
If $\beta = 1$, then we obtain exactly the \mytb{Popov's method} in  \cite{popov1980modification}.
If we rotate the second line up and use $\beta = 1$ as $y^k = x^k - \eta Fy^{k-1}$, then we get the \mytb{past-extragradient method}.

Now, under this choice of $u^k$, from the first line of \eqref{eq:EG4NE}, we have $x^k = y^k + \frac{\eta}{\beta}u^k = y^k + \frac{\eta}{\beta}Fy^{k-1}$.
Substituting this expression into the first line of \eqref{eq:EG4NE_v1}, we get $x^{k+1} = y^k - \eta Fy^k + \frac{\eta}{\beta}Fy^{k-1}$.
Substituting this relation into the second line of \eqref{eq:EG4NE_v1}, we can eliminate $x^{k+1}$ to get the following variant:
\begin{equation}\label{eq:FRBS4NE} 
y^{k+1} := y^k - \tfrac{\eta}{\beta}\big( (1 + \beta) Fy^k  - Fy^{k-1}\big).
\tag{FRBS}
\end{equation}
This scheme can be considered as a simplified variant of the \mytb{forward-reflected-backward splitting} scheme in \cite{malitsky2020forward} for solving \eqref{eq:NE} when we set $\beta := 1$ as $y^{k+1} := y^k - \eta ( 2Fy^k  - Fy^{k-1} )$.

Alternatively, from \eqref{eq:EG4NE_v1}, we have $x^{k-1} - x^k =  \eta Fy^{k-1}$ and $\beta (x^k - y^k) = \eta Fy^{k-1}$, leading to $x^{k-1} - x^k = \beta (x^k - y^k)$.
Therefore, we get $y^k = \frac{1}{\beta}( (1+\beta)x^k - x^{k-1})$.
Substituting this expression into the first line of \eqref{eq:EG4NE_v1}, we can show that
\begin{equation}\label{eq:RGD4NE} 
x^{k+1} := x^k - \eta F\big( \tfrac{1}{\beta}( (1+\beta)x^k - x^{k-1}) \big).
\tag{RFB}
\end{equation}
In particular, if $\beta = 1$, then we obtain $x^{k+1} := x^k - \eta F(2x^k - x^{k-1})$, which turns out to be the \mytb{reflected gradient} method in \cite{malitsky2015projected} or the \mytb{reflected forward-backward splitting} scheme in \cite{cevher2021reflected} for solving \eqref{eq:NE}.

Using the relation $x^{k-1} - x^k = \beta (x^k - y^k)$ above, we can compute that $x^{k} = \frac{\beta}{1+\beta}y^k + \frac{1}{1+\beta}x^{k-1} = \frac{(\omega-1)}{\omega}y^k + \frac{1}{\omega}x^{k-1}$, where $\omega := 1 + \beta$.
Combining the two lines of \eqref{eq:EG4NE}, we get $y^{k+1} :=  x^{k+1} - \tfrac{\eta}{\beta}Fy^k = x^k - \frac{\eta(1 + \beta)}{\beta}Fy^k$.
Putting both expressions together, we get
\begin{equation}\label{eq:GR4NE} 
\arraycolsep=0.2em
\left\{\begin{array}{lcl}
x^k  & := & \tfrac{(\omega-1)}{\omega}y^k + \tfrac{1}{\omega}x^{k-1}, \vspace{1ex}\\
y^{k+1}  & := &  x^{k} - \tfrac{\eta(1+\beta)}{\beta}Fy^k.
\end{array}\right.
\tag{GR}
\end{equation}
This method is a simplified variant of the \mytb{golden-ratio} method in \cite{malitsky2019golden} for solving \eqref{eq:NE}.
Overall, the template \eqref{eq:EG4NE} covers a class of EG algorithms with many common instances as discussed. 

\beforesubsec
\subsection{Convergence analysis}\label{subsec:EG4NE_convergence}
\aftersubsec
The results presented in this section were obtained in \cite{luo2022last}, but here we provide a different proof and unify several methods in one.
To analyze the convergence of \eqref{eq:EG4NE}, we first prove the following lemmas.

\begin{lemma}\label{le:EG4NE_key_estimate1}
If $\set{(x^k, y^k)}$ is generated by \eqref{eq:EG4NE}, then for any $\gamma > 0$ and any $\hat{x}\in\dom{F}$, we have
\begin{equation}\label{eq:EG4NE_key_est1}
\arraycolsep=0.2em
\begin{array}{lcl}
\norms{x^{k+1} - \hat{x}}^2 & \leq & \norms{x^k - \hat{x}}^2 - \beta\norms{y^k - x^k}^2 +  \tfrac{\eta^2}{\gamma}\norms{Fy^k - u^k}^2 -  2\eta\iprods{Fy^k, y^k - \hat{x}} \vspace{1ex}\\
&& - {~} (\beta - \gamma)\norms{x^{k+1} - y^k}^2 - (1 - \beta)\norms{x^{k+1} - x^k}^2.
\end{array}
\end{equation}
\end{lemma}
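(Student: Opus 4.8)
The plan is to derive \eqref{eq:EG4NE_key_est1} purely from the two update equations of \eqref{eq:EG4NE}, without invoking any monotonicity or Lipschitz property of $F$ and without distinguishing the two options for $u^k$. I would begin from the three-point identity applied to the second update $x^{k+1} = x^k - \eta Fy^k$:
\[
\norms{x^{k+1} - \hat{x}}^2 = \norms{x^k - \hat{x}}^2 + 2\iprods{x^{k+1} - x^k, x^{k+1} - \hat{x}} - \norms{x^{k+1} - x^k}^2,
\]
then substitute $x^{k+1} - x^k = -\eta Fy^k$, split $x^{k+1} - \hat{x} = (y^k - \hat{x}) + (x^{k+1} - y^k)$ to peel off the target term $-2\eta\iprods{Fy^k, y^k - \hat{x}}$, and rewrite the remaining inner product $-2\eta\iprods{Fy^k, x^{k+1} - y^k} = 2\iprods{x^{k+1} - x^k, x^{k+1} - y^k}$ using $\eta Fy^k = x^k - x^{k+1}$ together with the polarization identity $2\iprods{u, v} = \norms{u}^2 + \norms{v}^2 - \norms{u - v}^2$ with $u = x^{k+1} - x^k$, $v = x^{k+1} - y^k$, $u - v = y^k - x^k$. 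After cancellation this produces the intermediate identity
\[
\norms{x^{k+1} - \hat{x}}^2 = \norms{x^k - \hat{x}}^2 - 2\eta\iprods{Fy^k, y^k - \hat{x}} + \norms{x^{k+1} - y^k}^2 - \norms{y^k - x^k}^2.
\]

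Next I would bring in the first update $y^k = x^k - \tfrac{\eta}{\beta}u^k$, i.e. $\eta u^k = \beta(x^k - y^k)$, which, combined with $x^{k+1} - y^k = -\eta Fy^k - (y^k - x^k)$ and the splitting $-\eta Fy^k = -\eta(Fy^k - u^k) + \beta(y^k - x^k)$, yields the key algebraic relation
\[
x^{k+1} - y^k + (1-\beta)(y^k - x^k) = -\eta(Fy^k - u^k).
\]
Writing $a := x^{k+1} - y^k$, $b := y^k - x^k$, $d := Fy^k - u^k$ (so that $a = -\eta d - (1-\beta)b$ and $x^{k+1} - x^k = a + b$), substituting the intermediate identity into the claimed bound, and expanding $\norms{a+b}^2$, I expect \eqref{eq:EG4NE_key_est1} to reduce to the single scalar inequality $(2-\gamma)\norms{a}^2 + 2(1-\beta)\iprods{a, b} \leq \tfrac{\eta^2}{\gamma}\norms{d}^2$.

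Finally, I would substitute $a = -\eta d - (1-\beta)b$ into this scalar inequality; the gap between its two sides should simplify, using $\tfrac{1}{\gamma} - (2-\gamma) = \tfrac{(1-\gamma)^2}{\gamma}$, to $\tfrac{1}{\gamma}\norms{(1-\gamma)\eta d - \gamma(1-\beta)b}^2 \geq 0$, which completes the proof (and in fact shows \eqref{eq:EG4NE_key_est1} holds with equality up to this nonnegative remainder, so no constant is wasted). The only real obstacle I anticipate is the bookkeeping in the last two paragraphs: one must match the four squared terms $\norms{y^k - x^k}^2$, $\norms{x^{k+1} - y^k}^2$, $\norms{x^{k+1} - x^k}^2$, and $\norms{Fy^k - u^k}^2$ against their prescribed coefficients $\beta$, $\beta - \gamma$, $1 - \beta$, and $\eta^2/\gamma$, and it is precisely the algebraic identity above that turns the residual into a perfect square — any mismatch in a coefficient would destroy this.
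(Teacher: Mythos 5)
Your proposal is correct and follows essentially the same route as the paper's proof: the three-point identity with $x^{k+1}-x^k=-\eta Fy^k$, the relation $\eta u^k=\beta(x^k-y^k)$, and a Young-type bound on the cross term involving $Fy^k-u^k$ and $x^{k+1}-y^k$. The only (cosmetic) difference is that you reduce to an exact identity and exhibit the slack as the perfect square $\tfrac{1}{\gamma}\norms{(1-\gamma)\eta(Fy^k-u^k)-\gamma(1-\beta)(y^k-x^k)}^2$, whereas the paper invokes Cauchy--Schwarz plus the inequality $2wz\leq \gamma w^2+z^2/\gamma$; both yield \eqref{eq:EG4NE_key_est1} with the same constants.
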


\begin{proof}
First, for any $\hat{x}\in\dom{F}$, using $x^{k+1} - x^k = -\eta Fy^k$ from the second line of \eqref{eq:EG4NE}, we have
\begin{equation*} 
\arraycolsep=0.2em
\begin{array}{lcl}
\norms{x^{k+1} - \hat{x}}^2 &= & \norms{x^k - \hat{x}}^2 + 2\iprods{x^{k+1} - x^k, x^{k+1} - \hat{x}} - \norms{x^{k+1} - x^k}^2 \vspace{1ex}\\
&= & \norms{x^k - \hat{x}}^2 - 2\eta\iprods{Fy^k, x^{k+1} - \hat{x}} -  \norms{x^{k+1} - x^k}^2.
\end{array}
\end{equation*}
Next, using  $\eta u^k = \beta(x^k - y^k)$ from the first line of \eqref{eq:EG4NE}, 
the Cauchy-Schwarz inequality, the identity $2\iprods{x^{k+1} - y^k, x^k - y^k} = \norms{x^k - y^k}^2 + \norms{x^{k+1} - y^k}^2 - \norms{x^{k+1} - x^k}^2$, and an elementary inequality $2wz \leq \gamma w^2 + \frac{z^2}{\gamma}$  for any $\gamma > 0$ and $w, z \geq 0$, we can  derive that
\begin{equation*} 
\arraycolsep=0.2em
\begin{array}{lcl}
2\eta\iprods{Fy^k, x^{k+1} - \hat{x}} & = & 2\eta\iprods{Fy^k, y^k - \hat{x}} + 2\eta\iprods{Fy^k - u^k, x^{k+1} - y^k} + 2\eta\iprods{u^k, x^{k+1} - y^k} \vspace{1ex}\\
&\geq & 2\eta\iprods{Fy^k, y^k - \hat{x}} - 2\eta\norms{Fy^k - u^k}\norms{x^{k+1} - y^k} +  2\beta \iprods{x^{k+1} - y^k, x^k - y^k} \vspace{1ex}\\
&\geq & 2\eta\iprods{Fy^k, y^k - \hat{x}} - \frac{\eta^2}{\gamma}\norms{Fy^k - u^k}^2 - \gamma \norms{x^{k+1} - y^k}^2 \vspace{1ex}\\
&& + {~} \beta\big[ \norms{x^k - y^k}^2 + \norms{x^{k+1} - y^k}^2 - \norms{x^{k+1} - x^k}^2\big] \vspace{1ex}\\
&= & 2\eta\iprods{Fy^k, y^k - \hat{x}} + \beta\norms{y^k - x^k}^2  - \frac{\eta^2}{\gamma}\norms{Fy^k - u^k}^2 \vspace{1ex}\\
&& + {~} (\beta - \gamma)\norms{x^{k+1} - y^k}^2 - \beta\norms{x^{k+1} - x^k}^2.
\end{array}
\end{equation*}
Finally, combining the last two expressions, we obtain \eqref{eq:EG4NE_key_est1}.
\end{proof}

\begin{lemma}\label{le:EG4NE_monotonicity}
Let $F$ be $\rho$-co-hypomonotone, i.e. there exists $\rho \geq 0$ such that $\iprods{Fx - Fy, x-y} \geq -\rho\norms{Fx - Fy}^2$ for all $x, y \in\dom{F}$ and $L$-Lipschitz continuous.
Let $\set{(x^k, y^k)}$ be generated by \eqref{eq:EG4NE}.
Then,  for any $c > 0$ and $\omega > 0$, we have
\begin{equation}\label{eq:EG4NE_monotonicity}
\arraycolsep=0.2em
\begin{array}{lcl}
\norms{Fx^{k+1}}^2 & \leq & \norms{Fx^k}^2 -  \frac{[c\eta - 2(1+c)\rho]}{c\eta} \norms{Fy^k - Fx^k}^2 + \frac{\left[\eta\omega + 2(1+c)\rho \right] L^2\eta}{\beta^2}\norms{\beta Fy^k - u^k}^2 \vspace{1ex}\\
&& - {~} (\omega - 1)\norms{Fx^{k+1} - Fy^k}^2.
\end{array}
\end{equation}
\end{lemma}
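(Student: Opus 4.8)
The plan is to start from an exact three-point expansion of $\norms{Fx^{k+1}}^2$ anchored at the two points $Fy^k$ and $Fx^k$, then inject the $\rho$-co-hypomonotonicity of $F$ through the update relation $x^{k+1} - x^k = -\eta Fy^k$, and finally rewrite the leftover cross term in terms of the three residual quantities $\norms{Fy^k - Fx^k}^2$, $\norms{Fx^{k+1} - Fy^k}^2$, and $\norms{\beta Fy^k - u^k}^2$ that appear in the statement.

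First I would record the elementary identity $\norms{a}^2 = \norms{c}^2 + 2\iprods{a - c, b} + \norms{a - b}^2 - \norms{b - c}^2$, valid for all vectors $a, b, c$, and apply it with $a = Fx^{k+1}$, $b = Fy^k$, $c = Fx^k$, which gives $\norms{Fx^{k+1}}^2 = \norms{Fx^k}^2 + 2\iprods{Fx^{k+1} - Fx^k, Fy^k} + \norms{Fx^{k+1} - Fy^k}^2 - \norms{Fy^k - Fx^k}^2$. Since the second line of \eqref{eq:EG4NE} yields $x^{k+1} - x^k = -\eta Fy^k$, applying co-hypomonotonicity to the pair $(x^{k+1}, x^k)$ gives $-\eta\iprods{Fx^{k+1} - Fx^k, Fy^k} \geq -\rho\norms{Fx^{k+1} - Fx^k}^2$, i.e. $2\iprods{Fx^{k+1} - Fx^k, Fy^k} \leq \tfrac{2\rho}{\eta}\norms{Fx^{k+1} - Fx^k}^2$. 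Then I would split $Fx^{k+1} - Fx^k = (Fx^{k+1} - Fy^k) + (Fy^k - Fx^k)$ and use Young's inequality $\norms{Fx^{k+1} - Fx^k}^2 \leq (1+c)\norms{Fx^{k+1} - Fy^k}^2 + \tfrac{1+c}{c}\norms{Fy^k - Fx^k}^2$ for the free parameter $c > 0$. Substituting and collecting the $\norms{Fy^k - Fx^k}^2$ terms already produces the coefficient $-\tfrac{c\eta - 2(1+c)\rho}{c\eta}$ exactly as claimed, while the $\norms{Fx^{k+1} - Fy^k}^2$ terms combine into the single coefficient $1 + \tfrac{2(1+c)\rho}{\eta}$.

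To finish, I would split this last coefficient as $-(\omega - 1) + \big(\omega + \tfrac{2(1+c)\rho}{\eta}\big)$, keeping the $-(\omega - 1)\norms{Fx^{k+1} - Fy^k}^2$ piece verbatim, and bounding the remaining part by Lipschitz continuity: combining the two lines of \eqref{eq:EG4NE} gives $x^{k+1} - y^k = -\tfrac{\eta}{\beta}(\beta Fy^k - u^k)$, hence $\norms{Fx^{k+1} - Fy^k}^2 \leq L^2\norms{x^{k+1} - y^k}^2 = \tfrac{L^2\eta^2}{\beta^2}\norms{\beta Fy^k - u^k}^2$; multiplying through yields precisely $\tfrac{[\eta\omega + 2(1+c)\rho]L^2\eta}{\beta^2}\norms{\beta Fy^k - u^k}^2$, which matches the target term.

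I expect the only genuine decisions — and hence the \emph{hard} part — to be structural rather than computational: choosing to anchor the three-point identity at $Fy^k$ (not at $Fx^k$), applying co-hypomonotonicity to the pair $(x^{k+1}, x^k)$ rather than to $(y^k, x^k)$, and recognizing that the surviving $\norms{Fx^{k+1} - Fy^k}^2$ should be \emph{partly} retained (to supply the negative $-(\omega-1)$ term later used in a summation/telescoping argument) and \emph{partly} converted via Lipschitz continuity and $x^{k+1} - y^k = -\tfrac{\eta}{\beta}(\beta Fy^k - u^k)$ into the $\norms{\beta Fy^k - u^k}^2$ term. Once these choices are fixed, the roles of the free parameters $c$ and $\omega$ are forced and everything else is routine algebra.
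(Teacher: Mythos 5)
Your proposal is correct and is essentially the paper's own argument: both apply $\rho$-co-hypomonotonicity to the pair $(x^{k+1},x^k)$ via $x^{k+1}-x^k=-\eta Fy^k$, use Young's inequality with parameter $c$ on $\norms{Fx^{k+1}-Fx^k}^2$, split the resulting coefficient of $\norms{Fx^{k+1}-Fy^k}^2$ via $\omega$, and convert the retained part through Lipschitz continuity and $x^{k+1}-y^k=-\tfrac{\eta}{\beta}(\beta Fy^k-u^k)$. The only difference is cosmetic: you package the polarization expansions as a single three-point identity anchored at $Fy^k$, whereas the paper expands the two inner products $\iprods{Fx^k,Fy^k}$ and $\iprods{Fx^{k+1},Fy^k}$ directly, yielding the same intermediate estimate.
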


\begin{proof}
Since  $F$ is $\rho$-cohypomonotone, we have $\iprods{Fx^{k+1} - Fx^k, x^{k+1} - x^k} + \rho\norms{Fx^{k+1} - Fx^k}^2 \geq 0$.
Substituting $x^{k+1} - x^k = -\eta Fy^k$ from the second line of \eqref{eq:EG4NE} into this inequality, we can show that
\begin{equation*} 
\arraycolsep=0.2em
\begin{array}{lcl}
0 & \leq & 2\iprods{Fx^k, Fy^k} - 2\iprods{Fx^{k+1}, Fy^k} + \frac{2\rho}{\eta}\norms{Fx^{k+1} - Fx^k}^2 \vspace{1ex}\\
&\leq & \norms{Fx^k}^2 - \norms{Fy^k - Fx^k}^2 - \norms{Fx^{k+1}}^2  + \norms{Fx^{k+1} - Fy^k}^2 + \frac{2\rho}{\eta}\norms{Fx^{k+1} - Fx^k}^2.
\end{array}
\end{equation*}
Now, by utilizing Young's inequality, the $L$-Lipschitz continuity of $F$, and $x^{k+1} - y^k = -\eta (Fy^k - \frac{1}{\beta}u^k)$ from \eqref{eq:EG4NE}, for any $c > 0$ and $\omega \geq 1$, the last estimate leads to
\begin{equation*} 
\arraycolsep=0.2em
\begin{array}{lcl}
\norms{Fx^{k+1}}^2 &\leq & \norms{Fx^k}^2 - \norms{Fy^k - Fx^k}^2 + \omega \norms{Fx^{k+1} - Fy^k}^2  + \frac{2\rho}{\eta}\norms{Fx^{k+1} - Fx^k}^2 - (\omega - 1)\norms{Fx^{k+1} - Fy^k}^2 \vspace{1ex}\\
&\leq & \norms{Fx^k}^2 -  \frac{c\eta - 2(1+c)\rho}{c\eta} \norms{Fy^k - Fx^k}^2 +  \frac{[\eta\omega  + 2(1+c)\rho] L^2}{\eta} \norms{x^{k+1} - y^k}^2 - (\omega - 1)\norms{Fx^{k+1} - Fy^k}^2 \vspace{1ex}\\
&\leq & \norms{Fx^k}^2 -  \frac{c\eta - 2(1+c)\rho}{c\eta} \norms{Fy^k - Fx^k}^2 +  \left[\eta\omega + 2(1+c)\rho \right] L^2\eta \norms{Fy^k - \frac{1}{\beta}u^k}^2 \vspace{1ex}\\
&& - {~}  (\omega - 1)\norms{Fx^{k+1} - Fy^k}^2 ,
\end{array}
\end{equation*}
which  exactly proves \eqref{eq:EG4NE_monotonicity}.
\end{proof}

Now, we are ready to establish both the best-iterate and the last-iterate convergence rates of  \eqref{eq:EG4NE}. 

\begin{theorem}\label{th:EG4NE_convergence}
Suppose that $F$ in \eqref{eq:NE} is $L$-Lipschitz continuous and $\zer{F}\neq\emptyset$.
Let $\set{(x^k, y^k)}$ be generated by \eqref{eq:EG4NE} for solving \eqref{eq:NE}.
Then, we have the following statements.
\begin{itemize}
\item[$\mathrm{(a)}$] $($\mytb{Extragradient method}$)$ 
Let us choose $u^k := Fx^k$ and assume that there exists $\rho \geq 0$ such that $\iprods{Fx, x - x^{\star}} \geq -\rho\norms{Fx}^2$ for all $x\in\dom{F}$ and a given $x^{\star}\in\zer{F}$ $($this condition holds if, in particular, $F$ is $\rho$-co-hypomonotone on $\dom{F}$$)$.
Then, if $L\rho \leq \frac{3\sqrt{2} -2}{12} \approx 0.1869$, $\beta \in (0, 1]$, and $\eta$ is chosen such that 
\begin{equation}\label{eq:EG4NE_stepsize1}
\arraycolsep=0.2em
\begin{array}{l}
0 \leq \frac{\beta [ 1 - \sqrt{1 - 24 L\rho(3L\rho + 1)} ]}{2L(3L\rho + 1)} < \eta < \frac{\beta [ 1 + \sqrt{1 - 24 L\rho(3L\rho + 1)} ]}{2L(3L\rho + 1)} \leq \frac{\beta}{L}, 
\end{array}
\end{equation}
then we have
\begin{equation}\label{eq:EG4NE_convergence_est1a}
\min_{0\leq l \leq k}\norms{Fx^l}^2 \leq \frac{1}{k+1}\sum_{l=0}^k\norms{Fx^l}^2 \leq \frac{C_{\rho} \norms{x^0 - x^{\star}}^2}{k+1}, 
\end{equation} 
where $C_{\rho} :=   \frac{\beta^2}{\eta[ \eta\beta - 6\beta^2\rho - (3L\rho + 1)L\eta^2] } > 0$.
Consequently, $\sets{\norms{x^k - x^{\star}}}$ is nonincreasing and $\lim_{k\to\infty}\norms{x^k - y^k} = \lim_{k\to\infty}\norms{Fx^k} = \lim_{k\to\infty}\norms{Fy^k} =  0$.
Moreover, we have $\min_{0\leq l \leq k}\norms{Fx^l} = \BigOs{1/\sqrt{k}}$ showing the $\BigOs{1/\sqrt{k}}$ best-iterate convergence rate of $\sets{x^k}$.

In particular, if $\beta := 1$ and $F$ is $\rho$-co-hypomonotone on $\dom{F}$ such that $L\rho \leq \frac{3\sqrt{2} -2}{12}$, then
\begin{equation}\label{eq:EG4NE_convergence_est1b}
\norms{Fx^{k+1}}^2 \leq  \norms{Fx^k}^2 - \psi \cdot \norms{Fy^k - Fx^k}^2 \quad\text{and}\quad \norms{Fx^k} \leq \frac{\sqrt{C_{\rho}} \norms{x^0 - x^{\star}} }{\sqrt{k+1}},
\end{equation}
where $\psi := 1 - \frac{4\rho}{\eta} - L^2\eta(\eta + 4\rho) > 0$.
Hence, we have $\norms{Fx^k} = \BigOs{1/\sqrt{k}}$ on the last-iterate $x^k$.

\item[$\mathrm{(b)}$] $($\mytb{Past-extragradient method}$)$ 
Let us choose $u^k := Fy^{k-1}$ and $y^{-1} := x^0$ and assume that there exists $\rho \geq 0$ such that $\iprods{Fx, x - x^{\star}} \geq -\rho\norms{Fx}^2$ for all $x\in\dom{F}$ and a given $x^{\star}\in\zer{F}$ $($in particular, if $F$ is $\rho$-co-hypomonotone on $\dom{F}$$)$.
Then, for fixed $\beta \in (0, 1]$, if $L\rho \leq  \frac{\beta^2}{12}$ and $\eta$ is chosen such that 
\begin{equation}\label{eq:EG4NE_stepsize2}
\arraycolsep=0.2em
\begin{array}{l}
0 \leq \frac{\beta - \sqrt{\beta^2 - 12L\rho}}{6L} < \eta < \frac{\beta + \sqrt{\beta^2 - 12L\rho}}{6L} \leq \frac{\beta}{3L},
\end{array}
\end{equation}
then we have
\begin{equation}\label{eq:EG4NE_convergence_est2a}
\min_{0\leq l\leq k}  \norms{Fx^l}^2  \leq \frac{1}{k+1}\sum_{l=0}^k \big[ \norms{Fx^l}^2 + \kappa\norms{x^l - y^{l-1}}^2\big] \leq \frac{\hat{C}_{\rho}\norms{x^0 - x^{\star}}^2}{k+1},
\end{equation} 
where $\kappa := \frac{(2L^2\eta^2 + 3)L}{2(\beta\eta - 3L\eta^2 - 4\rho)} > 0$ and $\hat{C}_{\rho} :=   \frac{2L^2\eta^2 + 3}{(\beta\eta - 3L\eta^2 - 4\rho)\eta} > 0$.
Consequently, we also have $\lim_{k\to\infty}\norms{x^k - y^k} = \lim_{k\to\infty}\norms{x^{k+1} - y^k} = \lim_{k\to\infty}\norms{Fx^k} = \lim_{k\to\infty}\norms{Fy^k} =  0$, and $\min_{0\leq l \leq k}\norms{Fx^l} = \BigOs{1/\sqrt{k}}$ showing the $\BigOs{1/\sqrt{k}}$ best-iterate convergence rate of $\sets{x^k}$.

In particular, if $\beta := 1$ and $F$ is $\rho$-co-hypomonotone on $\dom{F}$ such that $12L\rho \leq 1$, then
\begin{equation}\label{eq:EG4NE_convergence_est2b}
\norms{Fx^{k+1}}^2 + \hat{\kappa}\norms{Fx^{k+1} - Fy^k}^2 \leq  \norms{Fx^k}^2 + \hat{\kappa}\norms{Fx^k - Fy^{k-1}}^2 \quad\text{and}\quad \norms{Fx^k} \leq \frac{\sqrt{M_{\rho}} \norms{x^0 - x^{\star}} }{\sqrt{k+1}},
\end{equation}
where $\hat{\kappa} := \frac{2(\eta + 4\rho)L^2\eta}{1 - 2L^2\eta^2}$ and $M_{\rho} :=  \hat{C}_{\rho} \cdot \max\set{\frac{L^2\hat{\kappa}}{\kappa}, 1}$.
Moreover, we have the last-iterate convergence rate as $\norms{Fx^k} = \BigOs{1/\sqrt{k}}$.
\end{itemize}
\end{theorem}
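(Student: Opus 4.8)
The plan is to deduce both statements from Lemma~\ref{le:EG4NE_key_estimate1} and Lemma~\ref{le:EG4NE_monotonicity}; the only genuine difference between the extragradient case $u^k := Fx^k$ and the past-extragradient case $u^k := Fy^{k-1}$ is how one controls the residual term $\norms{Fy^k - u^k}^2$ appearing in Lemma~\ref{le:EG4NE_key_estimate1}. In both cases I would apply Lemma~\ref{le:EG4NE_key_estimate1} with $\hat{x} := x^{\star}$, invoke the star-co-hypomonotonicity $\iprods{Fy^k, y^k - x^{\star}} \geq -\rho\norms{Fy^k}^2$ so that $-2\eta\iprods{Fy^k, y^k - x^{\star}} \leq 2\eta\rho\norms{Fy^k}^2$, split this by Young's inequality as $\norms{Fy^k}^2 \leq \tfrac{3}{2}\norms{Fy^k - u^k}^2 + 3\norms{u^k}^2$, apply the $L$-Lipschitz continuity of $F$ to the differences, and then choose the free parameter $\gamma > 0$ so that the resulting one-step inequality telescopes after the nonpositive terms $-(\beta-\gamma)\norms{x^{k+1}-y^k}^2$ and $-(1-\beta)\norms{x^{k+1}-x^k}^2$ are discarded (which needs $\gamma \leq \beta$ and $\beta \leq 1$).

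\textbf{Part (a).} Here $u^k = Fx^k$, so $\norms{y^k - x^k}^2 = \tfrac{\eta^2}{\beta^2}\norms{Fx^k}^2$ and $\norms{Fy^k - u^k}^2 = \norms{Fy^k - Fx^k}^2 \leq \tfrac{L^2\eta^2}{\beta^2}\norms{Fx^k}^2$. Taking $\gamma := L\eta$ — which satisfies $\gamma < \beta$ by \eqref{eq:EG4NE_stepsize1} — and collecting the coefficient of $\norms{Fx^k}^2$, a short computation shows it equals exactly $-1/C_{\rho}$; hence $\norms{x^{k+1} - x^{\star}}^2 \leq \norms{x^k - x^{\star}}^2 - \tfrac{1}{C_{\rho}}\norms{Fx^k}^2$, and telescoping gives \eqref{eq:EG4NE_convergence_est1a}. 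Monotonicity of $\sets{\norms{x^k - x^{\star}}}$, the stated limits (via $\norms{y^k - x^k} = \tfrac{\eta}{\beta}\norms{Fx^k}$ and $\norms{Fy^k} \leq L\norms{y^k - x^k} + \norms{Fx^k}$), and the $\BigOs{1/\sqrt{k}}$ best-iterate rate follow at once. For $\beta := 1$, I would apply Lemma~\ref{le:EG4NE_monotonicity} with $c := 1$ and $\omega := 1$, so that $\norms{\beta Fy^k - u^k}^2 = \norms{Fy^k - Fx^k}^2$ and the last term vanishes; the surviving inequality is $\norms{Fx^{k+1}}^2 \leq \norms{Fx^k}^2 - \psi\norms{Fy^k - Fx^k}^2$ with $\psi$ as defined, and $\psi > 0$ follows from \eqref{eq:EG4NE_stepsize1} (in particular from $\eta < 1/L$). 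Thus $\sets{\norms{Fx^k}^2}$ is nonincreasing; combining this with $\sum_{l=0}^{k}\norms{Fx^l}^2 \leq C_{\rho}\norms{x^0 - x^{\star}}^2$ yields $(k+1)\norms{Fx^k}^2 \leq C_{\rho}\norms{x^0 - x^{\star}}^2$, which is the last-iterate bound in \eqref{eq:EG4NE_convergence_est1b}.

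\textbf{Part (b).} Here $u^k = Fy^{k-1}$, so $\norms{y^k - x^k}^2 = \tfrac{\eta^2}{\beta^2}\norms{Fy^{k-1}}^2$ and $\norms{Fy^k - u^k}^2 = \norms{Fy^k - Fy^{k-1}}^2 \leq L^2\norms{y^k - y^{k-1}}^2 \leq 2L^2\norms{y^k - x^k}^2 + 2L^2\norms{x^k - y^{k-1}}^2$. The natural device is the Lyapunov function $\mathcal{V}_k := \norms{x^k - x^{\star}}^2 + (\beta - \gamma)\norms{x^k - y^{k-1}}^2$, which absorbs the $-(\beta-\gamma)\norms{x^{k+1}-y^k}^2$ term of Lemma~\ref{le:EG4NE_key_estimate1} and, because $y^{-1} := x^0$, satisfies $\mathcal{V}_0 = \norms{x^0 - x^{\star}}^2$. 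Substituting the bounds above and choosing $\gamma$ appropriately, one obtains $\mathcal{V}_{k+1} \leq \mathcal{V}_k - A\norms{Fy^{k-1}}^2 - B\norms{x^k - y^{k-1}}^2$ with $A, B > 0$ precisely when \eqref{eq:EG4NE_stepsize2} (together with $L\rho \leq \beta^2/12$) holds. Telescoping and then using $\norms{Fx^l}^2 \leq 2L^2\norms{x^l - y^{l-1}}^2 + 2\norms{Fy^{l-1}}^2$ to trade $\norms{Fy^{l-1}}^2$ and $\norms{x^l - y^{l-1}}^2$ for $\norms{Fx^l}^2$ produces \eqref{eq:EG4NE_convergence_est2a}; the stated limits follow as in Part~(a), together with $\norms{Fy^k - Fy^{k-1}} \to 0$ (the displayed bound on $\norms{Fy^k - Fy^{k-1}}^2$ is summable). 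For $\beta := 1$, I would use Lemma~\ref{le:EG4NE_monotonicity} with $c := 1$ and $\omega := \hat{\kappa} + 1$, bound $\norms{Fy^k - Fy^{k-1}}^2 \leq 2\norms{Fy^k - Fx^k}^2 + 2\norms{Fx^k - Fy^{k-1}}^2$, and take the potential $\mathcal{W}_k := \norms{Fx^k}^2 + \hat{\kappa}\norms{Fx^k - Fy^{k-1}}^2$; the stated $\hat{\kappa}$ is exactly the value making the coefficients of $\norms{Fy^k - Fx^k}^2$ and of $\norms{Fx^k - Fy^{k-1}}^2$ nonpositive, so $\mathcal{W}_{k+1} \leq \mathcal{W}_k$. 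Since $\norms{Fx^k - Fy^{k-1}}^2 \leq L^2\norms{x^k - y^{k-1}}^2$, we get $\mathcal{W}_l \leq \max\sets{1, L^2\hat{\kappa}/\kappa}\big(\norms{Fx^l}^2 + \kappa\norms{x^l - y^{l-1}}^2\big)$, hence $\sum_{l=0}^{k}\mathcal{W}_l \leq M_{\rho}\norms{x^0 - x^{\star}}^2$, and monotonicity of $\sets{\mathcal{W}_k}$ gives $(k+1)\mathcal{W}_k \leq M_{\rho}\norms{x^0 - x^{\star}}^2$, i.e. \eqref{eq:EG4NE_convergence_est2b}.

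\textbf{Main obstacle.} The delicate part is the best-iterate analysis of Part~(b): one must simultaneously handle the index-shifted quantity $\norms{Fy^{k-1}}^2$ that Lemma~\ref{le:EG4NE_key_estimate1} produces at step $k$, pick the free parameter $\gamma$ (which then determines $\kappa$ and $\hat{C}_{\rho}$), and certify that $A$ and $B$ — together with $\psi$ in Part~(a) and the feasibility $\omega = \hat{\kappa}+1 \geq 1$ in Part~(b) — are positive exactly in the stepsize windows \eqref{eq:EG4NE_stepsize1}--\eqref{eq:EG4NE_stepsize2} and under the smallness conditions on $L\rho$. Once the correct Lyapunov functions $\mathcal{V}_k$ and $\mathcal{W}_k$ are in hand, the telescoping and the monotone-plus-summable argument converting the best-iterate bounds into last-iterate bounds are routine.
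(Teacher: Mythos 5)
Your proposal is correct and follows essentially the same route as the paper's proof: both parts rest on Lemma~\ref{le:EG4NE_key_estimate1} with $\hat{x}:=x^{\star}$, the star-co-hypomonotonicity bound, the choice $\gamma:=L\eta$ and the split $\norms{Fy^k}^2\le \tfrac32\norms{Fy^k-u^k}^2+3\norms{u^k}^2$ in part (a), a Lyapunov function of the form $\norms{x^k-x^{\star}}^2+c\,\norms{x^k-y^{k-1}}^2$ in part (b), and Lemma~\ref{le:EG4NE_monotonicity} with the same potential $\mathcal{W}_k=\norms{Fx^k}^2+\hat{\kappa}\norms{Fx^k-Fy^{k-1}}^2$ (your $\omega=\hat{\kappa}+1$ is exactly the paper's $\omega=\frac{1+8L^2\rho\eta}{1-2L^2\eta^2}$), followed by the same monotone-plus-summable conversion to last-iterate rates. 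The only deviations are cosmetic: in part (b) your Young splits (factors $2,2$ instead of the paper's $3,\tfrac32$, and trading $\norms{Fx^l}^2$ against $\norms{Fy^{l-1}}^2$ rather than against $\norms{x^l-y^k}$-type terms) would yield the same $\BigOs{1/\sqrt{k}}$ rates but with slightly different explicit constants $\kappa,\hat{C}_{\rho},M_{\rho}$ unless recalibrated, and in part (a) the positivity of $\psi$ requires the full window \eqref{eq:EG4NE_stepsize1} together with $L\eta\le 1$ (via $\psi\eta\ge \eta-6\rho-(3L\rho+1)L\eta^2>0$), not merely $\eta<1/L$, which is exactly how the paper argues it.
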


Before proving Theorem~\ref{th:EG4NE_convergence}, we give the following remarks.
\begin{remark}\label{re:EG4NE_rm1}
If $\rho = 0$ in Theorem~\ref{th:EG4NE_convergence}, i.e. $F$ is star-monotone (and in particular, monotone), then our condition on the stepsize  $\eta$ reduces to $0 < \eta < \frac{\beta}{L}$ for the extragradient method and $0 < \eta < \frac{\beta}{3L}$ for the past-extragradient method. 
These choices are standard and often seen in both methods.
However, we have not yet optimized the choice of these parameters for \eqref{eq:EG4NE} stated in Theorem~\ref{th:EG4NE_convergence}.
Here, the stepsize $\eta$ can be carefully chosen so that we can possibly enlarge the range of $L\rho$ (see  \cite{gorbunov2022convergence} as an example).
\end{remark}

\begin{remark}\label{re:EG4NE_comparison}
The results in Theorem~\ref{th:EG4NE_convergence} were proven in \cite{luo2022last}, and then were revised in \cite{gorbunov2022convergence}.
The last-iterate convergence rates were proven in previous works such as \cite{golowich2020last} for the monotone case but with an additional assumption.
Note that the best-iterate rates for the monotone or the star-monotone case are classical, which can be found, e.g., in \cite{Facchinei2003,Korpelevic1976}.
The last-iterate convergence for the monotone case can be found in recent works such as \cite{golowich2020last,gorbunov2022extragradient}.
The best-iterate rates for the co-hypomonotone or the star-co-hypomonotone case can be found in \cite{diakonikolas2021efficient}, while the last-iterate convergence rates were recently proven in \cite{luo2022last}.
Nevertheless, in this survey, we provide a unified analysis for all of these variants of EG, which covers  both the monotone and co-hypomonotone cases altogether.
Our analysis is also different from \cite{luo2022last}.
\end{remark}

\begin{remark}\label{re:EG4NE_star_strong_mono}
One can easily modify the proof of Theorem~\ref{th:EG4NE_convergence} to handle the star-strongly-monotone case of $F$.
Indeed, if $F$ is $\mu$-star-strongly monotone, then we have $\mu\norms{x^k - x^{\star}}^2 \leq \iprods{Fx^k, x^k - x^{\star}} \leq \norms{Fx^k}\norms{x^k - x^{\star}}$.
This leads to $\norms{Fx^k}^2 \geq \mu^2\norms{x^k - x^{\star}}^2$.
Using this inequality, $\hat{x} := x^{\star}$, and $u^k := Fx^k$ into \eqref{eq:EG4NE_key_est1}, we obtain $\norms{x^{k+1} - x^{\star}}^2 \leq (1 - \eta^2(1 - L^2\eta^2)\mu^2 )\norms{x^k - x^{\star}}^2$.
Clearly, if we choose $\eta \in \big(0, \frac{1}{L}\big)$, then $\varphi := 1 - \eta^2(1 - L^2\eta^2)\mu^2 \in (0, 1)$, and we obtain a linear convergence rate of $\sets{\norms{x^k - x^{\star}}^2}$ with a contraction factor $\varphi$.
Note that since $\mu \leq L$, we have $\mu^4 - 4L^2\mu^2 < 0$. Hence, $L^2\mu^2\eta^4 - \mu^2\eta^2 + 1 \geq 0$ always holds to guarantee that $\varphi \in (0, 1)$. 
Another proof for the monotone case can be found, e.g., in \cite{Facchinei2003}.
\end{remark}

\begin{proof}[\mytb{Proof of Theorem~\ref{th:EG4NE_convergence}}]
(a)~\mytb{Extragradient method.} Since $u^k := Fx^k$, by the $L$-Lipschitz continuity of $F$, we have $\norms{Fy^k - u^k} = \norms{Fy^k - Fx^k} \leq L\norms{x^k - y^k}$.
Using this inequality, $\hat{x} := x^{\star} \in \zer{F}$, and $\iprods{Fy^k, y^k - x^{\star}} \geq -\rho\norms{Fy^k}^2$ into \eqref{eq:EG4NE_key_est1}, we have
\begin{equation}\label{eq:EG4NE_key_est1_for_EG}
\arraycolsep=0.2em
\begin{array}{lcl}
\norms{x^{k+1} - x^{\star} }^2 & \leq & \norms{x^k - x^{\star} }^2 - \left(\beta -  \tfrac{L^2\eta^2}{\gamma}\right) \norms{y^k - x^k}^2  +  2\eta\rho\norms{Fy^k}^2 \vspace{1ex}\\
&& - {~} (\beta - \gamma)\norms{x^{k+1} - y^k}^2 - (1 - \beta)\norms{x^{k+1} - x^k}^2.
\end{array}
\end{equation}
Now, using the first line of \eqref{eq:EG4NE} with $u^k := Fx^k$ as $\beta(x^k - y^k) = \eta Fx^k$, the $L$-Lipschitz continuity of $F$, and Young's inequality,  we have 
\begin{equation}\label{eq:EG4NE_key_est1_for_EG2a} 
\arraycolsep=0.2em
\begin{array}{lcl}
\norms{Fy^k}^2 &\leq & \frac{3}{2} \norms{Fy^k - Fx^k}^2 +  3\norms{Fx^k}^2 \leq \frac{3(2 \beta^2 + L^2\eta^2)}{2\eta^2}\norms{x^k - y^k}^2.
\end{array}
\end{equation}
Substituting this inequality, $\gamma := L\eta$, and $\beta^2\norms{y^k - x^k}^2 = \eta^2\norms{Fx^k}^2$ into \eqref{eq:EG4NE_key_est1_for_EG}, we obtain
\begin{equation}\label{eq:EG4NE_key_est1_for_EG2}
\arraycolsep=0.2em
\begin{array}{lcl}
\norms{x^{k+1} - x^{\star} }^2 & \leq & \norms{x^k - x^{\star} }^2 - (\beta - L\eta)\norms{x^{k+1} - y^k}^2  - (1 - \beta)\norms{x^{k+1} - x^k}^2 \vspace{1ex}\\
&& - {~} \frac{\eta[ \eta\beta - 6\beta^2\rho - (3L\rho + 1)L\eta^2] }{\beta^2}  \norms{Fx^k}^2. 
\end{array}
\end{equation}
Let us choose $\eta > 0$ such that $\eta\beta - 6\beta^2\rho - (3L\rho + 1)L\eta^2 > 0$, which holds if $\eta$ satisfies
\begin{equation*} 
\arraycolsep=0.2em
\begin{array}{lcl}
0 \leq \frac{\beta \big[ 1 - \sqrt{1 - 24 L\rho(3L\rho + 1)} \big]}{2L(3L\rho + 1)} < \eta < \frac{\beta \big[ 1 + \sqrt{1 - 24 L\rho(3L\rho + 1)} \big]}{2L(3L\rho + 1)} \leq \frac{\beta}{L},
\end{array}
\end{equation*}
provided that $L\rho \leq \frac{3\sqrt{2} -2}{12} \approx 0.1869$.
This condition is exactly \eqref{eq:EG4NE_stepsize1}.
In this case, we also have $\beta - L\eta \geq 0$, and \eqref{eq:EG4NE_key_est1_for_EG2} implies \eqref{eq:EG4NE_convergence_est1a}.
The next statement is a consequence of \eqref{eq:EG4NE_key_est1_for_EG2} combining with \eqref{eq:EG4NE_key_est1_for_EG2a} using standard arguments.

Next, if $F$ is $\rho$-co-hypomonotone, then with $\beta := 1$, using $\norms{\beta Fy^k - u^k} = \norms{Fy^k - Fx^k}$ and $c = \omega = 1$ into  \eqref{eq:EG4NE_monotonicity}, we obtain
\begin{equation*} 
\arraycolsep=0.2em
\begin{array}{lcl}
\norms{Fx^{k+1}}^2 \leq \norms{Fx^k}^2 - \frac{\left[\eta - 4\rho - L^2\eta^2(\eta + 4\rho)\right]}{\eta}\norms{Fy^k - Fx^k}^2.
\end{array}
\end{equation*}
However, by the choice of $\eta$ as in \eqref{eq:EG4NE_stepsize1}, one has $\eta - 6\rho - (3L\rho + 1)L\eta^2 > 0$.
It is obvious to check that $\psi  := \eta - 4\rho - L^2\eta^2(\eta + 4\rho) \geq \eta - 6\rho - (3L\rho + 1)L\eta^2 > 0$. 
This condition leads to the first part of \eqref{eq:EG4NE_convergence_est1b}.
The second part of \eqref{eq:EG4NE_convergence_est1b} is a consequence of the first part and \eqref{eq:EG4NE_convergence_est1a}.

(b)~\mytb{Past-extragradient method.}
If we choose $u^k := Fy^{k-1}$, then by Young's inequality and the $L$-Lipschitz continuity of $F$, we have
\begin{equation}\label{eq:EG4NE_proof_p2_1} 
\arraycolsep=0.2em
\begin{array}{lcl}
\norms{Fy^k - u^k}^2 & =  & \norms{Fy^k - Fy^{k-1}}^2 \leq 3\norms{Fy^k - Fx^k}^2 + \frac{3}{2}\norms{Fx^k - Fy^{k-1}}^2 \vspace{1ex}\\
& \leq & 3L^2\norms{x^k - y^k}^2 + \frac{3L^2}{2}\norms{x^k - y^{k-1}}^2.
\end{array}
\end{equation}
Moreover, since $\iprods{Fx, x - x^{\star}} \geq -\rho\norms{Fx}^2$ for all $x\in\dom{F}$, using this condition, $\eta Fy^k = x^{k+1} - x^k$ from \eqref{eq:EG4NE}, and Young's inequality, we can lower bound that
\begin{equation}\label{eq:EG4NE_proof_p2_2}  
\arraycolsep=0.2em
\begin{array}{lcl}
2\eta\iprods{Fy^k, y^k - x^{\star}} &\geq & -2\rho\eta\norms{Fy^k}^2 = -\frac{2\rho}{\eta}\norms{x^{k+1} - x^k}^2 \geq - \frac{4\rho}{\eta}\big[ \norms{x^{k+1} -y^k}^2 + \norms{y^k - x^k}^2 \big].
\end{array}
\end{equation}
Substituting \eqref{eq:EG4NE_proof_p2_1}, \eqref{eq:EG4NE_proof_p2_2}, and $\hat{x} := x^{\star} \in \zer{F}$ into \eqref{eq:EG4NE_key_est1}, we obtain
\begin{equation}\label{eq:EG4NE_proof_p2_3}   
\arraycolsep=0.2em
\begin{array}{lcl}
\norms{x^{k+1} - x^{\star}}^2 & + & \left(\frac{3L^2\eta^2}{\gamma} - \gamma \right)\norms{x^{k+1} - y^k}^2  \leq  \norms{x^k - x^{\star}}^2 + \left( \tfrac{3L^2\eta^2}{\gamma} - \gamma\right)\norms{x^k - y^{k-1}}^2 \vspace{1ex}\\
&& - {~} \left(\beta - \frac{3L^2\eta^2}{\gamma} - \frac{4\rho}{\eta}  \right)\left[ \norms{x^{k+1} - y^k}^2  +  \norms{y^k - x^k}^2 \right] -   \left( \tfrac{3L^2\eta^2}{2\gamma} - \gamma\right)\norms{x^k - y^{k-1}}^2 \vspace{1ex}\\
&& - {~}   (1 - \beta)\norms{x^{k+1} - x^k}^2.
\end{array}
\end{equation}
Next, by the second line of \eqref{eq:EG4NE} and Young's inequality, we can easily show that $\norms{Fy^k}^2 = \frac{1}{\eta^2}\norms{x^{k+1} - x^k}^2 \leq \frac{3}{2\eta^2}\norms{x^k - y^k}^2 + \frac{3}{\eta^2}\norms{x^{k+1} - y^k}^2$.
Alternatively,  by Young's inequality and the $L$-Lipschitz continuity of $F$, we also have $\norms{Fx^k}^2 \leq \left(L^2 + \frac{3}{2\eta^2}\right)\norms{x^k - y^k}^2 + \left(1 + \frac{2L^2\eta^2}{3}\right)\norms{Fy^k}^2$.
Combining both inequalities, we get 
\begin{equation*}
\arraycolsep=0.2em
\begin{array}{lcl}
\norms{Fx^k}^2 \leq   \frac{2L^2\eta^2 + 3}{\eta^2}  \left[ \norms{x^k - y^k}^2  + \norms{x^{k+1} - y^k}^2\right].
\end{array}
\end{equation*}
Now, let us choose $\gamma := L\eta$ and using the last inequality into \eqref{eq:EG4NE_proof_p2_3}, we can show that
\begin{equation}\label{eq:EG4NE_proof_p2_4}
\arraycolsep=0.2em
\begin{array}{lcl}
\norms{x^{k+1} - x^{\star}}^2  +  2L\eta\norms{x^{k+1} - y^k}^2 & \leq &  \norms{x^k - x^{\star}}^2 + 2L\eta\norms{x^k - y^{k-1}}^2 - (1 - \beta)\norms{x^{k+1} - x^k}^2 \vspace{1ex}\\
&& - {~} \frac{\eta(\beta\eta - 3L\eta^2 - 4\rho)}{2L^2\eta^2 + 3} \norms{Fx^k}^2  -    \tfrac{L\eta}{2}\norms{x^k - y^{k-1}}^2.
\end{array}
\end{equation}
If we choose $\eta > 0$ such that $\beta\eta - 3L\eta^2 - 4\rho > 0$, then \eqref{eq:EG4NE_proof_p2_4} implies \eqref{eq:EG4NE_convergence_est2a}.
The next statement of Part (b) follows from \eqref{eq:EG4NE_convergence_est2a} and the fact that $\norms{Fy^{k-1}}^2 \leq \frac{L^2+\kappa}{\kappa}\big[ \norms{Fx^k}^2 + \kappa \norms{x^k - y^{k-1}}^2 \big]$.
Note that, the condition  $\beta\eta - 3L\eta^2 - 4\rho > 0$ holds if  $0 \leq \frac{\beta - \sqrt{\beta^2 - 12L\rho}}{6L} < \eta < \frac{\beta + \sqrt{\beta^2 - 12L\rho}}{6L} \leq \frac{\beta}{3L}$, provided that $12L\rho \leq \beta^2$ as stated in Theorem~\ref{th:EG4NE_convergence}.

To prove the last-iterate rate, let us choose $\omega := \frac{1 + 8L^2\rho\eta}{1 - 2L^2\eta^2} > 1$, provided that  $\sqrt{2}L\eta <  1$.
Moreover, for $\beta = 1$ and $u^k := Fy^{k-1}$,  we have $\norms{\beta Fy^k - u^k}^2 = \norms{Fy^k - Fy^{k-1}}^2 \leq 2\norms{Fy^k - Fx^k}^2 + 2\norms{Fx^k - Fy^{k-1}}^2$.
Substituting this inequality, $c := 1$, and $\omega := \frac{1 + 8L^2\rho\eta}{1 - 2L^2\eta^2}$  into \eqref{eq:EG4NE_monotonicity}, we obtain 
\begin{equation}\label{eq:EG4NE_monotonicity_proof2}
\arraycolsep=0.2em
\begin{array}{lcl}
\norms{Fx^{k+1}}^2 + (\omega - 1)\norms{Fx^{k+1} - Fy^k}^2 & \leq & \norms{Fx^k}^2 + (\omega - 1) \norms{Fx^k - Fy^{k-1}}^2 \vspace{1ex}\\
&& - {~}  \left[ 1 - \frac{4\rho}{\eta} -  \frac{2L^2\eta(\eta + 4\rho)}{1 - 2L^2\eta^2}\right] \norms{Fy^k - Fx^k}^2. 
\end{array}
\end{equation}
It is obvious to show that the conditions $1 - \frac{4\rho}{\eta} - 3L\eta > 0$ and $\sqrt{2}L\eta < 1$ guarantee that $1 - \frac{4\rho}{\eta} -  \frac{2L^2\eta(\eta + 4\rho)}{1 - 2L^2\eta^2} \geq 0$.
Hence, if we define $\hat{\kappa} := \omega - 1 = \frac{2(\eta + 4\rho)L^2\eta}{1 - 2L^2\eta^2}$, then \eqref{eq:EG4NE_monotonicity_proof2} reduces to 
\begin{equation}\label{eq:EG4NE_monotonicity_proof3} 
\arraycolsep=0.2em
\begin{array}{lcl}
\norms{Fx^{k+1}}^2 +  \hat{\kappa} \norms{Fx^{k+1} - Fy^k}^2 & \leq & \norms{Fx^k}^2 + \hat{\kappa}\norms{Fx^k - Fy^{k-1}}^2.
\end{array}
\end{equation}
For $C_0 := \max\set{\frac{L^2\hat{\kappa}}{\kappa}, 1}$, we have $\norms{Fx^k}^2 + \hat{\kappa}\norms{Fx^k - Fy^{k-1}}^2 \leq  C_0\left[ \norms{Fx^k}^2 +  \frac{\kappa}{L^2}\norms{Fx^k - Fy^{k-1}}^2 \right] \leq C_0 \left[ \norms{Fx^k}^2 + \kappa\norms{x^k - y^{k-1}}^2 \right]$.
Combining this inequality and \eqref{eq:EG4NE_convergence_est2a}, we get
\begin{equation*} 
\frac{1}{k+1}\sum_{l=0}^k\big[ \norms{Fx^l}^2 + \hat{\kappa}\norms{Fx^l - Fy^{l-1}}^2\big]  \leq \frac{C_0}{k+1}\sum_{l=0}^k\big[ \norms{Fx^l}^2 + \kappa\norms{x^l - y^{l-1}}^2\big] \leq \frac{C_0\hat{C}_{\rho}\norms{x^0 - x^{\star}}^2}{k+1},
\end{equation*} 
Using \eqref{eq:EG4NE_monotonicity_proof3} into the last  bound, we obtain \eqref{eq:EG4NE_convergence_est2b}.
Note that the condition $3L\eta < 1$ guarantees that $\sqrt{2}L\eta < 1$.
The remaining statement of (b) in Theorem~\ref{th:EG4NE_convergence} is a direct consequence of \eqref{eq:EG4NE_convergence_est2b} and \eqref{eq:EG4NE_proof_p2_2}.
\end{proof}

\beforesec
\section{Extragradient-Type Methods for Monotone Inclusions}\label{sec:EG4NI}
\aftersec
In this section, we go beyond \eqref{eq:NE} to survey recent results on both best-iterate and last-iterate convergence rates of the EG method and its variants for solving \eqref{eq:NI}.
Again, we provide a unified analysis that covers a wide class of EG variants of the monotone instances of \eqref{eq:NI} as can be seen below.

\beforesubsec
\subsection{The class of extragradient methods}\label{subsect:EG4NI}
\aftersubsec
The  class of EG methods for solving \eqref{eq:NI} we consider in this section can be described as follows.
Starting from an initial point $x^0 \in \dom{\Phi}$, at each iteration $k\geq 0$, we update
\begin{equation}\label{eq:EG4NI}
\arraycolsep=0.2em
\left\{\begin{array}{lcl}
y^k &:= & J_{\frac{\eta}{\beta}T}(x^k - \frac{\eta}{\beta} u^k), \vspace{1ex}\\
x^{k+1} &:= & J_{\eta T}(x^k - \eta Fy^k),
\end{array}\right.
\tag{EG2}
\end{equation}
where $J_{\eta T}$ is the resolvent of $\eta T$, $\eta > 0$ is a given stepsize, and $\beta > 0$ is a scaling factor.
Here, we consider two different choices of $u^k$ as follows:
\begin{itemize}
\item\mytb{Option 1.} If $u^k := Fx^k$, then we obtain $y^k := J_{\frac{\eta}{\beta}T}(x^k - \frac{\eta}{\beta} Fx^k)$, leading to the well-known \mytb{extragradient method} (or \mytb{extragradient-plus} -- EG+ in \cite{diakonikolas2021efficient} if $\beta \in (0, 1)$) for solving \eqref{eq:NI}.
\item\mytb{Option 2.} If $u^k := Fy^{k-1}$, then we obtain $y^k := J_{\frac{\eta}{\beta}T}(x^k - \frac{\eta}{\beta} Fy^{k-1})$, leading to the \mytb{past-extragradient method} (or equivalently, \mytb{Popov's method} \cite{popov1980modification}) for solving \eqref{eq:NI}.
\end{itemize}
Clearly, when $T = \Nc_{\Xc}$, the normal cone of a nonempty, closed, and convex set $\Xc$, then $J_{\gamma T} = \proj_{\Xc}$, the projection onto $\Xc$ and hence, \eqref{eq:EG4NI} reduces to the extragradient variant for solving \eqref{eq:VIP} widely studied in the literature \cite{Facchinei2003,Konnov2001}.
In terms of computational complexity, \eqref{eq:EG4NI} requires two evaluations of $F$ at $x^k$ and $y^k$, and two evaluations of the resolvent $J_{\eta T}$ at each iteration.
It costs as twice as one iteration of the forward-backward splitting method \eqref{eq:FBS4NI}.
However, its does not require the co-coerciveness of $F$ to guarantee convergence.
Again, we use a scaling factor $\beta$ as in \eqref{eq:EG4NE}, which covers EG+ in \cite{diakonikolas2021efficient} as a special case.

Now, for given $\zeta^k \in Ty^k$ and $\xi^{k+1} \in Tx^{k+1}$, we denote  $\tilde{w}^k := Fx^k + \zeta^k$, and $\hat{w}^{k+1} := Fy^k + \xi^{k+1}$.
Then, we can rewrite \eqref{eq:EG4NI} equivalently to
\begin{equation}\label{eq:EG4NI_reform}
\arraycolsep=0.2em
\left\{\begin{array}{lclclcll}
y^k &:= & x^k - \frac{\eta}{\beta}(u^k + \zeta^k) & = & x^k - \frac{\eta}{\beta}(\tilde{w}^k + u^k - Fx^k), \quad &\zeta^k & \in & Ty^k, \vspace{1ex}\\
x^{k+1} &:= & x^k - \eta (Fy^k + \xi^{k+1}) & = & x^k - \eta\hat{w}^{k+1}, \quad & \xi^{k+1} & \in & Tx^{k+1}.
\end{array}\right.
\end{equation}
This representation makes \eqref{eq:EG4NI} looks like \eqref{eq:EG4NE}, and it is a key step for our convergence analysis.

\beforesubsec
\subsection{One-iteration analysis}\label{subsec:EG4NI_key_lemmas}
\aftersubsec
We establish both the best-iterate and last-iterate convergence rates of \eqref{eq:EG4NI} under the assumption that $F$ is monotone and $T$ is maximally $3$-cyclically monotone.
Note that if $T$ is maximally cyclically monotone, then $T = \partial{g}$, the subdifferential of a proper, closed, and convex function due to \cite[Theorem 22.18]{Bauschke2011}.
In this case, \eqref{eq:NI} reduces to \eqref{eq:MVIP}.
However, we do not require $T$ to be maximally cyclically monotone, but only $3$-maximally cyclically monotone, which may not be necessarily identical to $\partial{g}$.
Therefore, our result below is more general than existing variants in the recent literature, including \cite{cai2022tight}.

To analyze the convergence of \eqref{eq:EG4NI}, we also define
\begin{equation}\label{eq:EG4NI_w}
w^k := Fx^k + \xi^k \quad \text{for some}\quad \xi^k \in Tx^k.
\end{equation}
The following lemma provides key estimates to establish convergence of \eqref{eq:EG4NI}.

\begin{lemma}\label{le:EG4NI_key_estimate}
Suppose that $\sets{(x^k, y^k)}$ is generated by \eqref{eq:EG4NI}, $w^k$ is defined by \eqref{eq:EG4NI_w} and $T$ is maximally $3$-cyclically monotone.
Then, for any $\gamma > 0$, any $x^{\star} \in \zer{\Phi}$, we have
\begin{equation}\label{eq:EG4NI_key_est1}
\arraycolsep=0.2em
\begin{array}{lcl}
\norms{x^{k+1} - x^{\star}}^2 & \leq & \norms{x^k - x^{\star}}^2 -  (1 - \beta)\norms{x^{k+1} - x^k}^2 -  (\beta - \gamma)\norms{x^{k+1} - y^k}^2 \vspace{1ex}\\
&& - {~} \beta\norms{x^k - y^k}^2 + \frac{\eta^2}{\gamma}\norms{Fy^k - u^k}^2 - 2\eta\iprods{Fy^k - Fx^{\star}, y^k - x^{\star}}.
\end{array}
\end{equation}
If, in addition, $F$ is monotone and $\beta := 1$, then for $\omega \geq 1$, $\gamma > 0$, and $t > 0$, we have
\begin{equation}\label{eq:EG4NI_monotone_est1}
\arraycolsep=0.2em
\begin{array}{lcl}
\norms{w^{k+1}}^2 + (\omega - 1)\norms{w^{k+1} - \hat{w}^{k+1}}^2 &\leq & \norms{w^k}^2 - (1 - \gamma)\norms{w^k -  \tilde{w}^k}^2  + \left[\frac{1}{\gamma} + \frac{\omega(1+t) L^2\eta^2}{t} \right] \norms{Fx^k - u^k}^2  \vspace{1ex}\\
&& - {~}  \big[ 1 -  \omega (1+t) L^2\eta^2 \big] \norms{\hat{w}^{k+1} - \tilde{w}^k}^2.
\end{array}
\end{equation}
\end{lemma}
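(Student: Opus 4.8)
The plan is to treat \eqref{eq:EG4NI} as if it were \eqref{eq:EG4NE} by working with the reformulation \eqref{eq:EG4NI_reform}, which isolates the ``gradient-like'' direction $\hat{w}^{k+1} = Fy^k + \xi^{k+1}$ driving the update $x^{k+1} - x^k = -\eta\hat{w}^{k+1}$, together with the relation $\eta(u^k + \zeta^k) = \beta(x^k - y^k)$ coming from the first line. The new ingredient relative to Lemma~\ref{le:EG4NE_key_estimate1} (which was purely algebraic) is the $3$-cyclic monotonicity of $T$: wherever the resolvent steps introduce cross-terms built from the three selections $\xi^k \in Tx^k$, $\zeta^k \in Ty^k$, $\xi^{k+1} \in Tx^{k+1}$, these must be absorbed using $3$-cyclic monotonicity.

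\noindent\textbf{Step 1 (the estimate \eqref{eq:EG4NI_key_est1}).}
Expand $\norms{x^{k+1} - x^{\star}}^2 = \norms{x^k - x^{\star}}^2 - 2\eta\iprods{\hat{w}^{k+1}, x^{k+1} - x^{\star}} - \norms{x^{k+1} - x^k}^2$ using $x^{k+1} - x^k = -\eta\hat{w}^{k+1}$. Since $x^{\star} \in \zer{\Phi}$ we have $-Fx^{\star} \in Tx^{\star}$, so $3$-cyclic monotonicity of $T$ applied to the triple $\sets{(x^{\star}, -Fx^{\star}),\,(y^k, \zeta^k),\,(x^{k+1}, \xi^{k+1})}$ (in this cyclic order) gives $\iprods{\xi^{k+1}, x^{k+1} - x^{\star}} \geq -\iprods{Fx^{\star}, y^k - x^{\star}} + \iprods{\zeta^k, x^{k+1} - y^k}$; combined with $\hat{w}^{k+1} = Fy^k + \xi^{k+1}$ and the split $Fy^k = (Fy^k - Fx^{\star}) + Fx^{\star}$ on $y^k - x^{\star}$, this yields the lower bound $\iprods{\hat{w}^{k+1}, x^{k+1} - x^{\star}} \geq \iprods{Fy^k - Fx^{\star}, y^k - x^{\star}} + \iprods{Fy^k + \zeta^k, x^{k+1} - y^k}$. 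Now split $Fy^k + \zeta^k = (u^k + \zeta^k) + (Fy^k - u^k)$, substitute $\eta(u^k + \zeta^k) = \beta(x^k - y^k)$, use the identity $2\iprods{x^{k+1} - y^k, x^k - y^k} = \norms{x^k - y^k}^2 + \norms{x^{k+1} - y^k}^2 - \norms{x^{k+1} - x^k}^2$, and bound the leftover cross-term by Young's inequality $2\eta\iprods{Fy^k - u^k, x^{k+1} - y^k} \geq -\tfrac{\eta^2}{\gamma}\norms{Fy^k - u^k}^2 - \gamma\norms{x^{k+1} - y^k}^2$. Inserting this lower bound and collecting the squared terms reproduces \eqref{eq:EG4NI_key_est1}; in particular, no assumption on $F$ is needed here.

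\noindent\textbf{Step 2 (the estimate \eqref{eq:EG4NI_monotone_est1}).}
Set $\beta = 1$. Since $F$ is monotone and $T$ is monotone (being $3$-cyclically monotone), $\Phi = F + T$ is monotone, so $\iprods{w^{k+1} - w^k, x^{k+1} - x^k} \geq 0$; substituting $x^{k+1} - x^k = -\eta\hat{w}^{k+1}$ and expanding with the three-term identities gives $\norms{w^{k+1}}^2 \leq \norms{w^k}^2 - \norms{w^k - \hat{w}^{k+1}}^2 + \norms{w^{k+1} - \hat{w}^{k+1}}^2$, where $w^{k+1} - \hat{w}^{k+1} = Fx^{k+1} - Fy^k$. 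Add $(\omega - 1)\norms{w^{k+1} - \hat{w}^{k+1}}^2$ to both sides and bound $\omega\norms{Fx^{k+1} - Fy^k}^2 \leq \omega L^2\norms{x^{k+1} - y^k}^2$; the reformulation \eqref{eq:EG4NI_reform} with $\beta=1$ gives $x^{k+1} - y^k = -\eta\big[(\hat{w}^{k+1} - \tilde{w}^k) + (Fx^k - u^k)\big]$, so Young's inequality with parameter $t$ yields $\omega L^2\norms{x^{k+1} - y^k}^2 \leq \omega(1+t)L^2\eta^2\norms{\hat{w}^{k+1} - \tilde{w}^k}^2 + \tfrac{\omega(1+t)L^2\eta^2}{t}\norms{Fx^k - u^k}^2$. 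It remains to upper bound $-\norms{w^k - \hat{w}^{k+1}}^2$: inserting $\tilde{w}^k$ and expanding, $\norms{w^k - \hat{w}^{k+1}}^2 = \norms{w^k - \tilde{w}^k}^2 + \norms{\hat{w}^{k+1} - \tilde{w}^k}^2 - 2\iprods{w^k - \tilde{w}^k, \hat{w}^{k+1} - \tilde{w}^k}$, so after regrouping (the $\omega(1+t)L^2\eta^2\norms{\hat w^{k+1} - \tilde w^k}^2$ terms cancel) everything reduces to the claim $2\iprods{w^k - \tilde{w}^k, \hat{w}^{k+1} - \tilde{w}^k} \leq \gamma\norms{w^k - \tilde{w}^k}^2 + \tfrac{1}{\gamma}\norms{Fx^k - u^k}^2$. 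To prove it, write $w^k - \tilde{w}^k = \xi^k - \zeta^k$, $\hat{w}^{k+1} - \tilde{w}^k = (Fy^k - Fx^k) + (\xi^{k+1} - \zeta^k)$, and use $x^{k+1} - y^k = -\eta[(\hat{w}^{k+1} - \tilde{w}^k) + (Fx^k - u^k)]$ to trade $T$-selection inner products for inner products against iterate differences, applying $3$-cyclic monotonicity of $T$ to $\sets{(x^k, \xi^k),\,(y^k, \zeta^k),\,(x^{k+1}, \xi^{k+1})}$ together with the pairwise monotonicity of $T$; what survives is exactly the ``past error'' $\norms{Fx^k - u^k}^2$. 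Collecting all terms with the weights $\omega, \gamma, t$ gives \eqref{eq:EG4NI_monotone_est1}.

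\noindent\textbf{Main obstacle.}
The delicate point is precisely the last claim in Step~2. The cross-term $\iprods{w^k - \tilde{w}^k, \hat{w}^{k+1} - \tilde{w}^k}$ cannot be handled by a naive Cauchy--Schwarz/Young estimate against $\norms{Fx^k - u^k}^2$ (that would require $\norms{\hat{w}^{k+1} - \tilde{w}^k} \leq \norms{Fx^k - u^k}$, which is false in general), and plain monotonicity of $T$ is also insufficient; one genuinely needs the strictly stronger $3$-cyclic monotonicity to make the three selections $\xi^k,\zeta^k,\xi^{k+1}$ cancel in the right way. This is also the reason Lemma~\ref{le:EG4NI_key_estimate} is stated under $3$-cyclic rather than mere monotonicity of $T$; in the special case $T = \partial{g}$ (so that \eqref{eq:NI} becomes \eqref{eq:MVIP}) the required cancellation follows from convexity of $g$.
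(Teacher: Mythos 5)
Your Step 1 is correct and is essentially the paper's own argument for \eqref{eq:EG4NI_key_est1}: the same application of $3$-cyclic monotonicity to the triple $(x^{k+1},\xi^{k+1})$, $(x^{\star},-Fx^{\star})$, $(y^k,\zeta^k)$, the same splitting $Fy^k+\zeta^k=(u^k+\zeta^k)+(Fy^k-u^k)$ with $\eta(u^k+\zeta^k)=\beta(x^k-y^k)$, and the same Young step.

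Step 2, however, has a genuine gap, and it sits exactly at the point you flag as the main obstacle. Your assembly reduces \eqref{eq:EG4NI_monotone_est1} to the claim that $2\iprods{w^k-\tilde{w}^k,\hat{w}^{k+1}-\tilde{w}^k}\leq \gamma\norms{w^k-\tilde{w}^k}^2+\tfrac{1}{\gamma}\norms{Fx^k-u^k}^2$ for every $\gamma>0$, and this claim is false under the lemma's hypotheses, so no combination of $3$-cyclic and pairwise monotonicity can deliver it. Concretely, take $T=\Nc_{\Xc}$ with $\Xc=\{x\in\R^2: x_1\leq 0\}$ (maximally cyclically, hence $3$-cyclically, monotone), $Fx:=(x_2+1,\,-x_1-1)$ (monotone, $1$-Lipschitz, and $\zer{\Phi}=\{(-1,-1)\}\neq\emptyset$), $\beta=1$, $\eta=\tfrac{1}{2}$, $u^k:=Fx^k$, $x^k=(0,0)$, $\xi^k=(1,0)\in Tx^k$. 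Then $y^k=(-\tfrac12,\tfrac12)$, $\zeta^k=0$, $x^{k+1}=(-\tfrac34,\tfrac14)$, $\xi^{k+1}=0$, so $w^k-\tilde{w}^k=(1,0)$, $\hat{w}^{k+1}-\tilde{w}^k=Fy^k-Fx^k=(\tfrac12,\tfrac12)$, $Fx^k-u^k=0$, and your claim reads $1\leq\gamma$, which fails for every $\gamma<1$ --- precisely the regime ($u^k=Fx^k$, $\gamma\to 0^{+}$) needed later in Theorem~\ref{th:EG4NI_convergence}(a); meanwhile \eqref{eq:EG4NI_monotone_est1} itself holds comfortably here, so it is only your intermediate reduction that breaks. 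The structural reason is that by invoking only plain monotonicity of $\Phi$ between $x^{k+1}$ and $x^k$, you end up pairing $w^k-\tilde{w}^k=\xi^k-\zeta^k$ with $\hat{w}^{k+1}-\tilde{w}^k$, equivalently (up to the $Fx^k-u^k$ part) with $x^{k+1}-y^k$, and $\iprods{\xi^k-\zeta^k,x^{k+1}-y^k}$ has no sign (it equals $-\tfrac14$ above). The paper never splits things this way: it adds the $3$-cyclic inequality $\iprods{\xi^{k+1},x^{k+1}-x^k}+\iprods{\xi^k,x^k-y^k}+\iprods{\zeta^k,y^k-x^{k+1}}\geq 0$ to the monotonicity of $F$ between $x^k$ and $x^{k+1}$, obtaining $\iprods{w^{k+1}-\tilde{w}^k,x^{k+1}-x^k}+\iprods{w^k-\tilde{w}^k,x^k-y^k}\geq 0$, so that $w^k-\tilde{w}^k$ is paired with $x^k-y^k=\eta\tilde{w}^k+\eta(u^k-Fx^k)$; after polarization the only cross term is $2\iprods{w^k-\tilde{w}^k,u^k-Fx^k}$, which Young's inequality absorbs into the $\gamma$ and $\tfrac{1}{\gamma}$ terms, and then the $\omega$-scaled Lipschitz bound is added exactly as you do. Your Step 2 should be replaced by this three-point argument; as written it cannot be completed.
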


\begin{proof}
Firstly, since $\xi^{k+1} \in Tx^{k+1}$, $\zeta^k \in Ty^k$, and  $\xi^{\star} = -Fx^{\star} \in Tx^{\star}$,  by the maximally $3$-cyclic monotonicity of $T$, we have $\iprods{\xi^{k+1}, x^{k+1} - x^{\star}} + \iprods{\xi^{\star}, x^{\star} - y^k} + \iprods{\zeta^k, y^k - x^{k+1}} \geq 0$, leading to $\iprods{\xi^{k+1} - \zeta^k, x^{k+1} - x^{\star}}  \geq \iprods{\zeta^k - \xi^{\star}, x^{\star} - y^k} = -\iprods{Fx^{\star} + \zeta^k, y^k - x^{\star}}$.
Utilizing this inequality and the second line $x^k - x^{k+1} = \eta (Fy^k + \xi^{k+1})$ of \eqref{eq:EG4NI_reform}, for any $x^{\star} \in \zer{\Phi}$, we can derive that
\begin{equation*}
\arraycolsep=0.2em
\begin{array}{lcl}
\norms{x^{k+1} - x^{\star}}^2 & = & \norms{x^k - x^{\star}}^2 - 2\iprods{x^k - x^{k+1}, x^{k+1} - x^{\star}} - \norms{x^{k+1} - x^k}^2 \vspace{1ex}\\
&= & \norms{x^k - x^{\star}}^2 - 2\eta\iprods{Fy^k + \xi^{k+1}, x^{k+1} - x^{\star}} - \norms{x^{k+1} - x^k}^2 \vspace{1ex}\\
&= & \norms{x^k -x^{\star}}^2 - 2\eta\iprods{Fy^k + \zeta^k, x^{k+1} - x^{\star}} - \norms{x^{k+1} - x^k}^2 - 2\eta\iprods{\xi^{k+1} - \zeta^k, x^{k+1} - x^{\star}} \vspace{1ex}\\
&\leq & \norms{x^k - x^{\star}}^2  - \norms{x^{k+1} - x^k}^2 - 2\eta\iprods{Fy^k + \zeta^k, x^{k+1} - y^k} - 2\eta\iprods{Fy^k - Fx^{\star}, y^k - x^{\star}}.
\end{array}
\end{equation*}
Next, from the first line of  \eqref{eq:EG4NI_reform}, we have $\eta(Fy^k + \zeta^k) = \beta(x^k - y^k) + \eta(Fy^k - u^k)$.
Therefore, by the Cauchy-Schwarz inequality and Young's inequality, for any $\gamma > 0$, we can derive that
\begin{equation*}
\arraycolsep=0.2em
\begin{array}{lcl}
2\eta\iprods{Fy^k + \zeta^k, x^{k+1} - y^k} &= & 2\beta \iprods{x^k - y^k, x^{k+1} - y^k} + 2\eta\iprods{Fy^k - u^k, x^{k+1} - y^k} \vspace{1ex}\\
&\geq & \beta\left[\norms{x^k - y^k}^2 + \norms{x^{k+1} - y^k}^2 - \norms{x^{k+1} - x^k}^2\right]  -  2\eta\norms{Fy^k - u^k}\norms{x^{k+1} - y^k} \vspace{1ex}\\
&\geq & \beta \norms{x^k - y^k}^2 + (\beta - \gamma) \norms{x^{k+1} - y^k}^2 - \beta \norms{x^{k+1} - x^k}^2 - \frac{\eta^2}{\gamma}\norms{Fy^k - u^k}^2.
\end{array}
\end{equation*}
Finally, substituting this inequality into the above estimate, we obtain \eqref{eq:EG4NI_key_est1}.

To prove \eqref{eq:EG4NI_monotone_est1}, we process as follows.
Using again the $3$-cyclic monotonicity of $T$ but with $\xi^k \in Tx^k$, we have $\iprods{\xi^{k+1}, x^{k+1} - x^k}  + \iprods{\xi^k, x^k - y^k} + \iprods{\zeta^k, y^k - x^{k+1}}  \geq 0$.
By the monotonicity of $F$, we get $\iprods{Fx^{k+1} - Fx^k, x^{k+1} - x^k} \geq 0$.
Summing up these inequalities and using $w^k = Fx^k + \xi^k$ and $\tilde{w}^k := Fx^k + \zeta^k$, we have 
\begin{equation}\label{eq:EG4NI_lm2_proof1}
\iprods{w^{k+1} - \tilde{w}^k, x^{k+1} - x^k} + \iprods{w^k - \tilde{w}^k, x^k - y^k}  \geq 0.
\end{equation}
From the second line of \eqref{eq:EG4NI}, we have $x^{k+1} - x^k = -\eta(Fy^k + \xi^{k+1}) = -\eta\hat{w}^{k+1}$.
From the first line of  \eqref{eq:EG4NI} and $\beta = 1$, we also have $x^k - y^k = \frac{\eta}{\beta}(\tilde{w}^k + u^k - Fx^k) = \eta \tilde{w}^k + \eta(u^k - Fx^k)$.
Substituting these expressions into \eqref{eq:EG4NI_lm2_proof1}, and using an elementary inequality $2\iprods{z, s} \leq \gamma \norms{s}^2 +  \frac{\norms{z}^2}{\gamma}$ for any $\gamma > 0$, we have
\begin{equation*} 
\arraycolsep=0.2em
\begin{array}{lcl}
0 &\leq & 2 \iprods{\tilde{w}^k, \hat{w}^{k+1}} - 2 \iprods{w^{k+1}, \hat{w}^{k+1}}  + 2\iprods{w^k, \tilde{w}^k} - 2\norms{\tilde{w}^k}^2 + 2\iprods{w^k - \tilde{w}^k, u^k - Fx^k} \vspace{1ex}\\
&= & \norms{w^k}^2  - \norms{w^{k+1}}^2  + \norms{w^{k+1} - \hat{w}^{k+1}}^2 - \norms{\hat{w}^{k+1} - \tilde{w}^k}^2  - \norms{w^k - \tilde{w}^k}^2  + 2\iprods{w^k - \tilde{w}^k, Fx^k - u^k} \vspace{1ex}\\
&\leq & \norms{w^k}^2 - \norms{w^{k+1}}^2  + \norms{w^{k+1} - \hat{w}^{k+1}}^2 - \norms{\hat{w}^{k+1} - \tilde{w}^k}^2 - (1 - \gamma)\norms{w^k - \tilde{w}^k}^2  + \frac{1}{\gamma}\norms{Fx^k - u^k}^2.
\end{array}
\end{equation*}
This inequality leads to 
\begin{equation*}
\arraycolsep=0.2em
\begin{array}{lcl}
\norms{w^{k+1}}^2 &\leq & \norms{w^k}^2 + \norms{w^{k+1} - \hat{w}^{k+1}}^2 + \frac{1}{\gamma}\norms{Fx^k - u^k}^2 -  (1- \gamma)\norms{w^k - \tilde{w}^k}^2 - \norms{\hat{w}^{k+1} - \tilde{w}^k}^2.
\end{array}
\end{equation*}
Now, by the $L$-Lipschitz continuity of $F$, \eqref{eq:EG4NI}, and Young's inequality, for any $t > 0$, we have 
\begin{equation*} 
\arraycolsep=0.2em
\begin{array}{lcl}
\norms{w^{k+1} - \hat{w}^{k+1}}^2 & = & \norms{Fx^{k+1} - Fy^k}^2 \leq L^2\norms{x^{k+1} - y^k}^2 = L^2\eta^2\norms{\hat{w}^{k+1} - \tilde{w}^k + Fx^k - u^k}^2 \vspace{1ex}\\
& \leq & (1 + t)L^2\eta^2\norms{\hat{w}^{k+1} - \tilde{w}^k }^2 + \frac{(1 + t)L^2\eta^2}{t}\norms{Fx^k - u^k}^2.
\end{array}
\end{equation*}
Multiplying this inequality by $\omega \geq 1$ and adding the last inequality, we obtain \eqref{eq:EG4NI_monotone_est1}.
\end{proof}

\beforesubsec
\subsection{Unified convergence analysis}\label{subsec:EG4NI_analysis}
\aftersubsec
The following theorem proves the best-iterate and the last-iterate convergence of  \eqref{eq:EG4NI}.

\begin{theorem}\label{th:EG4NI_convergence}
Suppose that $\zer{\Phi} \neq \emptyset$, $F$ in \eqref{eq:NI} is $L$-Lipschitz continuous and satisfies $\iprods{Fx - Fx^{\star}, x - x^{\star}} \geq 0$ for all $x\in\dom{F}$ and some $x^{\star} \in \zer{\Phi}$, and $T$ is maximally $3$-cyclically monotone.
Let $\sets{(x^k, y^k)}$ be generated by \eqref{eq:EG4NI}.
Then, the following statements hold.
\begin{itemize}
\item[$\mathrm{(a)}$] \mytb{$($EG method$)$} 
If we  choose $u^k := Fx^k$ and  $0 < \eta < \frac{\beta}{L}$, then we have
\begin{equation}\label{eq:EG4NI_convergence_02}
\min_{1\leq l\leq k+1}\norms{Fx^l + \xi^l}^2 \leq \frac{1}{k+1}\sum_{l=1}^{k+1}\norms{Fx^l + \xi^l}^2 \leq \frac{C_0\norms{x^0 - x^{\star}}^2}{k+1}, \quad \xi^l \in Tx^l,
\end{equation} 
where $C_0 :=  \frac{3 + 2L^2}{\eta^2(\beta  - L\eta)} > 0$.
As a consequence, $\sets{\norms{x^k - x^{\star}}}$ is nonincreasing and
\begin{equation}\label{eq:EG4NI_convergence_01}
\lim_{k\to\infty}\norms{x^k - y^k} = \lim_{k\to\infty}\norms{Fy^k + \zeta^k} =  \lim_{k\to\infty}\norms{Fx^k + \xi^k} = 0.
\end{equation}
Moreover, $\sets{x^k}$ converges to $x^{\star}$, a solution of \eqref{eq:NI}.

\item[]$($\textbf{\textit{Last-iterate convergence rate of EG}}$)$ If, in addition, $F$ is monotone, then we have 
\begin{equation}\label{eq:EG4NI_monotone_est1}
\arraycolsep=0.2em
\begin{array}{lcl}
\norms{Fx^{k+1} + \xi^{k+1}}^2 \leq \norms{Fx^k + \xi^k}^2 \quad \text{and} \quad  \norms{Fx^k + \xi^k} \leq \frac{\sqrt{C_0}\norms{x^0 - x^{\star}}}{\sqrt{k}}.
\end{array}
\end{equation}
Hence, we have a last-iterate convergence rate $\norms{Fx^k + \xi^k} = \BigOs{1/\sqrt{k}}$ of $\norms{Fx^k + \xi^k}$, where $\xi^k \in Tx^k$.

\item[$\mathrm{(b)}$] \mytb{$($Past-EG method$)$} 
If we choose $u^k := Fy^{k-1}$ with $y^{-1} := x^0$ and $0 < \eta < \frac{\beta}{3L}$, then
\begin{equation}\label{eq:PEG4NI_convergence_03}
\min_{1\leq l\leq k+1}\norms{Fx^l + \xi^l}^2 \leq \frac{1}{k+1}\sum_{l=1}^{k+1} \left[ \norms{Fx^l + \xi^l}^2 + \psi \cdot \norms{x^l - y^{l-1}}^2 \right] \leq \frac{\hat{C}_0 \norms{x^0 - x^{\star}}^2}{k+1},
\end{equation} 
where $\hat{C}_0 := \frac{3 + 2L^2}{\eta^2(\beta -  3L\eta)} > 0$ and $\psi :=  \frac{L(3+2L^2)}{2\eta(\beta - 3L\eta)}$.
Moreover, $\sets{x^k}$ converges to  $x^{\star}$ and \eqref{eq:EG4NI_convergence_01} still holds.

\item[]$($\textbf{\textit{Last-iterate convergence rate of Past-EG}}$)$ If, in addition, $F$ is monotone, then we have 
\begin{equation}\label{eq:EG4NI_monotone_est2}
\arraycolsep=0.4em
\begin{array}{ll}
& \norms{Fx^{k+1} + \xi^{k+1}}^2 + \kappa\norms{Fx^{k+1} - Fy^k}^2 \leq \norms{Fx^k + \xi^k}^2 + \kappa\norms{Fx^k - Fy^{k-1}}^2 \vspace{1ex}\\
\text{and} & \norms{Fx^k + \xi^k}^2 \leq \norms{Fx^k + \xi^k}^2  + \kappa\norms{Fx^k - Fy^{k-1}}^2 \leq \frac{\hat{C}_0\norms{x^0 - x^{\star}}^2}{m_0 k},
\end{array}
\end{equation}
where $\kappa := \frac{1+ 9L^2\eta^2}{1 - 9L^2\eta^2} > 0$ and $m_0 := \max\set{\frac{\kappa}{\psi}, 1}$.
Therefore, we have  $\norms{Fx^k + \xi^k} = \BigOs{1/\sqrt{k}}$.
\end{itemize}
In both cases $\mathrm{(a)}$ and $\mathrm{(b)}$, if $T$ is maximally monotone, then we also have
\begin{equation}\label{eq:EG4NI_convergence_02_03}
\min_{1\leq l \leq k}\norms{G_{\eta\Phi}x^l} = \BigO{\frac{1}{\sqrt{k}}} \quad \text{and} \quad \min_{1\leq l \leq k}\norms{G_{\eta\Phi}y^l} = \BigO{\frac{1}{ \sqrt{k}}}, 
\end{equation}
where $G_{\eta\Phi}x := \frac{1}{\eta}(x - J_{\eta T}(x - \eta Fx))$ is given by \eqref{eq:FB_residual}.
Moreover, $\lim_{k\to\infty}\norms{G_{\eta\Phi}y^k} = \lim_{k\to\infty}\norms{G_{\eta\Phi}x^k} = 0$.
\end{theorem}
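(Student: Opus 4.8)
The plan is to obtain all four rate statements, together with the two asymptotic claims, from the two inequalities of Lemma~\ref{le:EG4NI_key_estimate}, treating \textbf{(a)} (EG, $u^k := Fx^k$) and \textbf{(b)} (past-EG, $u^k := Fy^{k-1}$) in parallel. For \textbf{(a)}, since $u^k = Fx^k$ the Lipschitz continuity of $F$ gives $\norms{Fy^k - u^k} \leq L\norms{x^k - y^k}$, while the cross term in \eqref{eq:EG4NI_key_est1} is nonpositive because $\iprods{Fy^k - Fx^{\star}, y^k - x^{\star}} \geq 0$ by the star-monotonicity hypothesis applied at $y^k$. Taking $\gamma := L\eta$ in \eqref{eq:EG4NI_key_est1} then yields the Fej\'er-type recursion
\begin{equation*}
\norms{x^{k+1} - x^{\star}}^2 \leq \norms{x^k - x^{\star}}^2 - (1-\beta)\norms{x^{k+1}-x^k}^2 - (\beta - L\eta)\big(\norms{x^{k+1}-y^k}^2 + \norms{x^k - y^k}^2\big),
\end{equation*}
all coefficients being nonnegative when $\beta \in (0,1]$ and $0 < \eta < \beta/L$. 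Telescoping gives monotonicity of $\sets{\norms{x^k - x^{\star}}}$, boundedness of $\sets{x^k}$, and $\sum_{k\geq 0}\big(\norms{x^{k+1}-y^k}^2 + \norms{x^k-y^k}^2\big) < \infty$. To turn this into a rate for $w^{k+1} := Fx^{k+1} + \xi^{k+1}$ I would use the identities $\eta\hat w^{k+1} = x^k - x^{k+1}$ and $w^{k+1} - \hat w^{k+1} = Fx^{k+1} - Fy^k$ read off from \eqref{eq:EG4NI_reform}, so that $\norms{w^{k+1}} \leq \tfrac1\eta\norms{x^{k+1}-x^k} + L\norms{x^{k+1}-y^k}$; squaring, bounding $\norms{x^{k+1}-x^k}^2$ by $\norms{x^{k+1}-y^k}^2 + \norms{x^k - y^k}^2$ up to constants through Young's inequality, and summing then produces \eqref{eq:EG4NI_convergence_02}. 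The limits in \eqref{eq:EG4NI_convergence_01} follow from summability together with $\eta(Fy^k+\zeta^k) = \beta(x^k-y^k) + \eta(Fy^k - Fx^k)$, and convergence of $\sets{x^k}$ to a solution is the standard Opial/Fej\'er argument: every cluster point $\bar x$ obeys $0 \in F\bar x + T\bar x$ since $F$ is continuous and $\gra{T}$ is closed, and Fej\'er monotonicity with respect to every element of $\zer{\Phi}$ (available once $F$ is monotone) pins down a unique limit. For the last-iterate bound I would invoke the second estimate of Lemma~\ref{le:EG4NI_key_estimate} with $\beta := 1$, $\omega := 1$, $\gamma \in (0,1)$, and $t > 0$ small enough that $(1+t)L^2\eta^2 \leq 1$ (possible since $\eta < 1/L$); because $u^k = Fx^k$ makes $\norms{Fx^k - u^k} = 0$, this collapses to $\norms{w^{k+1}}^2 \leq \norms{w^k}^2$, so $\sets{\norms{w^k}^2}$ is nonincreasing and the summation bound just obtained forces $\norms{w^k}^2 \leq C_0\norms{x^0-x^{\star}}^2/k$.

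Part \textbf{(b)} follows the same template with $u^k := Fy^{k-1}$, the new feature being $\norms{Fy^k - u^k} \leq L\norms{x^k-y^k} + L\norms{x^k - y^{k-1}}$, which injects a term in $\norms{x^k-y^{k-1}}^2$. The remedy is to telescope a Lyapunov sequence $V_k := \norms{x^k - x^{\star}}^2 + c\norms{x^k - y^{k-1}}^2$: with $\gamma := 2L\eta$ and $c := L\eta$ in \eqref{eq:EG4NI_key_est1}, the $\norms{x^k-y^{k-1}}^2$ produced at step $k$ is exactly absorbed by the $c\norms{x^k-y^{k-1}}^2$ of $V_k$, leaving nonnegative multiples of $\norms{x^{k+1}-y^k}^2$ and $\norms{x^k-y^k}^2$ as long as $\beta - 3L\eta > 0$, i.e.\ $0 < \eta < \beta/(3L)$; telescoping (using $y^{-1} := x^0$ so the boundary term vanishes) and the same conversion $\norms{w^{k+1}} \leq \tfrac1\eta\norms{x^{k+1}-x^k}+L\norms{x^{k+1}-y^k}$ produce \eqref{eq:PEG4NI_convergence_03}, with the $\psi\norms{x^l-y^{l-1}}^2$ term being precisely the telescoped good term. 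For the last-iterate rate I would use the second estimate of Lemma~\ref{le:EG4NI_key_estimate} with $\beta := 1$, noting that now $\norms{Fx^k - u^k} = \norms{Fx^k - Fy^{k-1}}$ exactly and $\norms{w^{k+1}-\hat w^{k+1}} = \norms{Fx^{k+1}-Fy^k}$, and track $\mathcal{V}_k := \norms{w^k}^2 + \kappa\norms{Fx^k - Fy^{k-1}}^2$; choosing $\omega := 1+\kappa$, $\gamma$ near $1$, and $t$ with $t(1-L^2\eta^2) > L^2\eta^2$ (feasible when $\eta < 1/(3L)$) makes $\tfrac1\gamma + \tfrac{\omega(1+t)L^2\eta^2}{t} \leq \omega - 1$, hence $\sets{\mathcal{V}_k}$ nonincreasing; comparing $\mathcal{V}_k$ against the summable quantity of \eqref{eq:PEG4NI_convergence_03} via $\norms{Fx^l-Fy^{l-1}}^2 \leq L^2\norms{x^l-y^{l-1}}^2$ gives $\mathcal{V}_k = \BigOs{1/k}$ and hence \eqref{eq:EG4NI_monotone_est2}.

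For the final joint claim \eqref{eq:EG4NI_convergence_02_03}: when $T$ is maximally monotone, $J_{\eta T}$ is firmly nonexpansive, so \eqref{eq:FBR_bound2} yields $\norms{G_{\eta\Phi}x^l} \leq \norms{Fx^l + \xi^l} = \norms{w^l}$ and $\norms{G_{\eta\Phi}y^l} \leq \norms{Fy^l + \zeta^l}$. The bound $\min_{1\leq l\leq k}\norms{w^l}^2 = \BigOs{1/k}$ is exactly \eqref{eq:EG4NI_convergence_02}/\eqref{eq:PEG4NI_convergence_03}, while $\norms{Fy^l+\zeta^l}$ is controlled by $\norms{x^l-y^l}$ (plus $\norms{x^l-y^{l-1}}$ in case \textbf{(b)}) through the resolvent identity $\eta(Fy^l+\zeta^l) = \beta(x^l-y^l) + \eta(Fy^l - u^l)$, whose squares are summable by the telescoped estimates; hence both minima are $\BigOs{1/\sqrt k}$, and $\norms{G_{\eta\Phi}x^k}\to 0$, $\norms{G_{\eta\Phi}y^k}\to 0$ follow from $\norms{w^k}\to 0$ and $\norms{Fy^k+\zeta^k}\to 0$ established above.

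\textbf{Main obstacle.} The Fej\'er/telescoping bookkeeping for EG is routine; the delicate point is the past-EG analysis, where one must choose the auxiliary constants ($c$, $\kappa$, $\gamma$, $t$, $\omega$) so that the genuinely two-step coupling term $\norms{Fy^k-Fy^{k-1}}^2$ (respectively $\norms{Fx^k-Fy^{k-1}}^2$) is swallowed by a telescoping Lyapunov function while every residual coefficient stays nonnegative --- this is what forces the smaller stepsize range $\eta < \beta/(3L)$ and is the step most likely to need some trial and error with the parameters. A secondary subtlety is the systematic conversion between the tracked residual $\norms{w^k} = \norms{Fx^k + \xi^k}$ and the geometric quantities $\norms{x^k-y^k}$, $\norms{x^k-y^{k-1}}$ that the Fej\'er estimate actually controls; this rests only on the resolvent identities in \eqref{eq:EG4NI_reform} and the firm nonexpansiveness of $J_{\eta T}$, not on any further hypothesis.
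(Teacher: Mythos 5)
Your proposal is correct and follows essentially the same route as the paper's proof: the same key lemma specialized with $\gamma$ proportional to $L\eta$ and the star-monotonicity dropping the cross term, the same conversion of the telescoped quantities into the residual via $\eta(Fx^{k+1}+\xi^{k+1}) = (x^k - x^{k+1}) + \eta(Fx^{k+1}-Fy^k)$, the same Lyapunov function $\norms{x^k-x^{\star}}^2 + c\,\norms{x^k-y^{k-1}}^2$ for past-EG, the same monotone-residual argument from the lemma's second estimate (the paper takes $\gamma=1$, $t=\tfrac18$, $\omega=\tfrac{2}{1-9L^2\eta^2}$, which is exactly your $\omega = 1+\kappa$), and the same use of \eqref{eq:FBR_bound2} for the final claim. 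The only deviations are bookkeeping: your choices $\gamma=2L\eta$, $c=L\eta$ in part (b) give the stated structure but slightly different constants than the theorem's $\hat{C}_0,\psi$ (the paper uses $\gamma=L\eta$, $c=\tfrac{3L\eta}{2}$), and your condition on $t$ in the past-EG last-iterate step should really be $\omega(1+t)L^2\eta^2\le 1$ together with $\tfrac1\gamma+\tfrac{\omega(1+t)L^2\eta^2}{t}\le\omega-1$ (forcing $t\ge\tfrac18$ and a stepsize marginally below $\tfrac{\beta}{3L}$), a tightness issue the paper's own computation shares.
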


\begin{remark}\label{re:EG4NI_remark1}
If $\beta = 1$, then the condition $0 < \eta < \frac{1}{L}$ in Part (a) is the same as in classical EG methods \cite{Facchinei2003}.
Similarly, when $\beta = 1$, the condition $0 < \eta \leq \frac{1}{3L}$ in Part (b) is the same as the one in \cite{popov1980modification}.
It remains open to establish both best-iterate and last-iterate convergence rates of \eqref{eq:EG4NI} under weak-Minty solution assumption or the co-hypomonotonicity of $\Phi$.
\end{remark}

\begin{proof}[Proof of Theorem~\ref{th:EG4NI_convergence}]
(a)~\mytb{(EG method)} 
First, since $u^k := Fx^k$, by the $L$-Lipschitz continuity of $F$,  we have $\norms{Fy^k - u^k} = \norms{Fy^k - Fx^k} \leq L\norms{x^k - y^k}$.
Using this inequality and $\iprods{Fy^k - Fx^{\star}, y^k - x^{\star}} \geq 0$ from our assumption into \eqref{eq:EG4NI_key_est1}, we have
\begin{equation}\label{eq:EG4NI_th41_proof1}
\arraycolsep=0.2em
\begin{array}{lcl}
\norms{x^{k+1} - x^{\star}}^2 & \leq & \norms{x^k - x^{\star}}^2 -  (1 - \beta)\norms{x^{k+1} - x^k}^2 -  (\beta - \gamma)\norms{x^{k+1} - y^k}^2  - \left(\beta - \frac{L^2\eta^2}{\gamma}\right)\norms{x^k - y^k}^2.
\end{array}
\end{equation}
Next, using the second line of \eqref{eq:EG4NI_reform},  Young's inequality, and the $L$-Lipschitz continuity of $F$, we have
\begin{equation}\label{eq:EG4NI_th41_proof2}
\arraycolsep=0.2em
\begin{array}{lcl}
\eta^2\norms{Fx^{k+1} + \xi^{k+1}}^2 & \overset{\tiny\eqref{eq:EG4NI_reform}}{ = } & \norms{x^{k+1} - x^k + \eta(Fx^{k+1} - Fy^k)}^2 \vspace{1ex}\\
&\leq & (1 + \frac{2L^2}{3})\norms{x^{k+1} - x^k}^2 + (1 + \frac{3}{2L^2}) \norms{Fx^{k+1} - Fy^k}^2 \vspace{1ex}\\
&\leq &\frac{3}{2}(1 + \frac{2L^2}{3})\norms{x^{k+1} - y^k}^2 + (1 + \frac{3}{2L^2})L^2\norms{x^{k+1} - y^k}^2 +3(1 + \frac{2L^2}{3})\norms{x^k - y^k}^2 \vspace{1ex}\\ 
&= & (3 + 2L^2) [ \norms{x^{k+1} - y^k}^2 + \norms{x^k - y^k}^2].
\end{array}
\end{equation}
Substituting \eqref{eq:EG4NI_th41_proof2}  into \eqref{eq:EG4NI_th41_proof1} with  $\gamma := L\eta$, and assuming that $L\eta < \beta$, we get
\begin{equation*}
\arraycolsep=0.2em
\begin{array}{lcl}
\norms{x^{k+1} - x^{\star}}^2 
&\leq & \norms{x^k - x^{\star}}^2 -  (\beta - L\eta) \big[ \norms{x^{k+1} - y^k}^2  +  \norms{x^k - y^k}^2 \big] \vspace{1ex}\\
& \leq & \norms{x^k - x^{\star}}^2 -  \frac{(\beta - L\eta)\eta^2}{3+2L^2}\norms{Fx^{k+1} + \xi^{k+1}}^2.
\end{array}
\end{equation*}
Now, using this estimate, we can easily prove \eqref{eq:EG4NI_convergence_02} in  Theorem~\ref{th:EG4NI_convergence}.
Note that, by \eqref{eq:FBR_bound2}, Young's inequality, the $L$-Lipschitz continuity of $F$, and $x^k - y^k = \frac{\eta}{\beta}(u^k + \zeta^k) = \frac{\eta}{\beta}(Fx^k + \zeta^k)$ from the first line of \eqref{eq:EG4NI_reform}, we have
\begin{equation}\label{eq:EG4NI_th41_proof2a}  
\arraycolsep=0.2em
\begin{array}{lcl}
\norms{Fy^k + \zeta^k}^2 & \leq & \frac{3}{2} \norms{Fy^k - Fx^k}^2 + 2\norms{Fx^k + \zeta^k}^2 \leq \frac{3(L^2\eta^2 + 2\beta^2)}{2\eta^2}\norms{x^k - y^k}^2.  
\end{array}
\end{equation}
Therefore, the remaining statements are direct consequences of  \eqref{eq:EG4NI_convergence_02},  \eqref{eq:EG4NI_th41_proof2a}, and \eqref{eq:EG4NI_th41_proof2} using standard arguments.

Finally, since $\beta = 1$, using $u^k := Fx^k$, $\omega := 1$, and $t = \gamma \to 0^{+}$ into \eqref{eq:EG4NI_monotone_est1}, and noting that $L\eta \leq 1$, we get 
\begin{equation*} 
\arraycolsep=0.2em
\begin{array}{lcl}
\norms{w^{k+1}}^2 &\leq & \norms{w^k}^2  - (1 -   L^2\eta^2)\norms{\hat{w}^{k+1} - \tilde{w}^k}^2 -  \norms{w^k -  \tilde{w}^k}^2 \leq \norms{w^k}^2.
\end{array}
\end{equation*}
This shows that $\set{\norms{w^k}}$ is monotonically nonincreasing.
Combining this property and \eqref{eq:EG4NI_convergence_02}, we obtain \eqref{eq:EG4NI_monotone_est1}.

(b)~\mytb{(Past-extragradient method)} 
If we choose $u^k := Fy^{k-1}$, then by Young's inequality and the $L$-Lipschitz continuity of $F$, similar to the proof of \eqref{eq:EG4NE_proof_p2_1}, we have 
\begin{equation*}
\arraycolsep=0.2em
\begin{array}{lcl}
\norms{Fy^k - u^k}^2 = \norms{Fy^k - Fy^{k-1}}^2 \leq 3L^2\norms{x^k - y^k}^2 + \frac{3L^2}{2}\norms{x^k - y^{k-1}}^2.
\end{array}
\end{equation*} 
Substituting this expression into \eqref{eq:EG4NI_key_est1}, using $\iprods{Fy^k - Fx^{\star}, y^k - x^{\star}} \geq 0$, and choosing $\gamma := L\eta$, we obtain
\begin{equation}\label{eq:EG4NI_th41p2_proof3}
\arraycolsep=0.2em
\begin{array}{lcl}
\norms{x^{k+1} - x^{\star}}^2 + \frac{3L\eta}{2} \norms{x^{k+1} - y^k}^2 & \leq &  \norms{x^k - x^{\star}}^2 + \frac{3L\eta}{2} \norms{x^k - y^{k-1}}^2 - (1 - \beta)\norms{x^{k+1} - x^k}^2 \vspace{1ex}\\
&& - {~}   \frac{L\eta}{2}\norms{x^{k+1} - y^k}^2 - ( \beta -  3L\eta)\big[ \norms{x^{k+1} - y^k}^2  +  \norms{x^k - y^k}^2\big].
\end{array}
\end{equation}
Assuming that $3L\eta < \beta$.
Then, combining  \eqref{eq:EG4NI_th41p2_proof3} and \eqref{eq:EG4NI_th41_proof2}, we can easily prove \eqref{eq:PEG4NI_convergence_03} in  Theorem~\ref{th:EG4NI_convergence}.
Moreover,  \eqref{eq:EG4NI_th41p2_proof3} also implies $\lim_{k\to\infty}\norms{x^k - y^k} = \lim_{k\to\infty}\norms{x^{k+1} - y^k} = \lim_{k\to\infty}\norms{Fx^k + \xi^k} = 0$.
By the second line of \eqref{eq:EG4NI_reform}, we have $\eta\norms{Fy^k + \zeta^k} \leq L\eta\norms{x^k - y^k} + L\eta\norms{x^k - y^{k-1}} + \norms{\eta(Fy^{k-1} + \zeta^k} = (L\eta + 1)\norms{x^k - y^k} + L\eta\norms{x^k - y^{k-1}}$.
Using this relation and $\lim_{k\to\infty}\norms{x^k - y^k} = \lim_{k\to\infty}\norms{x^{k+1} - y^k} =  0$, we obtain $\lim_{k\to\infty}\norms{Fy^k + \zeta^k} = 0$.
The convergence of $\sets{x^k}$ to  $x^{\star}$ follows from standard arguments.

Next, assume that $3L\eta < 1$ and $u^k := Fy^{k-1}$.
Then, substituting $\gamma := 1$, $t := \frac{1}{8}$, $\omega := \frac{2}{1 - 9L^2\eta^2} > 1$ into \eqref{eq:EG4NI_monotone_est1}, and using $\norms{Fx^k - u^k} = \norms{Fx^k - Fy^{k-1}} = \norms{w^k - \hat{w}^k}$, we obtain
\begin{equation*} 
\arraycolsep=0.2em
\begin{array}{lcl}
 \norms{w^{k+1}}^2 + \kappa \norms{w^{k+1} - \hat{w}^{k+1}}^2 & \leq & \norms{w^k}^2 + \kappa \norms{w^k - \hat{w}^k}^2 - \frac{4 - 27L^2\eta^2}{4(1-9L^2\eta^2)}\norms{\hat{w}^{k+1} - \tilde{w}^k}^2.
\end{array}
\end{equation*}
This is exactly  first line of \eqref{eq:EG4NI_monotone_est2}.
Since $3L\eta < 1$, we have $\kappa := \frac{1 + 9L^2\eta^2}{1 - 9L^2\eta^2} > 0$ and $ \frac{4 - 27L^2\eta^2}{4(1-9L^2\eta^2)} > 0$.
Let $m_0 := \max\sets{\frac{\kappa}{\psi}, 1}$, where $\psi$ is given in \eqref{eq:PEG4NI_convergence_03}.
Then, we have 
\begin{equation*}
\arraycolsep=0.2em
\begin{array}{lcl}
\norms{Fx^k + \xi^k}^2 + \kappa\norms{Fx^k - Fy^{k-1}}^2  & \leq & \norms{Fx^k + \xi^k}^2 + \kappa L^2\norms{x^k - y^{k-1}}^2 \leq  m_0\big[\norms{Fx^k + \xi^k}^2  + \psi \cdot \norms{x^k - y^{k-1}}^2 \big].
\end{array}
\end{equation*}
Combining this inequality and \eqref{eq:PEG4NI_convergence_03}, we obtain  $\frac{1}{k+1}\sum_{l=1}^{k+1} \left[\norms{Fx^l + \xi^l}^2 + \kappa\norms{Fx^l - Fy^{l-1}}^2  \right] \leq \frac{\hat{C}_0\norms{x^0 - x^{\star}}^2}{m_0(k+1)}$.
This bound together with the first line of \eqref{eq:EG4NI_monotone_est2} imply the second line of \eqref{eq:EG4NI_monotone_est2}.

Finally, since $T$ is maximally monotone,  $J_{\eta T}$ is single-valued and nonexpansive.
By using $\norms{G_{\Phi}x^k} \leq \norms{Fx^k + \xi^k}$ from \eqref{eq:FBR_bound2} and either \eqref{eq:EG4NI_convergence_02} or \eqref{eq:PEG4NI_convergence_03}, we obtain \eqref{eq:EG4NI_convergence_02_03}.
Using again \eqref{eq:FBR_bound2}  and the limits \eqref{eq:EG4NI_convergence_01}, we obtain $\lim_{k\to\infty}\norms{G_{\Phi}x^k} \leq \lim_{k\to\infty}\norms{Fx^k + \xi^k} = 0$ and $\lim_{k\to\infty}\norms{G_{\Phi}y^k} \leq \lim_{k\to\infty}\norms{Fy^k + \zeta^k} = 0$.
\end{proof}
\beforesec
\section{Forward-Backward-Forward Splitting-Type Methods for \eqref{eq:NI}}\label{sec:FBFS4NI}
\aftersec
Alternative to \eqref{eq:EG4NI}, we now survey recent results on the best-iterate  convergence rates of the FBFS method and its variants for solving \eqref{eq:NI}.
As before, we provide a unified analysis that covers a wide class of FBFS variants, which can also solve \eqref{eq:NI} under a weak-Minty solution and particularly, the co-hypomonotonicity. 

\beforesubsec
\subsection{The class of forward-backward-forward splitting methods}\label{subsec:FBF4NI}
\aftersubsec
The forward-backward-forward splitting (FBFS) method was proposed by P. Tseng in \cite{tseng2000modified} for solving \eqref{eq:NI}, which is originally called a \textit{modified forward-backward splitting} method.
This method was developed to solve \eqref{eq:NI} with additional constraints.
Instead of presenting the original scheme in \cite{tseng2000modified}, we  modify it using the idea of EG$+$ in \cite{diakonikolas2021efficient} and combine two variants in one.
Starting from an initial point $x^0 \in \dom{\Phi}$, at each iteration $k\geq 0$, we update
\begin{equation}\label{eq:FBFS4NI}
\arraycolsep=0.2em
\left\{\begin{array}{lcl}
y^k &\in & J_{\frac{\eta}{\beta} T}(x^k - \frac{\eta}{\beta} u^k), \vspace{1ex}\\
x^{k+1} &:= & \beta y^k + (1-\beta)x^k - \eta(Fy^k - u^k),
\end{array}\right.
\tag{FBFS2}
\end{equation}
where $\eta > 0$ is a given stepsize, $\beta \in (0, 1]$ is a scaling factor, and $u^k$ is one of the following choices:
\begin{itemize}
\itemsep=0.1em
\item\mytb{Option 1.} If we choose $u^k := Fx^k$, then we obtain a variant of \mytb{Tseng's FBFS method}.
In particular, if $\beta = 1$, then we get exactly \mytb{Tseng's FBFS method} in \cite{tseng2000modified} for solving \eqref{eq:NI}.
Note that one can extend \eqref{eq:FBFS4NI} to cover the case $\zer{\Phi}\cap\Cc \neq\emptyset$ for some subset $\Cc$ of $\R^p$ as presented in \cite{tseng2000modified}.
Nevertheless, for simplicity, we assume that $\Xc = \R^p$.
As shown in \eqref{eq:FBFS4NE}, if $T = 0$, then \eqref{eq:FBFS4NI} reduces to the classical extragradient method \eqref{eq:EG4NE}.
However, if $T\neq 0$, then \eqref{eq:FBFS4NI} is different from \eqref{eq:EG4NI}.

\item\mytb{Option 2.} If we choose $u^k := Fy^{k-1}$, where $y^{-1} := x^0$, then we obtain a \mytb{past-FBFS variant}.
This variant can also be referred to as a generalized variant of the \mytb{optimistic gradient (OG) method}, see, e.g., \cite{daskalakis2018training,mokhtari2020unified,mokhtari2020convergence}.
If $\beta = 1$, then $y^{k+1} \in J_{\eta T}(x^{k+1} - \eta Fy^k)$ and $x^{k+1} = y^k - \eta(Fy^k - Fy^{k-1})$.
Combining these two expressions, \eqref{eq:FBFS4NI} reduces to 
\begin{equation}\label{eq:FRBS4NI2}
\arraycolsep=0.2em
\begin{array}{lcl}
y^{k+1} &\in &  J_{\eta T}\left(y^k - \eta (2Fy^k - Fy^{k-1}) \right).
\end{array}
\tag{FRBS2}
\end{equation}
This is exactly the \mytb{forward-reflected-backward splitting (FRBS) method} in \cite{malitsky2020forward}.

\end{itemize}
Compared to \eqref{eq:EG4NI}, we do not require $T$ to be monotone in \eqref{eq:FBFS4NI}.
However, to guarantee the well-definedness of $\sets{(x^k, y^k)}$, we need $y^k \in \ran{J_{\eta T}}$ and $y^k \in \dom{F}$.
Hence, we can assume that $\ran{J_{\eta T}}\subseteq\dom{F} = \R^p$ and $\dom{J_{\eta T}} = \R^p$. 
This requirement makes \eqref{eq:FBFS4NI} cover a broader class of problems than \eqref{eq:EG4NI}, and it obviously holds if $T$ is maximally monotone and $F$ is monotone and Lipschitz continuous as in \eqref{eq:EG4NI}.
In addition, \eqref{eq:FBFS4NI} only requires one evaluation of $J_{\eta T}$ instead of two  as in \eqref{eq:EG4NI}, reducing the per-iteration complexity when $J_{\eta T}$ is expensive to evaluate. 
 
Similar to \eqref{eq:EG4NI_reform}, we can rewrite  \eqref{eq:FBFS4NI} equivalently to 
\begin{equation}\label{eq:FBFS4NI_reform}
\arraycolsep=0.2em
\left\{\begin{array}{lcl}
y^k &:= &  x^k - \frac{\eta}{\beta}(u^k + \zeta^k), \quad \zeta^k \in Ty^k, \vspace{1ex}\\
x^{k+1} &:= & x^k + \beta (y^k - x^k) - \eta(Fy^k - u^k).
\end{array}\right.
\end{equation}
This representation is an important step for our convergence analysis below.

\beforesubsec
\subsection{One-iteration analysis}\label{subsec:FBFS4NI_key_lemmas}
\aftersubsec
The following lemma provides a key estimate to establish convergence of \eqref{eq:FBFS4NI}.

\begin{lemma}\label{le:FBFS4NI_key_estimate}
Suppose that  $\sets{(x^k, y^k)}$ is generated by \eqref{eq:FBFS4NI} and $T$ is not necessary monotone, but $\ran{J_{\eta T}} \subseteq \dom{F} = \R^p$ and $\dom{J_{\eta T}} = \R^p$.
Then, for any $\gamma > 0$, any $x^{\star} \in \zer{\Phi}$, we have
\begin{equation}\label{eq:FBFS4NI_key_est1}
\arraycolsep=0.2em
\begin{array}{lcl}
\norms{x^{k+1} - x^{\star}}^2 & \leq & \norms{x^k - x^{\star}}^2 -  (1 - \beta)\norms{x^{k+1} - x^k}^2 -  (\beta - \gamma)\norms{x^{k+1} - y^k}^2 \vspace{1ex}\\
&& - {~} \beta\norms{x^k - y^k}^2 + \frac{\eta^2}{\gamma}\norms{Fy^k - u^k}^2 - 2\eta\iprods{Fy^k + \zeta^k, y^k - x^{\star}}.
\end{array}
\end{equation}
\end{lemma}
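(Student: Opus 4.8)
The plan is to follow the one-iteration template already used in Lemmas~\ref{le:EG4NE_key_estimate1} and \ref{le:EG4NI_key_estimate}, exploiting the reformulation \eqref{eq:FBFS4NI_reform}. The first (and essentially only clever) step is to notice that, even though FBFS uses a single resolvent, its update collapses to an EG-type step driven by $Fy^k + \zeta^k$ with $\zeta^k \in Ty^k$. Indeed, from the first line of \eqref{eq:FBFS4NI_reform} we have $\eta(u^k + \zeta^k) = \beta(x^k - y^k)$, i.e. $\eta u^k = \beta(x^k - y^k) - \eta\zeta^k$; substituting this into the second line of \eqref{eq:FBFS4NI_reform} yields
\begin{equation*}
x^{k+1} - x^k = \beta(y^k - x^k) - \eta(Fy^k - u^k) = -\eta(Fy^k + \zeta^k),
\end{equation*}
and, rearranging the same relation, $\eta(Fy^k + \zeta^k) = \beta(x^k - y^k) + \eta(Fy^k - u^k)$.

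Next I would expand the squared distance via the cosine rule and insert the first identity above:
\begin{equation*}
\norms{x^{k+1} - x^{\star}}^2 = \norms{x^k - x^{\star}}^2 - 2\eta\iprods{Fy^k + \zeta^k, x^{k+1} - x^{\star}} - \norms{x^{k+1} - x^k}^2,
\end{equation*}
and split $x^{k+1} - x^{\star} = (y^k - x^{\star}) + (x^{k+1} - y^k)$. The term $-2\eta\iprods{Fy^k + \zeta^k, y^k - x^{\star}}$ is carried through unchanged, since it is the last term of \eqref{eq:FBFS4NI_key_est1} (it will be controlled later by monotonicity-type assumptions on $F$ and $T$). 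For the cross term $2\eta\iprods{Fy^k + \zeta^k, x^{k+1} - y^k}$, I would use the second identity to write it as $2\beta\iprods{x^k - y^k, x^{k+1} - y^k} + 2\eta\iprods{Fy^k - u^k, x^{k+1} - y^k}$; the first piece is handled by the three-point identity $2\iprods{x^k - y^k, x^{k+1} - y^k} = \norms{x^k - y^k}^2 + \norms{x^{k+1} - y^k}^2 - \norms{x^{k+1} - x^k}^2$, and the second by Cauchy--Schwarz together with Young's inequality $2wz \le \gamma w^2 + z^2/\gamma$, producing the terms $-(\beta - \gamma)\norms{x^{k+1} - y^k}^2$ and $\tfrac{\eta^2}{\gamma}\norms{Fy^k - u^k}^2$. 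Collecting everything, the $\beta$-multiple of $\norms{x^{k+1} - x^k}^2$ generated by the three-point identity combines with the standalone $-\norms{x^{k+1} - x^k}^2$ to leave exactly $-(1-\beta)\norms{x^{k+1} - x^k}^2$, which gives \eqref{eq:FBFS4NI_key_est1}.

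I do not anticipate a genuine obstacle: the argument is pure bookkeeping once the collapse $x^{k+1} - x^k = -\eta(Fy^k + \zeta^k)$ is spotted, and that identity is forced by using the first line of \eqref{eq:FBFS4NI_reform} to eliminate $u^k$. The only points requiring care are the sign convention when passing from a lower bound on $2\eta\iprods{Fy^k + \zeta^k, x^{k+1} - y^k}$ to an upper bound on its negative, and the standing hypotheses $\ran{J_{\eta T}} \subseteq \dom{F} = \R^p$ and $\dom{J_{\eta T}} = \R^p$, which are used only to guarantee that $y^k$, $Fy^k$, and $\zeta^k \in Ty^k$ are well defined so that every inner product above makes sense. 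Note also that monotonicity of $T$ is never invoked, which is why the lemma holds for a non-monotone $T$, in contrast with Lemma~\ref{le:EG4NI_key_estimate}.
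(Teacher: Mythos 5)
Your proposal is correct and follows essentially the same route as the paper: the collapse $x^{k+1}-x^k=-\eta(Fy^k+\zeta^k)$ obtained by eliminating $u^k$ via the first line of \eqref{eq:FBFS4NI_reform}, the cosine-rule expansion with the split $x^{k+1}-x^{\star}=(x^{k+1}-y^k)+(y^k-x^{\star})$, the rewriting $\eta(Fy^k+\zeta^k)=\beta(x^k-y^k)+\eta(Fy^k-u^k)$, and the three-point identity combined with Cauchy--Schwarz and Young's inequality are exactly the steps of the paper's proof, with the same bookkeeping yielding $-(1-\beta)\norms{x^{k+1}-x^k}^2$. Your closing remarks about the role of the range/domain assumptions and the absence of any monotonicity of $T$ also match the paper's setting.
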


\begin{proof}
First, combining the first and second lines of  \eqref{eq:FBFS4NI_reform}, we obtain $x^{k+1} = x^k - \beta(x^k - y^k) + \eta(u^k - Fy^k) = x^k - \eta(Fy^k + \zeta^k)$.
Using this relation, we have
\begin{equation*} 
\arraycolsep=0.2em
\begin{array}{lcl}
\norms{x^{k+1} - x^{\star}}^2  &= & \norms{x^k - x^{\star}}^2 - 2\eta\iprods{Fy^k + \zeta^k, x^{k+1} - y^k} - \norms{x^{k+1} - x^k}^2 -  2\eta\iprods{Fy^k + \zeta^k, y^k - x^{\star}}.
\end{array} 
\end{equation*}
Next, from the first line of \eqref{eq:FBFS4NI_reform}, we have $\eta(Fy^k + \zeta^k) = \beta(x^k - y^k) + \eta(Fy^k - u^k)$.
Therefore, by the Cauchy-Schwarz inequality and Young's inequality, we can derive that
\begin{equation*}
\arraycolsep=0.2em
\begin{array}{lcl}
2\eta\iprods{Fy^k + \zeta^k, x^{k+1} - y^k} &= & 2\beta \iprods{x^k - y^k, x^{k+1} - y^k} + 2\eta\iprods{Fy^k - u^k, x^{k+1} - y^k} \vspace{1ex}\\
&\geq & \beta\left[\norms{x^k - y^k}^2 + \norms{x^{k+1} - y^k}^2 - \norms{x^{k+1} - x^k}^2\right] - 2\eta\norms{Fy^k - u^k}\norms{x^{k+1} - y^k} \vspace{1ex}\\
&\geq & \beta \norms{x^k - y^k}^2 + (\beta - \gamma) \norms{x^{k+1} - y^k}^2 - \beta \norms{x^{k+1} - x^k}^2 - \frac{\eta^2}{\gamma}\norms{Fy^k - u^k}^2.
\end{array}
\end{equation*}
Finally, substituting this inequality into the above estimate, we obtain \eqref{eq:FBFS4NI_key_est1}.
\end{proof}

\beforesubsec
\subsection{Unified convergence analysis}\label{subsec:FBFS4NI_analysis}
\aftersubsec
The following theorem establishes the best-iterate convergence rates of  \eqref{eq:FBFS4NI} under star-co-hypomonotonicity.

\begin{theorem}\label{th:FBFS4NI_convergence}
Suppose that $F$ in \eqref{eq:NI} is $L$-Lipschitz continuous, and $\zer{\Phi} \neq \emptyset$.
Suppose additionally that $\Phi$ is $\rho$-star co-hypomonotone $($i.e. $\iprods{u, x - x^{\star}} \geq - \rho \norms{u}^2$ for all $(x, u) \in\gra{\Phi}$ and for any $x^{\star} \in \zer{\Phi}$, where $\rho \geq 0$, 
$T$ is not necessarily monotone, but $\ran{J_{\eta T}} \subseteq \dom{F} = \R^p$ and $\dom{J_{\eta T}} = \R^p$.
Let $\sets{(x^k, y^k)}$ be generated by \eqref{eq:FBFS4NI}.
Then, the following statements hold.
\begin{itemize}
\item[$\mathrm{(a)}$] \mytb{$($FBFS method$)$} 
Let us choose $u^k := Fx^k$ and assume that $L\rho \leq \frac{3\sqrt{2} -2}{12} \approx 0.1869$.
Then, for any $\beta \in (0, 1]$, if $\eta$ is chosen such that
\begin{equation}\label{eq:FBFS4NI_stepsize} 
\arraycolsep=0.2em
\begin{array}{lcl}
0 \leq \frac{\beta \big[ 1 - \sqrt{1 - 24 L\rho(3L\rho + 1)} \big]}{2L(3L\rho + 1)} < \eta < \frac{\beta \big[ 1 + \sqrt{1 - 24 L\rho(3L\rho + 1)} \big]}{2L(3L\rho + 1)} \leq \frac{\beta}{L},
\end{array}
\end{equation}
then we have
\begin{equation}\label{eq:FBFS4NI_convergence_02}
\min_{1\leq l \leq k+1}\norms{Fx^l + \xi^l}^2 \leq \frac{1}{k+1}\sum_{l=1}^{k+1}\norms{Fx^{l} + \xi^{l}}^2 \leq \frac{C_{\rho}\norms{x^0 - x^{\star}}^2}{k+1}, \quad \xi^l \in Tx^l,
\end{equation} 
where $C_{\rho} :=  \frac{3 + 2L^2}{\eta(\beta\eta - 6\beta^2\rho - (3L\rho + 1)L\eta^2)} > 0$.
As a consequence, $\sets{\norms{x^k - x^{\star}}}$ is nonincreasing and
\begin{equation}\label{eq:FBFS4NI_convergence_01}
\lim_{k\to\infty}\norms{x^k - y^k} = \lim_{k\to\infty}\norms{Fy^k + \zeta^k} =  \lim_{k\to\infty}\norms{Fx^k + \xi^k} = 0.
\end{equation}
Moreover, $\sets{x^k}$ converges to $x^{\star} \in \zer{\Phi}$, a solution of \eqref{eq:NI}.

\item[$\mathrm{(b)}$] \mytb{$($Past-FBFS/OG method$)$} 
Let us choose $u^k := Fy^{k-1}$ with $y^{-1} := x^0$ and assume that $L\rho \leq \frac{2\sqrt{3} - 3}{24} \approx 0.01934$.
Then, for any $\beta \in (0, 1]$, if  $\eta$ is chosen such that
\begin{equation}\label{eq:FBFS4NI_stepsize}
\arraycolsep=0.2em
\begin{array}{lcl}
0 \leq \frac{\beta \big[ 1 - \sqrt{1 - 48 L\rho(4L\rho + 1)} \big]}{6L(4L\rho + 1)} < \eta < \frac{\beta \big[ 1 + \sqrt{1 - 48 L\rho(4L\rho + 1)} \big]}{6L(4L\rho + 1)} \leq \frac{\beta}{3L},
\end{array}
\end{equation}
then we have
\begin{equation}\label{eq:FBFS4NI_convergence_03}
\min_{1\leq l \leq k+1}\norms{Fx^l + \xi^l}^2 \leq \frac{1}{k+1}\sum_{l=1}^{k+1} \left[ \norms{Fx^l + \xi^l}^2 + \frac{L\eta^2 + 8\beta^2\rho}{2\eta} \norms{x^l - y^{l-1}}^2 \right] \leq \frac{\hat{C}_{\rho}\norms{x^0 - x^{\star}}^2}{k + 1},
\end{equation} 
where $\hat{C}_{\rho} := \frac{3 + 2L^2}{\eta(\beta\eta - 4\beta^2\rho - 3(4L\rho + 1)L\eta^2)} > 0$.
Moreover, $\sets{x^k}$ converges to  $x^{\star} \in \zer{\Phi}$ and \eqref{eq:EG4NI_convergence_01} still holds.
\end{itemize}
In both cases $\mathrm{(a)}$ and $\mathrm{(b)}$, if $J_{\eta T}$ is single-valued and nonexpansive, then 
\begin{equation}\label{eq:FBFS4NI_convergence_02_03}
\min_{1\leq l \leq k}\norms{G_{\eta\Phi}x^l} = \BigO{\frac{1}{\sqrt{k}}} \quad \text{and} \quad \min_{1\leq l \leq k}\norms{G_{\eta\Phi}y^l} = \BigO{\frac{1}{ \sqrt{k}}}, 
\end{equation}
where $G_{\eta\Phi}x := \frac{1}{\eta}(x - J_{\eta T}(x - \eta Fx))$ is given by \eqref{eq:FB_residual}.
Moreover, $\lim_{k\to\infty}\norms{G_{\eta\Phi}y^k} = \lim_{k\to\infty}\norms{G_{\eta\Phi}x^k} = 0$.
\end{theorem}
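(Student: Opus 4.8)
The plan is to run everything off the one-iteration inequality \eqref{eq:FBFS4NI_key_est1} of Lemma~\ref{le:FBFS4NI_key_estimate}, which already extracts the useful negative terms; what remains is to control the error term $\tfrac{\eta^2}{\gamma}\norms{Fy^k - u^k}^2$ and the inner-product term $-2\eta\iprods{Fy^k + \zeta^k, y^k - x^{\star}}$. For the latter, note that $\zeta^k \in Ty^k$ gives $(y^k, Fy^k + \zeta^k) \in \gra{\Phi}$, so $\rho$-star-co-hypomonotonicity of $\Phi$ yields $-2\eta\iprods{Fy^k + \zeta^k, y^k - x^{\star}} \leq 2\eta\rho\norms{Fy^k + \zeta^k}^2$. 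From the resolvent line of \eqref{eq:FBFS4NI_reform} one has $\tfrac{\beta}{\eta}(x^k - y^k) = u^k + \zeta^k$, hence $Fy^k + \zeta^k = \tfrac{\beta}{\eta}(x^k - y^k) + (Fy^k - u^k)$; combined with $L$-Lipschitz continuity of $F$ this bounds $\norms{Fy^k + \zeta^k}^2$ (and, through $u^k$, also $\norms{Fy^k - u^k}^2$) by a fixed multiple of $\norms{x^k - y^k}^2$ in Option~1 and of $\norms{x^k - y^k}^2 + \norms{x^k - y^{k-1}}^2$ in Option~2. I would fix $\gamma := L\eta$ throughout and drop the non-positive term $-(1-\beta)\norms{x^{k+1} - x^k}^2$ (recall $\beta \in (0,1]$).

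\textbf{Part (a).} With $u^k := Fx^k$, substituting the above into \eqref{eq:FBFS4NI_key_est1} and collecting terms yields a Fejér-type descent $\norms{x^{k+1} - x^{\star}}^2 \leq \norms{x^k - x^{\star}}^2 - c\,[\norms{x^{k+1} - y^k}^2 + \norms{x^k - y^k}^2]$ with $c > 0$ exactly when $\eta$ lies in the stated interval, whose nonemptiness is governed by the discriminant $1 - 24L\rho(3L\rho + 1) \geq 0$, i.e. $L\rho \leq \tfrac{3\sqrt{2}-2}{12}$. Next I would reuse the purely algebraic residual bound \eqref{eq:EG4NI_th41_proof2} — it only needs $x^{k+1} - x^k = -\eta(Fy^k + \zeta^k)$ (obtained by combining the two lines of \eqref{eq:FBFS4NI_reform}) and Lipschitz continuity — to get $\eta^2\norms{Fx^{k+1} + \xi^{k+1}}^2 \leq (3 + 2L^2)[\norms{x^{k+1} - y^k}^2 + \norms{x^k - y^k}^2]$, which turns the descent into $\norms{x^{k+1} - x^{\star}}^2 \leq \norms{x^k - x^{\star}}^2 - C_{\rho}^{-1}\norms{Fx^{k+1} + \xi^{k+1}}^2$. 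Telescoping over $l = 0, \dots, k$ and dividing by $k+1$ gives \eqref{eq:FBFS4NI_convergence_02}; monotonicity of $\sets{\norms{x^k - x^{\star}}}$, the limits \eqref{eq:FBFS4NI_convergence_01} (using in addition $\norms{Fy^k + \zeta^k}^2 = \mathcal{O}(\norms{x^k - y^k}^2)$), and convergence of $\sets{x^k}$ to a point of $\zer{\Phi}$ follow by standard Fejér-monotonicity arguments.

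\textbf{Part (b) and the $G_{\eta\Phi}$ conclusion.} With $u^k := Fy^{k-1}$ and $y^{-1} := x^0$, inserting $\pm Fx^k$ gives $\norms{Fy^k - Fy^{k-1}}^2 \leq 3L^2\norms{x^k - y^k}^2 + \tfrac{3L^2}{2}\norms{x^k - y^{k-1}}^2$, and the inner-product term now contributes $2\eta\rho\norms{Fy^k + \zeta^k}^2$ of the same type. The new term $\norms{x^k - y^{k-1}}^2$ is absorbed by propagating a Lyapunov function $\mathcal{L}_k := \norms{x^k - x^{\star}}^2 + c'\norms{x^k - y^{k-1}}^2$ with a suitable $c' > 0$ — the exact analog of \eqref{eq:EG4NI_th41p2_proof3} — noting $\mathcal{L}_0 = \norms{x^0 - x^{\star}}^2$. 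With $\gamma := L\eta$ and $c'$ chosen so that the $\norms{x^{k+1} - y^k}^2$ term generated at step $k+1$ is dominated, one gets $\mathcal{L}_{k+1} \leq \mathcal{L}_k - (\text{const})[\norms{Fx^{k+1} + \xi^{k+1}}^2 + \tfrac{L\eta^2 + 8\beta^2\rho}{2\eta}\norms{x^{k+1} - y^k}^2]$ whenever $\beta\eta - 4\beta^2\rho - 3(4L\rho+1)L\eta^2 > 0$, i.e. on the stated interval, nonempty iff $L\rho \leq \tfrac{2\sqrt{3}-3}{24}$; telescoping $\mathcal{L}_k$ gives \eqref{eq:FBFS4NI_convergence_03}, and the remaining claims follow as in (a) together with \eqref{eq:EG4NI_convergence_01}. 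Finally, under the stated hypothesis on $J_{\eta T}$, \eqref{eq:FBR_bound2} gives $\norms{G_{\eta\Phi}x} \leq \norms{Fx + \xi}$ for all $(x,\xi) \in \gra{T}$; applying this at $x = x^l$ and at $x = y^l$, and combining with \eqref{eq:FBFS4NI_convergence_02}/\eqref{eq:FBFS4NI_convergence_03} and with $\norms{Fy^l + \zeta^l}^2 = \mathcal{O}(\norms{x^l - y^l}^2 + \norms{x^l - y^{l-1}}^2)$, yields \eqref{eq:FBFS4NI_convergence_02_03}, while the stated limits are immediate from \eqref{eq:FBFS4NI_convergence_01}.

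\textbf{Main obstacle.} There is no conceptual difficulty beyond Lemma~\ref{le:FBFS4NI_key_estimate}; the work is entirely in the constant bookkeeping. One has to choose $\gamma$, the Young's-inequality weights, and (in Part~(b)) the Lyapunov weight $c'$ so that every cross term — $\norms{x^{k+1} - y^k}^2$, $\norms{x^k - y^k}^2$, $\norms{x^k - y^{k-1}}^2$, $\norms{x^{k+1} - x^k}^2$, and the $\rho\norms{Fy^k + \zeta^k}^2$ contribution — recombines with the correct sign, and so that the leading coefficient of the residual becomes positive precisely on the stepsize interval whose nonemptiness is what forces the sharp thresholds $\tfrac{3\sqrt{2}-2}{12}$ and $\tfrac{2\sqrt{3}-3}{24}$ on $L\rho$. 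Carefully tracking the $\beta$-dependence, so that the proof still covers the EG$+$-type scaling $\beta < 1$, is the only other place where attention is needed.
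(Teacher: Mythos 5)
Your proposal is correct and follows essentially the same route as the paper's own proof: the one-iteration estimate of Lemma~\ref{le:FBFS4NI_key_estimate} with $\gamma := L\eta$, the star-co-hypomonotonicity applied at $(y^k, Fy^k+\zeta^k)\in\gra{\Phi}$ together with the identity $Fy^k+\zeta^k = \tfrac{\beta}{\eta}(x^k-y^k)+(Fy^k-u^k)$, the algebraic residual bound \eqref{eq:FBFS4NI_th41_proof2}, and, for the past/OG variant, the Lyapunov function $\norms{x^k-x^{\star}}^2 + c'\norms{x^k-y^{k-1}}^2$, which is exactly the paper's $\Vc_k$. The remaining work you defer is indeed only the constant bookkeeping the paper carries out explicitly.
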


\begin{proof}
(a)~\mytb{(FBFS method)} 
For $u^k := Fx^k$, by the $L$-Lipschitz continuity of $F$,  we have $\norms{Fy^k - u^k} = \norms{Fy^k - Fx^k} \leq L\norms{y^k - x^k}$.
Using this inequality into \eqref{eq:FBFS4NI_key_est1}, we have
\begin{equation}\label{eq:FBFS4NI_th41_proof1}
\arraycolsep=0.2em
\begin{array}{lcl}
\norms{x^{k+1} - x^{\star}}^2 & \leq & \norms{x^k - x^{\star}}^2 -  (1 - \beta)\norms{x^{k+1} - x^k}^2 -  (\beta - \gamma)\norms{x^{k+1} - y^k}^2 \vspace{1ex}\\
&& - {~} \left(\beta - \frac{L^2\eta^2}{\gamma}\right)\norms{x^k - y^k}^2  - 2\eta\iprods{Fy^k + \zeta^k, y^k - x^{\star}}.
\end{array}
\end{equation}
Now, by Young's inequality, the $L$-Lipschitz continuity of $F$, and $x^k - y^k = \frac{\eta}{\beta}(u^k + \zeta^k) = \frac{\eta}{\beta}(Fx^k + \zeta^k)$ from the first line of \eqref{eq:FBFS4NI_reform}, we have
\begin{equation}\label{eq:FBFS4NI_th41_proof2a}  
\arraycolsep=0.2em
\begin{array}{lcl}
\norms{Fy^k + \zeta^k}^2 & \leq & \frac{3}{2} \norms{Fy^k - Fx^k}^2 + 2\norms{Fx^k + \zeta^k}^2 \leq \frac{3(L^2\eta^2 + 2\beta^2)}{2\eta^2}\norms{x^k - y^k}^2.  
\end{array}
\end{equation}
Similarly, using the second line of \eqref{eq:FBFS4NI_reform},  Young's inequality, and the $L$-Lipschitz continuity of $F$, we have
\begin{equation}\label{eq:FBFS4NI_th41_proof2}
\arraycolsep=0.2em
\begin{array}{lcl}
\eta^2\norms{Fx^{k+1} + \xi^{k+1}}^2 & = & \norms{x^{k+1} - x^k + \eta(Fx^{k+1} - Fy^k)}^2 \vspace{1ex}\\
&\leq & (1 + \frac{2L^2}{3})\norms{x^{k+1} - x^k}^2 + (1 + \frac{3}{2L^2}) \norms{Fx^{k+1} - Fy^k}^2 \vspace{1ex}\\
&\leq &\frac{3}{2}(1 + \frac{2L^2}{3})\norms{x^{k+1} - y^k}^2 + (1 + \frac{3}{2L^2})L^2\norms{x^{k+1} - y^k}^2 +3(1 + \frac{2L^2}{3})\norms{x^k - y^k}^2 \vspace{1ex}\\ 
&= & (3 + 2L^2) [ \norms{x^{k+1} - y^k}^2 + \norms{x^k - y^k}^2].
\end{array}
\end{equation}
Next, since $\iprods{u, x - x^{\star}} \geq - \rho \norms{u}^2$ for any $(x, u) \in \gra{\Phi}$,  we have  $\iprods{Fy^k + \zeta^k, y^k - x^{\star}} \geq - \rho \norms{Fy^k + \zeta^k}^2$ since $(y^k, \zeta^k) \in \gra{T}$.
Using this relation and \eqref{eq:FBFS4NI_th41_proof2a} into \eqref{eq:FBFS4NI_th41_proof1} with  $\gamma := L\eta$, we get
\begin{equation}\label{eq:FBFS4NI_th41_proof3}
\arraycolsep=0.2em
\begin{array}{lcl}
\norms{x^{k+1} - x^{\star}}^2 & \leq & \norms{x^k - x^{\star}}^2 -  (\beta -   L\eta) \norms{x^k - y^k}^2 - (\beta -   L\eta) \norms{x^{k+1} - y^k}^2 + 2\eta\rho\norms{Fy^k + \zeta^k}^2 \vspace{1ex}\\
&\leq & \norms{x^k - x^{\star}}^2 -  (\beta - L\eta) \norms{x^{k+1} - y^k}^2 -   \tfrac{\beta\eta - 6\beta^2\rho - (3L\rho + 1)L\eta^2}{\eta}  \norms{x^k - y^k}^2.
\end{array}
\end{equation}
Let us impose $\beta\eta - 6\beta^2\rho - (3L\rho + 1)L\eta^2 > 0$, which holds if
\begin{equation*} 
\arraycolsep=0.2em
\begin{array}{lcl}
0 \leq \frac{\beta \big[ 1 - \sqrt{1 - 24 L\rho(3L\rho + 1)} \big]}{2L(3L\rho + 1)} < \eta < \frac{\beta \big[ 1 + \sqrt{1 - 24 L\rho(3L\rho + 1)} \big]}{2L(3L\rho + 1)} \leq \frac{\beta}{L},
\end{array}
\end{equation*}
provided that  $L\rho \leq \frac{3\sqrt{2} -2}{12} \approx 0.1869$. 
This choice of $\eta$ is exactly \eqref{eq:FBFS4NI_stepsize}.
Let $\psi := \tfrac{\beta\eta - 6\beta^2\rho - (3L\rho + 1)L\eta^2}{\eta} = \beta - L\eta - \frac{3\rho}{\eta}(2\beta^2 + L^2\eta^2) > 0$.
Then, we have $\beta - L\eta - \psi \geq 0$.
In this case, \eqref{eq:FBFS4NI_th41_proof3} becomes 
\begin{equation*} 
\arraycolsep=0.2em
\begin{array}{lcl}
\norms{x^{k+1} - x^{\star}}^2 & \leq & \norms{x^k - x^{\star}}^2 - \psi[\norms{x^k - y^k}^2 + \norms{x^{k+1} - y^k}^2] - (\beta - L\eta - \psi)\norms{x^{k+1} - y^k}^2 \vspace{1ex}\\
& \leq &  \norms{x^k - x^{\star}}^2 - \psi[\norms{x^k - y^k}^2 + \norms{x^{k+1} - y^k}^2].
\end{array}
\end{equation*}
Finally, using this estimate, \eqref{eq:FBFS4NI_th41_proof2}, we can easily prove \eqref{eq:FBFS4NI_convergence_02} in  Theorem~\ref{th:FBFS4NI_convergence}.
The remaining statements are direct consequences of  \eqref{eq:FBFS4NI_convergence_02},  \eqref{eq:FBFS4NI_th41_proof2a}, and \eqref{eq:FBFS4NI_th41_proof2} using standard arguments.

(b)~\mytb{(Past-FBFS/OG method)} 
For $u^k := Fy^{k-1}$, using Young's inequality and the $L$-Lipschitz continuity of $F$,  we can derive
\begin{equation*}
\arraycolsep=0.2em
\begin{array}{lcl}
\norms{Fy^k - u^k}^2 = \norms{Fy^k - Fy^{k-1}}^2 \leq 3L^2\norms{x^k - y^k}^2 + \frac{3L^2}{2}\norms{x^k - y^{k-1}}^2.
\end{array}
\end{equation*} 
Next, since $\iprods{u, x - x^{\star}} \geq -\rho\norms{u}^2$ for all $(x, u) \in \gra{\Phi}$, we have $\iprods{Fy^k + \zeta^k, y^k - x^{\star}} \geq -\rho\norms{Fy^k + \zeta^k}^2$.
Using this inequality and the first line of \eqref{eq:FBFS4NI_reform} as $\eta(Fy^k + \zeta^k) = \beta(x^k - y^k) + \eta(Fy^k - Fy^{k-1})$.
\begin{equation*} 
\arraycolsep=0.2em
\begin{array}{lcl}
\iprods{Fy^k + \zeta^k, y^k - x^{\star}} &\geq & -\rho\norms{Fy^k + \zeta^k}^2 = -\frac{\rho}{\eta^2}\norms{\beta(x^k - y^k) + \eta(Fy^k - Fy^{k-1})}^2 \vspace{1ex}\\
& \geq & -\frac{2\rho\beta^2}{\eta^2}\norms{x^k -y^k}^2 - 4\rho \norms{Fy^k - Fx^k}^2 - 4\rho\norms{Fx^k - Fy^{k-1}}^2 \vspace{1ex}\\
& \geq & -  \frac{2\rho(\beta^2 + 2L^2\eta^2)}{\eta^2}  \norms{x^k -y^k}^2 - 4\rho L^2\norms{x^k - y^{k-1}}^2.
\end{array}
\end{equation*}
Substituting the last two expressions into \eqref{eq:FBFS4NI_key_est1} and choosing $\gamma := L\eta$, we obtain
\begin{equation}\label{eq:FBFS4NI_th41p2_proof3}
\arraycolsep=0.2em
\begin{array}{lcl}
\Vc_{k+1} & \leq & \Vc_k - (1 - \beta)\norms{x^{k+1} - x^k}^2 -   \left(\beta - \frac{5L\eta}{2} - 8\rho L^2\eta \right)\norms{x^{k+1} - y^k}^2 \vspace{1ex}\\
&& - {~} \left[ \beta -  3L\eta - \frac{4\rho(\beta^2 + 2L^2\eta^2)}{\eta} \right] \norms{x^k - y^k}^2,
\end{array}
\end{equation}
where $\Vc_k :=  \norms{x^k - x^{\star}}^2 + L\eta\left( \frac{3}{2} + 8\rho L \right)\norms{x^k - y^{k-1}}^2$.

Let us define $\hat{\psi} :=  \beta -  3L\eta - \frac{4\rho(\beta^2 + 2L^2\eta^2)}{\eta}$ and $\hat{\varphi} := \beta - \frac{5L\eta}{2} - 8\rho L^2\eta$ and impose the condition that $\hat{\psi} > 0$.
Then, it is clear that $\hat{\varphi} - \hat{\psi} = \frac{L\eta}{2} + \frac{4\beta^2\rho}{\eta} \geq 0$.
Moreover, \eqref{eq:FBFS4NI_th41p2_proof3} becomes $\Vc_{k+1} \leq \Vc_k - \hat{\psi}\big[ \norms{x^{k+1} - y^k}^2 + \norms{x^k - y^k}^2 \big] - (\hat{\varphi} - \hat{\psi})\norms{x^{k+1} - y^k}^2$.
Combining this estimate and \eqref{eq:FBFS4NI_th41_proof2}, we can easily prove \eqref{eq:FBFS4NI_convergence_03} in  Theorem~\ref{th:FBFS4NI_convergence}.
Note that the condition $\hat{\psi} > 0$ holds if $\eta$ is chosen as in \eqref{eq:FBFS4NI_stepsize} provided that $L\rho \leq \frac{2\sqrt{3} - 3}{24} \approx 0.01934$.
Using  \eqref{eq:FBFS4NI_convergence_03}, we can easily prove that $\sets{x^k}$ converges to  $x^{\star}$.
Moreover,  \eqref{eq:FBFS4NI_convergence_03} also implies $\lim_{k\to\infty}\norms{x^k - y^k} = \lim_{k\to\infty}\norms{x^{k+1} - y^k} = \lim_{k\to\infty}\norms{Fx^k + \xi^k} = 0$.
By the second line of \eqref{eq:FBFS4NI_reform}, we have 
\begin{equation*}
\arraycolsep=0.2em
\begin{array}{lcl}
\eta\norms{Fy^k + \zeta^k} \leq L\eta\norms{x^k - y^k} + L\eta\norms{x^k - y^{k-1}} + \norms{\eta(Fy^{k-1} + \zeta^k} = (L\eta + 1)\norms{x^k - y^k} + L\eta\norms{x^k - y^{k-1}}.
\end{array}
\end{equation*}
Combing this inequality and $\lim_{k\to\infty}\norms{x^k - y^k} = \lim_{k\to\infty}\norms{x^{k+1} - y^k} = 0$, we obtain $\lim_{k\to\infty}\eta\norms{Fy^k + \zeta^k} = 0$.

Finally, if $J_{\eta T}$ is single-valued and nonexpansive, then by using $\norms{G_{\Phi}x^k} \leq \norms{Fx^k + \xi^k}$ from \eqref{eq:FBR_bound2} and either \eqref{eq:FBFS4NI_convergence_02} or \eqref{eq:FBFS4NI_convergence_03}, we obtain \eqref{eq:FBFS4NI_convergence_02_03}.
Using again \eqref{eq:FBR_bound2}  and the limits \eqref{eq:FBFS4NI_convergence_01}, we obtain $\lim_{k\to\infty}\norms{G_{\Phi}x^k} \leq \lim_{k\to\infty}\norms{Fx^k + \xi^k} = 0$ and $\lim_{k\to\infty}\norms{G_{\Phi}y^k} \leq \lim_{k\to\infty}\norms{Fy^k + \zeta^k} = 0$.
\end{proof}

\begin{remark}\label{re:FBFS4NI_remark1}
The $\rho$-star co-hypomonotone condition in Theorem~\ref{th:FBFS4NI_convergence}(b) trivially holds if $\Phi := F + T$ is $\rho$-co-hypomonotone.
Hence, the star co-hypomonotone condition is generally weaker than the $\rho$-co-hypomonotonicity.
Note that we have not tried to optimize the parameters in Theorem~\ref{th:FBFS4NI_convergence}, and generally in the whole paper.
By careful tightening bounds and selecting parameters in our analysis (e.g., where Young's inequality is used), we can improve the range of parameters.
Note that the convergence analysis for the monotone case is very classical, which can be found, e.g., in \cite{Facchinei2003,Korpelevic1976}.
However, the convergence rates for $\beta < 1$, and for the star co-hypomonotone case are recent results.
The best-iterate convergence rate of  \eqref{eq:FBFS4NI} was proven in \cite{luo2022last} for the star co-hypomonotone case, but using a potential function.
Here, we provide a different proof using classical results in \cite{Facchinei2003} combining with the star co-hypomonotonicity of $\Phi$.
\end{remark}

\beforesec
\section{Two Other Variants of The Extragradient Method}\label{sec:other_methods}
\aftersec
In this section, we review two additional methods: the reflected forward-backward splitting  (RFBS) algorithm \cite{cevher2021reflected,malitsky2015projected} and the golden ratio (GR) scheme \cite{malitsky2019golden}.
The last-iterate analysis for RFBS scheme was recently given in  \cite{cai2022baccelerated}, but only for \eqref{eq:VIP}.
Here, we provide a new analysis for both the best-iterate and the last-iterate rates for RFBS to solve \eqref{eq:NI}, which is more general than \eqref{eq:VIP}.
The best-iterate convergence analysis for GR is modified the proof from  \cite{malitsky2019golden} to expand the range of parameters.
Nevertheless, the last-iterate convergence rate analysis of GR is still open.

\beforesubsec
\subsection{Reflected forward-backward splitting method}\label{subsec:RFBS4NI}
\aftersubsec
The reflected forward-backward splitting method was proposed by Malitsky in  \cite{malitsky2015projected} to solve \eqref{eq:VIP} and it is called the \mytb{projected reflected gradient} method.
It was generalized to solve monotone \eqref{eq:NI} in \cite{cevher2021reflected}, which is called the \textbf{reflected forward-backward splitting (RFBS)} scheme.
The last iterate convergence rate of the \mytb{projected reflected gradient} method for \eqref{eq:VIP} was recently proven in \cite{cai2022baccelerated}.
In this subsection, we survey this method for solving \eqref{eq:NI}.
We provide a new best-iterate convergence rate analysis compared to  \cite{cevher2021reflected}.
We also present an elementary proof for the last-iterate convergence rate of RFBS for solving monotone \eqref{eq:NI}.

The reflected forward-backward splitting (RFBS) method to approximate a solution of \eqref{eq:NI} is described as follows.
Starting from $x^0 \in \dom{\Phi}$, we choose $x^{-1} := x^0$ and at each iteration $k\geq 0$, we update
\begin{equation}\label{eq:RFBS4NI}
\arraycolsep=0.2em
\left\{\begin{array}{lcl}
y^k &:= & 2x^k - x^{k-1}, \vspace{1ex}\\
x^{k+1} &:= & J_{\eta T}(x^k - \eta Fy^k),
\end{array}\right.
\tag{RFBS2}
\end{equation}
where $\eta > 0$ is a given step-size, determined later.
Clearly, if we eliminate $y^k$, then \eqref{eq:RFBS4NI} can be written as 
\begin{equation*}
x^{k+1} := J_{\eta T}(x^k - \eta F(2x^k - x^{k-1})).
\end{equation*}
From the second line of \eqref{eq:RFBS4NI}, we have $\xi^{k+1} := \frac{1}{\eta}( x^k - \eta Fy^k - x^{k+1})  \in  Tx^{k+1}$.
As before, if we denote
\begin{equation}\label{eq:wk_terms}
w^k := Fx^k + \xi^k \quad \text{and} \quad \hat{w}^k := Fy^{k-1} + \xi^k,
\end{equation}
then we can rewrite \eqref{eq:RFBS4NI} equivalently to 
\begin{equation}\label{eq:RFBS4NI_reform}
\arraycolsep=0.2em
\left\{\begin{array}{lclcl}
y^k &:= & x^k + x^k - x^{k-1} &= & x^k - \eta\hat{w}^k, \vspace{1ex}\\
x^{k+1} &:= & x^k - \eta\hat{w}^{k+1}.
\end{array}\right.
\end{equation}
This expression leads to $x^{k+1} - y^k = -\eta(\hat{w}^{k+1} - \hat{w}^k)$. 
Next, we prove the following lemmas for our analysis.

\begin{lemma}\label{le:RFBS4NI_key_est1}
Assume that $T$ in \eqref{eq:NI} is maximally monotone and $F$ in \eqref{eq:NI} is  $L$-Lipschitz continuous and satisfies $\iprods{Fx - Fx^{\star}, x - x^{\star}} \geq 0$ for all $x\in\dom{\Phi}$ and some $x^{\star}\in\zer{\Phi}$.
Let $\set{(x^k, y^k)}$ be generated by \eqref{eq:RFBS4NI} using $\eta > 0$ and $\Vc_k$ be defined as
\begin{equation}\label{eq:RFBS4NI_potential_func}
\arraycolsep=0.2em
\begin{array}{lcl}
\Vc_k & := & \norms{x^k - x^{\star}}^2 + 2\norms{x^k - x^{k-1}}^2 +  \big( 1 - \sqrt{2}L \eta \big) \norms{x^k - y^{k-1}}^2 + 2\eta \iprods{Fy^{k-1} - Fx^{\star}, x^k - x^{k-1}}.
\end{array}
\end{equation}
Then, we have
\begin{equation}\label{eq:RFBS4NI_est1} 
\arraycolsep=0.2em
\begin{array}{lcl}
\Vc_k  & \geq & \Vc_{k+1} + \big[ 1 - (1+\sqrt{2})L\eta \big] \left[ \norms{y^k - x^k}^2 + \norms{x^k - y^{k-1}}^2 \right], \vspace{1.5ex}\\
\Vc_k  & \geq &   (1 - L\eta) \norms{x^k - x^{\star}}^2 + (1 - (1+\sqrt{2})L\eta) \norms{x^k - y^{k-1}}^2 + 2(1 -  L\eta) \norms{x^k - x^{k-1}}^2.
\end{array}
\end{equation}
\end{lemma}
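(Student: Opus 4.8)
The plan is to obtain both inequalities from three ingredients: the resolvent identities for $x^{k+1}$ and $x^k$, the monotonicity of $T$ applied at \emph{two} pairs of points, and the $L$-Lipschitz continuity together with the star-monotonicity $\iprods{Fx-Fx^{\star},x-x^{\star}}\geq 0$ of $F$ at $x^{\star}$. From the second line of \eqref{eq:RFBS4NI} I would first record $\xi^{k+1}:=\frac{1}{\eta}(x^k-\eta Fy^k-x^{k+1})\in Tx^{k+1}$, and similarly $\xi^k\in Tx^k$, so that $x^{k+1}-x^k=-\eta(Fy^k+\xi^{k+1})$; the reflection identities $y^k-x^k=x^k-x^{k-1}$ and $x^{k+1}-y^k=(x^{k+1}-x^k)-(x^k-x^{k-1})$ will be used repeatedly.

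For the descent inequality, I would start from maximal monotonicity of $T$ between $(x^{k+1},\xi^{k+1})$ and $(x^{\star},-Fx^{\star})$ (here $-Fx^{\star}\in Tx^{\star}$ since $0\in\Phi x^{\star}$); after the three-point expansion of $\iprods{x^k-x^{k+1},x^{k+1}-x^{\star}}$ this gives $\norms{x^{k+1}-x^{\star}}^2\leq \norms{x^k-x^{\star}}^2-\norms{x^{k+1}-x^k}^2-2\eta\iprods{Fy^k-Fx^{\star},x^{k+1}-x^{\star}}$. Splitting $x^{k+1}-x^{\star}=(x^{k+1}-y^k)+(y^k-x^{\star})$, discarding the nonnegative term $\iprods{Fy^k-Fx^{\star},y^k-x^{\star}}$, inserting the reflection identity for $x^{k+1}-y^k$, and writing $Fy^k-Fx^{\star}=(Fy^k-Fy^{k-1})+(Fy^{k-1}-Fx^{\star})$ in the resulting $\iprods{\cdot,x^k-x^{k-1}}$ term, I obtain an estimate whose cross terms are $2\eta\iprods{Fy^k-Fx^{\star},x^{k+1}-x^k}$ (to be absorbed into $\Vc_{k+1}$), $2\eta\iprods{Fy^{k-1}-Fx^{\star},x^k-x^{k-1}}$ (the anchor already present in $\Vc_k$), and an error $2\eta\iprods{Fy^k-Fy^{k-1},x^k-x^{k-1}}$.

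The crucial extra step is to also use monotonicity of $T$ between the two consecutive resolvent points $(x^{k+1},\xi^{k+1})$ and $(x^k,\xi^k)$: substituting the resolvent identities into $\iprods{\xi^{k+1}-\xi^k,x^{k+1}-x^k}\geq 0$ and using $2x^k-x^{k+1}-x^{k-1}=-(x^{k+1}-x^k)+(x^k-x^{k-1})$ yields $2\norms{x^{k+1}-x^k}^2\leq 2\iprods{x^k-x^{k-1},x^{k+1}-x^k}-2\eta\iprods{Fy^k-Fy^{k-1},x^{k+1}-x^k}$. Adding this nonnegative slack to the previous estimate, expanding $2\iprods{x^k-x^{k-1},x^{k+1}-x^k}=\norms{x^k-x^{k-1}}^2+\norms{x^{k+1}-x^k}^2-\norms{x^{k+1}-y^k}^2$, and combining the two $Fy^k-Fy^{k-1}$ inner products into the single term $-2\eta\iprods{Fy^k-Fy^{k-1},x^{k+1}-y^k}$, everything reduces to an inequality involving only $\norms{x^{k+1}-y^k}^2$, $\norms{x^{k+1}-x^k}^2$, $\norms{x^k-x^{k-1}}^2$, $\norms{x^k-y^{k-1}}^2$ and the two anchor inner products. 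I would then bound the error through $\norms{Fy^k-Fy^{k-1}}\leq L\norms{y^k-y^{k-1}}\leq L(\norms{x^k-x^{k-1}}+\norms{x^k-y^{k-1}})$ (using $y^k-y^{k-1}=(x^k-x^{k-1})+(x^k-y^{k-1})$) and two applications of Young's inequality with weights $\sqrt{2}-1$ and $1$, chosen so that after adding $2\norms{x^{k+1}-x^k}^2+(1-\sqrt{2}L\eta)\norms{x^{k+1}-y^k}^2$ to both sides to reconstruct $\Vc_{k+1}$ on the left, the $\norms{x^{k+1}-y^k}^2$ coefficient vanishes and the residual coefficients of $\norms{x^k-x^{k-1}}^2$ and $\norms{x^k-y^{k-1}}^2$ are each exactly $1-(1+\sqrt{2})L\eta$; since $\norms{y^k-x^k}^2=\norms{x^k-x^{k-1}}^2$ this is the claimed descent. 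The second inequality is easier: lower-bounding the anchor term of $\Vc_k$ by Cauchy--Schwarz, the $L$-Lipschitz continuity of $F$, $\norms{y^{k-1}-x^{\star}}\leq\norms{x^k-y^{k-1}}+\norms{x^k-x^{\star}}$, and unit-weight Young's inequality exactly accounts for the $L\eta$ deficits in the three squared terms.

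The main obstacle is the bookkeeping in the descent step: without the second use of monotonicity of $T$ the recursion does not close, since $\norms{x^{k+1}-x^k}^2$ and $\norms{x^{k+1}-y^k}^2$ cannot otherwise be controlled by step-$k$ quantities, and once that slack is added the Young weights must be tuned to the unique admissible pair $(\sqrt{2}-1,\,1)$, so the stated constants are tight and leave no room. A minor point is the initialization: with $x^{-1}:=x^0$ one has $y^0=x^0$, and setting $y^{-1}:=x^0$ makes $\Vc_0=\norms{x^0-x^{\star}}^2$, consistent with the recursion.
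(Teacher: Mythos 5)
Your proposal is correct and takes essentially the same route as the paper's proof: the same three-point expansion, the same two uses of monotonicity of $T$ (once against $(x^{\star},-Fx^{\star})$ and once between the consecutive resolvent points $(x^{k+1},\xi^{k+1})$ and $(x^k,\xi^k)$), the same splitting $Fy^k-Fx^{\star}=(Fy^k-Fy^{k-1})+(Fy^{k-1}-Fx^{\star})$, and the same Cauchy--Schwarz/Young weights producing the coefficients $1-\sqrt{2}L\eta$ and $1-(1+\sqrt{2})L\eta$. Your lower bound on $\Vc_k$ (Lipschitz continuity, the triangle inequality for $\norms{y^{k-1}-x^{\star}}$, and unit-weight Young) yields exactly the paper's deficits $L\eta$, $L\eta$, $2L\eta$, so both inequalities come out with identical constants.
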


\begin{proof}
First, since \eqref{eq:RFBS4NI} is equivalent to \eqref{eq:RFBS4NI_reform}, we have $x^{k+1} - x^k = -\eta\hat{w}^{k+1}$ from the second line of \eqref{eq:RFBS4NI_reform}.
Using this expression, for any $x^{\star}\in\zer{\Phi}$, we have
\begin{equation}\label{eq:RFBS4NI_proof1}
\arraycolsep=0.2em
\begin{array}{lcl}
\norms{x^{k+1} - x^{\star}}^2 &= & \norms{x^k - x^{\star}}^2 - 2\eta\iprods{\hat{w}^{k+1}, x^{k+1} - x^{\star}} - \norms{x^{k+1} - x^k}^2.
\end{array} 
\end{equation}
Next, since  $Fx^{\star} + \xi^{\star} = 0$ from the fact that $x^{\star} \in \zer{\Phi}$ and $T$ is monotone, we have $\iprods{\xi^{k+1}, x^{k+1} - x^{\star}} \geq \iprods{\xi^{\star}, x^{k+1} - x^{\star}} = -\iprods{Fx^{\star}, x^{k+1} - x^{\star}}$, where $\xi^{\star}\in Tx^{\star}$.
Using this relation, we can prove that
\begin{equation}\label{eq:RFBS4NI_proof2a} 
\arraycolsep=0.2em
\begin{array}{lcl}
\iprods{\hat{w}^{k+1}, x^{k+1} - x^{\star}} &= &  \iprods{Fy^k, x^{k+1} - x^{\star}} + \iprods{\xi^{k+1}, x^{k+1} - x^{\star}} \vspace{1ex}\\
&\geq & \iprods{Fy^k - Fx^{\star}, x^{k+1} - y^k} + \iprods{Fy^k - Fx^{\star}, y^k - x^{\star}}.
\end{array} 
\end{equation}
Utilizing $y^k - x^k = x^k - x^{k-1}$ from the first line of \eqref{eq:RFBS4NI}, we can further expand
\begin{equation}\label{eq:RFBS4NI_proof2}
\hspace{-0ex}
\arraycolsep=0.2em
\begin{array}{lcl}
 \iprods{Fy^k - Fx^{\star}, y^k - x^{k+1}} &= & \iprods{Fy^{k-1} - Fx^{\star}, y^k - x^k} - \iprods{Fy^k - Fx^{\star}, x^{k+1} - x^k}  + \iprods{Fy^k - Fy^{k-1}, y^k - x^k}   \vspace{1ex}\\
&= & \iprods{Fy^{k-1} - Fx^{\star}, x^k - x^{k-1}} - \iprods{Fy^k - Fx^{\star}, x^{k+1} - x^k}  \vspace{1ex}\\
&& + {~} \iprods{Fy^k - Fy^{k-1}, y^k - x^k}.
\end{array} 
\hspace{-6ex}
\end{equation}
Now, from the second line of \eqref{eq:RFBS4NI_reform}, we have $\eta \xi^{k+1} = x^k - x^{k+1} - \eta Fy^k$, leading to $\eta (\xi^{k+1} - \xi^k) = 2x^k - x^{k-1} - x^{k+1} - \eta(Fy^k - Fy^{k-1}) = y^k - x^{k+1} - \eta(Fy^k - Fy^{k-1})$.
By the monotonicity of $T$, we have $ \iprods{y^k - x^{k+1} - \eta(Fy^k - Fy^{k-1}), x^{k+1} - x^k} = \eta\iprods{\xi^{k+1} - \xi^k, x^{k+1} - x^k} \geq 0$, leading to
\begin{equation*} 
\arraycolsep=0.2em
\begin{array}{lcl}
2\eta\iprods{Fy^k - Fy^{k-1}, x^{k+1} - x^k} &\leq & 2\iprods{y^k - x^{k+1}, x^{k+1} - x^k} \vspace{1ex}\\
&= & \norms{y^k - x^k}^2 - \norms{x^{k+1} - x^k}^2 - \norms{x^{k+1} - y^k}^2.
\end{array} 
\end{equation*}
By the Cauchy-Schwarz inequality, the Lipschitz continuity of $F$, and Young's inequality, we can show that
\begin{equation*} 
\arraycolsep=0.2em
\begin{array}{lcl}
2\eta\iprods{Fy^k - Fy^{k-1},y^k - x^{k+1}} 
&\leq & 2\eta L\norms{y^k - y^{k-1}}\norms{x^{k+1} - y^k} \vspace{1ex}\\
& \leq & \eta L(\sqrt{2} + 1)\norms{y^k - x^k}^2 + \eta L\norms{x^k - y^{k-1}}^2 + \sqrt{2}\eta L \norms{x^{k+1} - y^k}^2.
\end{array} 
\end{equation*}
Summing up the last two inequalities, we get
\begin{equation*} 
\arraycolsep=0.2em
\begin{array}{lcl}
2\eta \iprods{Fy^k - Fy^{k-1}, y^k - x^k} &\leq & \left[ 1 +  \eta L(\sqrt{2} + 1)\right] \norms{y^k - x^k}^2 + \eta L\norms{x^k - y^{k-1}}^2 \vspace{1ex}\\
&& - {~} (1 - \sqrt{2} L \eta ) \norms{x^{k+1} - y^k}^2 - \norms{x^{k+1} - x^k}^2.
\end{array} 
\end{equation*}
Combining this inequality, \eqref{eq:RFBS4NI_proof2a}, and \eqref{eq:RFBS4NI_proof2} with \eqref{eq:RFBS4NI_proof1}, we can eventually prove that
\begin{equation*} 
\arraycolsep=0.2em
\begin{array}{lcl}
\norms{x^{k+1} - x^{\star}}^2 &= & \norms{x^k - x^{\star}}^2 + 2\eta\iprods{\hat{w}^{k+1}, x^{\star} - x^{k+1}} - \norms{x^{k+1} - x^k}^2 \vspace{1ex}\\
&\leq & \norms{x^k - x^{\star}}^2 +  2\eta \iprods{Fy^{k-1} - Fx^{\star}, x^k - x^{k-1}}  - 2\eta \iprods{Fy^k - Fx^{\star}, x^{k+1} - x^k} \vspace{1ex}\\
&& + {~} \left[ 1 +   L\eta(\sqrt{2} + 1)\right] \norms{y^k - x^k}^2 - 2\norms{x^{k+1} - x^k}^2 \vspace{1ex}\\
&& +  {~} L\eta \norms{x^k - y^{k-1}}^2 -  (1 - \sqrt{2}L\eta ) \norms{x^{k+1} - y^k}^2 - 2\eta \iprods{Fy^k - Fx^{\star}, y^k - x^{\star}}.
\end{array} 
\end{equation*}
Using the definition \eqref{eq:RFBS4NI_potential_func} of $\Vc_k$, $\iprods{Fy^k - Fx^{\star}, y^k - x^{\star}} \geq 0$, and $y^k - x^k = x^k - x^{k-1}$, this estimate becomes
\begin{equation*} 
\arraycolsep=0.2em
\begin{array}{lcl}
\Vc_{k+1} &:= & \norms{x^{k+1} - x^{\star}}^2 + 2\norms{x^{k+1} - x^k}^2 +  (1 - \sqrt{2}L\eta ) \norms{x^{k+1} - y^k}^2 + 2\eta \iprods{Fy^k - Fx^{\star}, x^{k+1} - x^k} \vspace{1ex}\\
&\leq & \norms{x^k - x^{\star}}^2 +  \left[ 1 +  (\sqrt{2} + 1)L\eta \right] \norms{y^k - x^k}^2 +  L\eta \norms{x^k - y^{k-1}}^2 +  2\eta \iprods{Fy^{k-1} - Fx^{\star}, x^k - x^{k-1}} \vspace{1ex}\\
&= & \Vc_k - \left[1 - (\sqrt{2} + 1)L\eta \right] \left(  \norms{y^k - x^k}^2  +  \norms{x^k - y^{k-1}}^2 \right),
\end{array} 
\end{equation*}
which proves the first inequality of \eqref{eq:RFBS4NI_est1}.

\noindent Next, using Young's inequality twice, we can show that
\begin{equation*}
\arraycolsep=0.2em
\begin{array}{lcl}
2\eta \iprods{Fy^{k-1} - Fx^{\star}, x^k - x^{k-1}} 
& \geq & - \frac{L\eta}{2} \norms{y^{k-1} - x^{\star}}^2 - 2 L\eta \norms{x^k - x^{k-1}}^2 \vspace{1ex}\\
& \geq & - L\eta \norms{x^k - x^{\star}}^2 - L\eta \norms{x^k - y^{k-1}}^2 - 2 L\eta \norms{x^k - x^{k-1}}^2.
\end{array} 
\end{equation*}
Substituting this estimate into \eqref{eq:RFBS4NI_potential_func}, we get
\begin{equation*}
\arraycolsep=0.2em
\begin{array}{lcl}
\Vc_k & := &  \norms{x^k - x^{\star}}^2 + 2\norms{x^k - x^{k-1}}^2 +  \left(1 - \sqrt{2}L \eta\right) \norms{x^k - y^{k-1}}^2 + 2\eta \iprods{Fy^{k-1} - Fx^{\star}, x^k - x^{k-1}} \vspace{1ex}\\
&\geq & (1 - L\eta) \norms{x^k - x^{\star}}^2 + (1 - (1+\sqrt{2})L\eta) \norms{x^k - y^{k-1}}^2 + 2(1 -  L\eta) \norms{x^k - x^{k-1}}^2,
\end{array} 
\end{equation*}
which proves the second line of \eqref{eq:RFBS4NI_est1}.
\end{proof}

\begin{lemma}\label{le:RFBS4NI_key_est2}
Assume that $F+T$ in \eqref{eq:NI} is maximally monotone and $F$ in \eqref{eq:NI} is $L$-Lipschitz continuous.
Let $\set{(x^k, y^k)}$ be generated by \eqref{eq:RFBS4NI} using $\eta > 0$.
Then, we have
\begin{equation}\label{eq:RFBS4NI_est2}
\hspace{-0ex}
\arraycolsep=0.2em
\begin{array}{lcl}
& \norms{ Fx^k + \xi^k}^2 \leq  \frac{5L^2\eta^2 + 3}{3\eta^2}  \norms{x^k - y^k}^2 + \frac{5L^2\eta^2 + 3}{5\eta^2} \norms{x^k - y^{k-1}}^2.
\end{array}
\end{equation}
Moreover, if $\sqrt{2}L\eta < 1$, then  with $\kappa := \frac{2L^2\eta^2}{1 - 2L^2\eta^2} > 0$, we also have
\begin{equation}\label{eq:RFBS4NI_est3}
\hspace{-0ex}
\arraycolsep=0.2em
\begin{array}{lcl}
\norms{Fx^{k+1} + \xi^{k+1}}^2 + \kappa \norms{Fx^{k+1} - Fy^k}^2 &\leq &  \norms{Fx^k + \xi^k}^2 + \kappa \norms{Fx^k - Fy^{k-1}}^2 \vspace{1ex}\\
&& - {~} \left( \frac{1 - 4L^2\eta^2 }{1 - 2L^2\eta^2} \right) \norms{Fy^k - Fx^k + \xi^{k+1} - \xi^k}^2.
\end{array} 
\hspace{-1ex}
\end{equation}
\end{lemma}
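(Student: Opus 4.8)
The plan is to recast everything in terms of the normalized residuals $w^k = Fx^k + \xi^k$ and $\hat{w}^k = Fy^{k-1} + \xi^k$ from \eqref{eq:wk_terms} and to exploit the reformulation \eqref{eq:RFBS4NI_reform}, which records $x^{k+1}-x^k = -\eta\hat{w}^{k+1}$, $y^k - x^k = -\eta\hat{w}^k$, and $x^{k+1}-y^k = -\eta(\hat{w}^{k+1}-\hat{w}^k)$, together with the purely algebraic identities $w^{k+1}-\hat{w}^{k+1} = Fx^{k+1}-Fy^k$, $w^k-\hat{w}^k = Fx^k-Fy^{k-1}$, and $\hat{w}^{k+1}-w^k = (Fy^k - Fx^k) + (\xi^{k+1}-\xi^k)$. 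Throughout I will use only the Cauchy--Schwarz inequality, Young's inequality in the form $\norms{a+b}^2 \le (1+s)\norms{a}^2 + (1+s^{-1})\norms{b}^2$, the $L$-Lipschitz continuity of $F$, and monotonicity of $\Phi = F+T$.

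For \eqref{eq:RFBS4NI_est2}: from $y^k - x^k = -\eta\hat{w}^k = -\eta(Fy^{k-1}+\xi^k)$ I solve for $\xi^k$, which gives $w^k = Fx^k + \xi^k = \tfrac1\eta(x^k-y^k) + (Fx^k - Fy^{k-1})$. Applying Young's inequality with the tailored parameter $s := \tfrac{5L^2\eta^2}{3}$ and then $\norms{Fx^k - Fy^{k-1}} \le L\norms{x^k - y^{k-1}}$ produces exactly the coefficients $\tfrac{5L^2\eta^2+3}{3\eta^2}$ and $\tfrac{5L^2\eta^2+3}{5\eta^2}$ claimed in \eqref{eq:RFBS4NI_est2}.

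For \eqref{eq:RFBS4NI_est3}: since $w^k \in \Phi x^k$ and $w^{k+1}\in \Phi x^{k+1}$, monotonicity of $\Phi$ gives $\iprods{w^{k+1}-w^k, x^{k+1}-x^k}\ge 0$, hence $\iprods{w^{k+1}-w^k, \hat{w}^{k+1}}\le 0$. Expanding with the identity $2\iprods{w^{k+1}-w^k,\hat{w}^{k+1}} = \norms{w^{k+1}}^2 - \norms{w^k}^2 - \norms{w^{k+1}-\hat{w}^{k+1}}^2 + \norms{\hat{w}^{k+1}-w^k}^2$ yields $\norms{w^{k+1}}^2 \le \norms{w^k}^2 + \norms{w^{k+1}-\hat{w}^{k+1}}^2 - \norms{\hat{w}^{k+1}-w^k}^2$. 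Next I bound $\norms{w^{k+1}-\hat{w}^{k+1}}^2 = \norms{Fx^{k+1}-Fy^k}^2 \le L^2\norms{x^{k+1}-y^k}^2 = L^2\eta^2\norms{\hat{w}^{k+1}-\hat{w}^k}^2$ and split $\hat{w}^{k+1}-\hat{w}^k = (\hat{w}^{k+1}-w^k) + (w^k-\hat{w}^k)$ via $\norms{u+v}^2 \le 2\norms{u}^2 + 2\norms{v}^2$. Adding $\kappa\norms{w^{k+1}-\hat{w}^{k+1}}^2$ to the left-hand side and bounding it the same way, the inequality becomes $\norms{w^{k+1}}^2 + \kappa\norms{w^{k+1}-\hat{w}^{k+1}}^2 \le \norms{w^k}^2 + 2(1+\kappa)L^2\eta^2\norms{w^k-\hat{w}^k}^2 - \bigl(1 - 2(1+\kappa)L^2\eta^2\bigr)\norms{\hat{w}^{k+1}-w^k}^2$. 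Requiring the coefficient of $\norms{w^k-\hat{w}^k}^2$ to equal $\kappa$ forces $\kappa = \tfrac{2L^2\eta^2}{1-2L^2\eta^2}$, which is positive and satisfies $1+\kappa = (1-2L^2\eta^2)^{-1}$ precisely when $\sqrt2\, L\eta < 1$; plugging this in collapses the last coefficient to $\tfrac{1-4L^2\eta^2}{1-2L^2\eta^2}$, and undoing the substitutions gives \eqref{eq:RFBS4NI_est3}.

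The only real subtlety is bookkeeping: the single constant $\kappa$ must simultaneously weight the history term $\norms{Fx^k-Fy^{k-1}}^2 = \norms{w^k-\hat{w}^k}^2$ inside the Lyapunov-type quantity and serve as the multiplier of $\norms{w^{k+1}-\hat{w}^{k+1}}^2$ carried to the left-hand side, so the self-consistency requirement on the functional and the requirement of matching the stated remainder coefficient must close up together. This is exactly why the Young split with constant $2$ paired with $\kappa = \tfrac{2L^2\eta^2}{1-2L^2\eta^2}$ is essentially forced; everything else reduces to the elementary estimates already used repeatedly in the paper.
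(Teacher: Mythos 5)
Your proposal is correct and follows essentially the same route as the paper: the same Young split with parameter $\tfrac{5L^2\eta^2}{3}$ for \eqref{eq:RFBS4NI_est2}, and for \eqref{eq:RFBS4NI_est3} the same use of monotonicity with $x^{k+1}-x^k=-\eta\hat{w}^{k+1}$, the same quadratic expansion, the same Lipschitz bound split by a factor $2$, and the same self-consistent weight (the paper merely writes it as $\omega=1+\kappa$ with $\omega-1=2\omega L^2\eta^2$). No gaps.
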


\begin{proof}
First, by Young's inequality, $x^k - y^k = -\eta(Fy^{k-1} + \xi^k)$ from \eqref{eq:RFBS4NI_reform}, and the $L$-Lipschitz continuity of $F$, we have
\begin{equation*}
\arraycolsep=0.2em
\begin{array}{lcl}
\norms{ Fx^k + \xi^k }^2 & \leq &   \big(1 + \frac{5L^2\eta^2}{3}\big) \norms{ Fy^{k-1} + \xi^k }^2  + \left(1 + \frac{3}{5L^2\eta^2}\right)\norms{ Fx^k - Fy^{k-1} }^2  \vspace{1ex}\\
& \leq & \frac{5L^2\eta^2 + 3}{3\eta^2}\norms{x^k - y^k}^2 + \frac{5L^2\eta^2 + 3}{5\eta^2} \norms{x^k - y^{k-1}}^2.
\end{array} 
\end{equation*}
This estimate is exactly the first line of \eqref{eq:RFBS4NI_est2}.

Next, by the monotonicity of $F+T$, we have $\iprods{w^{k+1} - w^k, x^{k+1} - x^k}  \geq 0$.
Substituting $x^{k+1} - x^k = -\eta\hat{w}^{k+1}$ into this inequality, we get
\begin{equation*}
\arraycolsep=0.2em
\begin{array}{lcl}
0 &\leq &  2\iprods{w^k, \hat{w}^{k+1}} -2\iprods{w^{k+1}, \hat{w}^{k+1}}  =  \norms{w^k}^2 - \norms{w^{k+1}}  + \norms{w^{k+1} - \hat{w}^{k+1}}^2  - \norms{\hat{w}^{k+1} - w^k}^2.
\end{array} 
\end{equation*}
This inequality implies that
\begin{equation*}
\arraycolsep=0.2em
\begin{array}{lcl}
\norms{w^{k+1}}^2 &\leq & \norms{w^k}^2 + \norms{w^{k+1} - \hat{w}^{k+1}}^2  - \norms{\hat{w}^{k+1} - w^k}^2.
\end{array} 
\end{equation*}
On the other hand, by the Lipschitz continuity of $F$ and $x^{k+1} - y^k = -\eta(\hat{w}^{k+1} - \hat{w}^k)$, we have $\norms{w^{k+1} - \hat{w}^{k+1}}^2 = \norms{Fx^{k+1} - Fy^k}^2 \leq L^2\norms{x^{k+1} - y^k}^2 = L^2\eta^2\norms{\hat{w}^{k+1} - \hat{w}^k}^2 \leq 2L^2\eta^2\norms{\hat{w}^{k+1} - w^k}^2 + 2L^2\eta^2\norms{w^k - \hat{w}^k}^2$.  
Multiplying this inequality by $\omega \geq 1$, and adding the result to the last inequality, we get
\begin{equation*}
\arraycolsep=0.2em
\begin{array}{lcl}
\norms{w^{k+1}}^2 + (\omega - 1)\norms{w^{k+1} - \hat{w}^{k+1}}^2 &\leq & \norms{w^k}^2  + 2\omega L^2\eta^2\norms{w^k - \hat{w}^k}^2 - (1 -  2\omega L^2\eta^2) \norms{\hat{w}^{k+1} - w^k}^2.
\end{array} 
\end{equation*}
Finally, let us choose $\omega \geq 1$ such that $\omega - 1 = 2\omega L^2\eta^2$. 
If $2L^2\eta^2 < 1$, then $\omega := \frac{1}{ 1 - 2L^2\eta^2}$ satisfies $\omega - 1 = 2\omega L^2\eta^2$. 
Consequently, the last estimate leads to \eqref{eq:RFBS4NI_est2}.
\end{proof}

Now, we are ready to establish the convergence of \eqref{eq:RFBS4NI} in the following theorem.

\begin{theorem}\label{th:RFBS_convergence}
Assume that $\zer{\Phi} \neq\emptyset$, $T$ in \eqref{eq:NI} is maximally monotone, and $F$ in \eqref{eq:NI} is  $L$-Lipschitz continuous and satisfies $\iprods{Fx - Fx^{\star}, x - x^{\star}} \geq 0$ for all $x\in\dom{\Phi}$ and some $x^{\star} \in \zer{\Phi}$.
Let $\set{(x^k, y^k)}$ be generated by \eqref{eq:RFBS4NI} using $\eta \in \left(0, \frac{\sqrt{2}-1}{L}\right)$.
Then, we have the following statements.
\begin{itemize}
\item \textbf{The $\BigOs{1/\sqrt{k}}$ best-iterate convergence rate.} The following bound holds:
\begin{equation}\label{eq:RFBS4NI_convergence1}
\frac{1}{k+1}\sum_{l=0}^k\norms{Fx^l + \xi^l}^2  \leq \frac{1}{k+1}\sum_{l=0}^k\left[ \norms{Fx^l + \xi^l}^2 + \kappa \norms{Fx^l - Fy^{l-1}}^2\right]  \leq \frac{C_0 \norms{x^0 - x^{\star}}^2}{k+1},
\end{equation}
where $\kappa :=  \frac{2L^2\eta^2}{1 - L^2\eta^2} > 0$ and $C_0 := \frac{5L^2\eta^2 + 3}{3\eta^2\left[ 1 - (1 + \sqrt{2})L\eta \right]} > 0$.
\item \textbf{The $\BigOs{1/\sqrt{k}}$ last-iterate convergence rate.}
If $\Phi$ is additionally monotone, then we also have
\begin{equation}\label{eq:RFBS4NI_convergence2}
\norms{ Fx^k + \xi^k}^2 \leq \norms{ Fx^k + \xi^k}^2 + \kappa \norms{Fx^k - Fy^{k-1}}^2   \leq  \frac{C_0\norms{x^0 - x^{\star}}^2}{k+1}.
\end{equation}
As a consequence, we have the last-iterate convergence rate $\BigO{\frac{1}{\sqrt{k}}}$ of the residual norm $\norms{Fx^k + \xi^k}$.
\end{itemize}
\end{theorem}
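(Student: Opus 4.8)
The plan is to derive both rates from the one-step decrease of the Lyapunov sequence $\Vc_k$ in Lemma~\ref{le:RFBS4NI_key_est1} together with the two residual estimates in Lemma~\ref{le:RFBS4NI_key_est2}. First I would record that the stepsize restriction $\eta \in \big(0, \frac{\sqrt 2 - 1}{L}\big)$ makes every coefficient in those lemmas strictly positive: since $(1+\sqrt 2)L\eta < (1+\sqrt 2)(\sqrt 2 - 1) = 1$ we get $1 - (1+\sqrt 2)L\eta > 0$, hence also $1 - L\eta > 0$ and $1 - \sqrt 2 L\eta > 0$, and $2L^2\eta^2 < 4L^2\eta^2 < 4(\sqrt 2 - 1)^2 < 1$, so $\kappa > 0$ and $\frac{1 - 4L^2\eta^2}{1 - 2L^2\eta^2} > 0$. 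In particular the second line of \eqref{eq:RFBS4NI_est1} yields $\Vc_k \ge 0$ for all $k$, and with the initialization $x^{-1} = x^0$ (and the matching convention $y^{-1} := x^0$) every term of $\Vc_0$ other than $\norms{x^0 - x^\star}^2$ vanishes, so $\Vc_0 = \norms{x^0 - x^\star}^2$.

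For the best-iterate bound I would telescope the first inequality of \eqref{eq:RFBS4NI_est1} over $l = 0, \dots, k$ and discard $\Vc_{k+1} \ge 0$, obtaining $[1 - (1+\sqrt 2)L\eta]\sum_{l=0}^k\big(\norms{y^l - x^l}^2 + \norms{x^l - y^{l-1}}^2\big) \le \norms{x^0 - x^\star}^2$. Then I would bound the summand $\norms{Fx^l + \xi^l}^2 + \kappa\norms{Fx^l - Fy^{l-1}}^2$ pointwise: the first piece is exactly \eqref{eq:RFBS4NI_est2} (whose proof uses only Young's inequality and the $L$-Lipschitz continuity of $F$, so it applies here verbatim), while the second is $\le \kappa L^2\norms{x^l - y^{l-1}}^2$ by Lipschitz continuity; an elementary check using $\eta < \frac{\sqrt 2 - 1}{L}$ once more shows the combined coefficient of $\norms{x^l - y^{l-1}}^2$ stays below $\frac{5L^2\eta^2 + 3}{3\eta^2}$, so the summand is dominated by $\frac{5L^2\eta^2 + 3}{3\eta^2}\big(\norms{x^l - y^l}^2 + \norms{x^l - y^{l-1}}^2\big)$. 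Summing and inserting the telescoped estimate gives $\sum_{l=0}^k\big(\norms{Fx^l + \xi^l}^2 + \kappa\norms{Fx^l - Fy^{l-1}}^2\big) \le C_0\norms{x^0 - x^\star}^2$, which is \eqref{eq:RFBS4NI_convergence1} after dividing by $k+1$ and dropping the nonnegative $\kappa$-term on the left.

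For the last-iterate bound, the additional monotonicity of $\Phi$ activates \eqref{eq:RFBS4NI_est3}, which exactly says that $a_k := \norms{Fx^k + \xi^k}^2 + \kappa\norms{Fx^k - Fy^{k-1}}^2$ is nonincreasing (the subtracted term has a positive coefficient by the sign check above). The standard averaging argument then gives $(k+1)\,a_k \le \sum_{l=0}^k a_l \le C_0\norms{x^0 - x^\star}^2$, and since $\norms{Fx^k + \xi^k}^2 \le a_k$ this is \eqref{eq:RFBS4NI_convergence2}, hence the $\BigOs{1/\sqrt k}$ last-iterate rate of $\norms{Fx^k + \xi^k}$.

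Within the theorem itself the only genuine calculation is the pointwise comparison that lets the $\kappa$-augmented residual be absorbed into the telescoped quantity --- i.e. verifying that $\eta < \frac{\sqrt 2 - 1}{L}$ suffices for the (quartic in $L\eta$) inequality behind "combined coefficient $\le \frac{5L^2\eta^2 + 3}{3\eta^2}$" --- plus the various positivity conditions; everything else is telescoping and averaging. The genuinely delicate part of the whole argument lives upstream in Lemma~\ref{le:RFBS4NI_key_est1}: the Lyapunov function $\Vc_k$ must be engineered --- in particular the cross term $2\eta\iprods{Fy^{k-1} - Fx^\star, x^k - x^{k-1}}$ and the coefficient $1 - \sqrt 2 L\eta$ on $\norms{x^k - y^{k-1}}^2$ --- so that the forward evaluation at the reflected point $y^k = 2x^k - x^{k-1}$, combined with monotonicity of $T$ applied at consecutive iterates, produces a clean one-step decrease instead of accumulating uncontrolled error terms.
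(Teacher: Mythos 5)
Your proposal is correct and follows essentially the same route as the paper: telescope the descent of $\Vc_k$ from Lemma~\ref{le:RFBS4NI_key_est1} (using $\Vc_{k+1}\geq 0$ and $\Vc_0=\norms{x^0-x^{\star}}^2$), absorb the $\kappa$-augmented residual via \eqref{eq:RFBS4NI_est2} together with $\kappa L^2\leq \tfrac{2L^2}{3}$ into the coefficient $\tfrac{5L^2\eta^2+3}{3\eta^2}$, and then use the monotone-case inequality \eqref{eq:RFBS4NI_est3} to make the augmented residual nonincreasing so the averaged bound transfers to the last iterate. Your remark that \eqref{eq:RFBS4NI_est2} needs only Lipschitz continuity (so it is valid in the best-iterate part without monotonicity of $\Phi$) is exactly how the paper uses it, so there is nothing to fix.
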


\begin{proof}
First, since $0 < \eta < \frac{\sqrt{2} - 1}{L}$, we have $1 - (1 + \sqrt{2})L\eta > 0$ and $\kappa := \frac{2L^2\eta^2}{1 - 2L^2\eta^2} < \frac{2}{3}$.
From  \eqref{eq:RFBS4NI_est2}, we have
\begin{equation*}
\arraycolsep=0.2em
\begin{array}{lcl}
\norms{ Fx^k + \xi^k}^2 +  \kappa\norms{Fx^k - Fy^{k-1}}^2   & \leq &  \frac{5L^2\eta^2 + 3}{3\eta^2}\norms{x^k - y^k}^2 + \left( \frac{5L^2\eta^2 + 3}{5\eta^2} + \frac{2L^2}{3}\right)\norms{x^k - y^{k-1}}^2 \vspace{1ex}\\
& \leq & \frac{5L^2\eta^2 + 3}{3\eta^2}\left[ \norms{x^k - y^k}^2 + \norms{x^k - y^{k-1}}^2 \right].
\end{array} 
\end{equation*}
Combining this estimate and \eqref{eq:RFBS4NI_est1}, we get
\begin{equation*}
\arraycolsep=0.2em
\begin{array}{lcl}
\frac{3\eta^2\left[ 1 - (1 + \sqrt{2})L\eta \right]}{5L^2\eta^2 + 3} \left[ \norms{ Fx^k + \xi^k}^2 +  \kappa \norms{Fx^k - Fy^{k-1}}^2  \right]  
&\leq &  (1 - (1 + \sqrt{2})L\eta)\left[\norms{x^k - y^k}^2 + \norms{x^k - y^{k-1}}^2  \right] \vspace{1ex}\\
&  \leq & \Vc_k - \Vc_{k+1}.
\end{array} 
\end{equation*}
Summing up this inequality from $l=0$ to $l=k$, and using $y^{-1} := x^0$, we get
\begin{equation*}
\arraycolsep=0.2em
\begin{array}{lcl}
\frac{3\eta^2\left[ 1 - (1 + \sqrt{2})L\eta \right]}{5L^2\eta^2 + 3} \sum_{l=0}^k \left[ \norms{ Fx^l + \xi^l}^2 +  \kappa \norms{Fx^l - Fy^{l-1}}^2  \right]  
&  \leq & \Vc_0 - \Vc_{k+1} \leq \Vc_0 = \norms{x^0 - x^{\star}}^2.
\end{array} 
\end{equation*}
This inequality implies \eqref{eq:RFBS4NI_convergence1}.
Finally, combining \eqref{eq:RFBS4NI_convergence1} and \eqref{eq:RFBS4NI_est3}, we obtain \eqref{eq:RFBS4NI_convergence2}.
\end{proof}

\begin{remark}\label{re:RFBS4NI_remark1}
We can modify \eqref{eq:RFBS4NI} to capture adaptive parameters as $y^k := x^k + \beta_k(x^k - x^{k-1})$ and $x^{k+1} := J_{\eta_kT}(x^k - \eta_kFy^k)$, where $\eta_k := \beta_k\eta_{k-1}$ for some $\beta_k > 0$.
Then, by imposing appropriate bounds on $\eta_k$, we can still prove the convergence of this variant by modifying the proof of Theorem~\ref{th:RFBS_convergence}.
We also note that our best-iterate convergence analysis of \eqref{eq:RFBS4NI} in this paper is relatively different from \cite{cevher2021reflected}, while the last-iterate convergence rate analysis is new and very simple.
\end{remark}

\beforesubsec
\subsection{The golden ratio method for \eqref{eq:NI}}\label{subsec:GR4NI}
\aftersubsec
The golden ratio (GR) method for solving \eqref{eq:NI} is presented as follows.
Starting from $x^0 \in \dom{\Phi}$, we set $y^{-1} := x^0$, and at each iteration $k \geq 0$, we update
\begin{equation}\label{eq:GR4NI}
\left\{\begin{array}{lcl}
y^k &:= & \frac{\omega -1}{\omega}x^k + \frac{1}{\omega}y^{k-1}, \vspace{1ex}\\
x^{k+1} &:= & J_{\eta T}(y^k - \eta Fx^k),
\end{array}\right.
\tag{GR2}
\end{equation}
where $J_{\eta T}$ is the resolvent of $\eta T$, $\omega > 1$ is given, and $\eta \in (0, \frac{\omega}{2L})$.

This method was proposed by Malitsky in \cite{malitsky2019golden} to solve monotone \eqref{eq:MVIP}, where $\omega$ is chosen as $\omega := \frac{\sqrt{5} + 1}{2}$, leading to the name: \textit{golden ratio}.
We now extend it to solve \eqref{eq:NI} for the case $F$ is monotone and $L$-Lipschitz continuous, and $T$ is maximally $3$-cyclically monotone.
Moreover, we extend our analysis for any $\omega \in (1,  1 + \sqrt{3})$ instead of fixing $\omega := \frac{\sqrt{5} + 1}{2}$.
We call this extension the \ref{eq:GR4NI}$+$ scheme.

Let us denote $\breve{w}^k := Fx^{k-1} + \xi^k$ for $\xi^k \in Tx^k$.
Then, we can rewrite the second line of \eqref{eq:GR4NI} as $x^{k+1} :=  y^k - \eta(Fx^k + \xi^{k+1}) = y^k - \eta \breve{w}^{k+1}$ for $\xi^{k+1} \in Tx^{k+1}$.
In this case, we have $x^k = y^{k-1} - \eta(Fx^{k-1} + \xi^k)$, leading to $y^{k-1} = x^k + \eta \breve{w}^k$.
Combining this expression and the first line of \eqref{eq:GR4NI}, we have $y^k = \frac{\omega - 1}{\omega}x^k + \frac{1}{\omega}(x^k + \eta\breve{w}^k) = x^k + \frac{\eta}{\omega}\breve{w}^k$.
Consequently, we can rewrite \eqref{eq:GR4NI} equivalently as follows:
\begin{equation}\label{eq:GR4NI_reform}
\arraycolsep=0.2em
\left\{\begin{array}{lcllcl}
y^k &:= & x^k + \frac{\eta}{\omega}\breve{w}^k, \vspace{1ex}\\
x^{k+1} &:= & y^k - \eta(Fx^k + \xi^{k+1}) &= & y^k - \eta\breve{w}^{k+1}.
\end{array}\right.
\end{equation}
If we eliminate $y^k$, then we obtain
\begin{equation}\label{eq:GR4NI_reform2}
\arraycolsep=0.2em
\left\{\begin{array}{lcl}
x^{k+1} &:= & J_{\eta T}\left(x^k - \eta \left( Fx^k - \frac{1}{\omega}(Fx^{k-1} + \xi^k)  \right) \right) = x^k - \eta \breve{w}^{k+1} + \frac{\eta}{\omega}\breve{w}^k, \vspace{1ex}\\
\xi^{k+1} &:= & \frac{1}{\eta}(x^k - x^{k+1}) -  \left( Fx^k - \frac{1}{\omega}(Fx^{k-1} + \xi^k)  \right).
\end{array}\right.
\end{equation}
The convergence of \eqref{eq:GR4NI} is established based on the following key lemma.

\begin{lemma}\label{le:GR4NI_key_est1}
Suppose that $\zer{\Phi} \neq\emptyset$, $T$ in \eqref{eq:NI} is maximally $3$-cyclically monotone, and $F$ is $L$-Lipschitz continuous.
Let $\set{(x^k, y^k)}$ be generated by \eqref{eq:GR4NI} with $\omega > 1$.
Then, for any $x^{\star}\in\zer{\Phi}$, we have
\begin{equation}\label{eq:GR4NI_key_est1}
\hspace{-0ex}
\arraycolsep=0.2em
\begin{array}{lcl}
\omega\norms{y^{k+1} - x^{\star}}^2  +  (\omega - 1)(\omega - \gamma) \norms{x^{k+1} - x^k}^2 & \leq &  \omega\norms{y^k - x^{\star}}^2 + \frac{(\omega-1)L^2\eta^2}{\gamma}\norms{x^k - x^{k-1}}^2 \vspace{1ex}\\
&&  - {~} \omega(\omega-1)\norms{x^k - y^k}^2 -  \frac{(\omega - 1)(1-\omega^2+\omega)}{\omega}\norms{ x^{k+1} - y^k}^2 \vspace{1ex}\\
& & - {~}  2\eta(\omega-1)\iprods{Fx^k - Fx^{\star}, x^k - x^{\star}}.
\end{array}
\hspace{-3ex}
\end{equation}
\end{lemma}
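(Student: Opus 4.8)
The plan is to mimic the one-iteration estimates already established for \eqref{eq:EG4NI} and \eqref{eq:RFBS4NI}, now working with the reformulation \eqref{eq:GR4NI_reform}, i.e. $y^k = x^k + \tfrac{\eta}{\omega}\breve{w}^k$ and $x^{k+1} = y^k - \eta\breve{w}^{k+1}$, and combining the three-point identity with the maximal $3$-cyclic monotonicity of $T$. First I would expand $\norms{x^{k+1}-x^{\star}}^2$ from $x^{k+1}-y^k = -\eta\breve{w}^{k+1}$ to get $\norms{x^{k+1}-x^{\star}}^2 = \norms{y^k-x^{\star}}^2 - 2\eta\iprods{\breve{w}^{k+1}, x^{k+1}-x^{\star}} - \norms{x^{k+1}-y^k}^2$. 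Since $y^{k+1} = \tfrac{\omega-1}{\omega}x^{k+1} + \tfrac{1}{\omega}y^k$, the convexity identity $\norms{\lambda a + (1-\lambda)b}^2 = \lambda\norms{a}^2 + (1-\lambda)\norms{b}^2 - \lambda(1-\lambda)\norms{a-b}^2$ gives $\omega\norms{y^{k+1}-x^{\star}}^2 = (\omega-1)\norms{x^{k+1}-x^{\star}}^2 + \norms{y^k-x^{\star}}^2 - \tfrac{\omega-1}{\omega}\norms{x^{k+1}-y^k}^2$. Substituting the first relation into this yields the working identity
\[
\omega\norms{y^{k+1}-x^{\star}}^2 = \omega\norms{y^k-x^{\star}}^2 - 2\eta(\omega-1)\iprods{\breve{w}^{k+1}, x^{k+1}-x^{\star}} - \tfrac{(\omega-1)(\omega+1)}{\omega}\norms{x^{k+1}-y^k}^2 .
\]

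The heart of the argument is to lower bound the cross term $\iprods{\breve{w}^{k+1}, x^{k+1}-x^{\star}}$. Writing $\xi^{\star} := -Fx^{\star} \in Tx^{\star}$ so that $\breve{w}^{k+1} = (Fx^k - Fx^{\star}) + (\xi^{k+1} - \xi^{\star})$, I would invoke the $3$-cyclic monotonicity of $T$ on the cyclic triple $(x^{k+1}, x^{\star}, x^k)$ to obtain $\iprods{\xi^{k+1}-\xi^{\star}, x^{k+1}-x^{\star}} \geq \iprods{\xi^k - \xi^{\star}, x^{k+1}-x^k}$, and then regroup the $F$- and $T$-parts (using $Fx^{\star}+\xi^{\star}=0$) to reach
\[
\iprods{\breve{w}^{k+1}, x^{k+1}-x^{\star}} \geq \iprods{Fx^k - Fx^{\star}, x^k - x^{\star}} + \iprods{Fx^k + \xi^k, x^{k+1}-x^k} .
\]
For the last term I would split $Fx^k + \xi^k = \breve{w}^k + (Fx^k - Fx^{k-1})$; using $\eta\breve{w}^k = \omega(y^k - x^k)$ together with $2\iprods{y^k-x^k, x^{k+1}-x^k} = \norms{y^k-x^k}^2 + \norms{x^{k+1}-x^k}^2 - \norms{x^{k+1}-y^k}^2$ produces the $-\omega(\omega-1)\norms{x^k-y^k}^2$ term and part of the $\norms{x^{k+1}-x^k}^2$ and $\norms{x^{k+1}-y^k}^2$ contributions, while $\iprods{Fx^k - Fx^{k-1}, x^{k+1}-x^k}$ is controlled by Cauchy--Schwarz, the $L$-Lipschitz continuity of $F$, and Young's inequality with parameter $\gamma$, which generates the $\tfrac{(\omega-1)L^2\eta^2}{\gamma}\norms{x^k-x^{k-1}}^2$ term and the $\gamma$-correction turning the coefficient of $\norms{x^{k+1}-x^k}^2$ into $(\omega-1)(\omega-\gamma)$.

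Finally I would substitute these bounds back into the working identity, multiply the cross-term estimate by $\omega-1>0$, move $(\omega-1)(\omega-\gamma)\norms{x^{k+1}-x^k}^2$ to the left-hand side, and collect the $\norms{x^{k+1}-y^k}^2$ terms using $\omega(\omega-1) - \tfrac{(\omega-1)(\omega+1)}{\omega} = -\tfrac{(\omega-1)(1-\omega^2+\omega)}{\omega}$, which gives precisely \eqref{eq:GR4NI_key_est1}. I expect the main obstacle to be the bookkeeping around the $3$-cyclic monotonicity step: one must choose the cyclic order so that the leftover $F$-cross term $\iprods{Fx^k - Fx^{\star}, x^{k+1}-x^k}$ fuses with the $T$-term into $\iprods{Fx^k + \xi^k, x^{k+1}-x^k}$, and then track signs carefully so that the coefficient of $\norms{x^{k+1}-y^k}^2$ emerges exactly as stated --- it is this (possibly positive) coefficient that will later need to be absorbed in the summation and plays a role in the restriction $\omega \in (1, 1+\sqrt{3})$ in the convergence theorem.
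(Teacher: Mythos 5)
Your proposal is correct and follows essentially the same route as the paper: the same $3$-cyclic monotonicity inequality on the triple $(x^{k+1},x^{\star},x^k)$, the same use of $\eta\breve{w}^k=\omega(y^k-x^k)$ (equivalently $y^{k-1}-x^k=\omega(y^k-x^k)$), the same Young step with parameter $\gamma$ on the $\iprods{Fx^{k-1}-Fx^k,x^{k+1}-x^k}$ term, and the same identity linking $\norms{y^{k+1}-x^{\star}}^2$, $\norms{x^{k+1}-x^{\star}}^2$, and $\norms{y^k-x^{\star}}^2$. The only difference is cosmetic ordering --- you apply the $y^{k+1}$-convexity identity before bounding the cross term, while the paper first bounds $\norms{x^{k+1}-x^{\star}}^2$ and converts afterwards --- and your closing remark about the sign of the $\norms{x^{k+1}-y^k}^2$ coefficient and its role in the range of $\omega$ is accurate.
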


\begin{proof}
Since $T$ is $3$-cyclically monotone, for $\xi^{k+1} \in Tx^{k+1}$, $\xi^k \in Tx^k$, and $x^{\star} \in Tx^{\star}$, we have 
\begin{equation}\label{eq:GR4NI_lm1_proof1}
\arraycolsep=0.2em
\begin{array}{lcll}
\iprods{\xi^{k+1}, x^{k+1} - x^{\star}} + \iprods{\xi^{\star}, x^{\star} - x^k} + \iprods{\xi^k, x^k - x^{k+1}} \geq 0.
\end{array}
\end{equation}
From \eqref{eq:GR4NI_reform}, we have $\eta\xi^{k+1} = y^k - x^{k+1} - \eta Fx^k$ and $\eta \xi^k = y^{k-1} - x^k - \eta Fx^{k-1}$. 
Moreover, since $x^{\star} \in \zer{\Phi}$, we have $Fx^{\star} + \xi^{\star} = 0$, leading to $\eta\xi^{\star} = - \eta Fx^{\star}$.
Substituting these expressions into \eqref{eq:GR4NI_lm1_proof1}, we have
\begin{equation*}
\begin{array}{lcll}
\iprods{y^k - x^{k+1} - \eta Fx^k, x^{k+1} - x^{\star}} + \iprods{y^{k-1} - x^k - \eta Fx^{k-1}, x^k - x^{k+1}}  - \iprods{Fx^{\star}, x^{\star} - x^k}  \geq 0.
\end{array}
\end{equation*}
However, since $y^k :=  \frac{\omega -1}{\omega}x^k + \frac{1}{\omega}y^{k-1}$ from the first line of \eqref{eq:GR4NI},  we get $y^{k-1} - x^k =  \omega(y^k - x^k)$.
Substituting this relation into the last inequality, rearranging the result, we obtain
\begin{equation}\label{eq:GR4NI_lm1_proof2}
\iprods{y^k - x^{k+1}, x^{k+1} - x^{\star}} +  \omega\iprods{x^k  - y^k,  x^{k+1} - x^k} + \eta\iprods{Fx^{k-1} - Fx^k, x^{k+1} - x^k} - \eta\iprods{Fx^k - Fx^{\star}, x^k - x^{\star}} \geq 0.
\end{equation}
By Young's inequality and the $L$-Lipschitz continuity of $F$, for any $\gamma > 0$, we get the first line of the following:
\begin{equation*}
\arraycolsep=0.2em
\begin{array}{lcl}
2\eta\iprods{Fx^{k-1} - Fx^k, x^{k+1} - x^k} &\leq & \frac{L^2\eta^2}{\gamma}\norms{x^k - x^{k-1}}^2 + \gamma \norms{x^{k+1} - x^k}^2, \vspace{1ex}\\
2\iprods{y^k - x^{k+1}, x^{k+1} - x^{\star} } &= & \norms{y^k - x^{\star}}^2 - \norms{x^{k+1} - x^{\star}}^2 - \norms{x^{k+1} - y^k}^2, \vspace{1ex}\\
2\iprods{x^k - y^k, x^{k+1} - x^k} &= & \norms{x^{k+1} - y^k}^2 -\norms{x^k - y^k}^2 - \norms{x^{k+1} - x^k}^2.
\end{array}
\end{equation*}
Substituting these expressions into  \eqref{eq:GR4NI_lm1_proof2}, and rearranging the result, we can show that
\begin{equation}\label{eq:GR4NI_lm1_proof3}
\arraycolsep=0.2em
\begin{array}{lcl}
\norms{x^{k+1} - x^{\star}}^2 &\leq & \norms{y^k - x^{\star}}^2 + (\omega - 1) \norms{x^{k+1} - y^k}^2 -   \omega \norms{x^k - y^k}^2 - ( \omega  - \gamma )\norms{x^{k+1} - x^k}^2 \vspace{1ex}\\
&& +  {~}   \frac{L^2\eta^2}{\gamma}\norms{x^k - x^{k-1}}^2  - 2\eta \iprods{Fx^k - Fx^{\star}, x^k - x^{\star}}.
\end{array}
\end{equation}
Now, using $(\omega - 1)x^{k+1} = \omega y^{k+1} - y^k$ and $\omega(y^{k+1} - y^k) = (\omega-1)(x^{k+1} - y^k)$ from the first line of \eqref{eq:GR4NI}, we can derive that 
\begin{equation*} 
\arraycolsep=0.2em
\begin{array}{lcl}
(\omega -1)^2\norms{x^{k+1} - x^{\star}}^2 
&= & \omega(\omega - 1)\norms{y^{k+1} - x^{\star}}^2 - (\omega - 1) \norms{y^k - x^{\star}}^2 + \omega\norms{y^{k+1} -  y^k}^2 \vspace{1ex}\\
&= & \omega(\omega - 1)\norms{y^{k+1} - x^{\star}}^2 - (\omega - 1) \norms{y^k - x^{\star}}^2 + \frac{(\omega-1)^2}{\omega}\norms{x^{k+1} -  y^k}^2.
\end{array}
\end{equation*}
Simplifying this expression to get $(\omega - 1)\norms{x^{k+1} - x^{\star}}^2 = \omega\norms{y^{k+1} - x^{\star}}^2 - \norms{y^k - x^{\star}}^2 + \frac{(\omega-1)}{\omega}\norms{x^{k+1} -  y^k}^2$.
Combining it and \eqref{eq:GR4NI_lm1_proof3}, and rearranging the result, we obtain \eqref{eq:GR4NI_key_est1}.
\end{proof}

Now, we are ready to state the convergence of \eqref{eq:GR4NI} in the following theorem.

\begin{theorem}\label{th:GR4NI_convergence}
Assume that $\zer{\Phi} \neq\emptyset$, $T$ in \eqref{eq:NI} is maximally $3$-cyclically monotone, and $F$ in \eqref{eq:NI} is  $L$-Lipschitz continuous and satisfies $\iprods{Fx - Fx^{\star}, x - x^{\star}} \geq 0$ for all $x\in\dom{\Phi}$ and some $x^{\star} \in \zer{\Phi}$.
Let $\set{(x^k, y^k)}$ be generated by \eqref{eq:GR4NI}. 
Then, the following statements hold.
\begin{itemize}
\item \textbf{The best-iterate rate of \ref{eq:GR4NI}.}  If $1 < \omega \leq \frac{1+\sqrt{5}}{2}$ and  $\eta \in \left(0, \frac{\omega}{2L}\right)$, then
\begin{equation}\label{eq:GR4NI_convergence1}
\frac{1}{k+1}\sum_{l=0}^{k}\norms{Fx^l + \xi^l}^2  \leq \frac{1}{k+1}\sum_{l=0}^k (\omega-1)\left[ \omega\norms{x^l - y^l}^2 + \varphi \cdot \norms{x^{l} -  x^{l-1}}^2\right]  \leq \frac{C_0 \norms{x^0 - x^{\star}}^2}{k+1},
\end{equation}
where $\varphi :=  \frac{\omega^2-4L^2\eta^2}{2\omega} > 0$ and $C_0 :=  \frac{ (\omega^2 - 2L^2\eta^2)\omega^2}{(\omega^2 - 4L^2\eta^2)\eta^2(\omega-1)} > 0$.

\item \textbf{The best-iterate rate for \ref{eq:GR4NI}$+$.} If $ \frac{1+\sqrt{5}}{2} < \omega < 1 + \sqrt{3}$ and $0 < \eta < \frac{\psi}{2L}$, then 
\begin{equation}\label{eq:GR4NI_convergence2}
\frac{1}{k+1}\sum_{l=0}^{k}\norms{Fx^l + \xi^l}^2  \leq \frac{1}{k+1}\sum_{l=0}^k(\omega-1)\left[ \psi\cdot \norms{x^l - y^l}^2 + \kappa \cdot \norms{x^{l} -  y^{l-1}}^2\right]  \leq \frac{\hat{C}_0 \norms{x^0 - x^{\star}}^2}{k+1},
\end{equation}
where $\psi :=  \frac{2\omega + 2 - \omega^2}{\omega} > 0$, $\kappa := \frac{\psi^2 - 4L^2\eta^2}{2\psi}$, and $\hat{C}_0 :=  \frac{[\psi^2 - 2L^2\eta^2(2\omega^2 - \psi^2)]\omega}{(\omega-1)(\psi^2 - 4L^2\eta^2)\eta^2\psi} > 0$.

\end{itemize}
\end{theorem}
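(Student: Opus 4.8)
The plan is to turn the one–iteration estimate of Lemma~\ref{le:GR4NI_key_est1} into a Lyapunov (energy) recursion, telescope it, and then convert the resulting summable quantities into the residual norm $\norms{Fx^l+\xi^l}$. I would first drop the term $-2\eta(\omega-1)\iprods{Fx^k-Fx^{\star},x^k-x^{\star}}$ in \eqref{eq:GR4NI_key_est1}, which is $\le 0$ by the (star-)monotonicity hypothesis on $F$. The free parameter $\gamma>0$ is then fixed so that the coefficient $(\omega-1)(\omega-\gamma)$ of $\norms{x^{k+1}-x^k}^2$ on the left of \eqref{eq:GR4NI_key_est1} dominates the coefficient $\tfrac{(\omega-1)L^2\eta^2}{\gamma}$ of $\norms{x^k-x^{k-1}}^2$ on the right; since $\gamma\mapsto\gamma(\omega-\gamma)$ is maximised at $\gamma=\omega/2$, the choice $\gamma:=\omega/2$ is optimal for the first statement and produces the surplus $(\omega-1)\varphi$ with $\varphi=\tfrac{\omega^2-4L^2\eta^2}{2\omega}>0$ exactly under $\eta<\tfrac{\omega}{2L}$. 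Writing $\Vc_k:=\omega\norms{y^k-x^{\star}}^2+\tfrac{(\omega-1)\omega}{2}\norms{x^k-x^{k-1}}^2$, Lemma~\ref{le:GR4NI_key_est1} rearranges into
\[
\Vc_{k+1}\le \Vc_k-(\omega-1)\big[\varphi\norms{x^k-x^{k-1}}^2+\omega\norms{x^k-y^k}^2\big]-\tfrac{(\omega-1)(1-\omega^2+\omega)}{\omega}\norms{x^{k+1}-y^k}^2 .
\]

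For the first statement I would use that $1<\omega\le\tfrac{1+\sqrt5}{2}$ forces $1-\omega^2+\omega\ge 0$, so the last term is non-positive and can be discarded; telescoping from $l=0$ to $k$, using $\Vc_{k+1}\ge 0$ and the initialization (since $y^{-1}:=x^0$ gives $y^0=x^0$, and with $x^{-1}:=x^0$ one has $\Vc_0=\omega\norms{x^0-x^{\star}}^2$), yields $\sum_{l=0}^k(\omega-1)[\varphi\norms{x^l-x^{l-1}}^2+\omega\norms{x^l-y^l}^2]\le\omega\norms{x^0-x^{\star}}^2$. To pass to the residual I would use the reformulation \eqref{eq:GR4NI_reform}, namely $Fx^l+\xi^l=\breve{w}^l+(Fx^l-Fx^{l-1})$ with $\breve{w}^l=\tfrac{\omega}{\eta}(y^l-x^l)$, apply Young's inequality with the tuned weight $s:=\tfrac{2L^2\eta^2}{\omega^2-4L^2\eta^2}$ (the unique weight for which the two resulting coefficients become proportional to $\omega$ and $\varphi$ with the \emph{same} proportionality constant), and bound $\norms{Fx^l-Fx^{l-1}}^2\le L^2\norms{x^l-x^{l-1}}^2$; this produces $\norms{Fx^l+\xi^l}^2\le\tfrac{C_0}{\omega}(\omega-1)[\omega\norms{x^l-y^l}^2+\varphi\norms{x^l-x^{l-1}}^2]$, which combined with the telescoped bound gives \eqref{eq:GR4NI_convergence1}.

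For the second statement the sign of $1-\omega^2+\omega$ flips, so $\norms{x^{k+1}-y^k}^2$ now enters \eqref{eq:GR4NI_key_est1} with an unfavourable coefficient and must be paid for. The key is the identity $x^{k+1}-y^k=(x^{k+1}-x^k)+(x^k-y^k)$, which by Young's inequality with weight $1$ gives $\norms{x^{k+1}-y^k}^2\le 2\norms{x^{k+1}-x^k}^2+2\norms{x^k-y^k}^2$: the factor $2\cdot\tfrac{(\omega-1)(\omega^2-\omega-1)}{\omega}$ is exactly $(\omega-\psi)(\omega-1)$ with $\psi:=\tfrac{2\omega+2-\omega^2}{\omega}$, so the $\norms{x^k-y^k}^2$ half consumes the portion $(\omega-\psi)$ of the favourable $\omega(\omega-1)\norms{x^k-y^k}^2$ budget and leaves $\psi(\omega-1)\norms{x^k-y^k}^2$, while the $\norms{x^{k+1}-x^k}^2$ half telescopes against the $\norms{x^k-x^{k-1}}^2$ surplus. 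Carrying out this bookkeeping with the choice $\gamma:=\psi/2$, the net coefficient left on $\norms{x^k-x^{k-1}}^2$ is precisely $(\omega-1)\kappa$ with $\kappa:=\tfrac{\psi^2-4L^2\eta^2}{2\psi}$; positivity of $\psi$ is exactly $\omega<1+\sqrt3$, positivity of $\kappa$ is exactly $\eta<\tfrac{\psi}{2L}$, and the only surviving boundary term is absorbed since $(\omega-1)(\omega-\psi/2)\ge(\omega-\psi)(\omega-1)$. Telescoping then gives the claimed bound $\sum_{l=0}^k(\omega-1)[\psi\norms{x^l-y^l}^2+\kappa\norms{x^l-y^{l-1}}^2]\le\omega\norms{x^0-x^{\star}}^2$ (using the resolvent identities $x^l-y^l=\tfrac1\omega(x^l-y^{l-1})$ and $x^l-x^{l-1}=-\eta\breve{w}^l+\tfrac\eta\omega\breve{w}^{l-1}$ to match the stated form), and the passage to $\sum_l\norms{Fx^l+\xi^l}^2$ repeats the Young-inequality step of the first part, now using that $\norms{\breve{w}^l}$ is proportional to both $\norms{x^l-y^l}$ and $\norms{x^l-y^{l-1}}$ and that $\norms{x^l-x^{l-1}}^2\lesssim\norms{\breve{w}^l}^2+\norms{\breve{w}^{l-1}}^2$.

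I expect the delicate point to be the second case: one must coordinate the choice $\gamma=\psi/2$, the weight-$1$ split of $\norms{x^{k+1}-y^k}^2$, and the resolvent identities so that \emph{every} wrong-sign contribution either telescopes away or is dominated by a boundary term, with the surviving coefficients coming out exactly as $\psi(\omega-1)$ and $\kappa(\omega-1)$; the endpoint $\omega=1+\sqrt3$ is precisely where $\psi$ — and hence $\kappa$ and the admissible interval for $\eta$ — degenerates, whereas the first case is essentially mechanical once $\gamma=\omega/2$ and the golden-ratio restriction $\omega\le\tfrac{1+\sqrt5}{2}$ are in place.
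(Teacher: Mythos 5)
Your proposal is correct and follows essentially the same route as the paper's own proof of Theorem~\ref{th:GR4NI_convergence}: the same choices $\gamma=\omega/2$ (resp.\ $\gamma=\psi/2$), the same Lyapunov function $\omega\norms{y^k-x^{\star}}^2+\tfrac{\omega(\omega-1)}{2}\norms{x^k-x^{k-1}}^2$ (resp.\ with coefficient $\tfrac{\psi(\omega-1)}{2}$), the same weight-one split of $\norms{x^{k+1}-y^k}^2$ in the regime $\omega>\tfrac{1+\sqrt{5}}{2}$, and the same tuned Young step (your weight $s=\tfrac{2L^2\eta^2}{\omega^2-4L^2\eta^2}$ is exactly the paper's) to pass from the telescoped quantities to $\norms{Fx^l+\xi^l}^2$ with the stated constants $C_0$ and $\hat{C}_0$. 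The only loose point is your parenthetical attempt to rewrite the surplus $\kappa\norms{x^l-x^{l-1}}^2$ as $\kappa\norms{x^l-y^{l-1}}^2$: the identity $x^l-y^{l-1}=\omega(x^l-y^l)$ trades $y^{l-1}$-differences for $y^l$-differences, not for $x^{l-1}$-differences, so that substitution is not justified --- but none is needed, since the paper's own derivation (and evidently the intended statement of \eqref{eq:GR4NI_convergence2}, where $y^{l-1}$ is a slip for $x^{l-1}$) keeps $\norms{x^l-x^{l-1}}^2$ throughout.
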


\begin{proof}
First, to guarantee that $1 + \omega - \omega^2 \geq 0$ and $\omega > 1$, we need to choose $1 < \omega \leq \frac{\sqrt{5} + 1}{2}$.
If $0 < \eta < \frac{\omega}{2L}$, then by choosing $\gamma := \frac{\omega}{2}$, we have  $\psi := \frac{(\omega-1)(\omega \gamma - \gamma^2 - L^2\eta^2)}{\gamma} = \frac{(\omega-1)(\omega^2 - 4L^2\eta^2)}{2\omega} > 0$.
Using this relation and $\iprods{Fx^k - Fx^{\star}, x^k - x^{\star}} \geq 0$, if we define $\Vc_k := \omega \norms{y^k - x^{\star}}^2 + \frac{\omega(\omega - 1)}{2} \norms{x^k - x^{k-1}}^2 \geq 0$, then we can deduce from \eqref{eq:GR4NI_key_est1} that
\begin{equation}\label{eq:GR4NI_th1_proof1}
\arraycolsep=0.2em
\begin{array}{lcl}
\Vc_{k+1} &\leq & \Vc_k  -  \psi \cdot \norms{ x^k - x^{k-1}}^2  - \omega(\omega-1)\norms{x^k - y^k}^2.
\end{array}
\end{equation}
Next, using $y^k - x^k = \frac{\eta}{\omega}\breve{w}^k$ and $\breve{w}^k = Fx^{k-1} + \xi^k$, by Young's inequality, we have 
\begin{equation}\label{eq:GR4NI_th1_proof2}
\arraycolsep=0.2em
\begin{array}{lcl}
\norms{w^k}^2 & = & \norms{Fx^k + \xi^k}^2 \leq \left(1 + \frac{\psi\omega}{L^2\eta^2(\omega-1)} \right) \norms{Fx^k - Fx^{k-1}}^2 + \left( 1+ \frac{L^2\eta^2(\omega-1)}{\psi\omega}\right)\norms{\breve{w}^k}^2 \vspace{1ex}\\
& \leq &  \left(1 + \frac{\psi\omega}{L^2\eta^2(\omega-1)} \right) L^2\norms{x^k - x^{k-1}}^2 +  \left( 1+ \frac{L^2\eta^2(\omega-1)}{\psi\omega}\right)\frac{\omega^2}{\eta^2}\norms{x^k - y^k}^2 \vspace{1ex}\\
& = & \frac{(\omega-1)}{\psi\eta^2(\omega-1)} \left[ \psi\cdot\norms{x^k - x^{k-1}}^2 + \omega(\omega-1)\norms{x^k - y^k}^2\right].
\end{array}
\end{equation} 
Combining this estimate and \eqref{eq:GR4NI_th1_proof1}, and noting that $\Vc_k \geq 0$, we can show that
\begin{equation*}
\arraycolsep=0.2em
\begin{array}{lcl}
\sum_{l=0}^k\norms{w^l}^2 & \leq & \frac{L^2\eta^2(\omega-1) + \psi\omega}{\psi\eta^2(\omega-1)} \sum_{l=0}^k \left[ \psi \cdot \norms{x^{l} - x^{l-1}}^2 +   \omega(\omega-1) \norms{x^l - y^l}^2\right] \vspace{1ex}\\
& \leq &    \frac{L^2\eta^2(\omega-1) + \psi\omega}{\psi\eta^2(\omega-1)}   \left[ \Vc_0 - \Vc_{k+1} \right] \leq   \frac{L^2\eta^2(\omega-1) + \psi\omega}{\psi\eta^2(\omega-1)}  \cdot \Vc_0  \vspace{1ex}\\
& = &  \frac{ (\omega^2 - 2L^2\eta^2)\omega^2}{(\omega^2 - 4L^2\eta^2)\eta^2(\omega-1)} \cdot \norms{x^0 - x^{\star}}^2,
\end{array}
\end{equation*} 
which is exactly \eqref{eq:GR4NI_convergence1},  where we have used $\Vc_0 := \omega\norms{y^0 - x^{\star}}^2 + \frac{\omega(\omega-1)}{2} \norms{x^0 - x^{-1}}^2 = \omega\norms{x^0 - x^{\star}}^2$ due to $x^{-1} = y^0 = x^0$.

Next, if $1.6180 \approx \frac{1 + \sqrt{5}}{2} < \omega < 1 + \sqrt{3} \approx 2.7321$, then we have $\omega^2 - \omega - 1 > 0$ and $\psi := \omega - \frac{2(\omega^2-\omega-1)}{\omega} > 0$.
In this case, using $\norms{x^{k+1} - y^k}^2 \leq 2\norms{x^{k+1} - x^k}^2 + 2\norms{y^k - x^k}^2$ and $\iprods{Fx^k - Fx^{\star}, x^k - x^{\star}} \geq 0$ into \eqref{eq:GR4NI_key_est1}, rearranging the result, and using $\gamma := \frac{\psi}{2}$, we get
\begin{equation}\label{eq:GR4NI_key_est1_v2}
\hspace{-0ex}
\arraycolsep=0.2em
\begin{array}{lcl}
\omega\norms{y^{k+1} - x^{\star}}^2  +  \frac{\psi(\omega - 1)}{2} \norms{x^{k+1} - x^k}^2  & \leq & \omega\norms{y^k - x^{\star}}^2 +  \frac{\psi(\omega - 1)}{2}\norms{x^k-x^{k-1}}^2  - \psi(\omega-1)\norms{x^k - y^k}^2 \vspace{1ex}\\
&& -  {~} \frac{(\omega-1)(\psi^2 - 4L^2\eta^2)}{2\psi} \norms{x^k - x^{k-1}}^2.
\end{array}
\hspace{-3ex}
\end{equation}
Similar to the proof of \eqref{eq:GR4NI_th1_proof2}, we have $\norms{w^k}^2 \leq \frac{\psi^2 - 2L^2\eta^2(2\omega^2 - \psi^2)}{(\psi^2 - 4L^2\eta^2)\eta^2\psi} \big[\frac{\psi^2 - 4L^2\eta^2}{2\psi}\norms{x^k - x^{k-1}}^2 + \psi\norms{x^k - y^k}^2\big]$.
Combining this inequality and \eqref{eq:GR4NI_key_est1_v2}, with same argument as in the proof of \eqref{eq:GR4NI_convergence1}, we obtain \eqref{eq:GR4NI_convergence2}.
%
%
\end{proof}

\beforesec
\section{Accelerated Extragradient Methods for Nonlinear Inclusions}\label{sec:EAG4NI}
\aftersec
\textbf{Introduction.}
The convergence rate on the residual norm $\norms{Fx^k + \xi^k}$ of the EG method and its variants using constant stepsize  discussed so far is $\mathcal{O}\big(1/\sqrt{k}\big)$, which is unimprovable for standard and constant stepsize EG-type methods as shown in \cite{golowich2020last}.
In this section, we survey recent development on accelerated methods that can theoretically achieve a $\BigO{1/k}$ last-iterate convergence rate on $\norms{Fx^k + \xi^k}$ using variable stepsizes.
We will present two different approaches to develop accelerated methods for solving \eqref{eq:NE} and \eqref{eq:NI} without using averaging sequences.
The first one relies on Halpern's fixed-point iteration \cite{halpern1967fixed}, and the second approach leverages  Nesterov's accelerated techniques.
Halpern's fixed-point iteration is a classical method to approximate a fixed-point of a nonexpansive operator, or equivalently, to find a root of a co-coercive operator. 
This method has been intensively studied in fixed-point theory, but the first work showing a $\BigO{1/k}$ last-iterate convergence rate is due to F. Lieder in \cite{lieder2021convergence}.
This method was then extended and intensively studied in \cite{diakonikolas2020halpern} for root-finding problems and VIPs.
In a pioneering work \cite{yoon2021accelerated}, Yoon and Ryu extended Halpern's fixed-point iteration to the EG method for solving \eqref{eq:NE}, which is called \textit{extra-anchored gradient} (EAG) method.
This new method still achieves $\BigO{1/k}$ last-iterate convergence but only requires the monotonicity and Lipschitz continuity of $F$.
Lee and Kim in \cite{lee2021fast} further advanced \cite{yoon2021accelerated} to the co-hypomonotone setting of \eqref{eq:NE} and still achieved the same rates.
The authors in \cite{tran2021halpern} exploited the technique in \cite{yoon2021accelerated} and applied it to the past-extragradient method in \cite{popov1980modification} and obtained a past extra-anchored gradient (PEAG) method that has the same $\BigO{1/k}$-rates (up to a constant factor).
Recently, \cite{cai2022accelerated} and \cite{cai2022baccelerated} expanded the results in \cite{lee2021fast,tran2021halpern,yoon2021accelerated} to develop methods for solving \eqref{eq:VIP} and \eqref{eq:NI}, and preserved the same $\BigO{1/k}$ last-iterate convergence rates on $\norms{Fx^k + \xi^k}$.

In this section, we summarize the above mentioned results and provide a unified convergence analysis obtained from a recent work \cite{tran2023extragradient} that covers all the results from  \cite{cai2022accelerated,cai2022baccelerated,lee2021fast,tran2021halpern,yoon2021accelerated} in a unified fashion.


\beforesubsec
\subsection{The extra-anchored gradient method for \eqref{eq:NI}}\label{subsec:EAG2_v2}
\aftersubsec
\noindent\textbf{The algorithm.}
The extra-anchored gradient (EAG) for solving \eqref{eq:NI} we discuss here is presented as follows.
Starting from $x^0 \in \dom{\Phi}$, at each iteration $k\geq 0$, we update
\begin{equation}\label{eq:EAG4MVIP}
\arraycolsep=0.2em
\left\{\begin{array}{lcl}
y^k &:= & J_{\hat{\eta}_k T}\left( \tau_kx^0 + (1-\tau_k)x^k - \hat{\eta}_k Fx^k \right),   \vspace{1ex}\\
x^{k+1} &:= & J_{\eta_kT}\left( \tau_kx^0 + (1-\tau_k)x^k - \eta_k Fy^k \right),
\end{array}\right.
\tag{EAG2}
\end{equation}
where $\tau_k \in (0, 1)$, $\hat{\eta}_k > 0$, and $\eta_k > 0$ are given, which will be determined later.
Here, we assume that $T$ is maximally $3$-cyclically monotone, and hence covers the special cases $T = \Nc_{\Xc}$, the normal cone of $\Xc$ and $T = \partial{g}$, the subdifferential of a convex function $g$.
In fact, \cite{cai2022accelerated} considers the special case \eqref{eq:VIP} of \eqref{eq:NI} when  $T := \Nc_{\Xc}$ is normal cone of a nonempty, closed, and convex set $\Xc$, and $\hat{\eta}_k := \eta_k$.
As we can see, the scheme \eqref{eq:EAG4MVIP} purely extends the extra-anchored gradient (EAG) scheme from \cite{yoon2021accelerated} for \eqref{eq:NE} to \eqref{eq:NI}, when $F$ is monotone and Lipschitz continuous, and $T$ is maximally $3$-cyclically monotone.

\vspace{0.75ex}
\noindent\textbf{Convergence analysis.}
For simplicity of analysis, we recall the following quantities defined ealier:
\begin{equation}\label{eq:EAG4MVIP_ex2}
w^k := Fx^k + \xi^k, \quad \hat{w}^k := Fy^{k-1} + \xi^k,  \quad\text{and} \quad \tilde{w}^k := Fx^k + \zeta^k,
\end{equation}
where $\xi^k \in Tx^k$ and $\zeta^k \in Ty^k$.
Then, we can equivalently rewrite \eqref{eq:EAG4MVIP} as follows:
\begin{equation}\label{eq:EAG4MVIP_reform}
\arraycolsep=0.2em
\left\{\begin{array}{lcl}
y^k &:= & \tau_kx^0 + (1-\tau_k)x^k - \hat{\eta}_k\tilde{w}^k, \vspace{1ex}\\
x^{k+1} &:= & \tau_kx^0 + (1-\tau_k)x^k - \eta_k\hat{w}^{k+1}.
\end{array}\right.
\end{equation}
To establish the convergence of \eqref{eq:EAG4MVIP}, we use the following potential function as in \cite{cai2022accelerated,lee2021fast,tran2023extragradient,yoon2021accelerated}:
\begin{equation}\label{eq:EAG4MVIP_potential_func}
\Vc_k :=  a_k\norms{w^k}^2 + b_k\iprods{w^k, x^k - x^0},
\end{equation}
where $a_k > 0$ and $b_k > 0$ are given parameters.
Let us prove the convergence of \eqref{eq:EAG4MVIP}.

\begin{theorem}\label{th:EAG4MVIP_convergence}
For \eqref{eq:NI}, assume that $\zer{\Phi} \neq\emptyset$,  $F$ is $L$-Lipschitz continuous and monotone, and $T$ is maximally $3$-cyclically monotone.
Let $\sets{(x^k, y^k)}$ be generated by \eqref{eq:EAG4MVIP} using
\begin{equation}\label{eq:EAG4MVIP_param_update}
\tau_k := \frac{1}{k+2}, \quad \eta_k :=  \eta \in \left(0,  \frac{1}{L}\right], \quad\text{and} \quad  \hat{\eta}_k := (1-\tau_k)\eta.
\end{equation}
Then, for all $k\geq 0$ and any $x^{\star}\in\zer{\Phi}$, the following result holds:
\begin{equation}\label{eq:EAG4MVIP_convergence1}
\norms{Fx^k + \xi^k}^2 \leq  \frac{4\norms{x^0 - x^{\star}}^2 + 2\eta^2\norms{Fx^0 + \xi^0}^2}{\eta^2(k + 1)^2}, \quad\text{where}\quad \xi^k \in Tx^k.
\end{equation}
Consequently, we have the last-iterate convergence rate $\BigO{1/k}$ of the residual norm $\norms{Fx^k + \xi^k}$.
\end{theorem}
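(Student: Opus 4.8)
The plan is to run a Halpern/anchoring-type Lyapunov argument built on the potential $\Vc_k = a_k\norms{w^k}^2 + b_k\iprods{w^k, x^k - x^0}$ from \eqref{eq:EAG4MVIP_potential_func}, where $w^k = Fx^k + \xi^k$ and the positive sequences $a_k, b_k$ are chosen to grow like $(k+1)^2$ and $(k+1)$; concretely, the ratios $b_k/a_k = \tfrac{2}{\eta(k+1)}$ and $a_k/a_0 = (k+1)^2$ are exactly what the target bound dictates. First I would record a lower bound on $\Vc_k$. Since $F$ is monotone and $T$, being maximally $3$-cyclically monotone, is in particular monotone, the operator $\Phi = F + T$ is monotone, so for any $x^\star\in\zer{\Phi}$ (with $Fx^\star + \xi^\star = 0$) one has $\iprods{w^k, x^k - x^\star}\geq 0$. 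Splitting $x^k - x^0 = (x^k - x^\star) + (x^\star - x^0)$ and applying Cauchy--Schwarz gives $\Vc_k \geq a_k\norms{w^k}^2 - b_k\norms{w^k}\,\norms{x^0 - x^\star}$.

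The core step is the one-iteration estimate $\Vc_{k+1}\leq\Vc_k$. Here I would reuse the purely monotonicity-based inequality from the proof of Lemma~\ref{le:EG4NI_key_estimate}, namely \eqref{eq:EG4NI_lm2_proof1}: applying $3$-cyclic monotonicity of $T$ to the triple $(x^{k+1},x^k,y^k)$ together with monotonicity of $F$ yields $\iprods{w^{k+1} - \tilde{w}^k, x^{k+1} - x^k} + \iprods{w^k - \tilde{w}^k, x^k - y^k}\geq 0$, and this carries over verbatim since it uses only the iterates and the subgradient selections, not the particular update rule. Then I would substitute the identities from \eqref{eq:EAG4MVIP_reform}: $x^{k+1} - x^k = -\tau_k(x^k - x^0) - \eta\hat{w}^{k+1}$, $x^k - y^k = \tau_k(x^k - x^0) + (1-\tau_k)\eta\tilde{w}^k$, $x^{k+1} - x^0 = (1-\tau_k)(x^k - x^0) - \eta\hat{w}^{k+1}$, and $x^{k+1} - y^k = -\eta\bigl(\hat{w}^{k+1} - (1-\tau_k)\tilde{w}^k\bigr)$; expand $\iprods{w^{k+1}, x^{k+1} - x^0}$ and $\norms{w^{k+1}}^2$ accordingly; and use the Lipschitz bound $\norms{w^{k+1} - \hat{w}^{k+1}} = \norms{Fx^{k+1} - Fy^k}\leq L\norms{x^{k+1} - y^k}$. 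After collecting terms, $\Vc_{k+1} - \Vc_k$ becomes a combination of $\norms{w^k - \tilde{w}^k}^2$, $\norms{\hat{w}^{k+1} - \tilde{w}^k}^2$, $\norms{w^{k+1} - \hat{w}^{k+1}}^2$, $\norms{x^k - x^0}^2$ and cross terms, and the choices $\tau_k = 1/(k+2)$, $\eta L\leq 1$, and the prescribed growth of $a_k,b_k$ should force every coefficient to be nonpositive. Verifying these sign/coefficient identities is the main obstacle: it is the standard but delicate ``Lyapunov engineering'' where the exact forms of the parameters get pinned down, and where $\eta L\leq 1$ is essential, exactly as in the analysis behind Lemma~\ref{le:EG4NI_key_estimate}.

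Finally I would telescope: $\Vc_k\leq\Vc_0 = a_0\norms{w^0}^2$, the $b_0$-term vanishing because $x^0 - x^0 = 0$. Combined with the lower bound from the first step, $a_k\norms{w^k}^2 - b_k\norms{w^k}\,\norms{x^0-x^\star}\leq a_0\norms{w^0}^2$, a quadratic inequality in $\norms{w^k}$; solving it and using $(s+t)^2\leq 2s^2 + 2t^2$ gives
\begin{equation*}
\norms{w^k}^2 \leq \frac{b_k^2}{a_k^2}\,\norms{x^0 - x^\star}^2 + \frac{2a_0}{a_k}\,\norms{w^0}^2 .
\end{equation*}
Substituting $b_k/a_k = \tfrac{2}{\eta(k+1)}$, $a_0/a_k = \tfrac{1}{(k+1)^2}$, and $w^0 = Fx^0 + \xi^0$ yields exactly \eqref{eq:EAG4MVIP_convergence1}, and hence the $\BigO{1/k}$ last-iterate rate on $\norms{Fx^k + \xi^k}$.
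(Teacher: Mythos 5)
Your proposal is correct and follows essentially the same route as the paper's proof: the same anchored potential $\Vc_k$ with the same parameter growth ($b_k = b_0(k+1)$, $a_k = \tfrac{\eta b_0 (k+1)^2}{2}$), the same $3$-cyclic-monotonicity inequality applied to the triple $(x^{k+1},x^k,y^k)$ combined with monotonicity of $F$, the same substitution of the update identities and the Lipschitz bound $\norms{w^{k+1}-\hat{w}^{k+1}}\le L\norms{x^{k+1}-y^k}$, yielding $\Vc_{k+1}\le \Vc_k$ (in the actual expansion no $\norms{x^k-x^0}^2$ term survives; the difference collapses to the two nonnegative squares $\norms{\hat{w}^{k+1}-(1-\tau_k)\tilde{w}^k}^2$ and $\norms{w^k-\tilde{w}^k}^2$, exactly as your parameter choices predict). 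The only cosmetic difference is the finish, where the paper completes the square via Young's inequality rather than solving your quadratic inequality in $\norms{w^k}$; both give the identical bound \eqref{eq:EAG4MVIP_convergence1}.
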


\begin{proof}
Since $T$ is maximally $3$-cyclically monotone, $\xi^{k+1} \in Tx^{k+1}$, $\xi^k \in Tx^k$, and $\zeta^k \in Ty^k$, we have
\begin{equation*} 
\arraycolsep=0.2em
\begin{array}{lcl}
\iprods{\xi^{k+1}, x^{k+1} - x^k} + \iprods{\xi^k, x^k - y^k} + \iprods{\zeta^k, y^k - x^{k+1}} \geq 0.
\end{array}
\end{equation*}
By the monotonicity of $F$, we also have $\iprods{Fx^{k+1} - Fx^k, x^{k+1} - x^k} \geq 0$.
Summing up this inequality and the last one, and then using the fact that $w^{k+1} = Fx^{k+1} + \xi^{k+1}$, $w^k := Fx^k + \xi^k$, and $\tilde{w}^k := Fx^k + \zeta^k$, we obtain
\begin{equation}\label{eq:EAG4MVIP_proof1} 
\arraycolsep=0.2em
\begin{array}{lcl}
\iprods{w^{k+1}, x^{k+1} - x^k} - \iprods{\tilde{w}^k, x^{k+1} - x^k} + \iprods{w^k - \tilde{w}^k, x^k - y^k} \geq 0.
\end{array}
\end{equation}
Now, from \eqref{eq:EAG4MVIP_reform}, we have
\begin{equation*} 
\arraycolsep=0.2em
\begin{array}{lcl}
x^{k+1} - x^k &= & -\frac{\tau_k}{1 - \tau_k}(x^{k+1} - x^0) - \frac{\eta_k}{1-\tau_k}\hat{w}^{k+1}, \vspace{1ex}\\
x^{k+1} - x^k &= & - \tau_k(x^k - x^0) - \eta_k\hat{w}^{k+1}, \vspace{1ex}\\
x^k - y^k &= & \tau_k(x^k - x^0) + \hat{\eta}_k\tilde{w}^k.
\end{array}
\end{equation*}
Substituting these relations into \eqref{eq:EAG4MVIP_proof1}, and rearranging terms, we arrive at
\begin{equation*} 
\arraycolsep=0.2em
\begin{array}{lcl}
\tau_k\iprods{w^k, x^k - x^0} - \frac{\tau_k}{1-\tau_k}\iprods{w^{k+1}, x^{k+1} - x^0} &\geq & \frac{\eta_k}{1-\tau_k}\iprods{w^{k+1}, \hat{w}^{k+1}} - \eta_k\iprods{\tilde{w}^k, \hat{w}^{k+1}} - \hat{\eta}_k\iprods{w^k, \tilde{w}^k} + \hat{\eta}_k\norms{\tilde{w}^k}^2.
\end{array}
\end{equation*}
Multiplying this inequality by $\frac{b_k}{\tau_k}$ and assume that $b_{k+1} = \frac{b_k}{1-\tau_k}$, and then using \eqref{eq:EAG4MVIP_potential_func}, we can show that
\begin{equation}\label{eq:EAG4MVIP_proof2}  
\arraycolsep=0.2em
\begin{array}{lcl}
\Vc_k - \Vc_{k+1} 
&= & a_k\norms{w^k}^2 - a_{k+1}\norms{w^{k+1}}^2 + b_k\iprods{w^k, x^k - x^0} - \frac{b_k}{1-\tau_k}\iprods{w^{k+1}, x^{k+1} - x^0} \vspace{1ex}\\
&\geq & \frac{b_{k+1}\eta_k}{\tau_k}\iprods{w^{k+1} - \tilde{w}^k, \hat{w}^{k+1}} + b_{k+1}\eta_k\iprods{\tilde{w}^k, \hat{w}^{k+1}} - \frac{b_k\hat{\eta}_k}{\tau_k}\iprods{w^k, \tilde{w}^k} + \frac{b_k\hat{\eta}_k}{\tau_k}\norms{\tilde{w}^k}^2 \vspace{1ex}\\
&& + {~} a_k\norms{w^k}^2 - a_{k+1}\norms{w^{k+1}}^2.
\end{array}
\end{equation}
Next, from  \eqref{eq:EAG4MVIP_reform}, we have $x^{k+1} - y^k = -\eta_k\hat{w}^{k+1} + \hat{\eta}_k\tilde{w}^k$.
Using this expression and the $L$-Lipschitz continuity of $F$, we have $\norms{w^{k+1} - \hat{w}^{k+1}}^2 = \norms{Fx^{k+1} - Fy^k}^2 \leq L^2\norms{x^{k+1} - y^k}^2 = L^2\norms{\eta_k\hat{w}^{k+1} - \hat{\eta}_k\tilde{w}^k}^2$, leading to
\begin{equation*} 
\arraycolsep=0.2em
\begin{array}{lcl}
\norms{w^{k+1}}^2 + (1-L^2\eta_k^2)\norms{\hat{w}^{k+1}}^2 - 2\iprods{w^{k+1} - \tilde{w}^k, \hat{w}^{k+1}} -  2(1 - L^2\eta_k\hat{\eta}_k)\iprods{\hat{w}^{k+1}, \tilde{w}^k} - L^2\hat{\eta}_k^2\norms{\tilde{w}^k}^2 \leq 0.
\end{array}
\end{equation*}
Multiplying this inequality by $\frac{b_{k+1}\eta_k}{2\tau_k}$, adding the result to \eqref{eq:EAG4MVIP_proof1}, and using $\hat{\eta}_k = (1-\tau_k)\eta_k$, we obtain
\begin{equation*} 
\arraycolsep=0.2em
\begin{array}{lcl}
\Vc_k - \Vc_{k+1} &\geq & \left( \frac{b_{k+1}\eta_k}{2\tau_k} - a_{k+1} \right)\norms{w^{k+1}}^2 + \frac{b_{k+1}\eta_k(1-L^2\eta_k^2)}{2\tau_k}\norms{\hat{w}^{k+1}}^2 + a_k\norms{w^k}^2 
+ \frac{b_{k+1}\eta_k(1-\tau_k)^2(2 - L^2\eta_k^2)}{2\tau_k}\norms{\tilde{w}^k}^2 \vspace{1ex}\\
& & - {~} \frac{b_{k+1}\eta_k(1 - L^2\eta_k^2)(1 - \tau_k)}{\tau_k}  \iprods{\tilde{w}^k, \hat{w}^{k+1}} - \frac{b_{k+1}\eta_k(1-\tau_k)^2}{\tau_k}\iprods{w^k, \tilde{w}^k} \vspace{1ex}\\
&= &  \frac{b_{k+1}\eta_k(1-L^2\eta_k^2)}{2\tau_k}\norms{\hat{w}^{k+1} - (1-\tau_k)\tilde{w}^k}^2  + \frac{b_{k+1}\eta_k(1-\tau_k)^2}{2\tau_k}\norms{w^k - \tilde{w}^k}^2 \vspace{1ex}\\
&& + {~} \left( \frac{b_{k+1}\eta_k}{2\tau_k} - a_{k+1} \right)\norms{w^{k+1}}^2  +  \left( a_k -  \frac{b_k\eta_k(1-\tau_k)}{2\tau_k}\right) \norms{w^k}^2.
\end{array}
\end{equation*}
Let us choose $\eta_k := \eta \in \left(0, \frac{1}{L}\right]$ as in \eqref{eq:EAG4MVIP_param_update}, $\tau_k := \frac{1}{k+2}$ and $a_k := \frac{b_k\eta(1-\tau_k)}{2\tau_k} = \frac{\eta b_k(k+1)}{2}$.
Then, we have $a_{k+1} = \frac{\eta b_{k+1}(k+2)}{2} = \frac{\eta b_{k+1}}{2\tau_k}$.
Moreover, since $b_{k+1} = \frac{b_k}{1-\tau_k} = \frac{b_k(k+2)}{k+1}$.
By induction, we obtain $b_k = b_0(k+1)$ for some $b_0 > 0$.
Using these parameters into the last estimate, we  obtain
\begin{equation*} 
\arraycolsep=0.2em
\begin{array}{lcl}
\Vc_k - \Vc_{k+1} &\geq & \frac{b_{k+1}\eta(1-L^2\eta^2)}{2\tau_k}\norms{\hat{w}^{k+1} - (1-\tau_k)\tilde{w}^k}^2  + \frac{b_{k+1}\eta(1-\tau_k)^2}{2\tau_k}\norms{w^k - \tilde{w}^k}^2 \geq 0.
\end{array}
\end{equation*}
Finally, using $\iprods{w^k, x^k - x^{\star}} \geq 0$ for $x^{\star} \in \zer{\Phi}$ and $b_k = b_0(k+1)$, we can lower bound $\Vc_k$ as
\begin{equation*}
\arraycolsep=0.2em
\begin{array}{lcl}
\Vc_k & = & a_k\norms{w^k}^2 + b_k\iprods{w^k, x^{\star} - x^0} + b_k\iprods{w^k, x^k - x^{\star}}  \geq a_k \norms{w^k}^2 - b_k\norms{w^k}\norms{x^0 - x^{\star}} \vspace{1ex}\\
&\geq &  \left( a_k  - \frac{\eta  b_k^2}{4b_0} \right) \norms{w^k}^2 - \frac{b_0}{\eta}\norms{x^0 - x^{\star}}^2 =  \frac{b_0\eta (k + 1)^2}{4}\norms{w^k}^2 - \frac{b_0}{\eta}\norms{x^0 - x^{\star}}^2.
\end{array}
\end{equation*}
Combining the last two estimates, we can easily show that $\frac{b_0\eta (k + 1)^2}{4}\norms{w^k}^2 - \frac{b_0}{\eta}\norms{x^0 - x^{\star}}^2 \leq \Vc_k \leq \Vc_0 = a_0\norms{w^0}^2 = \frac{\eta b_0}{2}\norms{w^0}^2$, leading to \eqref{eq:EAG4MVIP_convergence1}.
\end{proof}

\begin{remark}\label{re:EAG4NI}
Our analysis in Theorem~\ref{th:EAG4MVIP_convergence} essentially relies on the proof technique in \cite{yoon2021accelerated}, and it is also different from \cite{cai2022accelerated}.
We believe that our proof is rather elementary and using simple arguments.
We note that our analysis can also be extended to prove the convergence of the past-extra-anchored gradient method (i.e., replacing $Fx^k$ in \eqref{eq:EAG4NI} by $Fy^{k-1}$) by using similar arguments as in \cite{tran2021halpern}.
We omit the details here.
\end{remark}

\beforesubsec
\subsection{The fast extragradient method for \eqref{eq:NI}}\label{subsec:EAG2}
\aftersubsec
\noindent\textbf{The algorithm.}
The fast extragradient method (FEG) for solving \eqref{eq:NI} developed in \cite{cai2022baccelerated,lee2021fast,tran2023extragradient,yoon2021accelerated} can be written in a unified form as follows.
Starting from $x^0\in\dom{\Phi}$, at each iteration $k \geq 0$, we update
\begin{equation}\label{eq:EAG4NI}
\arraycolsep=0.2em
\left\{\begin{array}{lcl}
y^k        &:= &  x^k + \tau_k(x^0 - x^k) -  ( \hat{\eta}_k - \beta_k) (Fx^k + \xi^k), \vspace{1ex}\\
x^{k+1} &:= & x^k + \tau_k(x^0 - x^k) - \eta_k (Fy^k + \xi^{k+1}) + \beta_k(Fx^k + \xi^k),
\end{array}\right.
\tag{FEG2}
\end{equation}
where $\xi^k \in Tx^k$, $\tau_k \in (0, 1)$, $\beta_k \geq 0$, $\eta_k  > 0$, and $\hat{\eta}_k > 0$ are given, determined later.
\begin{itemize}
\itemsep=0.2em
\item If $T = 0$, $\beta_k := 0$, and $\eta_k = \hat{\eta}_k$, then \eqref{eq:EAG4NI} reduces to the extra-anchored gradient (EAG) scheme for solving \eqref{eq:NE} in \cite{yoon2021accelerated} under the monotonicity of $F$ as
\begin{equation}\label{eq:EAG4NE}
y^k := x^k - \eta_kFx^k \quad\text{and}\quad x^{k+1} = x^k - \eta_kFy^k.
\tag{EAG}
\end{equation}
\item If $T = 0$, $\beta_k := 2\rho(1-\tau_k)$, and $\eta_k := \eta > 0$, then \eqref{eq:EAG4NI} reduces to the fast extragradient variant for solving \eqref{eq:NE} in \cite{lee2021fast}, but under the co-hypomonotonicity of $F$.

\item If $T$ is a maximally monotone operator (e.g., $T := \Nc_{\Xc}$, the normal cone of a nonempty, closed, and convex set $\Xc$), $\beta_k := 2\rho(1-\tau_k)$ and $\eta_k := \eta > 0$, then  \eqref{eq:EAG4NI} is exactly the variant studied in \cite{cai2022accelerated}.
\end{itemize}
In fact, \eqref{eq:EAG4NI} is rooted from Tseng's forward-backward-forward splitting method \eqref{eq:FBFS4NI} instead of \eqref{eq:EG4NI} because it only requires one resolvent evaluation $J_{\eta T}$ per iteration.
Recently, \cite{tran2023extragradient} provides an elementary convergence analysis for \eqref{eq:EAG4NI}, which relies on the technique in \cite{yoon2021accelerated}.
We survey this method here and present the convergence analysis from  \cite{yoon2021accelerated}.

Let $w^k$ and $\hat{w}^k$ be defined as \eqref{eq:EAG4MVIP_ex2}.
Then, we can equivalently rewrite \eqref{eq:EAG4NI}   as follows:
\begin{equation}\label{eq:EAG4NI_ex3}
\arraycolsep=0.2em
\left\{\begin{array}{lcl}
y^k        &:= &  x^k + \tau_k(x^0 - x^k) -  (\hat{\eta}_k - \beta_k) w^k, \vspace{1ex}\\
x^{k+1} &:= & x^k + \tau_k(x^0 - x^k) - \eta \hat{w}^{k+1} + \beta_k w^k.
\end{array}\right.
\end{equation}
Clearly, \eqref{eq:EAG4NI_ex3} has the same form as the fast extragradient scheme in \cite{lee2021fast} for  solving \eqref{eq:NE}, where $w^k$ and $\hat{w}^{k+1}$ reduce to $Fx^k$ and $Fy^k$, respectively.
Since $x^{k+1}$ are in both sides of line 2 of \eqref{eq:EAG4NI_ex3}, we can rewrite \eqref{eq:EAG4NI} as
\begin{equation}\label{eq:EAG4NI_impl}
\arraycolsep=0.2em
\left\{\begin{array}{lcl}
y^k        &:= &  x^k + \tau_k(x^0 - x^k) - ( \hat{\eta}_k - \beta_k) (Fx^k + \xi^k), \vspace{1ex}\\
x^{k+1}  & \in & J_{\eta T}\left( y^k - \eta Fy^k  +  \hat{\eta}_k(Fx^k + \xi^k)  \right), \vspace{1ex}\\
\xi^{k+1} & := &  \frac{1}{\eta}\left( y^k - \eta Fy^k  +  \hat{\eta}_k(Fx^k + \xi^k)  - x^{k+1} \right),
\end{array}\right.
\end{equation}
where $\xi^0 \in Tx^0$ is arbitrary, and $J_{\eta T}$ is the resolvent of $\eta T$, which may not be single-valued in our case.
However, for our iterates to be well-defined, we will assume that $\ran{J_{\eta T}} \subseteq\dom{F} = \R^p$, and $\dom{J_{\eta T}} = \R^p$.

\vspace{0.75ex}
\noindent\textbf{Convergence analysis.}
Using the same potential function $\Vc_k$ as in \eqref{eq:EAG4MVIP_potential_func}, we can prove the convergence of  \eqref{eq:EAG4NI} in the following theorem.

\begin{theorem}\label{th:cEAG_convergence}
Assume that $\Phi$ in \eqref{eq:NI} is $\rho$-co-hypomonotone, $F$ is $L$-Lipschitz continuous such that $2L\rho < 1$, $\zer{\Phi}\neq\emptyset$, $\ran{J_{\eta T}} \subseteq\dom{F} = \R^p$, and $\dom{J_{\eta T}} = \R^p$.
Let $\sets{(x^k, y^k)}$ be generated by \eqref{eq:EAG4NI} using
\begin{equation}\label{eq:EAG4NI_param_update}
\tau_k := \frac{1}{k+2}, \quad \beta_k := 2\rho(1-\tau_k), \quad \eta_k :=  \eta \in \left(2\rho,  \frac{1}{L}\right], \quad\text{and} \quad  \hat{\eta}_k := (1-\tau_k)\eta.
\end{equation}
Then, for all $k\geq 0$ and any $x^{\star}\in\zer{\Phi}$, we have
\begin{equation}\label{eq:EAG4NI_convergence1}
\norms{Fx^k + \xi^k}^2 \leq  \frac{4\norms{x^0 - x^{\star}}^2 + 2\eta(\eta-2\rho)\norms{Fx^0 + \xi^0}^2}{(\eta - 2\rho)^2(k + 1)^2}, \quad\text{where}\quad \xi^k \in Tx^k.
\end{equation}
Consequently, we have the last-iterate convergence rate $\BigO{1/k}$ of the residual norm $\norms{Fx^k + \xi^k}$.
\end{theorem}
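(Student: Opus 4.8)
The plan is to mirror the Halpern/anchoring potential-function argument already used for Theorem~\ref{th:EAG4MVIP_convergence}, adapted to the $\rho$-co-hypomonotone inclusion and to the single-resolvent (FBFS-rooted) scheme \eqref{eq:EAG4NI}. I would keep the same Lyapunov function $\Vc_k = a_k\norms{w^k}^2 + b_k\iprods{w^k, x^k - x^0}$ from \eqref{eq:EAG4MVIP_potential_func}, now with $w^k = Fx^k + \xi^k \in \Phi x^k$ and $\hat{w}^{k+1} = Fy^k + \xi^{k+1}$, and use the reformulation \eqref{eq:EAG4NI_ex3} of the iterates. The two ingredients replacing the $3$-cyclic monotonicity of $T$ are: (i) the $\rho$-co-hypomonotonicity of $\Phi$ applied to the pair $(x^{k+1},w^{k+1})$, $(x^k,w^k)$, giving $\iprods{w^{k+1} - w^k, x^{k+1} - x^k} + \rho\norms{w^{k+1} - w^k}^2 \geq 0$; and (ii) the $L$-Lipschitz bound $\norms{w^{k+1} - \hat{w}^{k+1}}^2 = \norms{Fx^{k+1} - Fy^k}^2 \leq L^2\norms{x^{k+1} - y^k}^2$, together with the clean identity $x^{k+1} - y^k = \hat{\eta}_k w^k - \eta\hat{w}^{k+1}$ that falls out of \eqref{eq:EAG4NI_ex3}.

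The one-iteration descent is the heart of the proof. Starting from (i), I would substitute the two expressions for $x^{k+1} - x^k$ obtained from \eqref{eq:EAG4NI_ex3} — one in terms of $x^k - x^0$ and one in terms of $x^{k+1} - x^0$ (the latter obtained by eliminating $x^k - x^0$ through the recursion, which is where the $1/(1-\tau_k)$ factor appears) — then multiply by $b_k/\tau_k$ and impose $b_{k+1} = b_k/(1-\tau_k)$ to surface the telescoping pair $b_k\iprods{w^k, x^k - x^0} - b_{k+1}\iprods{w^{k+1}, x^{k+1} - x^0}$. Adding the appropriate multiple of (ii), all the inner-product cross-terms in $w^{k+1}, \hat{w}^{k+1}, w^k$ should collapse, provided one takes $\tau_k = 1/(k+2)$, $\eta_k = \eta$, $\hat{\eta}_k = (1-\tau_k)\eta$, and $\beta_k = 2\rho(1-\tau_k)$ — so that $b_{k+1}\beta_k = 2\rho b_k$ and the $\iprods{w^{k+1},w^k}$ contribution from the correction cancels the one from the co-hypomonotone slack — and finally a (shifted) choice $a_k = \tfrac{b_0\eta (k+1)^2}{2} - b_0\rho\, k(k+1)$ with $b_k = b_0(k+1)$ that kills the $\norms{w^{k+1}}^2$ coefficient. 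The leftover should be exactly $\Vc_k - \Vc_{k+1} \geq \tfrac{\eta(1-L^2\eta^2)\,b_k}{2\tau_k(1-\tau_k)}\norms{\hat{w}^{k+1} - (1-\tau_k)w^k}^2 \geq 0$, which is nonnegative precisely because $\eta \leq 1/L$. The purpose of $\beta_k = 2\rho(1-\tau_k)$ is to absorb the co-hypomonotone deficit, so the argument effectively runs with step $\eta - 2\rho$; this forces $\eta > 2\rho$, and the interval $(2\rho, 1/L]$ in \eqref{eq:EAG4NI_param_update} is nonempty exactly when $2L\rho < 1$.

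With $\Vc_{k+1} \leq \Vc_k$ established, the conclusion is routine. I would lower-bound $\Vc_k$ using $(x^{\star},0)\in\gra{\Phi}$ and again $\rho$-co-hypomonotonicity: $\iprods{w^k, x^k - x^0} \geq \iprods{w^k, x^{\star} - x^0} - \rho\norms{w^k}^2 \geq -\norms{w^k}\norms{x^0 - x^{\star}} - \rho\norms{w^k}^2$, so $\Vc_k \geq (a_k - b_k\rho)\norms{w^k}^2 - b_k\norms{w^k}\norms{x^0 - x^{\star}}$. With the chosen $a_k, b_k$ one checks $a_k - b_k\rho = \tfrac{b_0(\eta - 2\rho)(k+1)^2}{2}$, so Young's inequality with weight $\tfrac{\eta - 2\rho}{2b_0}$ gives $\Vc_k \geq \tfrac{b_0(\eta-2\rho)(k+1)^2}{4}\norms{w^k}^2 - \tfrac{b_0}{\eta - 2\rho}\norms{x^0 - x^{\star}}^2$. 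Since $a_0 = \tfrac{\eta b_0}{2}$ and the anchoring term vanishes at $k=0$, we have $\Vc_0 = \tfrac{\eta b_0}{2}\norms{w^0}^2$, and chaining $\tfrac{b_0(\eta-2\rho)(k+1)^2}{4}\norms{w^k}^2 - \tfrac{b_0}{\eta-2\rho}\norms{x^0-x^{\star}}^2 \leq \Vc_k \leq \Vc_0$ yields \eqref{eq:EAG4NI_convergence1} after dividing by $\tfrac{b_0(\eta-2\rho)(k+1)^2}{4}$; the $\BigO{1/k}$ last-iterate rate on $\norms{Fx^k + \xi^k}$ is then immediate.

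The main obstacle is the middle step: the bookkeeping of the inner-product cross-terms in the one-iteration estimate. Unlike the monotone EAG case, the $\rho\norms{w^{k+1} - w^k}^2$ slack injects a $-\rho\norms{w^{k+1}}^2$ term and an extra $\norms{w^k}^2$ term, and one has to verify that $\beta_k = 2\rho(1-\tau_k)$ together with the shifted $a_k$ cancels these exactly while still leaving the perfect square $\norms{\hat{w}^{k+1} - (1-\tau_k)w^k}^2$ — a single sign slip there destroys the telescoping. I would also confirm that $\ran{J_{\eta T}}\subseteq\dom{F} = \R^p$ and $\dom{J_{\eta T}} = \R^p$ are all that is used for well-definedness of \eqref{eq:EAG4NI_impl} (since $T$ is not assumed monotone here), so that monotonicity of $T$ is never silently invoked.
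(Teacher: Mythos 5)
Your proposal follows essentially the same route as the paper's proof: the same potential $\Vc_k = a_k\norms{w^k}^2 + b_k\iprods{w^k, x^k - x^0}$, the same use of the reformulation \eqref{eq:EAG4NI_ex3}, the co-hypomonotonicity of $\Phi$ on consecutive iterates plus the Lipschitz bound on $\norms{w^{k+1}-\hat{w}^{k+1}}$, the same parameter choices (your shifted $a_k = \tfrac{b_0\eta(k+1)^2}{2} - b_0\rho k(k+1)$ is algebraically identical to the paper's $a_k = \tfrac{b_0[(\eta-2\rho)(k+1)+2\rho](k+1)}{2}$), the same leftover square $\norms{\hat{w}^{k+1}-(1-\tau_k)w^k}^2$, and the same lower bound on $\Vc_k$ via $\iprods{w^k, x^k - x^{\star}} \geq -\rho\norms{w^k}^2$ and Young's inequality. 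The argument is correct and matches the paper's proof of Theorem~\ref{th:cEAG_convergence}.
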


\begin{proof}
Since \eqref{eq:EAG4NI} is equivalent to \eqref{eq:EAG4NI_ex3}, from the second line of \eqref{eq:EAG4NI}, we can easily show that
\begin{equation}\label{eq:EAG4NI_proof1}
\arraycolsep=0.2em
\left\{\begin{array}{lcl}
x^{k+1} - x^k & = &  - \tau_k(x^k - x^0) - \eta \hat{w}^{k+1} + \beta_kw^k \vspace{1ex}\\
x^{k+1} - x^k & = & -\tfrac{\tau_k}{1-\tau_k}(x^{k+1} - x^0) - \tfrac{\eta}{1-\tau_k}\hat{w}^{k+1} + \frac{\beta_k}{1-\tau_k}w^k.
\end{array}\right.
\end{equation}
Next, since  $\Phi$ is $\rho$-co-hypomonotone and $w^k \in \Phi x^k = Fx^k + Tx^k$, we have $\iprods{w^{k+1} - w^k, x^{k+1} - x^k} + \rho\norms{w^{k+1} - w^k}^2 \geq 0$.
This relation together with $\eta_k := \eta \in (2\rho, \frac{1}{L}]$ and $\beta_k := 2\rho(1-\tau_k)$ lead to
\begin{equation*} 
\arraycolsep=0.2em
\begin{array}{lcl}
0 &\leq & \iprods{w^{k+1}, x^{k+1} - x^k} - \iprods{w^k, x^{k+1} - x^k}  + \rho\norms{w^{k+1} - w^k}^2 \vspace{1ex}\\
&\overset{\tiny\eqref{eq:EAG4NI_proof1}}{=} & \tau_k\iprods{w^k, x^k - x^0} - \frac{\tau_k}{1-\tau_k}\iprods{w^{k+1}, x^{k+1} - x^0} + \eta \iprods{w^k, \hat{w}^{k+1}} - 2\rho(1-\tau_k)\norms{w^k}^2 \vspace{1ex}\\
&& -  {~}  \frac{\eta}{1-\tau_k}\iprods{w^{k+1}, \hat{w}^{k+1}} + 2\rho\iprods{w^{k+1}, w^k} + \rho\norms{w^{k+1} - w^k}^2.
\end{array}
\end{equation*}
Multiplying this expression by $\frac{b_k}{\tau_k}$, rearranging the result, and using $b_{k+1} = \frac{b_k}{1-\tau_k}$, we obtain
\begin{equation*} 
\arraycolsep=0.2em
\begin{array}{lcl}
\Tc_{[1]} &:= & b_k\iprods{w^k, x^k - x^0} - b_{k+1}\iprods{w^{k+1}, x^{k+1} - x^0} \vspace{1ex} \\ 
&\geq & \frac{\eta b_{k+1}}{\tau_k}\iprods{w^{k+1} - w^k, \hat{w}^{k+1}} + \eta b_{k+1} \iprods{\hat{w}^{k+1}, w^k} - \frac{\rho b_k}{\tau_k}\norms{w^{k+1}}^2 + \frac{\rho b_k(1 - 2\tau_k)}{\tau_k}\norms{w^k}^2.
\end{array}
\end{equation*}
Adding $a_k\norms{w^k}^2 - a_{k+1}\norms{w^{k+1}}^2$ to $\Tc_{[1]}$ and  using $\Vc_k$ from \eqref{eq:EAG4MVIP_potential_func}, we can show that
\begin{equation}\label{eq:EAG4NI_proof2}
\hspace{-3ex}
\arraycolsep=0.2em
\begin{array}{lcl}
\Vc_k - \Vc_{k+1} &= & a_k\norms{w^k}^2 - a_{k+1}\norms{w^{k+1}}^2 + b_k\iprods{w^k, x^k - x^0} - b_{k+1}\iprods{w^{k+1}, x^{k+1} - x^0} \vspace{1ex}\\
&\geq &  \big(a_k + \frac{\rho b_k(1 - 2\tau_k)}{\tau_k} \big) \norms{w^k}^2 - \big(a_{k+1} + \frac{\rho b_k}{\tau_k}\big)\norms{w^{k+1}}^2  \vspace{1ex}\\
&& +  {~}  \frac{\eta b_{k+1} }{\tau_k}\iprods{w^{k+1} - w^k, \hat{w}^{k+1}} + \eta b_{k+1} \iprods{\hat{w}^{k+1}, w^k}.
\end{array}
\hspace{-5ex}
\end{equation}
Now, from \eqref{eq:EAG4NI_ex3}, we have $x^{k+1} - y^k = -\eta \hat{w}^{k+1} + \hat{\eta}_kw^k$.
By the $L$-Lipschitz continuity of $F$, we have $\norms{w^{k+1} - \hat{w}^{k+1}}^2 = \norms{Fx^{k+1} - Fy^k}^2 \leq L^2\norms{x^{k+1} - y^k}^2 = L^2\norms{\eta\hat{w}^{k+1} - \hat{\eta}_kw^k}^2$.
Expanding this inequality, and rearranging the result, we obtain
\begin{equation*} 
\arraycolsep=0.2em
\begin{array}{lcl}
0 & \geq & \norms{w^{k+1}}^2 + (1 - L^2\eta^2)\norms{\hat{w}^{k+1}}^2 - 2\iprods{w^{k+1} - w^k, \hat{w}^{k+1}} -  2\big(1 - L^2\eta\hat{\eta}_k)\iprods{\hat{w}^{k+1}, w^k} - L^2\hat{\eta}_k^2\norms{w^k}^2.
\end{array}
\end{equation*}
Multiplying this estimate by $\frac{\eta b_{k+1} }{2\tau_k}$ and adding the result to \eqref{eq:EAG4NI_proof2}, we eventually arrive at
\begin{equation}\label{eq:EAG4NI_proof3}
\hspace{-4ex}
\arraycolsep=0.2em
\begin{array}{lcl}
\Vc_k - \Vc_{k+1} &\geq & \left(a_k  - \frac{L^2\eta \hat{\eta}_k^2b_{k+1} - 2\rho b_k(1-2\tau_k)}{2\tau_k} \right)\norms{w^k}^2 + \left(\frac{\eta b_{k+1} - 2\rho b_k}{2\tau_k}  - a_{k+1} \right) \norms{w^{k+1}}^2  \vspace{2ex}\\
&& +  {~}  \frac{\eta(1 - L^2\eta^2)b_{k+1}}{2\tau_k}\norms{\hat{w}^{k+1}}^2  -  \frac{\eta(1 - \tau_k - L^2\eta\hat{\eta}_k) b_{k+1} }{\tau_k} \iprods{\hat{w}^{k+1}, w^k}.
\end{array}
\hspace{-7ex}
\end{equation}
Let us choose $\tau_k := \frac{1}{k + 2}$ and $\hat{\eta}_k := (1-\tau_k)\eta$ as in \eqref{eq:EAG4NI_param_update}, and $a_{k+1} := \frac{b_{k+1}[ \eta  - 2\rho(1-\tau_k)]}{2\tau_k} = \frac{[(\eta - 2\rho)(k + 2) + 2\rho] b_{k+1}}{2}$.
Since $b_{k+1} = \frac{b_k}{1-\tau_k} $, we have $b_k = b_0(k+1)$ and hence $a_k = \frac{b_0[(\eta - 2\rho)(k+1) + 2\rho](k+1) }{2}$.

Using the above choice of parameters and noting that $L\eta \leq 1$, we can simplify \eqref{eq:EAG4NI_proof3} as
\begin{equation}\label{eq:EAG4NI_proof3_b}
\arraycolsep=0.2em
\begin{array}{lcl}
\Vc_k - \Vc_{k+1} &\geq & \frac{\eta(1 - L^2\eta^2)b_{k+1}}{2\tau_k}\norms{\hat{w}^{k+1} - (1-\tau_k)w^k}^2 \geq 0.
\end{array}
\end{equation}
Finally, since $x^{\star}\in\zer{\Phi}$, we have $\iprods{w^k, x^k - x^{\star}} \geq -\rho\norms{w^k}^2$.
Using this bound and \eqref{eq:EAG4MVIP_potential_func}, we can show that
\begin{equation*}
\arraycolsep=0.2em
\begin{array}{lcllcl}
\Vc_k & = & a_k\norms{w^k}^2 + b_k\iprods{w^k, x^{\star} - x^0} + b_k\iprods{w^k, x^k - x^{\star}}  & \geq &  ( a_k - \rho b_k) \norms{w^k}^2 - b_k\norms{w^k}\norms{x^0 - x^{\star}} \vspace{1ex}\\
&\geq &  \left( a_k - \rho b_k - \frac{(\eta - 2\rho) b_k^2}{4b_0} \right) \norms{w^k}^2 - \frac{b_0}{\eta - 2\rho}\norms{x^0 - x^{\star}}^2  & = &  \frac{b_0(\eta - 2\rho)(k + 1)^2}{4}\norms{w^k}^2 - \frac{b_0}{\eta- 2\rho}\norms{x^0 - x^{\star}}^2.
\end{array}
\end{equation*}
Combining this inequality and \eqref{eq:EAG4NI_proof3_b}, we get $\frac{b_0(\eta - 2\rho)(k + 1)^2}{4}\norms{w^k}^2 - \frac{b_0}{\eta- 2\rho}\norms{x^0 - x^{\star}}^2 \leq \Vc_k \leq \Vc_0 = a_0\norms{w^0}^2 + b_0\iprods{w^0, x^0 - x^0} = \frac{b_0\eta}{2}\norms{w^0}^2$.
This bound leads to \eqref{eq:EAG4NI_convergence1}.
\end{proof}

\beforesubsec
\subsection{The  past extra-anchored gradient method for \eqref{eq:NI}}\label{subsec:PEAG2}
\aftersubsec
Both \eqref{eq:EAG4MVIP} and \eqref{eq:EAG4NI} require two evaluations of $F$ per iteration.
In addition,  \eqref{eq:EAG4MVIP} needs two evaluations of $J_{\eta T}$.
To reduce this computation, we can apply Halpern fixed-point iteration to  the past extragradient method \cite{popov1980modification} as done in  \cite{tran2021halpern} for \eqref{eq:NE}.
Our recent work  \cite{yoon2021accelerated} has extended   \cite{tran2021halpern} to solve \eqref{eq:NI} and relaxed assumption from the monotonicity to the co-hypomonotonicity of $F$.
We now survey the results from \cite{cai2022baccelerated,tran2023extragradient} in this subsection. 

\vspace{0.5ex}
\noindent\textbf{The algorithm.}
Starting from $x^0\in\dom{\Phi}$, we set $y^{-1} := x^0$, and at each iteration $k \geq 0$, we update
\begin{equation}\label{eq:PEAG4NI}
\arraycolsep=0.2em
\left\{\begin{array}{lcl}
y^k        &:= &  x^k + \tau_k(x^0 - x^k) - ( \hat{\eta}_k - \beta_k) (Fy^{k-1} + \xi^k), \vspace{1ex}\\
x^{k+1} &:= & x^k + \tau_k(x^0 - x^k) - \eta (Fy^k + \xi^{k+1}) + \beta_k(Fy^{k-1} + \xi^k), 
\end{array}\right.
\tag{PEAG2}
\end{equation}
where $\xi^k \in Tx^k$, $\tau_k \in (0, 1)$, $\eta > 0$, $\hat{\eta}_k > 0$, and $\beta_k > 0$ are given parameters, determined later.
Clearly, if $T = 0$, then \eqref{eq:PEAG4NI} reduces to the past extra-anchored gradient scheme in \cite{tran2021halpern}.
This scheme can be considered as a modification of \eqref{eq:EAG4NI} by replacing $Fx^k$ by $Fy^{k-1}$ using Popov's trick in \cite{popov1980modification}.

Again, we reuse  $w^k := Fx^k + \xi^k \in Fx^k + Tx^k$ and $\hat{w}^k := Fy^{k-1} + \xi^k$ as in \eqref{eq:EAG4NI}.
Then, \eqref{eq:PEAG4NI} can be rewritten equivalently as 
\begin{equation}\label{eq:PEAG4NI_ex3}
\arraycolsep=0.2em
\left\{\begin{array}{lcl}
y^k        &:= &  x^k + \tau_k(x^0 - x^k) - ( \hat{\eta}_k  - \beta_k) \hat{w}^k, \vspace{1ex}\\
x^{k+1} &:= & x^k + \tau_k(x^0 - x^k) - \eta \hat{w}^{k+1} + \beta_k \hat{w}^k.
\end{array}\right.
\end{equation}
Obviously, \eqref{eq:PEAG4NI_ex3} appears to have the same form as the past extra-anchored gradient scheme in  \cite{tran2021halpern}, but with different parameters.
Since $\xi^{k+1} \in Tx^{k+1}$, we can rewrite \eqref{eq:PEAG4NI} as follows:
\begin{equation}\label{eq:EAG4NI_impl}
\arraycolsep=0.2em
\left\{\begin{array}{lcl}
y^k        &:= &  x^k + \tau_k(x^0 - x^k) - ( \hat{\eta}_k  - \beta_k)\hat{w}^k, \vspace{1ex}\\
x^{k+1}  & \in & J_{\eta T}\left( y^k - \eta Fy^k  +  \hat{\eta}_k\hat{w}^k  \right), \vspace{1ex}\\
\hat{w}^{k+1} & := &  \frac{1}{\eta}\left( y^k +  \hat{\eta}_k\hat{w}^k  - x^{k+1} \right), \vspace{1ex}\\
\end{array}\right.
\end{equation}
where $x^0 \in \dom{\Phi}$ is given, $y^{-1}  := x^0$, and $\hat{w}^0 \in Fy^{-1} + Tx^0$ is arbitrary.

\begin{remark}\label{re:PEAG4NI_remark1}
Notice that the first accelerated variant of the past extragradient method \cite{popov1980modification} was proposed in \cite{tran2021halpern} to solve \eqref{eq:NE} under the monotonicity of $F$, that achieves $\BigO{1/k}$ rate on $\norms{Fx^k}$.
This method was then extended to \eqref{eq:NI} in \cite{cai2022baccelerated} for the $\rho$-co-hypomonotonicity of $\Phi$ such that  $2\sqrt{34} L\rho < 1$.
However, \cite{tran2023extragradient} provided an alternative proof for \eqref{eq:PEAG4NI} using a different choice of parameters and a more relaxed condition $2\sqrt{34} L\rho < 1$ than \cite{cai2022baccelerated}.
In addition, \cite{tran2023extragradient} does not require the maximal monotonicity of $T$  as \cite{cai2022baccelerated}, and the form \eqref{eq:EAG4NI_impl} appears to be different from \cite{cai2022baccelerated}, though they have the same per-iteration complexity with one evaluation of $F$ and one evaluation of $J_{\eta T}$ per iteration. 
\end{remark}

\vspace{0.75ex}
\noindent
\textbf{Convergence analysis.}
To establish convergence of \eqref{eq:PEAG4NI}, we use the following potential function:
\begin{equation}\label{eq:PEAG4NI_potential_func}
\hat{\Vc}_k :=   a_k\norms{Fx^k + \xi^k}^2 + b_k\iprods{Fx^k + \xi^k, x^k - x^0} + c_k\norms{Fx^k - Fy^{k-1}}^2,
\end{equation}
where $\xi^k \in Tx^k$, $a_k > 0$, $b_k > 0$, and $c_k > 0$ are given parameters.
Using $w^k := Fx^k + \xi^k$ and $\hat{w}^k := Fy^{k-1} + \xi^k$, we can rewrite $\hat{\Vc}_k = a_k\norms{w^k}^2 + b_k\iprods{w^k, x^k - x^0} + c_k\norms{w^k - \hat{w}^k}^2$.
Now, we are ready to state the convergence of \eqref{eq:PEAG4NI} in the following theorem.

\begin{theorem}\label{th:cPEAG_convergence}
Assume that $\Phi$ in \eqref{eq:NI} is $\rho$-co-hypomonotone, $F$ is $L$-Lipschitz continuous such that $2\sqrt{34}L\rho < 1$, $\zer{\Phi}\neq\emptyset$, $\dom{J_{\eta T}} = \R^p$, and $\ran{J_{\eta T}} \subseteq\dom{F} = \R^p$.  
Let $\eta := \sqrt{\frac{2}{17L}}$ be a given stepsize, and $\sets{(x^k, y^k)}$ be generated by \eqref{eq:PEAG4NI} using the following parameters:
\begin{equation}\label{eq:PEAG4NI_param_update}
\tau_k := \frac{1}{k+2}, \quad \beta_k := \frac{4\rho(1-\tau_k)}{1 + \tau_k},  \quad\text{and} \quad  \hat{\eta}_k := (1-\tau_k)\eta.
\end{equation}
Then, for all $k\geq 0$ and any $x^{\star}\in\zer{\Phi}$, we have 
\begin{equation}\label{eq:PEAG4NI_convergence1}
\norms{Fx^k + \xi^k}^2  \leq    \frac{1}{(k+1)^2}\left[ \frac{4}{3(\eta - 4\rho)^2}\norms{x^0 - x^{\star}}^2 +  \frac{2(3\eta - 2\rho)}{9(\eta - 4\rho)}  \norms{Fx^0 + \xi^0}^2\right], \quad \xi^k \in Tx^k.
\end{equation}
Consequently, we have the last-iterate convergence rate $\BigO{1/k}$ of the residual norm $\norms{Fx^k + \xi^k}$.
\end{theorem}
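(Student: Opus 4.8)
The plan is to mirror the potential-function argument used in the proofs of Theorem~\ref{th:EAG4MVIP_convergence} and Theorem~\ref{th:cEAG_convergence}, but now with the \emph{augmented} Lyapunov function $\hat{\Vc}_k$ from \eqref{eq:PEAG4NI_potential_func}, whose third term $c_k\norms{w^k - \hat{w}^k}^2 = c_k\norms{Fx^k - Fy^{k-1}}^2$ is precisely what is needed to absorb the mismatch between $Fx^k$ and $Fy^{k-1}$ introduced by Popov's trick. Working throughout with the equivalent form \eqref{eq:PEAG4NI_ex3}, I would first record the two expressions for $x^{k+1} - x^k$ (one written in terms of $x^k - x^0$, the other in terms of $x^{k+1} - x^0$), exactly in the spirit of \eqref{eq:EAG4NI_proof1}, and also the identity $x^{k+1} - y^k = -\eta\hat{w}^{k+1} + \hat{\eta}_k\hat{w}^k = -\eta\big(\hat{w}^{k+1} - (1-\tau_k)\hat{w}^k\big)$.

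First, I would invoke the $\rho$-co-hypomonotonicity of $\Phi = F + T$ together with $w^k \in \Phi x^k$ to obtain $\iprods{w^{k+1} - w^k, x^{k+1} - x^k} + \rho\norms{w^{k+1} - w^k}^2 \geq 0$, substitute the two forms of $x^{k+1} - x^k$, multiply through by $\frac{b_k}{\tau_k}$, and use the recursion $b_{k+1} = \frac{b_k}{1-\tau_k}$ to telescope the bilinear term into $b_k\iprods{w^k, x^k - x^0} - b_{k+1}\iprods{w^{k+1}, x^{k+1} - x^0}$. Adding $a_k\norms{w^k}^2 - a_{k+1}\norms{w^{k+1}}^2$ and $c_k\norms{w^k - \hat{w}^k}^2 - c_{k+1}\norms{w^{k+1} - \hat{w}^{k+1}}^2$, I arrive at a lower bound for $\hat{\Vc}_k - \hat{\Vc}_{k+1}$ that is a quadratic form in the vectors $w^k, w^{k+1}, \hat{w}^{k+1}, \hat{w}^k$ and their pairwise inner products.

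Next, I would use the $L$-Lipschitz continuity of $F$ and the identity above to bound $\norms{w^{k+1} - \hat{w}^{k+1}}^2 = \norms{Fx^{k+1} - Fy^k}^2 \leq L^2\eta^2\norms{\hat{w}^{k+1} - (1-\tau_k)\hat{w}^k}^2$, and similarly control the $c_{k+1}$-term and, via Young's inequality, relate $\norms{\hat{w}^k}^2$ and $\norms{w^k - \hat{w}^k}^2$ to $\norms{w^k}^2$. Plugging in the prescribed schedule $\tau_k = \frac{1}{k+2}$, $\beta_k = \frac{4\rho(1-\tau_k)}{1+\tau_k}$, $\hat{\eta}_k = (1-\tau_k)\eta$, which forces $b_k = b_0(k+1)$ and dictates the choices of $a_k$ and $c_k$ so that the $\norms{w^k}^2$- and $\norms{w^{k+1}}^2$-coefficients match across consecutive indices, the residual quadratic should collapse to a manifestly nonnegative combination — essentially a square in $\hat{w}^{k+1} - (1-\tau_k)\hat{w}^k$ plus a square in $w^k - \hat{w}^k$ — exactly when $\eta = \sqrt{2/(17L)}$ and $2\sqrt{34}\,L\rho < 1$. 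This yields $\hat{\Vc}_{k+1} \leq \hat{\Vc}_k$ for all $k \geq 0$.

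Finally, I would lower-bound $\hat{\Vc}_k$: using $x^{\star}\in\zer{\Phi}$ and $\rho$-co-hypomonotonicity, $\iprods{w^k, x^k - x^{\star}} \geq -\rho\norms{w^k}^2$, so $\hat{\Vc}_k \geq (a_k - \rho b_k)\norms{w^k}^2 - b_k\norms{w^k}\norms{x^0 - x^{\star}} \geq \big(a_k - \rho b_k - \tfrac{b_k^2}{4\lambda}\big)\norms{w^k}^2 - \lambda\norms{x^0 - x^{\star}}^2$ for a suitable $\lambda > 0$, where the coefficient of $\norms{w^k}^2$ is a positive multiple of $(k+1)^2$ by the choices above; the $c_k$-term is dropped as it is nonnegative. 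Since $y^{-1} = x^0$ gives $\hat{w}^0 = w^0$ and $x^0 - x^0 = 0$, we have $\hat{\Vc}_0 = a_0\norms{w^0}^2$, and chaining $\hat{\Vc}_k \leq \hat{\Vc}_0$ with this lower bound produces \eqref{eq:PEAG4NI_convergence1}. The main obstacle is the third step: simultaneously tuning the four sequences $(\tau_k, a_k, b_k, c_k)$ along with $\beta_k$ and $\eta$ so that, after the Lipschitz substitution and the Young splittings, every leftover cross-term is dominated — this is the only genuinely delicate computation and is where the constants $\sqrt{2/(17L)}$ and the condition $2\sqrt{34}\,L\rho < 1$ are forced.
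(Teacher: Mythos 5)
Your plan is essentially the paper's own proof: the same potential function $\hat{\Vc}_k$, the same two representations of $x^{k+1}-x^k$ fed into the $\rho$-co-hypomonotonicity inequality with the telescoping weights $b_{k+1}=\frac{b_k}{1-\tau_k}$, the same Lipschitz bound on $\norms{w^{k+1}-\hat{w}^{k+1}}$ split by Young's inequality so the $c_k$-terms absorb $\norms{w^k-\hat{w}^k}^2$, and the same lower bound on $\hat{\Vc}_k$ via $\iprods{w^k,x^k-x^{\star}}\geq-\rho\norms{w^k}^2$ together with $\hat{\Vc}_0=a_0\norms{w^0}^2$. The ``delicate computation'' you defer is exactly what the paper carries out in \eqref{eq:PEAG4NI_proof3}--\eqref{eq:PEAG4NI_proof4}, choosing $a_k$, $c_k$, and $\omega=\tfrac{13}{4}$ so that the conditions reduce to $\eta>4\rho$ and $2\sqrt{34}L\rho<1$; your outline correctly identifies where and why those constants are forced.
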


\begin{proof}
First, using the equivalent form \eqref{eq:PEAG4NI_ex3} of \eqref{eq:PEAG4NI}, we can show that
\begin{equation*} 
\hspace{-2ex}
\arraycolsep=0.2em
\left\{\begin{array}{lcl}
x^{k+1} - x^k & = &  -\tau_k(x^k - x^0) - \eta \hat{w}^{k+1} + \beta_kw^k + \beta_k(\hat{w}^k - w^k) \vspace{1ex}\\
x^{k+1} - x^k & = & -\tfrac{\tau_k}{1-\tau_k}(x^{k+1} - x^0) - \tfrac{\eta}{1-\tau_k}\hat{w}^{k+1}  + \frac{\beta_k}{1-\tau_k}w^k +  \frac{\beta_k}{1-\tau_k}(\hat{w}^k - w^k).
\end{array}\right.
\hspace{-2ex}
\end{equation*}
Second, since $\Phi$ is $\rho$-co-hypomonotone, we have $\iprods{w^{k+1} - w^k, x^{k+1} - x^k} + \rho\norms{w^{k+1} - w^k}^2 \geq 0$.
Combining these expressions together, we can derive that
\begin{equation*} 
\arraycolsep=0.2em
\begin{array}{lcl}
\Tc_{[1]} &:= & \tau_k\iprods{w^k, x^k - x^0} - \frac{\tau_k}{1-\tau_k}\iprods{w^{k+1}, x^{k+1} - x^0} \vspace{1ex}\\
&\geq &  \frac{\eta}{1-\tau_k}\iprods{w^{k+1}, \hat{w}^{k+1}}   - \eta \iprods{w^k, \hat{w}^{k+1}}  - \rho\norms{w^{k+1} - w^k}^2 + \beta_k\norms{w^k}^2  \vspace{1ex}\\
&& - {~}   \frac{\beta_k}{1-\tau_k}\iprods{w^{k+1}, w^k} - \frac{\beta_k}{1-\tau_k}\iprods{w^{k+1} - (1-\tau_k)w^k, \hat{w}^k - w^k}.
\end{array}
\end{equation*}
Next, by Young's inequality and assuming that $\beta_k := \frac{4\rho(1-\tau_k)}{1 + \tau_k}$, we can further expand $\Tc_{[1]}$ as
\begin{equation*} 
\arraycolsep=0.2em
\begin{array}{lcl}
\Tc_{[1]} &:= & \tau_k\iprods{w^k, x^k - x^0} - \frac{\tau_k}{1-\tau_k}\iprods{w^{k+1}, x^{k+1} - x^0} \vspace{1ex}\\
&\geq&  \frac{\eta}{1-\tau_k}\iprods{w^{k+1}, \hat{w}^{k+1}} - \eta \iprods{w^k, \hat{w}^{k+1}} -  \rho\norms{w^{k+1} - w^k}^2 + \beta_k\norms{w^k}^2 -  \frac{\beta_k}{1-\tau_k}\iprods{w^{k+1}, w^k}   \vspace{1ex}\\
&& -  {~}    \frac{\beta_k}{4(1-\tau_k)}\norms{w^{k+1} - (1-\tau_k)w^k}^2 - \frac{\beta_k}{1-\tau_k} \norms{\hat{w}^k - w^k}^2 \vspace{1ex}\\
&= &   \frac{\eta }{1-\tau_k}\iprods{w^{k+1}, \hat{w}^{k+1}}  - \eta \iprods{w^k, \hat{w}^{k+1}} - \frac{\rho(2 + \tau_k)}{1+\tau_k} \norms{w^{k+1}}^2  + \frac{\rho (2 -  3\tau_k - \tau_k^2) }{1 + \tau_k} \norms{w^k}^2 -   \frac{4\rho}{1 + \tau_k} \norms{\hat{w}^k - w^k}^2.
\end{array}
\end{equation*}
Multiplying $\Tc_{[1]}$ by $\frac{b_k}{\tau_k}$ and assuming $b_{k+1} = \frac{b_k}{1-\tau_k}$, then utilizing $\hat{\Vc}_k$ from \eqref{eq:PEAG4NI_potential_func}, we can show that
\begin{equation}\label{eq:PEAG4NI_proof2}
\arraycolsep=0.2em
\begin{array}{lcl}
\hat{\Vc}_k - \hat{\Vc}_{k+1} & = & a_k\norms{w^k}^2 - a_{k+1}\norms{w^{k+1}}^2 + b_k\iprods{w^k, x^k - x^0} - b_{k+1}\iprods{w^{k+1}, x^{k+1} - x^0} \vspace{1ex}\\
&& + {~} c_k\norms{w^k - \hat{w}^k}^2 - c_{k+1}\norms{w^{k+1} - \hat{w}^{k+1}}^2 \vspace{1ex}\\
&\geq &  \left[ a_k +  \frac{\rho b_k ( 2 - 3 \tau_k - \tau_k^2 ) }{\tau_k(1 + \tau_k)}  \right] \norms{w^k}^2 - \left[ a_{k+1} +  \frac{\rho b_k(2 + \tau_k)}{\tau_k(1 + \tau_k)}  \right]\norms{w^{k+1}}^2  \vspace{1ex}\\
&& + \left[c_k -  \frac{4 \rho b_k}{\tau_k(1 + \tau_k)}  \right]\norms{w^k - \hat{w}^k}^2 - c_{k+1}\norms{w^{k+1} - \hat{w}^{k+1}}^2\vspace{1ex}\\
&& +  {~}  \frac{\eta b_{k+1} }{\tau_k}  \iprods{w^{k+1} - w^k, \hat{w}^{k+1}} + \eta b_{k+1} \iprods{\hat{w}^{k+1}, w^k}.
\end{array}
\end{equation}
Now, since  $x^{k+1} - y^k = -\eta\hat{w}^{k+1} + \hat{\eta}_k\hat{w}^k$ from \eqref{eq:PEAG4NI_ex3}, utilizing the $L$-Lipschitz continuity of $F$ and Young's inequality, we can prove that
\begin{equation*} 
\arraycolsep=0.2em
\begin{array}{lcl}
\norms{w^{k+1} - \hat{w}^{k+1}}^2 & = & \norms{Fx^{k+1} - Fy^k}^2 \leq L^2\norms{x^{k+1} - y^k}^2 = L^2\norms{\eta \hat{w}^{k+1} - \hat{\eta}_k\hat{w}^k}^2 \vspace{1ex}\\
&\leq & 2L^2\norms{\eta\hat{w}^{k+1} - \hat{\eta}_kw^k}^2 + 2L\hat{\eta}_k^2\norms{w^k - \hat{w}^k}^2.
\end{array}
\end{equation*}
Multiplying this expression by $(1+\omega)$ for $\omega > 0$, using $M := 2(1 + \omega)L^2$, and expanding the result, we get
\begin{equation*} 
\arraycolsep=0.2em
\begin{array}{lcl}
0 & \geq & \omega\norms{w^{k+1} - \hat{w}^{k+1}}^2 + \norms{w^{k+1}}^2 + (1 - M\eta^2)\norms{\hat{w}^{k+1}}^2 - 2 \iprods{w^{k+1} - w^k, \hat{w}^{k+1}} \vspace{1ex}\\
&& - {~} 2(1 - M\eta\hat{\eta}_k)\iprods{\hat{w}^{k+1}, w^k} - M\hat{\eta}_k^2\norms{w^k}^2 - M\hat{\eta}_k^2\norms{w^k - \hat{w}^k}^2.
\end{array}
\end{equation*}
Further multiplying this inequality by $\frac{\eta b_{k+1} }{2\tau_k}$ and adding the result to \eqref{eq:PEAG4NI_proof2}, we obtain
\begin{equation}\label{eq:PEAG4NI_proof3}
\arraycolsep=0.2em
\begin{array}{lcl}
\hat{\Vc}_k - \hat{\Vc}_{k+1} &\geq & \left[ c_k - \frac{4\rho b_k}{\tau_k(1 + \tau_k)}  - \frac{Mb_{k+1}\eta\hat{\eta}_k^2}{2\tau_k} \right]\norms{w^k - \hat{w}^k}^2 +  \left( \frac{\omega\eta b_{k+1}}{2\tau_k} -  c_{k+1} \right) \norms{w^{k+1} - \hat{w}^{k+1}}^2\vspace{1ex}\\
&& + {~} \left[ a_k + \frac{\rho b_k ( 2  - 3 \tau_k - \tau_k^2 ) }{\tau_k(1 + \tau_k)}  - \frac{Mb_{k+1}\eta\hat{\eta}_k^2}{2\tau_k} \right] \norms{w^k}^2 +  \left[ \frac{\eta b_{k+1} }{2\tau_k} - \frac{\rho b_k(2 + \tau_k)}{\tau_k(1 + \tau_k)}  - a_{k+1} \right]\norms{w^{k+1}}^2 \vspace{1ex}\\
&& + {~}  \frac{\eta(1-M\eta^2)b_{k+1}}{2\tau_k}\norms{\hat{w}^{k+1}}^2  -  \frac{\eta\left(1 -  \tau_k - M\eta\hat{\eta}_k\right)b_{k+1}}{\tau_k}  \iprods{\hat{w}^{k+1}, w^k}.
\end{array}
\end{equation}
Since $\tau_k := \frac{1}{k+2}$ and $\hat{\eta}_k := (1 - \tau_k)\eta$ due to \eqref{eq:PEAG4NI_param_update}, if we choose $a_k := \frac{b_k}{2}\left(\eta(k+1) -  4\rho k + \frac{2\rho(k-1)}{k+3} \right)$, $b_k$ such that $b_{k+1}(1-\tau_k) = b_k$,  and $c_k := \frac{b_k}{2}\left( M\eta^3(k+1) +  \frac{8\rho(k+2)^2}{k+3} \right)$, then \eqref{eq:PEAG4NI_proof3} leads to
\begin{equation}\label{eq:PEAG4NI_proof3_b}
\hspace{-0.75ex}
\arraycolsep=0.2em
\begin{array}{lcl}
\hat{\Vc}_k - \hat{\Vc}_{k+1} &\geq & \frac{\eta(1-M\eta^2)b_{k+1}}{2\tau_k}\norms{\hat{w}^{k+1} - (1-\tau_k)w^k}^2 + \frac{2\rho b_{k+1}}{2(k+4)}\norms{w^{k+1}}^2 \vspace{1ex}\\
&& + {~} \frac{b_{k+1}(k+2)}{2}\left( \omega\eta -   M\eta^3 - \frac{8\rho(k+3)^2}{(k+2)(k+4)}  \right) \norms{w^{k+1} - \hat{w}^{k+1}}^2. 
\end{array}
\hspace{-6ex}
\end{equation}
If $\omega > 0$ satisfies $\omega \eta \geq M\eta^3 + \frac{8\rho(k+3)^2}{(k+2)(k+4)}$ and $1 - M\eta^2 \geq 0$, then \eqref{eq:PEAG4NI_proof3_b} implies that $\hat{\Vc}_{k+1} \leq \hat{\Vc}_k$ for all $k\geq 0$.
The first condition holds if $\omega \eta \geq M\eta^3 + 9\rho$.
However, to assure that $a_k > 0$, we require $\eta > 4\rho$.
Overall, if 
\begin{equation}\label{eq:PEAG4NI_proof4}
2(1+\omega)L^2\eta^2 \leq 1, \quad  \omega \eta \geq 2(1+\omega)L^2\eta^3 + 9\rho, \quad \text{and} \quad \eta > 4\rho,
\end{equation}
then \eqref{eq:PEAG4NI_proof3_b} leads to $\hat{\Vc}_{k+1} \leq \hat{\Vc}_k$ for all $k\geq 0$.

For simplicity, we choose $\omega := \frac{13}{4}$ and $\eta := \frac{1}{L\sqrt{2(1+\omega)}} = \frac{\sqrt{2}}{\sqrt{17}L}$.
Then, the last two conditions of \eqref{eq:PEAG4NI_proof4} hold if $L\rho \leq \frac{\omega - 1}{9\sqrt{2(1+\omega)}}  = \frac{1}{2\sqrt{34}}$ and $L\rho < \frac{1}{4\sqrt{2(1+\omega)}} = \frac{1}{2\sqrt{34}}$, respectively.
Thus  if $2\sqrt{34}L\rho < 1$, then \eqref{eq:PEAG4NI_proof4} holds.

Since  $\tau_k := \frac{1}{k+2}$, we get $b_k = b_0(k+1)$ for some $b_0 > 0$.
Moreover, since $x^{\star} \in \zer{\Phi}$, by the $\rho$-co-hypomonotonicity of $\Phi$, we have $\iprods{w^k, x^k - x^{\star}} \geq -\rho\norms{w^k}^2$.
Using this expression, \eqref{eq:PEAG4NI_potential_func}, Young's inequality, $b_k = b_0(k+1)$, and the choice of $a_k$, we can prove that
\begin{equation*}
\arraycolsep=0.2em
\begin{array}{lcl}
\hat{\Vc}_k 
&\geq & \frac{3b_0(\eta - 4\rho)(k+1)^2}{4} \norms{w^k}^2 - \frac{b_0}{\eta - 4\rho}\norms{x^0 - x^{\star}}^2.
\end{array}
\end{equation*}
Finally, since $y^{-1} = x^0$, we have $\hat{\Vc}_0 = a_0\norms{w^0}^2 = b_0\left(\frac{\eta}{2} - \frac{\rho}{3}\right)\norms{Fx^0 + \xi^0}^2$, leading to $\hat{\Vc}_k \leq \hat{\Vc}_0 = b_0 \left(\frac{\eta}{2} - \frac{\rho}{3}\right)\norms{Fx^0 + \xi^0}^2$ from \eqref{eq:PEAG4NI_proof3_b}.
Combining this expression and the lower bound of $\hat{\Vc}_k$ above, we get
\begin{equation*}
\arraycolsep=0.2em
\begin{array}{lcl}
\norms{w^k}^2 & \leq &  \frac{1}{(k+1)^2}\left[ \frac{4}{3(\eta - 4\rho)^2}\norms{x^0 - x^{\star}}^2 +  \frac{2(3\eta - 2\rho)}{9(\eta - 4\rho)}  \norms{Fx^0 + \xi^0}^2\right],
\end{array}
\end{equation*}
which proves \eqref{eq:PEAG4NI_convergence1} by virtue of  $w^k := Fx^k + \xi^k \in Fx^k + Tx^k$.
\end{proof}

\begin{remark}\label{re:PEAG4NI_remark2}
Theorem~\ref{th:cPEAG_convergence} only provides one possibility for the stepsize $\eta$, which is $\eta := \frac{\sqrt{2}}{\sqrt{17}L}$.
However, one can revise our proof to provide a range of $\eta$ as in \eqref{eq:EAG4NI} or \eqref{eq:EAG4MVIP}.
\end{remark}

\beforesec
\section{Nesterov's Accelerated Extragradient-Type Methods}\label{sec:AEG4NI}
\aftersec
\textbf{Introduction.}
Nesterov's accelerated method  \cite{Nesterov1983} is an outstanding achievement in convex optimization over the past few decades. 
While the method was invented in 1983, its popularity began with two pioneering works \cite{Nesterov2005c} and \cite{Beck2009}. 
Since then, such a technique has been widely studied and applied to many problems in various fields, including proximal-point, coordinate gradient, stochastic gradient, operator splitting,  conditional gradient, Newton-type, and high-order methods. 
Although the majority of literature on accelerated methods pertains to convex optimization, extensions to monotone equations and inclusions have recently become an interesting research topic. 
Early works in this direction were conducted by, e.g., \cite{attouch2020convergence,attouch2019convergence,bot2022fast,bot2022bfast,kim2021accelerated,mainge2021accelerated,mainge2021fast}. 
Unlike convex optimization, developing accelerated methods for monotone inclusions of the form \eqref{eq:NI} requires a fundamental change in forming an appropriate potential function. 
Existing works often rely on proximal-point methods, which were extended to accelerated schemes in \cite{guler1992new}. 
Another approach, such as in  \cite{gupta2022branch,kim2021accelerated}, is to apply the ``performance estimation problem'' technique developed in \cite{drori2014performance}. 
Nesterov's accelerated methods for different problem classes have been proven to be ``optimal'', meaning their upper bounds of convergence rates or complexity match the respective lower bounds in a certain sense, see, e.g., \cite{Nesterov2004,woodworth2016tight}.

Note that Nesterov's accelerated method can also be viewed as a discretization of an appropriate dynamical system \cite{Su2014}, as often seen in classical gradient methods. 
Exploring this perspective, several new variants and extensions have been extensively studied in the literature, see, e.g., \cite{attouch2016rate,bot2022fast,bot2022bfast,shi2021understanding,wibisono2016variational}.
In addition, connections between Nesterov's and other methods have also been discussed. 
For example, \cite{attouch2022ravine} shows that Nesterov's accelerated methods are equivalent to Ravine's methods, which were proposed in 1961. 
Recently, \cite{partkryu2022,tran2022connection} have shown the relations between Nesterov's accelerated schemes and Halpern fixed-point iterations in fixed-point theory \cite{halpern1967fixed}. 
These methods are indeed equivalent in certain settings. 
Exploiting this perspective, \cite{tran2022connection,tran2023extragradient} have developed several Nesterov's accelerated variants to solve \eqref{eq:NE} and \eqref{eq:NI}, including extragradient methods. 
In this section, we will survey recent results from these works.

\beforesubsec
\subsection{Nesterov's accelerated extragradient method for \eqref{eq:NI}}\label{subsec:AEG}
\aftersubsec
\textbf{The algorithm.}
The first Nesterov's accelerated extragradient method proposed in \cite{tran2022connection,tran2023extragradient} can be written as follows.
Starting from $y^0 \in \dom{\Phi}$, set $z^0 := y^0$ and $w^{-1} := 0$, and at each iteration $k\geq 0$, we update
\begin{equation}\label{eq:AEG4NI}
\arraycolsep=0.2em
\left\{\begin{array}{lcl}
x^k & \in  & J_{\eta T}\left(y^k - \eta Fy^k + \hat{\eta}_k w^{k-1}\right), \vspace{1ex}\\
w^k & := &   \frac{1}{\eta}(y^k - x^k + \hat{\eta}_kw^{k-1}) - ( Fy^k - Fx^k), \vspace{1ex}\\
z^{k+1} &:= & x^k - \gamma w^k, \vspace{1ex}\\
y^{k+1} &:= & z^{k+1} + \theta_k(z^{k+1} - z^k)  +  \nu_k(y^k - z^{k+1}), 
\end{array}\right.
\tag{AEG}
\end{equation}
where $\eta$, $\hat{\eta}_k$, $\gamma$, $\theta_k$ and $\nu_k$ are given parameters, which will be determined later, and $J_{\eta T}$ is the resolvent of $\eta T$.

Now, for $\xi^k \in Tx^k$, we can easily show that  $w^k = Fx^k + \xi^k$.
If we additionally denote by $\hat{w}^k := Fy^k + \xi^k$ for $\xi^k \in Tx^k$, then by starting from $x^0 = z^0 = y^0$, we can rewrite \eqref{eq:AEG4NI} equivalently to 
\begin{equation}\label{eq:AEG4NI_v0}
\arraycolsep=0.2em
\left\{\begin{array}{lcl}
z^{k+1} &:= & x^k - \gamma w^k, \vspace{1ex}\\
y^{k+1} &:= & z^{k+1} + \theta_k(z^{k+1} - z^k) + \nu_k(y^k - z^{k+1}), \vspace{1ex}\\
x^{k+1} & := & y^{k+1} - \eta\hat{w}^{k+1} + \hat{\eta}_{k+1}w^k. 
\end{array}\right.
\end{equation}
Clearly, if $T = 0$, then \eqref{eq:AEG4NI_v0} exactly reduces to the one in \cite{tran2022connection}.
However, since we have not yet seen an obvious connection between \eqref{eq:AEG4NI} and \eqref{eq:EG4NI}, we can eliminate $z^k$ to obtain
\begin{equation}\label{eq:AEG4NI_v3}
\arraycolsep=0.2em
\left\{\begin{array}{lcl}
x^k & \in & J_{\eta T}\left(y^k - \eta Fy^k { \ + \ \hat{\eta}_k w^{k-1}} \right), \vspace{1ex}\\
y^{k+1} &:= & x^k - \beta_k(Fx^k - Fy^k)  { \ + \ \theta_k(x^k - x^{k-1}) } { \ + \ \hat{\beta}_k(y^k - x^k)  + \tilde{\beta}_kw^{k-1}}, \vspace{1ex}\\
w^k & := &   \frac{1}{\eta}(y^k - x^k + \hat{\eta}_kw^{k-1}) - ( Fy^k - Fx^k),
\end{array}\right.
\end{equation}
where $\beta_k :=  \gamma(1 + \theta_k - \nu_k)$, $\hat{\beta}_k := \nu_k - \frac{\beta_k}{\eta}$,  and $\tilde{\beta}_k := \gamma \theta_k - \frac{\beta_k\hat{\eta}_k}{\eta}$.
In this case \eqref{eq:AEG4NI_v3} can be viewed as an accelerated variant of \eqref{eq:EG4NI} with correction terms (see \cite{mainge2021accelerated}).

\vspace{0.75ex}
\noindent\textbf{Convergence analysis.}
The main tool to establish convergence of \eqref{eq:AEG4NI} is the following potential function:
\begin{equation}\label{eq:AEG4NI_potential_func}
\Pc_k := a_k\norms{w^{k-1}}^2 + b_k\iprods{w^{k-1}, z^k - y^k} + \norms{z^k + t_k(y^k - z^k) - x^{\star}}^2,
\end{equation}
where $w^k := Fx^k + \xi^k$ for $\xi^k \in Tx^k$, $a_k > 0$, $b_k > 0$, and $t_k > 0$ are given parameters.

Now, we can establish a convergence rate of \eqref{eq:AEG4NI} in the following theorem.

\begin{theorem}\label{th:AEG4NI_convergence}
Suppose that $\Phi$ in \eqref{eq:NI} is $\rho$-co-hypomonotone, $F$ is $L$-Lipschitz continuous such that $2L\rho <  1$, $x^{\star} \in \zer{\Phi}$, $\dom{J_{\eta T}} = \R^p$, and $\ran{J_{\eta T}} \subseteq\dom{F}=\R^p$.
Let $\gamma > 0$ be such that $L(2\rho + \gamma) \leq 1$ $($e.g., $\gamma := \frac{1}{L} - 2\rho > 0$$)$ and $\sets{(x^k, y^k, z^k)}$ be generated by \eqref{eq:AEG4NI} using
\begin{equation}\label{eq:AEG4NI_para_choice}
t_k := k + 2, \quad \eta := \gamma + 2\rho,  \quad \hat{\eta}_k = \frac{(t_k-1)\eta}{t_k},  \quad  \theta_k := \frac{t_k-1}{t_{k+1}}, \quad \text{and} \quad \nu_k := \frac{t_k}{t_{k+1}}.
\end{equation}
Then, the following bound holds:
\begin{equation}\label{eq:AEG4NI_convergence}
\norms{Fx^k + \xi^k} \leq \frac{2\norms{y^0 - x^{\star}}}{ \gamma (k+2)}, \quad \text{where}\quad \xi^k \in Tx^k.
\end{equation}
Consequently, we have $\norms{Fx^k + \xi^k} = \BigO{\frac{1}{k}}$ showing $\BigO{\frac{1}{k}}$ convergence rate of \eqref{eq:AEG4NI}.
\end{theorem}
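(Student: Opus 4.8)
The plan is to follow the potential-function approach that has already been used for \eqref{eq:EAG4MVIP} and \eqref{eq:EAG4NI}, but now with the "Nesterov-style" potential $\Pc_k$ from \eqref{eq:AEG4NI_potential_func} that mixes a quadratic term in $w^{k-1}$, a cross term $\iprods{w^{k-1}, z^k - y^k}$, and the anchor-free distance $\norms{z^k + t_k(y^k - z^k) - x^{\star}}^2$. First I would rewrite \eqref{eq:AEG4NI} in the equivalent form \eqref{eq:AEG4NI_v0}, so that the update reads $z^{k+1} = x^k - \gamma w^k$, $y^{k+1} = z^{k+1} + \theta_k(z^{k+1}-z^k) + \nu_k(y^k - z^{k+1})$, and $x^{k+1} = y^{k+1} - \eta\hat{w}^{k+1} + \hat\eta_{k+1}w^k$, with $w^k = Fx^k + \xi^k$ and $\hat w^k = Fy^k + \xi^k$. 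The key algebraic identity to extract is the relation between the ``tail point'' $z^{k+1} + t_{k+1}(y^{k+1}-z^{k+1})$ and its predecessor: using $t_k = k+2$, $\theta_k = (t_k-1)/t_{k+1}$, $\nu_k = t_k/t_{k+1}$ one should get a telescoping-friendly expression of the form $z^{k+1} + t_{k+1}(y^{k+1}-z^{k+1}) = z^k + t_k(y^k - z^k) + (\text{multiple of } w^k)$, which is exactly the structure that makes Nesterov/Halpern proofs work.

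The core one-iteration estimate would combine three ingredients, in this order. (i) Expand $\norms{z^{k+1} + t_{k+1}(y^{k+1}-z^{k+1}) - x^{\star}}^2$ using the identity just mentioned; this produces $\norms{z^k + t_k(y^k-z^k) - x^{\star}}^2$, a quadratic term in $w^k$, and a cross term $\iprods{w^k, z^k + t_k(y^k - z^k) - x^{\star}}$. (ii) Bound the cross term: split $z^k + t_k(y^k - z^k) - x^{\star}$ appropriately and invoke $\rho$-co-hypomonotonicity of $\Phi$, i.e. $\iprods{w^k, x^k - x^{\star}} \ge -\rho\norms{w^k}^2$, together with the $3$-cyclic-monotonicity-free monotone-inclusion inequality coming from $T$ at the points $x^k, x^{k+1}$ (analogous to \eqref{eq:EAG4NI_proof2}); this is where the condition $L(2\rho+\gamma)\le 1$, equivalently $\eta := \gamma + 2\rho \le 1/L$, must be consumed. (iii) Use the $L$-Lipschitz continuity of $F$ via $x^{k+1} - y^{k+1}$ (from the last line of \eqref{eq:AEG4NI_v0}) to control $\norms{w^{k+1} - \hat w^{k+1}}^2 = \norms{Fx^{k+1} - Fy^{k+1}}^2 \le L^2\norms{\eta\hat w^{k+1} - \hat\eta_{k+1}w^k}^2$, as in \eqref{eq:EAG4NI_proof3}. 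Collecting terms, and choosing $a_k, b_k$ so that the coefficients of $\norms{w^{k+1}}^2$, $\norms{w^k}^2$, $\iprods{\hat w^{k+1}, w^k}$ assemble into a perfect square $\norms{\hat w^{k+1} - (\text{const})w^k}^2$ plus a manifestly nonnegative remainder, one gets $\Pc_{k+1}\le \Pc_k$.

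The final step is to lower-bound $\Pc_k$ and upper-bound $\Pc_0$. For the lower bound, I expect $a_k \sim \tfrac{\gamma^2}{4}t_k^2$ and $b_k \sim \gamma t_k$ so that $\Pc_k \ge (\tfrac{\gamma^2}{4}t_k^2 - \tfrac{b_k^2}{4})\norms{w^k}^2 + \norms{\cdots}^2 - (\text{lower order})$; combining with $\Pc_k \le \Pc_0 = \norms{y^0 - x^{\star}}^2$ (since $z^0 = y^0$ and $w^{-1} = 0$) yields $\norms{w^k}^2 \le \frac{4\norms{y^0-x^{\star}}^2}{\gamma^2(k+2)^2}$, which is exactly \eqref{eq:AEG4NI_convergence} because $w^k = Fx^k + \xi^k$. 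The main obstacle I anticipate is step (ii)–(iii): namely verifying that the specific parameter schedule \eqref{eq:AEG4NI_para_choice} makes \emph{all} the leftover cross terms and sign-indefinite quadratic terms cancel or combine into a single nonnegative square, since there is no anchoring term here (unlike \eqref{eq:EAG4MVIP_potential_func}) and the bookkeeping among $\theta_k, \nu_k, \hat\eta_k, t_k$ is delicate; getting the coefficient of $\iprods{w^{k-1}, z^k - y^k}$ to telescope correctly against the expansion of the distance term is the crux. Once that algebraic miracle is checked (it is forced by the connection to Halpern iteration established in \cite{tran2022connection,tran2023extragradient}), the rest is routine.
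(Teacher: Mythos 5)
Your scaffolding (the potential $\Pc_k$ from \eqref{eq:AEG4NI_potential_func}, the reformulation \eqref{eq:AEG4NI_v0}, proving $\Pc_{k+1}\le\Pc_k$, then bounding $\Pc_k$ below and $\Pc_0=\norms{y^0-x^{\star}}^2$ above) is exactly the paper's strategy, but the one-iteration descent — which you defer as an ``algebraic miracle'' — is both the heart of the proof and mis-sketched in your plan. Under \eqref{eq:AEG4NI_para_choice} one has $t_{k+1}\theta_k=t_k-1$ and $t_{k+1}\nu_k=t_k$, so the anchor point satisfies $z^{k+1}+t_{k+1}(y^{k+1}-z^{k+1})=z^k+t_k(y^k-z^k)$ \emph{exactly}: the coefficient of $z^{k+1}$ cancels and there is no ``multiple of $w^k$''. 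Hence the distance term in $\Pc_k$ is conserved; it produces neither a quadratic term in $w^k$ nor a cross term $\iprods{w^k, z^k+t_k(y^k-z^k)-x^{\star}}$, so your step (ii) — absorbing such a cross term via $\iprods{w^k,x^k-x^{\star}}\ge-\rho\norms{w^k}^2$ — acts on terms that never appear. All of the decrease must come from $a_k\norms{w^{k-1}}^2+b_k\iprods{w^{k-1},z^k-y^k}$: one writes $z^{k+1}-z^k=(x^k-x^{k-1})-\gamma(w^k-w^{k-1})$ and applies co-hypomonotonicity to the \emph{consecutive} pair $(x^k,x^{k-1})$, not to $x^{\star}$ (the star-type inequality $\iprods{w^{k-1},x^{k-1}-x^{\star}}\ge-\rho\norms{w^{k-1}}^2$ enters only in the final lower bound of $\Pc_k$); and one uses $y^k-z^{k+1}=\gamma w^k+\eta\hat{w}^k-\hat{\eta}_k w^{k-1}$ together with the Lipschitz bound at index $k$, namely $\norms{w^k-\hat{w}^k}\le L\norms{x^k-y^k}=L\norms{\eta\hat{w}^k-\hat{\eta}_kw^{k-1}}$. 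Your step (iii) invokes it at index $k+1$ (relating $w^{k+1},\hat{w}^{k+1},w^k$), but $\Pc_k-\Pc_{k+1}$ contains no $w^{k+1}$ or $\hat{w}^{k+1}$, so that inequality cannot be combined with the potential difference.

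Two further ingredients you would still have to supply. First, the choice $\eta=\gamma+2\rho$ together with $\hat{\eta}_k=\eta\theta_k/\nu_k$ is precisely what annihilates the sign-indefinite cross term $\iprods{w^{k-1},w^k}$ (its coefficient is proportional to $\tfrac{\hat{\eta}_k\nu_k}{\theta_k}-\gamma-2\rho=\eta-\gamma-2\rho=0$), and the hypothesis $L(2\rho+\gamma)\le1$, i.e. $L\eta\le1$, is what keeps the remaining quadratic form in $(w^{k-1},w^k,\hat{w}^k)$ nonnegative — this is where that condition is consumed, not in your step (ii). Second, your parameter scaling is off: $b_{k+1}=b_k/\theta_k$ forces $b_k=\tfrac{b_0(k+1)(k+2)}{2}\asymp k^2$ (not $\asymp\gamma t_k$), with $a_k=\tfrac{b_k(\gamma t_k+\eta)}{2t_k}$, and the lower bound on $\Pc_k$ completes a square with $\tfrac{b_k}{2t_k}w^{k-1}$, so the subtracted term is $\tfrac{b_k^2}{4t_k^2}$ rather than $\tfrac{b_k^2}{4}$; with $b_0:=2\gamma$ this yields $\Pc_k\ge\tfrac{\gamma^2(k+1)^2}{4}\norms{w^{k-1}}^2$ and hence \eqref{eq:AEG4NI_convergence}. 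In short, while your outline matches the paper's, the central inequality $\Pc_{k+1}\le\Pc_k$ is neither established nor correctly set up, so as it stands the proposal has a genuine gap.
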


\begin{proof}
First, by inserting $(t_k-1)z^{k+1} - (t_k-1)z^{k+1}$, it is obvious to show that
\begin{equation*} 
\arraycolsep=0.2em
\begin{array}{lcl}
\norms{z^k + t_k(y^k - z^k) - x^{\star}}^2  &= & \norms{z^{k+1} - x^{\star}}^2 + (t_k - 1)^2\norms{z^{k+1} - z^k}^2 + t_k^2\norms{y^k - z^{k+1}}^2 \vspace{1ex}\\
&& + {~} 2(t_k -1)\iprods{z^{k+1} - z^k, z^{k+1} - x^{\star}} + 2t_k\iprods{y^k - z^{k+1}, z^{k+1} - x^{\star}} \vspace{1ex}\\
&& + {~} 2(t_k-1)t_k\iprods{y^k - z^{k+1}, z^{k+1} - z^k}.
\end{array} 
\end{equation*}
Next, using $y^{k+1} - z^{k+1} = \theta_k(z^{k+1} - z^k) + \nu_k(y^k - z^{k+1})$ from \eqref{eq:AEG4NI}, we have
\begin{equation*} 
\arraycolsep=0.2em
\begin{array}{lcl}
 \norms{z^{k+1} + t_{k+1}(y^{k+1} - z^{k+1}) -  x^{\star}}^2  &= & \norms{z^{k+1} - x^{\star}}^2 + t_{k+1}^2\theta_k^2\norms{z^{k+1} - z^k}^2 +  t_{k+1}^2\nu_k^2\norms{y^k  - z^{k+1} }^2 \vspace{1ex}\\
&& + {~} 2t_{k+1}\theta_k\iprods{z^{k+1} - z^k, z^{k+1} - x^{\star} } + 2t_{k+1}\nu_k\iprods{y^k - z^{k+1}, z^{k+1} - x^{\star}} \vspace{1ex}\\
&& + {~} 2t_{k+1}^2\nu_k\theta_k\iprods{y^k - z^{k+1}, z^{k+1} - z^k}.
\end{array} 
\end{equation*}
Combining the last two expressions, we can show that
\begin{equation*} 
\arraycolsep=0.2em
\begin{array}{lcl}
\Tc_{[1]}  &:= &   \norms{z^k + t_k(y^k - z^k) - x^{\star}}^2 - \norms{z^{k+1} + t_{k+1}(y^{k+1} - z^{k+1}) -  x^{\star}}^2 \vspace{1ex}\\
&= & \left[ (t_k-1)^2 - t_{k+1}^2\theta_k^2 \right] \norms{z^{k+1} - z^k }^2 + (t_k^2  - \nu_k^2t_{k+1}^2)\norms{y^k - z^{k+1} }^2 \vspace{1ex}\\
&& + {~} 2(t_k - 1 - t_{k+1}\theta_k)\iprods{z^{k+1} - z^k, z^{k+1} - x^{\star}} \vspace{1ex}\\
&& + {~} 2(t_k - t_{k+1}\nu_k)\iprods{y^k - z^{k+1}, z^{k+1} - x^{\star}} \vspace{1ex}\\
&& + {~} 2 \left[ t_k(t_k-1) - t_{k+1}^2\theta_k\nu_k \right] \iprods{ y^k - z^{k+1}, z^{k+1} - z^k }.
\end{array} 
\end{equation*}
From $\Tc_{[1]}$, \eqref{eq:AEG4NI_potential_func}, and  $z^{k+1} - y^{k+1} = -\theta_k(z^{k+1} - z^k) - \nu_k(y^k - z^{k+1})$ in \eqref{eq:AEG4NI},  we can further derive
\begin{equation}\label{eq:AEG4NI_proof3} 
\hspace{-0.0ex}
\arraycolsep=0.2em
\begin{array}{lcl}
\Pc_k - \Pc_{k+1} &= &  a_k\norms{w^{k-1}}^2 - a_{k+1}\norms{w^k}^2  +  \left[ (t_k-1)^2 - t_{k+1}^2\theta_k^2 \right] \norms{z^{k+1} - z^k }^2 + (t_k^2  - \nu_k^2t_{k+1}^2)\norms{y^k - z^{k+1} }^2 \vspace{1ex}\\
&& + {~} b_k\iprods{w^k - w^{k-1}, z^{k+1} - z^k}  + b_{k+1}\iprods{\nu_kw^k - \theta_kw^{k-1}, y^k - z^{k+1}} \vspace{1ex}\\
&& + {~} \left( b_{k+1}\theta_k - b_k\right) \big[ \iprods{w^k, z^{k+1} - z^k}  + \iprods{w^{k-1}, y^k - z^{k+1}} \big]  \vspace{1ex}\\
&& + {~} 2(t_k - 1 - t_{k+1}\theta_k)\iprods{z^{k+1} - z^k, z^{k+1} - x^{\star}} + 2(t_k - t_{k+1}\nu_k)\iprods{y^k - z^{k+1}, z^{k+1} - x^{\star}} \vspace{1ex}\\
&& + {~} 2 \left[ t_k(t_k-1) - t_{k+1}^2\theta_k\nu_k \right] \iprods{ y^k - z^{k+1}, z^{k+1} - z^k}.
\end{array} 
\hspace{-4.0ex}
\end{equation}
We first choose  $t_k $, $\nu_k$, $\theta_k$, and $b_k$ such that
\begin{equation}\label{eq:AEG4NI_para_cond1}
\arraycolsep=0.2em
\begin{array}{lclclcl}
t_k - t_{k+1}\nu_k & = &  0, \ \ &  & t_k(t_k-1) - \nu_k\theta_kt_{k+1}^2 & = & 0, \vspace{1ex}\\
t_k - 1  - t_{k+1} \theta_k & =&  0, \ &  \text{and} \ &  b_{k+1}\theta_k - b_k & = &  0.
\end{array}
\end{equation}
These conditions lead to $\theta_k = \frac{t_k - 1}{t_{k+1}}$ and $\nu_k = \frac{t_k}{t_{k+1}}$ as in \eqref{eq:AEG4NI_para_choice}, and $b_{k+1} := \frac{b_k}{\theta_k} = \frac{b_kt_{k+1}}{t_k-1}$.

Now, using \eqref{eq:AEG4NI_para_cond1}, \eqref{eq:AEG4NI_proof3} reduces to
\begin{equation}\label{eq:AEG4NI_proof5} 
\arraycolsep=0.2em
\begin{array}{lcl}
\Pc_k - \Pc_{k+1} &= &  a_k\norms{w^{k-1}}^2 - a_{k+1}\norms{w^k}^2 + b_k\iprods{w^k - w^{k-1}, z^{k+1} - z^k} \vspace{1ex}\\
&& + {~} b_{k+1} \iprods{\nu_k w^k - \theta_kw^{k-1}, y^k - z^{k+1} }.
\end{array} 
\end{equation}
By the $\rho$-co-hypomonotonicity of $\Phi$ and $z^{k+1} = x^k - \gamma w^k$ from \eqref{eq:AEG4NI}, we have $\iprods{w^k - w^{k-1}, z^{k+1} - z^k} = \iprods{w^k - w^{k-1}, x^k - x^{k-1}} - \gamma\norms{w^k - w^{k-1}}^2 \geq -(\rho + \gamma)\norms{w^k - w^{k-1}}^2$.
Therefore, we obtain
\begin{equation}\label{eq:AEG4NI_proof7} 
\arraycolsep=0.2em
\begin{array}{lcl}
\iprods{w^k - w^{k-1}, z^{k+1} - z^k} &\geq & - (\gamma + \rho)\left[ \norms{w^k}^2 + \norms{w^{k-1}}^2 - 2\iprods{w^k, w^{k-1}} \right].
\end{array}
\end{equation}
Since $\hat{w}^k := Fy^k + \xi^k$, we have $Fy^k - Fx^k = \hat{w}^k - w^k$.
Using this relation and \eqref{eq:AEG4NI}, we get $\frac{\hat{\eta}_k}{\eta} w^{k-1} + \frac{1}{\eta}(y^k - x^k) - \hat{w}^k  = 0$, leading to $x^k - y^k = \hat{\eta}_k w^{k-1} - \eta \hat{w}^k$.
Combining this expression and the second line of \eqref{eq:AEG4NI}, we have $y^k - z^{k+1} = \gamma w^k + \eta \hat{w}^k -  \hat{\eta}_k w^{k-1}$, leading to
\begin{equation}\label{eq:AEG4NI_proof6} 
\arraycolsep=0.2em
\begin{array}{lcl}
\iprods{\nu_k w^k - \theta_kw^{k-1}, y^k - z^{k+1} } &= & \iprods{\nu_k w^k - \theta_kw^{k-1}, \gamma w^k + \eta \hat{w}^k - \hat{\eta}_kw^{k-1}} \vspace{1ex}\\
&= &  \nu_k\gamma \norms{w^k}^2 +  \theta_k\hat{\eta}_k \norms{w^{k-1}}^2  + \nu_k\eta \iprods{w^k, \hat{w}^k} \vspace{1ex}\\
&& - {~} \eta \theta_k\iprods{w^{k-1}, \hat{w}^k} - (\nu_k\hat{\eta}_k  + \gamma \theta_k) \iprods{w^{k-1}, w^k}.
\end{array}
\end{equation}
Substituting \eqref{eq:AEG4NI_proof7} and \eqref{eq:AEG4NI_proof6} into \eqref{eq:AEG4NI_proof5}, and noting that $b_k = b_{k+1}\theta_k$, we can prove that
\begin{equation}\label{eq:AEG4NI_proof9} 
\arraycolsep=0.2em
\begin{array}{lcl}
\Pc_k - \Pc_{k+1} &\geq &  \left[  a_k + b_k\left(\hat{\eta}_k - \gamma - \rho \right) \right]  \norms{w^{k-1}}^2  +  \left[ b_k\left(\frac{\gamma \nu_k}{\theta_k} - \gamma - \rho\right)  - a_{k+1} \right] \norms{w^k}^2 \vspace{1ex}\\
&& + {~} \eta b_k\left(\frac{\nu_k}{\theta_k} - 1\right)\iprods{w^k, \hat{w}^k}  +  \eta b_k  \iprods{w^k - w^{k-1}, \hat{w}^k }  -  b_k \left( \frac{\hat{\eta}_k\nu_k}{\theta_k}  - \gamma - 2\rho \right)  \iprods{w^{k-1}, w^k}.
\end{array} 
\end{equation}
Now, by the Lipschitz continuity of $F$, we have $\norms{\hat{w}^k - w^k}^2 = \norms{Fy^k - Fx^k}^2 \leq L^2\norms{x^k - y^k}^2 = L^2\norms{\eta\hat{w}^k - \hat{\eta}_k w^{k-1}}^2$.
Expanding this expression and rearranging terms, we can deduce that
\begin{equation*} 
\arraycolsep=0.2em
\begin{array}{lcl}
0 & \geq & \norms{w^k}^2 + (1 - L^2\eta^2)\norms{\hat{w}^k}^2 - 2\left(1 - L^2\eta\hat{\eta}_k \right)\iprods{w^k, \hat{w}^k} -  2L^2\eta\hat{\eta}_k \iprods{w^k - w^{k-1}, \hat{w}^k } - L^2\hat{\eta}_k^2 \norms{w^{k-1}}^2. 
\end{array}
\end{equation*}
Multiplying this expression by $\frac{b_k}{2L^2\hat{\eta}_k}$ and adding the result to \eqref{eq:AEG4NI_proof9}, we get
\begin{equation}\label{eq:AEG4NI_proof10} 
\arraycolsep=0.2em
\begin{array}{lcl}
\Pc_k - \Pc_{k+1} &\geq &  \left[ a_k + \frac{b_k}{2} \left( \hat{\eta}_k - 2\gamma - 2\rho \right) \right] \norms{w^{k-1}}^2 + \frac{b_k(1 - L^2\eta^2)}{2L^2\hat{\eta}_k}  \norms{\hat{w}^k}^2 \vspace{1ex}\\
&& + {~} \left[ b_k\left(\frac{\gamma \nu_k}{\theta_k} + \frac{1}{2L^2\hat{\eta}_k} - \gamma - \rho\right) - a_{k+1} \right] \norms{w^k}^2  \vspace{1ex}\\
&& + {~} b_k \left( \frac{\eta\nu_k}{\theta_k}  -  \frac{1}{L^2\hat{\eta}_k}  \right) \iprods{w^k, \hat{w}^k} -   b_k( \frac{\hat{\eta}_k\nu_k}{\theta_k} - 2\rho - \gamma )  \iprods{w^{k-1}, w^k}.
\end{array} 
\end{equation}
Now we choose $\eta := \gamma + 2\rho$ and $\hat{\eta}_k := \frac{\eta\theta_k}{\nu_k} = \frac{\eta(t_k-1)}{t_k}$ as in \eqref{eq:AEG4NI_para_choice}.
Then, using $\nu_k = \frac{t_k}{t_{k+1}}$, $\theta_k = \frac{t_k-1}{t_{k+1}}$, $b_{k+1} = \frac{b_kt_{k+1}}{t_k-1}$, and $\eta := \gamma + 2\rho$, we can further bound \eqref{eq:AEG4NI_proof10}  as
\begin{equation}\label{eq:AEG4NI_proof10_v1} 
\arraycolsep=0.2em
\begin{array}{lcl}
\Pc_k - \Pc_{k+1} &\geq &  \left( a_k - \frac{b_k(\gamma t_k + \eta)}{2t_k} \right) \norms{w^{k-1}}^2 + \left[ \frac{b_k\left(  \gamma t_k + \gamma + \eta \right) }{2(t_k-1)}  - a_{k+1} \right] \norms{w^k}^2 +  \frac{b_k(1 - L^2\eta^2)t_k}{2L^2\eta(t_k-1)} \norms{w^k - \hat{w}^k}^2. 
\end{array} 
\end{equation}
Next, if we assume that
\begin{equation}\label{eq:AEG4NI_proof10_cond} 
1 - L^2\eta^2  \geq  0, \quad  a_k - \frac{b_k(\gamma t_k + \eta)}{2t_k}  \geq  0, \quad \text{and}\quad  \frac{b_k\left(  \gamma t_k + \gamma + \eta \right) }{2(t_k-1)}  - a_{k+1}  \geq  0,
\end{equation}
then \eqref{eq:AEG4NI_proof10_v1} reduces to $\Pc_k \geq \Pc_{k+1}$ for all $k\geq 0$.

To guarantee \eqref{eq:AEG4NI_proof10_cond}, we note that the first condition of \eqref{eq:AEG4NI_proof10_cond} is equivalent to $L\eta = L(\gamma + 2\rho) \leq 1$.
If $2L\rho < 1$, then we can always choose $0 < \gamma \leq \frac{1}{L} - 2\rho$ such that $1 - L^2\eta^2 \geq 0$.
This is the first condition in Theorem~\ref{th:AEG4NI_convergence}.
If we choose $a_k := \frac{b_k(\gamma t_k + \eta)}{2t_k}$, then the second condition of \eqref{eq:AEG4NI_proof10_cond} automatically holds.
The third condition of \eqref{eq:AEG4NI_proof10_cond} becomes  $a_{k+1} \leq \frac{ b_{k+1}( \gamma t_{k + 1} + \eta ) }{2 t_{k+1}}$ due to $b_{k+1} = \frac{b_kt_{k+1}}{t_k-1}$ and $t_k = k + 2$.
By the choice of $a_k$, this condition holds with equality.
Moreover, for $t_k := k+2$, we get $b_k := \frac{b_0(k+1)(k+2)}{2}$ for any $b_0 > 0$, and thus $a_k =  \frac{b_k(\gamma t_k + \eta)}{2t_k} = \frac{b_0(k+1)(\gamma k + 3\gamma + 2\rho )}{4}$.

Utilizing $z^k = x^{k-1} - \gamma w^{k-1}$ from\eqref{eq:AEG4NI} and $\iprods{w^{k-1}, x^{k-1} - x^{\star}} \geq -\rho\norms{w^{k-1}}^2$ due to the $\rho$-co-hypomonotonicity of $\Phi$, we can derive that
\begin{equation}\label{eq:P_lowerbound} 
\arraycolsep=0.2em
\begin{array}{lcl}
\Pc_k  & =   & \norms{z^k + t_k(y^k - z^k) - x^{\star} - \frac{b_k}{2t_k}w^{k-1}}^2 + \left( a_k - \frac{b_k^2}{4t_k^2} -  \frac{\gamma b_k}{t_k} \right) \norms{w^{k-1}}^2 + \frac{b_k}{t_k}\iprods{w^{k-1}, x^{k-1} - x^{\star}} \vspace{1ex}\\  
& \geq & \norms{z^k + t_k(y^k - z^k) - \frac{b_k}{2t_k}w^{k-1} -  x^{\star} }^2  +  \left( a_k - \frac{b_k^2}{4t_k^2} - \frac{(\gamma + \rho) b_k}{t_k}  \right) \norms{w^{k-1}}^2.
\end{array}
\end{equation}
Finally, since $\Pc_{k+1} \leq \Pc_k$ as shown above, by induction we have $\Pc_k \leq \Pc_0 = a_0\norms{w^{-1}}^2 + b_0\iprods{w^{-1}, z^0 - y^0} + \norms{z^0 + t_0(y^0 - z^0) - x^{\star}}^2 = \norms{y^0 - x^{\star}}^2$ due to $z^0 = y^0$.
Combining this expression and \eqref{eq:P_lowerbound}, and then using the explicit form of $a_k$ and $b_k$, we can deduce that
\begin{equation*} 
\arraycolsep=0.2em
\begin{array}{lcl}
\norms{y^0 - x^{\star}}^2 &\geq & \Pc_k \geq \left( a_k - \frac{b_k^2}{4t_k^2} - \frac{(\gamma + \rho) b_k}{t_k}  \right) \norms{w^{k-1}}^2 = \frac{ b_0(4\gamma - b_0)(k+1)^2}{16}  \norms{w^{k-1}}^2.
\end{array} 
\end{equation*}
If we set $b_0 := 2\gamma$, then this estimate reduces to $ \norms{w^k}^2 \leq \frac{4\norms{y^0 - x^{\star}}^2}{\gamma^2(k+2)^2}$, which is exactly \eqref{eq:AEG4NI_convergence}.
\end{proof}

\beforesubsec
\subsection{Nesterov's accelerated past-extragradient method for \eqref{eq:NI}}\label{subsec:APEG}
\aftersubsec
\textbf{The algorithm.}
Alternative to Nesterov's accelerated extragradient method \eqref{eq:AEG4NI}, \cite{tran2022connection,tran2023extragradient}  also develop Nesterov's accelerated past-extragradient methods to solve \eqref{eq:NI}.
We now present this method as follows.
Starting from $x^0\in\dom{\Phi}$, we set $\hat{w}^{-1} := 0$ and $z^0 := y^0$, and at each iteration $k\geq 0$, we update
\begin{equation}\label{eq:APEG4NI}
\arraycolsep=0.2em
\left\{\begin{array}{lcl}
x^k & \in & J_{\eta T}\left(y^k - \eta Fy^k +  \hat{\eta}_k\hat{w}^{k-1}\right), \vspace{1ex}\\
\hat{w}^k & := & \frac{1}{\eta}( y^k - x^k + \hat{\eta}_k \hat{w}^{k-1}), \vspace{1ex}\\
z^{k+1} &:= & x^k - \gamma \hat{w}^k, \vspace{1ex}\\
y^{k+1} &:= & z^{k+1} + \theta_k(z^{k+1} - z^k)  +  \nu_k(y^k - z^{k+1}), 
\end{array}\right.
\tag{APEG}
\end{equation}
where $\theta_k$, $\nu_k$, $\eta$, and $\gamma$ are given parameters, determined later.
Compared to \eqref{eq:AEG4NI}, we have replaced $Fx^k$ by $Fy^{k-1}$ in \eqref{eq:APEG4NI} to save one evaluation of $F$.
Now, if we eliminate $z^k$ from \eqref{eq:APEG4NI}, then we obtain
\begin{equation}\label{eq:APEG4NI_impl}
\arraycolsep=0.2em
\left\{\begin{array}{lcl}
x^k & \in & J_{\eta T}\left(y^k - \eta Fy^k +  \hat{\eta}_k\hat{w}^{k-1}\right), \vspace{1ex}\\
y^{k+1} &:= &  x^k + \theta_k(x^k - x^{k-1}) + \hat{\beta}_k (y^k - x^k) + \tilde{\beta}_k\hat{w}^{k-1}, \vspace{1ex}\\
\hat{w}^k & := & \frac{1}{\eta}( y^k - x^k + \hat{\eta}_k \hat{w}^{k-1}),
\end{array}\right.
\end{equation}
where $\hat{\beta}_k := \nu_k - \frac{\gamma}{\eta}(1 + \theta_k - \nu_k)$, and $\tilde{\beta}_k :=  \gamma\theta_k - \frac{\gamma\hat{\eta}_k}{\eta}(1 + \theta_k - \nu_k)$.
Clearly, if $\hat{\eta}_k  = \hat{\beta}_k = \tilde{\beta}_k = 0$, and $\theta_k = 1$, then we obtain the reflected-forward-backward splitting method \eqref{eq:RFBS4NI} from \cite{cevher2021reflected,malitsky2015projected}.
Hence, we can view \eqref{eq:APEG4NI_impl} as an accelerated variant of \eqref{eq:RFBS4NI}

\vspace{1ex}
\noindent\textbf{Convergence analysis.}
To analyze the convergence of  \eqref{eq:APEG4NI}, we use the following potential function:
\begin{equation}\label{eq:APEG4NI_potential_func}
\arraycolsep=0.2em
\begin{array}{lcl}
\hat{\Pc}_k &:= & a_k\norms{w^{k-1}}^2 + b_k\iprods{w^{k-1}, z^k - y^k} + \norms{z^k + t_k(y^k - z^k) - x^{\star}}^2 +  c_k\norms{w^{k-1} - \hat{w}^{k-1}}^2.
\end{array}
\end{equation}
where $w^k := Fx^k + \xi^k$, $\hat{w}^k := Fy^{k-1} + \xi^k$ for $\xi^k \in Tx^k$,  $a_k > 0$, $b_k > 0$, $c_k > 0$, and $t_k > 0$ are given parameters, determined later.
Now, we can state the convergence of \eqref{eq:APEG4NI} in the following theorem.

\begin{theorem}\label{th:APEG4NI_convergence}
Suppose that $\Phi$ in \eqref{eq:NI} is $\rho$-co-hypomonotone, $F$ is $L$-Lipschitz continuous such that $8\sqrt{3}L\rho <  1$, $x^{\star} \in \zer{\Phi}$, $\dom{J_{\eta T}} = \R^p$, and $\ran{J_{\eta T}} \subseteq\dom{F}=\R^p$.
Let $\gamma > 0$ be such that $16L^2 \left[ 3(3\gamma + 2\rho)^2 + \gamma(2\gamma + \rho) \right] \leq 1$, which always exists, and $\sets{(x^k, y^k, z^k)}$ be generated by \eqref{eq:APEG4NI}  using
\begin{equation}\label{eq:APEG4NI_para_choice}
t_k := k + 2, \ \ \eta := 2(3\gamma + 2\rho), \ \ \hat{\eta}_k = \frac{(t_k-1)\eta}{t_k},  \ \ \theta_k := \frac{t_k-1}{t_{k+1}}, \ \text{and}  \ \ \nu_k := \frac{t_k}{t_{k+1}}.
\end{equation}
Then, for $k\geq 0$, the following bound holds:
\begin{equation}\label{eq:APEG4NI_convergence}
\norms{Fx^k + \xi^k}^2 \leq \frac{4\norms{y^0 - x^{\star}}^2}{\gamma^2(k+2)(k + 4)}, \quad\text{where} \quad \xi^k \in Tx^k.
\end{equation}
Consequently, we have the last-iterate convergence rate as $\norms{Fx^k + \xi^k} = \BigO{\frac{1}{k}}$. 
\end{theorem}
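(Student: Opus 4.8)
The plan is to run the Lyapunov-function argument of Theorem~\ref{th:AEG4NI_convergence} with the augmented potential $\hat{\Pc}_k$ from \eqref{eq:APEG4NI_potential_func}, whose extra term $c_k\norms{w^{k-1} - \hat{w}^{k-1}}^2$ absorbs the error created by the Popov-type substitution (replacing $Fx^k$ by $Fy^{k-1}$), exactly as the $c_k$-term does in the proof of Theorem~\ref{th:cPEAG_convergence}. First I would expand $\norms{z^k + t_k(y^k - z^k) - x^{\star}}^2$ by inserting $\pm(t_k-1)z^{k+1}$, and similarly expand the shifted term $\norms{z^{k+1} + t_{k+1}(y^{k+1} - z^{k+1}) - x^{\star}}^2$ using $y^{k+1} - z^{k+1} = \theta_k(z^{k+1}-z^k) + \nu_k(y^k - z^{k+1})$ from \eqref{eq:APEG4NI}; subtracting and folding in the $w^{k-1}$-terms of $\hat{\Pc}_k$ produces an identity for $\hat{\Pc}_k - \hat{\Pc}_{k+1}$ whose right-hand side is a combination of the memory quadratics and of inner products of $w^k$, $w^{k-1}$, $\hat{w}^k$ with the displacements $z^{k+1}-z^k$ and $y^k - z^{k+1}$. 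Imposing precisely the telescoping relations \eqref{eq:AEG4NI_para_cond1}, namely $t_k = t_{k+1}\nu_k$, $t_k(t_k-1) = \nu_k\theta_kt_{k+1}^2$, $t_k - 1 = t_{k+1}\theta_k$ and $b_k = b_{k+1}\theta_k$, annihilates every term containing $z^{k+1}-x^{\star}$ and pins down $\theta_k$, $\nu_k$, $b_k$ as in \eqref{eq:APEG4NI_para_choice}.

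Next I would eliminate $z$: from \eqref{eq:APEG4NI} one has $z^{k+1} = x^k - \gamma\hat{w}^k$ and $x^k - y^k = \hat{\eta}_k\hat{w}^{k-1} - \eta\hat{w}^k$, hence $y^k - z^{k+1} = (\gamma+\eta)\hat{w}^k - \hat{\eta}_k\hat{w}^{k-1}$, so $\iprods{\nu_kw^k - \theta_kw^{k-1},\, y^k - z^{k+1}}$ becomes an explicit quadratic form in $w^k, w^{k-1}, \hat{w}^k$. For the other inner product I would write $z^{k+1}-z^k = (x^k - x^{k-1}) - \gamma(\hat{w}^k - \hat{w}^{k-1})$ --- the correction being along $\hat{w}$ rather than along $w$ is the new structural feature here --- and invoke the $\rho$-co-hypomonotonicity of $\Phi$ on $\iprods{w^k - w^{k-1}, x^k - x^{k-1}}$, followed by a Young split that routes the $\norms{\hat{w}^k - \hat{w}^{k-1}}^2$ contribution into the telescoping $c_k$ term. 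The Lipschitz error $\norms{w^k - \hat{w}^k}^2 = \norms{Fx^k - Fy^{k-1}}^2 \leq L^2\norms{x^k - y^{k-1}}^2$ is then bounded using the displacement relations read off from \eqref{eq:APEG4NI_impl} and, once more, Young's inequality. With $t_k := k+2$, $\eta := 2(3\gamma+2\rho)$, $\hat{\eta}_k := \frac{(t_k-1)\eta}{t_k}$, one obtains by induction $b_k = \frac{b_0(k+1)(k+2)}{2}$ and polynomial-in-$k$ formulas for the admissible $a_k$ and $c_k$; choosing them so that the leftover coefficients of $\norms{w^{k-1}}^2$, $\norms{w^k}^2$, $\norms{\hat{w}^k}^2$, $\norms{w^k-\hat{w}^k}^2$ and $\norms{\hat{w}^k-\hat{w}^{k-1}}^2$ are all nonnegative yields $\hat{\Pc}_{k+1} \leq \hat{\Pc}_k$, and the feasibility of these choices reduces exactly to the scalar inequalities $16L^2[\,3(3\gamma+2\rho)^2 + \gamma(2\gamma+\rho)\,] \leq 1$ and $8\sqrt{3}L\rho < 1$ assumed in the statement.

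Finally, I would lower-bound $\hat{\Pc}_k$ by completing the square in $w^{k-1}$ as in \eqref{eq:P_lowerbound}, using $z^k = x^{k-1} - \gamma\hat{w}^{k-1}$ and $\iprods{w^{k-1}, x^{k-1}-x^{\star}} \geq -\rho\norms{w^{k-1}}^2$, to reach a bound of the shape $\hat{\Pc}_k \geq \tfrac{1}{16}b_0(4\gamma - b_0)(k+1)(k+3)\norms{w^{k-1}}^2$; combining this with $\hat{\Pc}_k \leq \hat{\Pc}_0 = \norms{y^0 - x^{\star}}^2$ (which follows from $z^0 = y^0$ and the zero initializations of the $w$-memory), choosing $b_0 := 2\gamma$, and shifting the index gives \eqref{eq:APEG4NI_convergence} since $w^k = Fx^k + \xi^k \in \Phi x^k$. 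The main obstacle is the simultaneous coefficient matching in the middle step: the interplay of the Nesterov displacement terms with the Popov memory term $c_k\norms{w^{k-1}-\hat{w}^{k-1}}^2$ forces several interlocking algebraic identities on $a_k$, $c_k$, $\hat{\eta}_k$ and $\eta$, and keeping all the resulting quadratic-form coefficients nonnegative is precisely what produces the more restrictive smallness requirement $8\sqrt{3}L\rho<1$ here, in contrast to $2L\rho<1$ for \eqref{eq:AEG4NI}.
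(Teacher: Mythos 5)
Your proposal follows essentially the same route as the paper's proof: the augmented potential $\hat{\Pc}_k$ with the $c_k\norms{w^{k-1}-\hat{w}^{k-1}}^2$ memory term, the telescoping conditions \eqref{eq:AEG4NI_para_cond1}, co-hypomonotonicity applied to $\iprods{w^k-w^{k-1},x^k-x^{k-1}}$ with the $\gamma(\hat{w}^k-\hat{w}^{k-1})$ correction routed into the $c_k$ term via Young, the expansion $y^k-z^{k+1}=(\gamma+\eta)\hat{w}^k-\hat{\eta}_k\hat{w}^{k-1}$, the Lipschitz-plus-Young step, and the final completion of squares with $b_0:=2\gamma$ giving $\hat{\Pc}_k\geq\tfrac{\gamma^2}{4}(k+1)(k+3)\norms{w^{k-1}}^2\leq\hat{\Pc}_0=\norms{y^0-x^{\star}}^2$. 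The only cosmetic deviation is writing the Lipschitz error as $\norms{Fx^k-Fy^{k-1}}$ instead of $\norms{Fx^k-Fy^k}$ (the paper's proof uses $\hat{w}^k=Fy^k+\xi^k$), which does not change the argument.
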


\begin{proof}
Similar to the proof of \eqref{eq:AEG4NI_proof5}  from Theorem~\ref{th:AEG4NI_convergence}, but using \eqref{eq:APEG4NI}, \eqref{eq:APEG4NI_potential_func}, and \eqref{eq:APEG4NI_para_choice},  we get
\begin{equation}\label{eq:APEG4NI_proof5} 
\hspace{-2ex}
\arraycolsep=0.2em
\begin{array}{lcl}
\hat{\Pc}_k - \hat{\Pc}_{k+1} &= &   a_k\norms{w^{k-1}}^2 - a_{k+1}\norms{w^k}^2  + c_k\norms{w^{k-1} - \hat{w}^{k-1}}^2 - c_{k+1}\norms{w^k - \hat{w}^k}^2 \vspace{1ex}\\
&& + {~} b_k\iprods{w^k - w^{k-1}, z^{k+1} - z^k}  + b_{k+1} \iprods{\nu_k w^k - \theta_kw^{k-1}, y^k - z^{k+1}}.
\end{array} 
\hspace{-2ex}
\end{equation}
Since $\iprods{w^k - w^{k-1}, x^k - x^{k-1}} \geq -\rho\norms{w^k - w^{k-1}}^2$ due to the $\rho$-co-hypomonotonicity of $\Phi$, $z^{k+1} = x^k - \gamma\hat{w}^k = x^k - \gamma w^k + \gamma(w^k - \hat{w}^k)$ from \eqref{eq:APEG4NI}, and both the Cauchy-Schwarz and Young inequalities, we can derive
\begin{equation}\label{eq:APEG4NI_proof6} 
\hspace{-0.2ex}
\arraycolsep=0.2em
\begin{array}{lcl}
\iprods{w^k - w^{k-1}, z^{k+1} - z^k}  &= & \iprods{w^k - w^{k-1}, x^k - x^{k-1}} -  \gamma \iprods{w^k - w^{k-1},  \hat{w}^k - \hat{w}^{k-1}} \vspace{1ex}\\
& \geq &  \gamma\iprods{w^k - w^{k-1}, (w^k - \hat{w}^k) - (w^{k-1} - \hat{w}^{k-1})} -  (\gamma + \rho ) \norms{w^k - w^{k-1}}^2 \vspace{1ex}\\
& \geq & -\left(2\gamma + \rho \right) \norms{w^k - w^{k-1}}^2 -  \frac{\gamma}{2}\norms{w^k - \hat{w}^k}^2 - \frac{\gamma}{2}\norms{w^{k-1} - \hat{w}^{k-1}}^2.
\end{array}
\hspace{-4ex}
\end{equation}
Now, combining $x^k = y^k - \eta \hat{w}^k +    \hat{\eta}_k \hat{w}^{k-1}$ from \eqref{eq:APEG4NI} and its third line, we obtain $y^k - z^{k+1} = (\gamma  + \eta)\hat{w}^k  - \hat{\eta}_k \hat{w}^{k-1} = \gamma w^k + \eta \hat{w}^k - \hat{\eta}_kw^{k-1} + \gamma(\hat{w}^k - w^k) - \hat{\eta}_k(\hat{w}^{k-1} - w^{k-1})$.
Using this expression, and both the Cauchy-Schwarz and Young inequalities again, for any $\beta > 0$, we can prove that
\begin{equation}\label{eq:APEG4NI_proof7} 
\arraycolsep=0.2em
\begin{array}{lcl}
\Tc_{[3]} &:= & \iprods{\nu_k w^k - \theta_kw^{k-1}, y^k - z^{k+1}} \vspace{1ex}\\
&= & \iprods{\nu_k w^k - \theta_kw^{k-1}, \gamma w^k + \eta \hat{w}^k - \hat{\eta}_kw^{k-1} } + \iprods{\nu_k w^k - \theta_kw^{k-1}, \gamma(\hat{w}^k - w^k) - \hat{\eta}_k(\hat{w}^{k-1} - w^{k-1})} \vspace{1ex}\\
& \geq & \gamma\nu_k \norms{w^k}^2 + \hat{\eta}_k\theta_k\norms{w^{k-1}}^2  - (\hat{\eta}_k\nu_k + \gamma\theta_k)\iprods{w^k, w^{k-1}} - \eta\theta_k\iprods{\hat{w}^k, w^{k-1}} +  \eta\nu_k\iprods{w^k, \hat{w}^k} \vspace{1ex}\\
&& - {~} \frac{\beta }{2\nu_k}\norms{\nu_k w^k - \theta_kw^{k-1}}^2  - \frac{\gamma^2 \nu_k}{\beta} \norms{w^k - \hat{w}^k}^2 - \frac{\hat{\eta}_k^2\nu_k}{\beta }\norms{w^{k-1} - \hat{w}^{k-1}}^2.
\end{array}
\end{equation}
Expanding \eqref{eq:APEG4NI_proof6} and \eqref{eq:APEG4NI_proof7}, and then substituting their results  into \eqref{eq:APEG4NI_proof5} with $b_{k+1}\theta_k = b_k$ from \eqref{eq:AEG4NI_para_cond1}, we can derive that
\begin{equation}\label{eq:APEG4NI_proof8} 
\arraycolsep=0.1em
\begin{array}{lcl}
\hat{\Pc}_k - \hat{\Pc}_{k+1} &\geq &     \left[ c_k - b_k \left( \frac{\gamma}{2} + \frac{\hat{\eta}_k^2\nu_k}{\beta\theta_k} \right) \right] \norms{w^{k-1} - \hat{w}^{k-1}}^2 -  \left[ c_{k+1} + b_k \left( \frac{\gamma}{2} + \frac{\gamma^2\nu_k}{\beta\theta_k}\right)  \right] \norms{w^k - \hat{w}^k}^2 \vspace{1ex}\\
&& + {~} \left[ a_k - b_k\left(2\gamma + \rho + \frac{\beta\theta_k}{2\nu_k} - \hat{\eta}_k\right) \right] \norms{w^{k-1}}^2 +  \left[ b_k\left( \frac{(2\gamma - \beta) \nu_k}{2\theta_k}  - 2\gamma - \rho \right) - a_{k+1} \right] \norms{w^k}^2  \vspace{1ex}\\
&& + {~} b_k\left( 3\gamma + 2\rho + \beta -  \frac{\nu_k\hat{\eta}_k}{\theta_k}  \right) \iprods{w^k, w^{k-1}} + \eta b_k \iprods{\hat{w}^k, w^k - w^{k-1}} +  \eta b_k \left(\frac{\nu_k}{\theta_k} - 1\right) \iprods{w^k, \hat{w}^k}.
\end{array} 
\end{equation}
Next, by the Lipschitz continuity of $F$, we can easily show that $\norms{w^k - \hat{w}^k}^2 = \norms{Fx^k - Fy^k}^2 \leq L^2\norms{x^k - y^k}^2 = L^2\norms{\eta \hat{w}^k -  \hat{\eta}_k \hat{w}^{k-1}}^2$.
Hence, for any $\omega > 0$, by Young's inequality, this expression leads to $0 \geq \omega\norms{w^k - \hat{w}^k}^2 + \norms{w^k - \hat{w}^k}^2 - 2(1+\omega)L^2 \norms{ \eta \hat{w}^k - \hat{\eta}_kw^{k-1} }^2 - 2(1+\omega)L^2\hat{\eta}_k^2\norms{w^{k-1} - \hat{w}^{k-1} }^2$.
If we set $M := 2(1+\omega)L^2$, then by expanding the last inequality, we get
\begin{equation*} 
\arraycolsep=0.2em
\begin{array}{lcl}
0 & \geq & \omega\norms{w^k - \hat{w}^k}^2 + \norms{w^k}^2 + ( 1 - M \eta^2) \norms{\hat{w}^k}^2 - 2(1 - M\eta\hat{\eta}_k) \iprods{w^k, \hat{w}^k}  \vspace{1ex}\\
&&- {~}   2M\eta \hat{\eta}_k \iprods{\hat{w}^k, w^k - w^{k-1}} -  M\hat{\eta}_k^2 \norms{ w^{k-1}}^2 - M\hat{\eta}_k^2\norms{w^{k-1} - \hat{w}^{k-1}}^2.
\end{array}
\end{equation*}
Multiplying this expression by $\frac{b_k}{2M\hat{\eta}_k}$ and adding the result to \eqref{eq:APEG4NI_proof8}, we arrive at
\begin{equation*} 
\arraycolsep=0.2em
\begin{array}{lcl}
\hat{\Pc}_k - \hat{\Pc}_{k+1} &\geq &  \left[ c_k - b_k \left( \frac{\gamma}{2} + \frac{\hat{\eta}_k^2\nu_k}{\beta\theta_k} + \frac{\hat{\eta}_k}{2}\right)  \right] \norms{w^{k-1} - \hat{w}^{k-1}}^2  +   \left[ b_k  \left(\frac{\omega}{2M\hat{\eta}_k} - \frac{\gamma}{2} - \frac{\gamma^2 \nu_k}{\beta \theta_k}\right) -  c_{k+1} \right] \norms{w^k - \hat{w}^k}^2 \vspace{1ex}\\
&& + {~}  \left[ a_k - b_k\left(2\gamma + \rho + \frac{\beta\theta_k}{2\nu_k} - \frac{\hat{\eta}_k}{2}\right) \right] \norms{w^{k-1}}^2 + \frac{(1-M\eta^2)b_k}{2M\hat{\eta}_k}\norms{\hat{w}^k}^2 \vspace{1ex}\\
&& + {~} \left[ b_k\left( \frac{1}{2M\hat{\eta}_k} + \frac{(2\gamma - \beta) \nu_k}{2\theta_k}  - 2\gamma - \rho \right) - a_{k+1} \right] \norms{w^k}^2    - b_k\left( \frac{1}{M\hat{\eta}_k} -  \frac{\eta \nu_k}{\theta_k} \right) \iprods{w^k, \hat{w}^k} \vspace{1ex}\\
&& + {~} b_k\left( 3\gamma +  2\rho  + \beta - \frac{\nu_k\hat{\eta}_k}{\theta_k}  \right) \iprods{w^k, w^{k-1}}.
\end{array} 
\end{equation*}
If we choose $\beta := 3\gamma + 2\rho$, and  $\eta := 2( 3\gamma + 2\rho) = 2\beta$ and  $\hat{\eta}_k := \frac{\eta \theta_k}{\nu_k}$, then  $3\gamma + 2\rho + \beta  - \frac{\nu_k\hat{\eta}_k}{\theta_k} = 0$.
Moreover, we can simplify the last expression as
\begin{equation}\label{eq:APEG4NI_proof8_c} 
\arraycolsep=0.2em
\begin{array}{lcl}
\hat{\Pc}_k - \hat{\Pc}_{k+1} &\geq &   \left[ b_k\left( \frac{(\eta + 4\gamma ) \nu_k}{4\theta_k} - 2\gamma - \rho \right) - a_{k+1} \right] \norms{w^k}^2 +  \left[ a_k - b_k \left(  2\gamma + \rho  -  \frac{\eta\theta_k}{4\nu_k}\right)   \right] \norms{w^{k-1}}^2 \vspace{1ex}\\
&& + {~}  \left[ \frac{b_k}{2\theta_k} \left( \frac{(1-2L^2\eta^2)\nu_k}{2L^2\eta} - \gamma\theta_k - \frac{4\gamma^2\nu_k}{\eta}\right) - c_{k+1} \right] \norms{w^k - \hat{w}^k}^2 \vspace{1ex}\\
&& + {~}  \left[ c_k - \frac{b_k}{2}\left( \gamma +  \frac{5\eta\theta_k}{\nu_k} \right) \right] \norms{w^{k-1} - \hat{w}^{k-1}}^2.
\end{array} 
\end{equation}
Now, if we assume that $1 - M\eta^2 \geq 0$, and
\begin{equation}\label{eq:APEG4NI_proof9_para_cond2}  
\arraycolsep=0.2em
\begin{array}{lcllcl}
a_k -   b_k \left(  2\gamma + \rho   -  \frac{\eta\theta_k}{4\nu_k}\right) & \geq & 0,  \quad  & b_k\left( \frac{(\eta + 4\gamma) \nu_k}{4\theta_k} - 2\gamma - \rho \right) - a_{k+1} & \geq & 0, \vspace{1ex}\\
c_k - \frac{b_k}{2}\left( \gamma + \frac{5\eta\theta_k}{\nu_k} \right) \ & \geq & 0,  \quad\text{and}\quad &  \frac{b_k}{2\theta_k}\left( \frac{(1-2L^2\eta^2)\nu_k}{2L^2\eta}  - \gamma\theta_k - \frac{4\gamma^2\nu_k}{\eta} \right) - c_{k+1}  & \geq & 0,
\end{array}
\end{equation}
then \eqref{eq:APEG4NI_proof8_c} reduces to $\hat{\Pc}_{k+1} \leq \hat{\Pc}_k$ for all $k\geq 0$.

Our next step is to show that the update rules in \eqref{eq:APEG4NI_para_choice} guarantee $1 - M\eta^2 \geq 0$ and \eqref{eq:APEG4NI_proof9_para_cond2}.
First, let us choose $a_k :=  b_k \left(  2\gamma + \rho   -  \frac{\eta \theta_k}{4\nu_k}\right) = \frac{b_k( 2\gamma t_k + \eta) }{4t_k}$.
Then, the first  and second conditions of \eqref{eq:APEG4NI_proof9_para_cond2} automatically hold.
The two last conditions of \eqref{eq:APEG4NI_proof9_para_cond2} hold if $\frac{1-2L^2\eta^2}{2L^2\eta} \geq 5\eta + 2\gamma + \frac{4\gamma^2}{\eta}$.
If we choose $\omega := 5$, then using $M = 2(1+\omega)L^2$, this condition is equivalent to $16L^2 \left[ 3(3\gamma + 2\rho)^2 + \gamma(2\gamma + \rho) \right] \leq 1$.
Clearly, if $8\sqrt{3}L\rho < 1$, then we can always find $\gamma > 0$ such that the last condition is satisfied. 
In addition, the condition $1 - M\eta^2 \geq 0$ is equivalent to $M\eta^2 = 48L^2(3\gamma + 2\rho)^2 \leq 1$, which automatically holds.

Using $z^k = x^{k-1} - \gamma \hat{w}^{k-1} = x^{k-1} - \gamma w^{k-1} + \gamma(w^{k-1} - \hat{w}^{k-1})$ from \eqref{eq:APEG4NI}, $\iprods{w^{k-1}, x^{k-1} - x^{\star}} \geq -\rho\norms{w^{k-1}}^2$, and $-\iprods{w^{k-1}, \hat{w}^{k-1}} = -\iprods{w^{k-1}, \hat{w}^{k-1} - w^{k-1}} - \norms{w^{k-1}}^2 \geq -\frac{3}{2}\norms{w^{k-1}}^2 - \frac{1}{2}\norms{w^{k-1} - \hat{w}^{k-1}}^2$, we have
\begin{equation}\label{eq:Nes_coPEAG_potential_func}
\arraycolsep=0.2em
\begin{array}{lcl}
\hat{\Pc}_k   & =   & \norms{z^k + t_k(y^k - z^k) - x^{\star} - \frac{b_k}{2t_k}w^{k-1}}^2  + c_k\norms{w^{k-1} - \hat{w}^{k-1}}^2 \vspace{1ex}\\
&& + {~} \left( a_k - \frac{b_k^2}{4t_k^2}\right) \norms{w^{k-1}}^2 + \frac{b_k}{t_k}\iprods{w^{k-1}, x^{k-1} - x^{\star}} -  \frac{\gamma b_k}{t_k}\iprods{w^{k-1}, \hat{w}^{k-1}} \vspace{1ex}\\
& \geq &  \left( a_k - \frac{b_k^2}{4t_k^2} - \frac{(3\gamma + 2\rho) b_k}{2t_k} \right) \norms{w^{k-1}}^2 + \left( c_k - \frac{\gamma b_k}{2t_k} \right) \norms{w^{k-1} - \hat{w}^{k-1}}^2.
\end{array}
\end{equation}
Since $b_k = \frac{b_0(k+1)(k+2)}{2}$ and $a_k = \frac{b_k( 2\gamma t_k + \eta) }{4t_k} = \frac{b_0(k+1)( \gamma k+ 5\gamma + 2\rho) }{4}$, by choosing  $c_k := \frac{b_k(\gamma\nu_k + 5\eta\theta_k)}{2\nu_k}= \frac{b_0(k+1)[ (31\gamma + 20\rho)(k+1) + \gamma ] }{4}$,  we can show from the last inequality that
\begin{equation*} 
\arraycolsep=0.2em
\begin{array}{lcl}
\hat{\Pc}_k &\geq &  \left( a_k - \frac{b_k^2}{4t_k^2} - \frac{(3\gamma + 2\rho) b_k}{2t_k} \right) \norms{w^{k-1}}^2 + \left( c_k - \frac{\gamma b_k}{t_k} \right) \norms{w^{k-1} - \hat{w}^{k-1}}^2 \vspace{1ex}\\
&= & \frac{b_0(k+1)[ (4\gamma - b_0) (k+2) + b_0]}{16}  \norms{w^{k-1}}^2 +  \frac{b_0(k+1)[ (31\gamma + 20\rho)(k+1) - \gamma ] }{4} \norms{w^{k-1} - \hat{w}^{k-1}}^2.
\end{array} 
\end{equation*}
Finally, if we choose $b_0 := 2\gamma$, then $\hat{\Pc}_k \geq \frac{\gamma^2(k+1)(k+3)}{4}\norms{w^{k-1}}^2$.
Since $\hat{\Pc}_k \leq \hat{\Pc}_0 = a_0\norms{w^{-1}}^2 + b_0\iprods{w^{-1}, z^0 - y^0} + \norms{z^0 + t_0(y^0 - z^0) - x^{\star}}^2 +  c_0\norms{w^{-1} - \hat{w}^{-1}}^2$, and $y^0 = z^0$ and $w^{-1} = \hat{w}^{-1} = 0$, we get $\hat{\Pc}_k \leq \hat{\Pc}_0 = \norms{y^0 - x^{\star}}^2$.
Putting these steps together, we can conclude that $\norms{w^{k-1}}^2 \leq \frac{4\norms{y^0 - x^{\star}}^2}{\gamma^2(k+1)(k + 3)}$, which is exactly \eqref{eq:APEG4NI_convergence}.
\end{proof}

\begin{remark}\label{re:APEG4NI_remark1}
The condition $16L^2 \left[ 3(3\gamma + 2\rho)^2 + \gamma(2\gamma + \rho) \right] \leq 1$ in Theroem~\ref{th:APEG4NI_convergence} covers a range of $\gamma$ by solving a quadratic inequation in $\gamma$. 
Compared \eqref{eq:APEG4NI} to the accelerated extragradient methods in \cite{bot2022fast} for \eqref{eq:NE}, we can see that \cite{bot2022fast} uses all variable stepsizes, while  \eqref{eq:APEG4NI} allows $\gamma$ and $\eta$ to be constant.
However,  \eqref{eq:APEG4NI} only achieves Big-O convergence rates instead of small-O convergence rates as in  \cite{bot2022fast}.
\end{remark}

\beforesec
\section{Conclusion and Further Remarks}\label{sec:conclusion}
\aftersec
In this paper, we have provided a survey of classical and recent results on the sublinear convergence rates of the extragradient (EG) method and its variants. 
We presents the full proofs of all the results discussed in the paper, where many proofs are new in certain aspects.
Classical convergence results of EG-type algorithms typically rely on monotonicity assumptions, while recent developments extend EG-type methods to weak-Minty solutions and co-hypomonotone settings. 
In addition, last-iterate convergence rates have been investigated for several EG variants, though this research remains incomplete. 
Various extensions to stochastic and randomized models have also been studied. 
EG-type methods have been widely applied in machine learning, particularly in GANs, online learning, reinforcement learning, and robust optimization.
These algorithms have shown their efficiency in practice, especially for constant and adaptive stepsize variants.

Accelerated variants of EG have also attracted significant attention, including methods relying on Halpern’s fixed-point iteration and Nesterov’s accelerated techniques. 
While several works have focused on theoretical aspects of EGs such as iteration-complexity and last-iterate convergence rates, the practical performance of accelerated EG variants remains limited and requires further investigation. 
It is still unclear whether accelerated variants of EG can outperform their classical counterparts, which opens up a new research question for our future work.
In addition, establishing tighter convergence rates (e.g., small-o rates) as well as convergence of sequences remains largely open for several variants discussed in this paper. 

EGs have been extensively studied for several decades, with numerous researchers making remarkable contributions to the field. 
The theory, algorithms, and applications of EGs have been expanded to various fields, including economics and machine learning. 
However, given the breadth of the literature on EGs, this paper can only survey a small proportion of recent works on sublinear convergence rates for both non-accelerated and accelerated variants in deterministic settings. 
We have no means to fully cover many other works, including classical and recent developments. 
We hope that this paper will provide a useful starting point for us to continue exploring recent literature on minimax problems and their extensions.
We also wish to survey prominent applications of minimax problems and nonlinear inclusions in different fields.

{
\vspace{2ex}
\noindent\textbf{Data availability.}
The author confirms that this paper does not contain any data.
}

\vspace{2ex}
\noindent\textbf{Acknowledgements.}
This work is  partially supported by the National Science Foundation (NSF), grant no. NSF-RTG DMS-2134107 and the Office of Naval Research (ONR), grant No. N00014-20-1-2088.

\bibliographystyle{plain}

\begin{thebibliography}{100}

\bibitem{alacaoglu2022beyond}
A.~Alacaoglu, A.~B{\"o}hm, and Y.~Malitsky.
\newblock Beyond the golden ratio for variational inequality algorithms.
\newblock {\em arXiv preprint arXiv:2212.13955}, 2022.

\bibitem{alacaoglu2021stochastic}
A.~Alacaoglu and Y.~Malitsky.
\newblock Stochastic variance reduction for variational inequality methods.
\newblock {\em arXiv preprint arXiv:2102.08352}, 2021.

\bibitem{antipin1976}
A.~S. Antipin.
\newblock On a method for convex programs using a symmetrical modification of
  the {L}agrange function.
\newblock 12:1164--1173.

\bibitem{arjovsky2017wasserstein}
M.~Arjovsky, S.~Chintala, and L.~Bottou.
\newblock Wasserstein generative adversarial networks.
\newblock In {\em International Conference on Machine Learning}, pages
  214--223, 2017.

\bibitem{attouch2020convergence}
H.~Attouch and A.~Cabot.
\newblock Convergence of a relaxed inertial proximal algorithm for maximally
  monotone operators.
\newblock {\em Math. Program.}, 184(1):243--287, 2020.

\bibitem{attouch2022ravine}
H.~Attouch and J.~Fadili.
\newblock From the {R}avine method to the {N}esterov method and vice versa: a
  dynamical system perspective.
\newblock {\em SIAM Journal on Optimization}, 32(3):2074--2101, 2022.

\bibitem{attouch2016rate}
H.~Attouch and J.~Peypouquet.
\newblock The rate of convergence of {N}esterov's accelerated forward-backward
  method is actually faster than $\mathcal{O}(1/k^2)$.
\newblock {\em SIAM J. Optim.}, 26(3):1824--1834, 2016.

\bibitem{attouch2019convergence}
H.~Attouch and J.~Peypouquet.
\newblock Convergence of inertial dynamics and proximal algorithms governed by
  maximally monotone operators.
\newblock {\em Math. Program.}, 174(1-2):391--432, 2019.

\bibitem{azar2017minimax}
M.~G. Azar, I.~Osband, and R.~Munos.
\newblock Minimax regret bounds for reinforcement learning.
\newblock In {\em International Conference on Machine Learning}, pages
  263--272. PMLR, 2017.

\bibitem{bartz2007fitzpatrick}
S.~Bartz, H.~H. Bauschke, J.~M. Borwein, S.~Reich, and X.~Wang.
\newblock {F}itzpatrick functions, cyclic monotonicity and {R}ockafellar’s
  antiderivative.
\newblock {\em Nonlinear Analysis: Theory, Methods \& Applications},
  66(5):1198--1223, 2007.

\bibitem{Bauschke2011}
H.~H. Bauschke and P.~Combettes.
\newblock {\em Convex analysis and monotone operators theory in {H}ilbert
  spaces}.
\newblock Springer-Verlag, 2nd edition, 2017.

\bibitem{bauschke2020generalized}
H.~H. Bauschke, W.~M. Moursi, and X.~Wang.
\newblock Generalized monotone operators and their averaged resolvents.
\newblock {\em Math. Program.}, pages 1--20, 2020.

\bibitem{Beck2009}
A.~Beck and M.~Teboulle.
\newblock A fast iterative shrinkage-thresholding algorithm for linear inverse
  problems.
\newblock {\em SIAM J. Imaging Sci.}, 2(1):183--202, 2009.

\bibitem{Ben-Tal2009}
A.~Ben-Tal, L.~El~Ghaoui, and A.~Nemirovski.
\newblock {\em {R}obust optimization}.
\newblock Princeton University Press, 2009.

\bibitem{bhatia2020online}
K.~Bhatia and K.~Sridharan.
\newblock Online learning with dynamics: {A} minimax perspective.
\newblock {\em Advances in Neural Information Processing Systems},
  33:15020--15030, 2020.

\bibitem{bohm2022solving}
A.~B{\"o}hm.
\newblock Solving nonconvex-nonconcave min-max problems exhibiting weak {M}inty
  solutions.
\newblock {\em arXiv preprint arXiv:2201.12247}, 2022.

\bibitem{Bonnans2000a}
J.~F. Bonnans and A.~Shapiro.
\newblock {\em {Perturbation Analysis of Optimization Problems}}.
\newblock Springer, 2000.

\bibitem{Bonnans1994a}
J.F. Bonnans.
\newblock {L}ocal {A}nalysis of {N}ewton-{T}ype {M}ethods for {V}ariational
  {I}nequalities and {N}onlinear {P}rogramming.
\newblock {\em Appl. Math. Optim}, 29:161--186, 1994.

\bibitem{boct2015hybrid}
R.~I. Bo{\c{t}} and E.~Csetnek.
\newblock A hybrid proximal-extragradient algorithm with inertial effects.
\newblock {\em Numerical Functional Analysis and Optimization}, 36(8):951--963,
  2015.

\bibitem{boct2016inertial}
R.~I. Bo{\c{t}} and E.~R. Csetnek.
\newblock An inertial forward-backward-forward primal-dual splitting algorithm
  for solving monotone inclusion problems.
\newblock {\em Numerical Algorithms}, 71:519--540, 2016.

\bibitem{bot2022fast}
R.~I. Bot, E.~R. Csetnek, and D.~K. Nguyen.
\newblock Fast {OGDA} in continuous and discrete time.
\newblock {\em arXiv preprint arXiv:2203.10947}, 2022.

\bibitem{bot2022bfast}
R.~I. Bot and D.~K. Nguyen.
\newblock Fast {K}rasnosel\'skii-{M}ann algorithm with a convergence rate of
  the fixed point iteration of $o(1/k)$.
\newblock {\em arXiv preprint arXiv:2206.09462}, 2022.

\bibitem{BricenoArias2011}
L.M. Briceno-Arias and P.L. Combettes.
\newblock A monotone + skew splitting model for composite monotone inclusions
  in duality.
\newblock {\em SIAM J. Optim.}, 21(4):1230--1250, 2011.

\bibitem{reginaset2008}
R.~S. Burachik and A.~Iusem.
\newblock {\em Set-Valued Mappings and Enlargements of Monotone Operators}.
\newblock New York: Springer, 2008.

\bibitem{cai2022accelerated}
Y.~Cai, A.~Oikonomou, and W.~Zheng.
\newblock Accelerated algorithms for monotone inclusions and constrained
  nonconvex-nonconcave min-max optimization.
\newblock {\em arXiv preprint arXiv:2206.05248}, 2022.

\bibitem{cai2022tight}
Y.~Cai, A.~Oikonomou, and W.~Zheng.
\newblock Tight last-iterate convergence of the extragradient and the
  optimistic gradient descent-ascent algorithm for constrained monotone
  variational inequalities.
\newblock {\em arXiv preprint arXiv:2204.09228}, 2022.

\bibitem{cai2022baccelerated}
Y.~Cai and W.~Zheng.
\newblock Accelerated single-call methods for constrained min-max optimization.
\newblock {\em arXiv preprint arXiv:2210.03096}, 2022.

\bibitem{censor2011subgradient}
Y.~Censor, A.~Gibali, and S.~Reich.
\newblock The subgradient extragradient method for solving variational
  inequalities in hilbert space.
\newblock {\em J. Optim. Theory Appl.}, 148(2):318--335, 2011.

\bibitem{censor2012extensions}
Y.~Censor, A.~Gibali, and S.~Reich.
\newblock Extensions of {K}orpelevich's extragradient method for the
  variational inequality problem in {E}uclidean space.
\newblock {\em Optimization}, 61(9):1119--1132, 2012.

\bibitem{cevher2021reflected}
V.~Cevher and B.C. V{\~u}.
\newblock A reflected forward-backward splitting method for monotone inclusions
  involving {L}ipschitzian operators.
\newblock {\em Set-Valued and Variational Analysis}, 29(1):163--174, 2021.

\bibitem{Chambolle2011}
A.~Chambolle and T.~Pock.
\newblock A first-order primal-dual algorithm for convex problems with
  applications to imaging.
\newblock {\em J. Math. Imaging Vis.}, 40(1):120--145, 2011.

\bibitem{Chen2013a}
Y.~Chen, G.~Lan, and Y.~Ouyang.
\newblock Optimal primal-dual methods for a class of saddle-point problems.
\newblock {\em SIAM J. Optim.}, 24(4):1779--1814, 2014.

\bibitem{chen2017accelerated}
Y.~Chen, G.~Lan, and Y.~Ouyang.
\newblock Accelerated schemes for a class of variational inequalities.
\newblock {\em Math. Program.}, 165(1):113--149, 2017.

\bibitem{Combettes2011}
P.~Combettes and J.-C. Pesquet.
\newblock Signal recovery by proximal forward-backward splitting.
\newblock In {\em Fixed-Point Algorithms for Inverse Problems in Science and
  Engineering}, pages 185--212. Springer-Verlag, 2011.

\bibitem{combettes2004proximal}
P.~L. Combettes and T.~Pennanen.
\newblock Proximal methods for cohypomonotone operators.
\newblock {\em SIAM J. Control Optim.}, 43(2):731--742, 2004.

\bibitem{Combettes2005}
P.~L. Combettes and V.~R. Wajs.
\newblock Signal recovery by proximal forward-backward splitting.
\newblock {\em Multiscale Model. Simul.}, 4:1168--1200, 2005.

\bibitem{Cong2012}
D.~D. Cong and G.~Lan.
\newblock On the convergence properties of non-euclidean extragradient methods
  for variational inequalities with generalized monotone operators.
\newblock {\em Comput. Optim. Appl.}, 60(2):277--310, 2015.

\bibitem{daskalakis2018training}
C.~Daskalakis, A.~Ilyas, V.~Syrgkanis, and H.~Zeng.
\newblock Training {GANs} with {O}ptimism.
\newblock In {\em International Conference on Learning Representations (ICLR
  2018)}, 2018.

\bibitem{Davis2014}
D.~Davis.
\newblock Convergence rate analysis of the forward-{D}ouglas-{R}achford
  splitting scheme.
\newblock {\em SIAM J. Optim.}, 25(3):1760--1786, 2015.

\bibitem{davis2022variance}
D.~Davis.
\newblock Variance reduction for root-finding problems.
\newblock {\em Math. Program.}, pages 1--36, 2022.

\bibitem{Davis2015}
D.~Davis and W.~Yin.
\newblock A three-operator splitting scheme and its optimization applications.
\newblock {\em Set-valued and Variational Analysis}, 25(4):829--858, 2017.

\bibitem{diakonikolas2020halpern}
J.~Diakonikolas.
\newblock {H}alpern iteration for near-optimal and parameter-free monotone
  inclusion and strong solutions to variational inequalities.
\newblock In {\em Conference on Learning Theory}, pages 1428--1451. PMLR, 2020.

\bibitem{diakonikolas2021efficient}
J.~Diakonikolas, C.~Daskalakis, and M.~Jordan.
\newblock Efficient methods for structured nonconvex-nonconcave min-max
  optimization.
\newblock In {\em International Conference on Artificial Intelligence and
  Statistics}, pages 2746--2754. PMLR, 2021.

\bibitem{Dontchev1996}
A.~L. Dontchev and T.~R. Rockafellar.
\newblock Characterizations of strong regularity for variational inequalities
  over polyhedral convex sets.
\newblock {\em SIAM J. Optim.}, 6(4):1087--1105., 1996.

\bibitem{drori2014performance}
Y.~Drori and M.~Teboulle.
\newblock Performance of first-order methods for smooth convex minimization: a
  novel approach.
\newblock {\em Math. Program.}, 145(1):451--482, 2014.

\bibitem{eckstein1992douglas}
J.~Eckstein and D.~P. Bertsekas.
\newblock On the {D}ouglas—{R}achford splitting method and the proximal point
  algorithm for maximal monotone operators.
\newblock {\em Math. Program.}, 55(1-3):293--318, 1992.

\bibitem{eckstein2008family}
Jonathan Eckstein and Benar~Fux Svaiter.
\newblock A family of projective splitting methods for the sum of two maximal
  monotone operators.
\newblock {\em Math. Program.}, 111(1-2):173--199, 2008.

\bibitem{Eckstein2009}
Jonathan Eckstein and Benar~Fux Svaiter.
\newblock General projective splitting methods for sums of maximal monotone
  operators.
\newblock {\em SIAM Journal on Control and Optimization}, 48(2):787--811, 2009.

\bibitem{Esser2010a}
J.~E. Esser.
\newblock {\em Primal-dual algorithm for convex models and applications to
  image restoration, registration and nonlocal inpainting}.
\newblock {PhD} {T}hesis, University of California, Los Angeles, USA, 2010.

\bibitem{Facchinei2003}
F.~Facchinei and J.-S. Pang.
\newblock {\em Finite-dimensional variational inequalities and complementarity
  problems}, volume 1-2.
\newblock Springer-Verlag, 2003.

\bibitem{Fukushima1992}
M.~Fukushima.
\newblock Application of the alternating direction method of multipliers to
  separable convex programming problems.
\newblock {\em Comput. Optim. Appl.}, 1(1):93--111, 1992.

\bibitem{giannessi1995variational}
F.~Giannessi and A.~Maugeri.
\newblock {\em Variational inequalities and network equilibrium problems}.
\newblock Springer, 1995.

\bibitem{gidel2018variational}
G.~Gidel, H.~Berard, G.~Vignoud, P.~Vincent, and S.~Lacoste-Julien.
\newblock A variational inequality perspective on generative adversarial
  networks.
\newblock {\em arXiv preprint arXiv:1802.10551}, 2018.

\bibitem{golowich2020last}
N.~Golowich, S.~Pattathil, C.~Daskalakis, and A.~Ozdaglar.
\newblock Last iterate is slower than averaged iterate in smooth convex-concave
  saddle point problems.
\newblock {\em Conference on Learning Theory (PMLR)}, pages 1758--1784, 2020.

\bibitem{goodfellow2014generative}
I.~Goodfellow, J.~Pouget-Abadie, M.~Mirza, B.~Xu, D.~Warde-Farley, S.~Ozair,
  A.~Courville, and Y.~Bengio.
\newblock Generative adversarial nets.
\newblock In {\em Advances in neural information processing systems}, pages
  2672--2680, 2014.

\bibitem{gorbunov2022extragradient}
E.~Gorbunov, N.~Loizou, and G.~Gidel.
\newblock Extragradient method: $\mathcal{O} (1/k)$ last-iterate convergence
  for monotone variational inequalities and connections with cocoercivity.
\newblock In {\em International Conference on Artificial Intelligence and
  Statistics}, pages 366--402. PMLR, 2022.

\bibitem{gorbunov2022convergence}
E.~Gorbunov, A.~Taylor, S.~Horv{\'a}th, and G.~Gidel.
\newblock Convergence of proximal point and extragradient-based methods beyond
  monotonicity: {T}he case of negative comonotonicity.
\newblock {\em arXiv preprint arXiv:2210.13831}, 2022.

\bibitem{guler1992new}
O.~G{\"u}ler.
\newblock New proximal point algorithms for convex minimization.
\newblock {\em SIAM J. Optim.}, 2(4):649--664, 1992.

\bibitem{gupta2022branch}
S.~D. Gupta, B.~Van Parys, and E.~Ryu.
\newblock Branch-and-bound performance estimation programming: {A} unified
  methodology for constructing optimal optimization methods.
\newblock {\em arXiv preprint arXiv:2203.07305}, 2022.

\bibitem{halpern1967fixed}
B.~Halpern.
\newblock Fixed points of nonexpanding maps.
\newblock {\em Bull. Am. Math. Soc.}, 73(6):957--961, 1967.

\bibitem{harker1990finite}
P.~T. Harker and J.-S. Pang.
\newblock Finite-dimensional variational inequality and nonlinear
  complementarity problems: a survey of theory, algorithms and applications.
\newblock {\em Mathematical programming}, 48(1):161--220, 1990.

\bibitem{he1997class}
B.~He.
\newblock A class of projection and contraction methods for monotone
  variational inequalities.
\newblock {\em Applied Mathematics and optimization}, 35(1):69--76, 1997.

\bibitem{He2012b}
B.~He and X.~Yuan.
\newblock Convergence analysis of primal-dual algorithms for saddle-point
  problem: from contraction perspective.
\newblock {\em SIAM J. Imaging Sci.}, 5:119--149, 2012.

\bibitem{hsieh2019convergence}
Y.-G. Hsieh, F.~Iutzeler, J.~Malick, and P.~Mertikopoulos.
\newblock On the convergence of single-call stochastic extra-gradient methods.
\newblock In {\em Advances in Neural Information Processing Systems}, pages
  6938--6948, 2019.

\bibitem{iusem2017extragradient}
A.~N. Iusem, A.~Jofr{\'e}, R.~I. Oliveira, and P.~Thompson.
\newblock Extragradient method with variance reduction for stochastic
  variational inequalities.
\newblock {\em SIAM J. Optim.}, 27(2):686--724, 2017.

\bibitem{Iusem1997}
A.N. Iusem and B.F. Svaiter.
\newblock A variant of {K}orpelevich’s method for variational inequalities
  with a new search strategy.
\newblock {\em Optimization}, 42(4):309--321, 1997.

\bibitem{jabbar2021survey}
Abdul Jabbar, Xi~Li, and Bourahla Omar.
\newblock A survey on generative adversarial networks: Variants, applications,
  and training.
\newblock {\em ACM Computing Surveys (CSUR)}, 54(8):1--49, 2021.

\bibitem{kannan2019optimal}
A.~Kannan and U.~V. Shanbhag.
\newblock Optimal stochastic extragradient schemes for pseudomonotone
  stochastic variational inequality problems and their variants.
\newblock {\em Comput. Optim. Appl.}, 74(3):779--820, 2019.

\bibitem{khobotov1987modification}
E.~N. Khobotov.
\newblock Modification of the extra-gradient method for solving variational
  inequalities and certain optimization problems.
\newblock {\em USSR Computational Mathematics and Mathematical Physics},
  27(5):120--127, 1987.

\bibitem{kim2021accelerated}
D.~Kim.
\newblock Accelerated proximal point method for maximally monotone operators.
\newblock {\em Math. Program.}, pages 1--31, 2021.

\bibitem{konnov1997class}
I.~V. Konnov.
\newblock A class of combined iterative methods for solving variational
  inequalities.
\newblock {\em J. Optim. Theory Appl.}, 94(3):677--693, 1997.

\bibitem{Konnov2001}
I.V. Konnov.
\newblock {\em Combined relaxation methods for variational inequalities.}
\newblock Springer-Verlag, 2001.

\bibitem{Korpelevic1976}
G.~M. Korpelevic.
\newblock An extragradient method for finding saddle-points and for other
  problems.
\newblock {\em \`{E}konom. i Mat. Metody.}, 12(4):747--756, 1976.

\bibitem{korpelevich1976extragradient}
G.M. Korpelevich.
\newblock The extragradient method for finding saddle points and other
  problems.
\newblock {\em Matecon}, 12:747--756, 1976.

\bibitem{lee2021fast}
S.~Lee and D.~Kim.
\newblock Fast extra gradient methods for smooth structured
  nonconvex-nonconcave minimax problems.
\newblock {\em Thirty-fifth Conference on Neural Information Processing Systems
  (NeurIPs2021)}, 2021.

\bibitem{levy2020large}
D.~Levy, Y.~Carmon, J.~C. Duchi, and A.~Sidford.
\newblock Large-scale methods for distributionally robust optimization.
\newblock {\em Advances in Neural Information Processing Systems},
  33:8847--8860, 2020.

\bibitem{lieder2021convergence}
F.~Lieder.
\newblock On the convergence rate of the halpern-iteration.
\newblock {\em Optimization Letters}, 15(2):405--418, 2021.

\bibitem{lin2022distributionally}
F.~Lin, X.~Fang, and Z.~Gao.
\newblock Distributionally robust optimization: {A} review on theory and
  applications.
\newblock {\em Numerical Algebra, Control \& Optimization}, 12(1):159, 2022.

\bibitem{lin2018solving}
Q.~Lin, M.~Liu, H.~Rafique, and T.~Yang.
\newblock Solving weakly-convex-weakly-concave saddle-point problems as
  weakly-monotone variational inequality.
\newblock {\em arXiv preprint arXiv:1810.10207}, 2018.

\bibitem{lin2020near}
T.~Lin, C.~Jin, and M.~I. Jordan.
\newblock Near-optimal algorithms for minimax optimization.
\newblock In {\em Conference on Learning Theory}, pages 2738--2779. PMLR, 2020.

\bibitem{Lions1979}
P.~L. Lions and B.~Mercier.
\newblock Splitting algorithms for the sum of two nonlinear operators.
\newblock {\em SIAM J. Num. Anal.}, 16:964--979, 1979.

\bibitem{loizou2021stochastic}
N.~Loizou, H.~Berard, G.~Gidel, I.~Mitliagkas, and S.~Lacoste-Julien.
\newblock Stochastic gradient descent-ascent and consensus optimization for
  smooth games: {C}onvergence analysis under expected co-coercivity.
\newblock {\em Advances in Neural Information Processing Systems},
  34:19095--19108, 2021.

\bibitem{luo2022last}
Y.~Luo and Q.~Tran-Dinh.
\newblock Extragradient-type methods for co-monotone root-finding problems.
\newblock {\em (Manuscript)}, 2022.

\bibitem{madry2018towards}
A.~Madry, A.~Makelov, L.~Schmidt, D.~Tsipras, and A.~Vladu.
\newblock Towards deep learning models resistant to adversarial attacks.
\newblock In {\em International Conference on Learning Representations}, 2018.

\bibitem{mainge2021accelerated}
P.-E. Maing{\'e}.
\newblock Accelerated proximal algorithms with a correction term for monotone
  inclusions.
\newblock {\em Applied Mathematics \& Optimization}, 84(2):2027--2061, 2021.

\bibitem{mainge2021fast}
P.~E. Maing{\'e}.
\newblock Fast convergence of generalized forward-backward algorithms for
  structured monotone inclusions.
\newblock {\em arXiv preprint arXiv:2107.10107}, 2021.

\bibitem{malitsky2015projected}
Y.~Malitsky.
\newblock Projected reflected gradient methods for monotone variational
  inequalities.
\newblock {\em SIAM J. Optim.}, 25(1):502--520, 2015.

\bibitem{malitsky2019golden}
Y.~Malitsky.
\newblock Golden ratio algorithms for variational inequalities.
\newblock {\em Math. Program.}, pages 1--28, 2019.

\bibitem{malitsky2020forward}
Y.~Malitsky and M.~K. Tam.
\newblock A forward-backward splitting method for monotone inclusions without
  cocoercivity.
\newblock {\em SIAM J. Optim.}, 30(2):1451--1472, 2020.

\bibitem{malitsky2014extragradient}
Y.~V. Malitsky and V.~V. Semenov.
\newblock An extragradient algorithm for monotone variational inequalities.
\newblock {\em Cybernetics and Systems Analysis}, 50(2):271--277, 2014.

\bibitem{marcotte1991application}
P.~Marcotte.
\newblock Application of {K}hobotov’s algorithm to variational inequalities
  and network equilibrium problems.
\newblock {\em INFOR: Information Systems and Operational Research},
  29(4):258--270, 1991.

\bibitem{Martinet1970}
B.~Martinet.
\newblock R\'{e}gularisation d'in\'{e}quations variationnelles par
  approximations succesives.
\newblock {\em Rev. Fran\c{c}aise Inf. Rech. Oper.}, R-3:154--179, 1970.

\bibitem{mertikopoulos2019optimistic}
P.~Mertikopoulos, B.~Lecouat, H.~Zenati, C.-S. Foo, V.~Chandrasekhar, and
  G.~Piliouras.
\newblock Optimistic mirror descent in saddle-point problems: {G}oing the extra
  (gradient) mile.
\newblock In {\em ICLR 2019-7th International Conference on Learning
  Representations}, pages 1--23, 2019.

\bibitem{minty1962monotone}
G.~J. Minty.
\newblock Monotone (nonlinear) operators in {H}ilbert space.
\newblock {\em Duke Mathematical journal}, 29(3):341--346, 1962.

\bibitem{mokhtari2020unified}
A.~Mokhtari, A.~Ozdaglar, and S.~Pattathil.
\newblock A unified analysis of extra-gradient and optimistic gradient methods
  for saddle point problems: {P}roximal point approach.
\newblock In {\em International Conference on Artificial Intelligence and
  Statistics}, pages 1497--1507. PMLR, 2020.

\bibitem{mokhtari2020convergence}
A.~Mokhtari, A.~E. Ozdaglar, and S.~Pattathil.
\newblock Convergence rate of $\mathcal{O}(1/k)$ for optimistic gradient and
  {E}xtragradient methods in smooth convex-concave saddle point problems.
\newblock {\em SIAM J. Optim.}, 30(4):3230--3251, 2020.

\bibitem{Monteiro2010a}
R.D.C. Monteiro and B.F. Svaiter.
\newblock On the complexity of the hybrid proximal extragradient method for the
  interates and the ergodic mean.
\newblock {\em SIAM J. Optim.}, 20(6):2755--2787, 2010.

\bibitem{Monteiro2011}
R.D.C. Monteiro and B.F. Svaiter.
\newblock Complexity of variants of {T}seng's modified {F-B} splitting and
  {K}orpelevich's methods for hemivariational inequalities with applications to
  saddle-point and convex optimization problems.
\newblock {\em SIAM J. Optim.}, 21(4):1688--1720, 2011.

\bibitem{mordukhovich2006variational}
B.~S. Mordukhovich.
\newblock {\em Variational analysis and generalized differentiation: {V}olumes
  {I} and {II}}, volume 330.
\newblock Springer Science \& Business Media, 2006.

\bibitem{Nemirovskii2004}
A.~Nemirovskii.
\newblock Prox-method with rate of convergence $\mathcal{O}(1/t)$ for
  variational inequalities with {L}ipschitz continuous monotone operators and
  smooth convex-concave saddle point problems.
\newblock {\em SIAM J. Op}, 15(1):229--251, 2004.

\bibitem{Nesterov1983}
Y.~Nesterov.
\newblock A method for unconstrained convex minimization problem with the rate
  of convergence $\mathcal{O}(1/k^2)$.
\newblock {\em Doklady AN SSSR}, 269:543--547, 1983.
\newblock Translated as Soviet Math. Dokl.

\bibitem{Nesterov2004}
Y.~Nesterov.
\newblock {\em {I}ntroductory lectures on convex optimization: {A} basic
  course}, volume~87 of {\em Applied Optimization}.
\newblock Kluwer Academic Publishers, 2004.

\bibitem{Nesterov2005c}
Y.~Nesterov.
\newblock Smooth minimization of non-smooth functions.
\newblock {\em Math. Program.}, 103(1):127--152, 2005.

\bibitem{Nesterov2007a}
Y.~Nesterov.
\newblock Dual extrapolation and its applications to solving variational
  inequalities and related problems.
\newblock {\em Math. Program.}, 109(2--3):319--344, 2007.

\bibitem{Nesterov2006d}
Y.~Nesterov and L.~Scrimali.
\newblock Solving strongly monotone variational and quasi-variational
  inequalities.
\newblock {\em CORE Discussion Paper}, 107:1--15, 2006.

\bibitem{partkryu2022}
J.~Park and E.~K. Ryu.
\newblock Exact optimal accelerated complexity for fixed-point iterations.
\newblock {\em https://arxiv.org/pdf/2201.11413.pdf}, 2022.

\bibitem{Peng1999}
J.-M. Peng and M.~Fukushima.
\newblock A hybrid {N}ewton method for solving the variational inequality
  problem via the {D}-gap function.
\newblock {\em Math. Program.}, 86(2):367--386, 1999.

\bibitem{peng2016arock}
Z.~Peng, Y.~Xu, M.~Yan, and W.~Yin.
\newblock {AR}ock: an algorithmic framework for asynchronous parallel
  coordinate updates.
\newblock {\em SIAM J. Scientific Comput.}, 38(5):2851--2879, 2016.

\bibitem{pennanen2002local}
T.~Pennanen.
\newblock Local convergence of the proximal point algorithm and multiplier
  methods without monotonicity.
\newblock {\em Math. Oper. Res.}, 27(1):170--191, 2002.

\bibitem{pethick2023solving}
T.~Pethick, O.~Fercoq, P.~Latafat, P.~Patrinos, and V.~Cevher.
\newblock Solving stochastic weak {M}inty variational inequalities without
  increasing batch size.
\newblock {\em arXiv preprint arXiv:2302.09029}, 2023.

\bibitem{pethick2022escaping}
T.~Pethick, P.~Patrinos, O.~Fercoq, and V.~Cevher.
\newblock Escaping limit cycles: {G}lobal convergence for constrained
  nonconvex-nonconcave minimax problems.
\newblock In {\em International Conference on Learning Representations}, 2022.

\bibitem{phelps2009convex}
R.~R. Phelps.
\newblock {\em Convex functions, monotone operators and differentiability},
  volume 1364.
\newblock Springer, 2009.

\bibitem{popov1980modification}
L.~D. Popov.
\newblock A modification of the {A}rrow-{H}urwicz method for search of saddle
  points.
\newblock {\em Math. notes of the Academy of Sciences of the USSR},
  28(5):845--848, 1980.

\bibitem{rahimian2019distributionally}
H.~Rahimian and S.~Mehrotra.
\newblock Distributionally robust optimization: {A} review.
\newblock {\em arXiv preprint arXiv:1908.05659}, 2019.

\bibitem{Robinson1980}
S.~M. Robinson.
\newblock {S}trongly {R}egular {G}eneralized {E}quations.
\newblock {\em Math. Opers. Res., Vol. 5, No. 1 (Feb., 1980), pp. 43-62},
  5:43--62, 1980.

\bibitem{Rockafellar2004}
R.~Rockafellar and R.~Wets.
\newblock {\em {V}ariational {A}nalysis}, volume 317.
\newblock Springer, 2004.

\bibitem{Rockafellar1970}
R.~T. Rockafellar.
\newblock {\em {C}onvex {A}nalysis}, volume~28 of {\em Princeton Mathematics
  Series}.
\newblock Princeton University Press, 1970.

\bibitem{Rockafellar1976b}
R.T. Rockafellar.
\newblock Monotone operators and the proximal point algorithm.
\newblock {\em SIAM J. Control Optim.}, 14:877--898, 1976.

\bibitem{Rockafellar1997}
R.T. Rockafellar and R.~J-B. Wets.
\newblock {\em {V}ariational {A}nalysis}.
\newblock Springer-Verlag, 1997.

\bibitem{ryu2016primer}
E.~K. Ryu and S.~Boyd.
\newblock Primer on monotone operator methods.
\newblock {\em Appl. Comput. Math}, 15(1):3--43, 2016.

\bibitem{shi2021understanding}
B.~Shi, S.~S. Du, M.~I. Jordan, and W.~Su.
\newblock Understanding the acceleration phenomenon via high-resolution
  differential equations.
\newblock {\em Math. Program.}, pages 1--70, 2021.

\bibitem{solodov1999hybrid}
M.~V. Solodov and B.~F. Svaiter.
\newblock A hybrid approximate extragradient--proximal point algorithm using
  the enlargement of a maximal monotone operator.
\newblock {\em Set-Valued Var. Anal.}, 7(4):323--345, 1999.

\bibitem{solodov1999new}
M.~V. Solodov and B.~F. Svaiter.
\newblock A new projection method for variational inequality problems.
\newblock {\em SIAM J. Control Optim.}, 37(3):765--776, 1999.

\bibitem{solodov1996modified}
M.~V. Solodov and P.~Tseng.
\newblock Modified projection-type methods for monotone variational
  inequalities.
\newblock {\em SIAM J. Control Optim.}, 34(5):1814--1830, 1996.

\bibitem{Su2014}
W.~Su, S.~Boyd, and E.~Candes.
\newblock A differential equation for modeling {N}esterov's accelerated
  gradient method: {T}heory and insights.
\newblock In {\em Advances in Neural Information Processing Systems (NIPS)},
  pages 2510--2518, 2014.

\bibitem{sun1995new}
D.~Sun.
\newblock A new step-size skill for solving a class of nonlinear projection
  equations.
\newblock {\em Journal of Computational Mathematics}, pages 357--368, 1995.

\bibitem{sun1996class}
D.~Sun.
\newblock A class of iterative methods for solving nonlinear projection
  equations.
\newblock {\em J. Optim. Theory Appl.}, 91(1):123--140, 1996.

\bibitem{tran2022accelerated}
Q.~Tran-Dinh.
\newblock Accelerated randomized block-coordinate algorithms for co-coercive
  equations and applications.
\newblock {\em preprint (STOR-07-22, UNC-Chapel Hill}, 2022.

\bibitem{tran2022connection}
Q.~Tran-Dinh.
\newblock The connection between {N}esterov's accelerated methods and {H}alpern
  fixed-point iterations.
\newblock {\em arXiv preprint arXiv:2203.04869}, 2022.

\bibitem{tran2023extragradient}
Q.~Tran-Dinh.
\newblock {E}xtragradient-{T}ype {M}ethods with
  $\mathcal{O}(1/k)$-{C}onvergence {R}ates for {C}o-{H}ypomonotone
  {I}nclusions.
\newblock {\em arXiv preprint arXiv:2302.04099}, 2023.

\bibitem{TranDinh2015b}
Q.~Tran-Dinh, O.~Fercoq, and V.~Cevher.
\newblock A smooth primal-dual optimization framework for nonsmooth composite
  convex minimization.
\newblock {\em SIAM J. Optim.}, 28(1):96--134, 2018.

\bibitem{tran2021halpern}
Q.~Tran-Dinh and Y.~Luo.
\newblock {H}alpern-type accelerated and splitting algorithms for monotone
  inclusions.
\newblock {\em arXiv preprint arXiv:2110.08150}, 2021.

\bibitem{TranDinh2016c}
Q.~Tran-Dinh, T.~Sun, and S.~Lu.
\newblock Self-concordant inclusions: A unified framework for path-following
  generalized {N}ewton-type algorithms.
\newblock {\em Math. Program.}, 177(1--2):173--223, 2019.

\bibitem{tran2019non}
Q.~Tran-Dinh and Y.~Zhu.
\newblock Non-stationary first-order primal-dual algorithms with faster
  convergence rates.
\newblock {\em SIAM J. Optim.}, 30(4):2866--2896, 2020.

\bibitem{tseng1990further}
P.~Tseng.
\newblock Further applications of a splitting algorithm to decomposition in
  variational inequalities and convex programming.
\newblock {\em Math. Program.}, 48(1-3):249--263, 1990.

\bibitem{tseng2000modified}
P.~Tseng.
\newblock A modified forward-backward splitting method for maximal monotone
  mappings.
\newblock {\em SIAM J. Control and Optim.}, 38(2):431--446, 2000.

\bibitem{vuong2018weak}
V.~Phan Tu.
\newblock On the weak convergence of the extragradient method for solving
  pseudo-monotone variational inequalities.
\newblock {\em J. Optim. Theory Appl.}, 176(2):399--409, 2018.

\bibitem{vuong2012extragradient}
V.~Phan Tu, J.~J. Strodiot, and V.~H. Nguyen.
\newblock Extragradient methods and linesearch algorithms for solving
  {K}y-{F}an inequalities and fixed point problems.
\newblock {\em J. Optim. Theory Appl.}, 155(2):605--627, 2012.

\bibitem{wei2021last}
C.-Y. Wei, C.-W. Lee, M.~Zhang, and H.~Luo.
\newblock Last-iterate convergence of decentralized optimistic gradient
  descent/ascent in infinite-horizon competitive {M}arkov games.
\newblock In {\em Conference on learning theory}, pages 4259--4299. PMLR, 2021.

\bibitem{wibisono2016variational}
A.~Wibisono, A.~C. Wilson, and M.~I. Jordan.
\newblock A variational perspective on accelerated methods in optimization.
\newblock {\em Proceedings of the National Academy of Sciences},
  113(47):E7351--E7358, 2016.

\bibitem{woodworth2016tight}
B.~E. Woodworth and N.~Srebro.
\newblock Tight complexity bounds for optimizing composite objectives.
\newblock In {\em Advances in neural information processing systems (NIPS)},
  pages 3639--3647, 2016.

\bibitem{xiu2001convergence}
N.~Xiu, C.~Wang, and J.~Zhang.
\newblock Convergence properties of projection and contraction methods for
  variational inequality problems.
\newblock {\em Appl. Math. Optim.}, 43(2):147--168, 2001.

\bibitem{yoon2021accelerated}
T.~Yoon and E.~K. Ryu.
\newblock Accelerated algorithms for smooth convex-concave minimax problems
  with $\mathcal{O}(1/k^2)$ rate on squared gradient norm.
\newblock In {\em International Conference on Machine Learning}, pages
  12098--12109. PMLR, 2021.

\bibitem{Zeidler1984}
E.~Zeidler.
\newblock {\em {N}onlinear {F}unctional {A}nalysis and its {A}pplications {III}
  - {V}ariational {M}ethods and {O}ptimization}.
\newblock Springer Verlag, 1984.

\bibitem{ZhuLiuTran2020}
Y.~Zhu, D.~Liu, and Q.~Tran-Dinh.
\newblock New primal-dual algorithms for a class of nonsmooth and nonlinear
  convex-concave minimax problems.
\newblock {\em SIAM J. Optim.}, 32(4):2580--2611, 2022.

\end{thebibliography}

\end{document}